\def\on{\bar\rho}
\newtheorem{theorem}{Theorem}[section]
\newtheorem{remark}{Remark}[section]
\newtheorem{definition}{Definition}[section]
\newtheorem{lemma}[theorem]{Lemma}
\newtheorem{proposition}{Proposition}[section]
\newcommand{\ltwo}{_{L^2}^2}
\newcommand{\lm}{\lambda}
\renewcommand{\div}{ {\rm div }  }
\newcommand{\na}{\nabla }
\newcommand{\pa}{\partial}
\renewcommand{\r}{\mathbb{R}}
\newcommand{\dis}{\displaystyle}
\newcommand{\ia}{\int_0^T}
\newcommand{\bt}{{\hat\theta}}
\newcommand{\bl}{\begin{lemma}}
\newcommand{\el}{\end{lemma}}
\newcommand{\et}{\end{theorem}}
\newcommand{\ga}{\gamma}
\newcommand{\curl}{{\rm curl} }
\newcommand{\te}{\theta}
\newcommand{\al}{\alpha}
\newcommand{\de}{\delta}
\newcommand{\ve}{\varepsilon}
\newcommand{\la}{\label}
\newcommand{\p}{p(\rho)  }
\newcommand{\ka}{\kappa}
\newcommand{\ol}{\overline}
\newcommand{\bn}{\begin{eqnarray}}
\newcommand{\en}{\end{eqnarray}}
\newcommand{\bnn}{\begin{eqnarray*}}
\newcommand{\enn}{\end{eqnarray*}}
\newcommand{\bnnn}{\begin{eqnarray*}}
\newcommand{\ennn}{\end{eqnarray*}}
\newcommand{\ba}{\begin{aligned}}
\newcommand{\ea}{\end{aligned}}
\newcommand{\be}{\begin{equation}}
\newcommand{\ee}{\end{equation}}
\def\O{\Omega}
\def\p{\partial}
\def\norm[#1]#2{\|#2\|_{#1}}
\newcommand{\ep}{\varepsilon}
\newcommand{\n}{\rho}
\newcommand{\si}{\sigma}
\def\la{\label}
\def\na{\nabla}
\def\on{\hat\rho}
\def\xl{\left}
\def\xr{\right}
\def\bp{\overline{P}}
\renewcommand{\thefootnote}{}
\newcommand\blfootnote[1]{%
  \begingroup
  \renewcommand\thefootnote{}\footnote{#1}%
  \addtocounter{footnote}{-1}%
  \endgroup
}
\def\QEDopen{{\setlength{\fboxsep}{0pt}\setlength{\fboxrule}{0.2pt}\fbox{\rule[0pt]{0pt}{1.3ex}\rule[0pt]{1.3ex}{0pt}}}} %???????
\def\QED{\QEDopen} % ??\QEDclosed?????
\def\endproof{\hspace*{\fill}~\QED\par\endtrivlist\unskip}% ?proof????????????????
\title{Global Well-Posedness of     Full Compressible Magnetohydrodynamic System in 3D Bounded Domains with  Large Oscillations and Vacuum}
\author{Yazhou C{\small HEN}$^a$, Yunkun C{\small HEN}$^b$, Xue W{\small ANG}$^c$   \\[3mm]
{\normalsize  a. College of Mathematics and Physics, }\\ {\normalsize  Beijing University of Chemical Technology, Beijing 100029, P. R. China;} \\
{\normalsize  b. Anshun University, Anshun, 561000, P.R. China; } \\ {\normalsize c.  School of Mathematical Sciences,}\\
{\normalsize  University of Chinese Academy of Sciences, Beijing 100049, P. R. China}}
\date{ }
\begin{document}
\maketitle

\blfootnote{Email: chenyz@mail.buct.edu.cn (Y.Chen), cyk2013@gznu.edu.cn (Y. Chen), xuewa@amss.ac.cn (X.  Wang)}

\begin{abstract}
The three-dimensional (3D) full compressible magnetohydrodynamic system is studied in a general bounded domain with slip boundary condition for the velocity filed,  adiabatic condition for the temperature and perfect conduction for the magnetic field.  For the regular initial data with small energy but possibly large oscillations,  the global existence of classical and weak solution as well as the exponential decay rate to the initial-boundary-value problem of this system is obtained. In particular, the density and temperature of such a classical solution are both allowed to vanish initially. Moreover, it is also shown that for the classical solutions, the oscillation of the density will grow unboundedly with an exponential rate when the initial vacuum appears (even at a point). Some new observations and useful estimates are developed to overcome the difficulties caused by the slip boundary conditions.
\end{abstract}

\textbf{Keywords:}  full compressible magnetohydrodynamic equations, large oscillations, slip boundary condition, vacuum, global well-posedness.

\textbf{AMS subject classifications:} 35Q55, 35K65, 76N10, 76W05

\section{Introduction}
%The compressible magnetohydrodynamic (MHD) system  has been attracted a lot of attention of physicists and mathematicians due to its physical importance and mathematical challenges, including the strong coupling and interplay interaction between fluid motion and magnetic field.
We consider the motion of a viscous, compressible, and heat conducting magnetohydrodynamic (MHD) flow in a domain $\Omega\subset\r^{3}$, which can be described by the full compressible MHD equations (see \cite{hw2008}):
\begin{equation}\label{CMHD-1}
\begin{cases}
\rho_t+ \mathop{\mathrm{div}}\nolimits(\rho u)=0,\\
(\rho u)_t+\mathop{\mathrm{div}}\nolimits(\rho u\otimes u)+\nabla P
=\mathop{\mathrm{div}}\mathbb{S}+(\nabla\times H)\times H,\\
(\mathcal{E})_t+\mathop{\mathrm{div}}(\rho Eu+Pu)=\mathop{\mathrm{div}}\big((u\!\times H)\!\times H+H\!\times(\nu\nabla\!\times H)+u\mathbb{S}+\kappa\nabla \theta\big),\\
H_t -\nabla \times (u \times H)=-\nu \nabla \times (\nabla \times H),
\\
\mathop{\mathrm{div}}\nolimits H=0,
\end{cases}
\end{equation}
where $(x,t)\in\Omega\times (0,T]$, $t\geq 0$ is time, and $x=(x_1,x_2,x_3)$ is the spatial coordinate. $\rho=\rho(x,t)$ denotes the density, $u=(u^1,u^2,u^3)^{tr}(x,t)$ the velocity, $\theta=\theta(x,t)$ the temperature, $H=(H^1,H^2,H^3)^{tr}(x,t)$ the magnetic field;  $\mathbb{S}$ is the viscous stress tensor given by
\begin{equation}
  \mathbb{S}=2\mu \mathbb{D}u+\lambda \mathop{\mathrm{div}}u \mathbb{I}_3,
\end{equation}
where $\mathbb{D}u=(\nabla u+(\nabla u)^{tr})/2$ is the deformation tensor, $\mu$ and $\lambda$ are the shear viscosity and bulk coefficients respectively satisfying the following physical restrictions $\mu>0$ and $2\mu +{3}\lambda\geq 0$, and $\mathbb{I}_3$ is the $3\times3$ identity matrix; $\mathcal{E}$ is the total energy given by
\begin{equation}\label{ee}
  \mathcal{E}=\rho E+\frac{|H|^2}{2}\quad  \text{and}\quad E=e+\frac{|u|^2}{2},
\end{equation}
with $e$ the internal energy; $P=P(\rho,\theta)$ is the pressure, the constant $\kappa>0$ is the heat conductivity and $\nu >0$ is the magnetic resistivity coefficient. In the current paper, we consider the ideal  polytropic fluids so that $P$ and $e$ are given by the state equations:
\begin{equation}
  P=R\rho\theta, \quad e=\frac{R\theta}{\gamma-1},
\end{equation}
where $\gamma>1$  is the adiabatic constant  and $R$ is a positive constant.

Let $\Omega \subset \mathbb{R}^3 $ be a simply connected bounded domain.
Note that for the classical solutions, the system \eqref{CMHD-1} can be rewritten as
\begin{equation}\label{CMHD}
\begin{cases}
\rho_t+ \mathop{\mathrm{div}}\nolimits(\rho u)=0,\\
\rho (u_t+ u \cdot \nabla u)+ \nabla P=\mu\Delta u+(\mu+\lambda)\nabla\mathop{\mathrm{div}}u+H \!\cdot\! \nabla H-\nabla \frac{|H|^2}{2},\\
\frac{R}{\gamma-1}\rho ( \theta_t+u\cdot\nabla \theta)+P\mathop{\mathrm{div}} u=\kappa\Delta\theta+\lambda (\mathop{\mathrm{div}} u)^2+2\mu |\mathbb{D}(u)|^2+\nu|\mathop{\rm curl} H|^2, \\
H_t+u \cdot \nabla H-H \cdot \nabla u+ H \mathop{\mathrm{div}} u= -\nu \nabla \times \mathop{\rm curl} H,
\\
\mathop{\mathrm{div}}\nolimits H=0,
\end{cases}
\end{equation}
In addition, the system is solved subject to the given initial data
\begin{equation}\label{initial}
\displaystyle  \big(\rho,\rho u,\rho \theta,H\big)(x,0)=\big(\rho_0,\rho_0 u_0,\rho_0 \theta_0,H_0\big)(x),\quad x\in \Omega,
\end{equation}
and slip boundary conditions
\begin{align}
& u\cdot n=0,\,\,\,\curl u\times n=0, &\text{on} \,\,\,\partial\Omega, \label{navier-b}\\
& \nabla \theta \cdot n=0, &\text{on} \,\,\,\partial\Omega, \label{theta-b}\\
& H \cdot n=0,\,\,\,\curl H\times n=0,  &\text{on} \,\,\,\partial\Omega,\label{boundary}
\end{align}
where $n=(n^1,n^2,n^3)^{tr}$ is the unit outward normal vector to $\partial \Omega$.
It should be mentioned that the boundary condition \eqref{navier-b} for the velocity  can be regarded as a special case of the following general Navier-type slip boundary condition (see e.g., \cite{cl2019})
 \begin{align}\label{Navi2} \displaystyle  u \cdot n = 0, \,\,(2\mathbb{D}u\  n)_{\tau}+ \vartheta u_{\tau}=0 \,\,\,\text{on}\,\,\, \partial\Omega, \end{align}
which is proposed by Navier in \cite{Nclm1} and indicates that there is a stagnant layer of fluid close to the wall allowing a fluid to slip and the slip velocity is proportional to the shear stress. Here $\vartheta$ is a scalar friction function, subscript $\tau$ denotes the tangential component on $\partial\Omega$.
Additional, as shown in \cite{cl2019}, the Navier-slip condition \eqref{Navi2} is in fact a particular case of the following slip boundary condition:
 \begin{align}\label{navi1} u\cdot n=0,\,\,\,\curl u\times n=-(Au)_{\tau} \,\,\,&\text{on} \,\,\,\partial\Omega, \end{align}
where $A$ is a smooth symmetric matrix defined on $\partial \Omega$.
For the temperature field, we adopt the adiabatic condition \eqref{theta-b}.
For the magnetic field, the boundary condition \eqref{boundary} describes that the boundary $\partial \Omega$ is a perfect conductor (see e.g. \cite{djj2013}), that means the magnetic field is confined inside and separated from the exterior.
Therefore, it is appropriate to consider the compressible MHD equations with the boundary conditions \eqref{navier-b}-\eqref{boundary}.

Significant progress has been made in the analysis of the well-posedness and dynamic behavior to the solutions of the full compressible MHD system \eqref{CMHD} due to its physical importance and mathematical challenges, including the strong coupling and interplay interaction between fluid motion and magnetic field.
%see for example, \cite{cw2002,ko1982,wang2003,vh1972,fy2009,lh2015,k1984,hw2008,hw2010,fy2008,lxz2013,lsx2016,tg2016,df2006,sh2012,djj2013,fjn2007,hhpz2017,hw2008-1,hw2009,liu2015,xh2017,zjx2009,zz2010,zhu2015} and their references. %Among them, we briefly review the results related to well-posedness of solutions for the multi-dimensional compressible MHD equation.
For one-dimensional case, We refer to \cite{cw2002,ko1982,Wang2003,hss2021} and their references.
For the multi-dimensional case, the local existence of strong solutions with initial vacuum was proved by Fan and Yu \cite{fy2009} in three-dimensional space and by Li and Huang \cite{lh2016} in two-dimensional space.
The local existence of the classical solutions to the 3D initial-boundary value problem with slip boundary condition was investigated by Xi and Hao \cite{xh2017}.
We refer to \cite{tg2016,xh2017,lh2015} for local strong solutions to the isentropic compressible MHD system.
Concerning with global weak solutions, the global existence of renormalized weak solutions in a 3D bounded domain was proved by Hu and Wang \cite{hw2008,hw2010} with large data and also investigated in \cite{sh2012,lyz2013,df2006} for isentropic case or non-isentropic case.
The global existence of smooth solutions to the Cauchy problem of compressible MHD system was first obtained by Kawashima \cite{k1984} when the initial data are close to a non-vacuum equilibrium in $H^3$-norm. The result was extended to full compressible MHD system in 3D exterior domains by Liu et al. \cite{llz2021} with  Navier-slip for the velocity filed and perfect conduction for the magnetic field.
For Cauchy problem of isentropic compressible MHD system with vacuum, the global existence and uniqueness of classical solutions with small initial energy but large oscillations in three-dimensional space was proved by Li et al. \cite{lxz2013} and the result was generalized by Hong et al. \cite{hhpz2017} for large initial data with $\gamma-1$ and $\nu^{-1}$ are suitably small. Later on, global existence of unqiue classical solutions in two-dimensional space and some better a priori decay with rates are obtained by Lv et al. \cite{lsx2016}. For the initial-boundary-value problem, the global classical solutions to isentropic compressible MHD system with slip boundary conditions in 3D bounded domains was studied by Chen et al. \cite{chs2020-mhd} for the regular initial data with small energy but possibly large oscillations and vacuum. More resently, the global well-posedness of strong and weak solutions of the isentropic compressible MHD system in 2D bounded domains with large initial data and vacuum was investigated by Chen et al. \cite{chs2021-mhd}.
As for the stong/classical solutions to the full compressible MHD system,
the global strong solutions to the 3D Cauchy problem was investigated by Liu and Zhong \cite{lz2020-mhd,lz2021-fmhd,lz2022-fmhd} and Hou et al. \cite{hjp2022} under some certain small conditions.
We refer the readers to \cite{fl2020,ls2019,tw2018,ww2017,zhu2015,ls2021} for more results on global solutions to multi-dimensional compressible non-resistive or inviscid MHD equations.
%There are some interesting results on other studies of compressible MHD system, such as blow-up criterion, asymptotic limits and optimal convergence rates of the solutions and so on, we refer to \cite{glx2019,jjl2010,liyp2012,dw2015,xz2012,ly2011,gty2016,zz2021,hl2013,wangyf2021,zhong2019,jjl2012,jjlx2014,jll2013,kt2011} and their references.

%However, all of the above results containing large oscillations and vacuum only concern with the Cauchy problem.  It is rather complicated to investigate the well-posedness and dynamical behaviors of the compressible MHD system in a 3D general bounded domain with slip boundary condition. Recently, motivated by the works in \cite{cl2019}, Chen et al. \cite{chs2020-mhd,chs2020-mhd-e} established the global classical solutions to isentropic compressible MHD system with slip boundary conditions in 3D bounded domains or exterior domains for the regular initial data with small energy but possibly large oscillations and vacuum.  Liu et al. \cite{llz2021} established the global existence of smooth solutions near a given constant state for the full compressible MHD system \eqref{CMHD}-\eqref{boundary} in 3D exterior domains.

The main purpose of this paper is to establish the global existence and large time behavior of the classical solutions for full compressible MHD system \eqref{CMHD} in general bounded domains with the boundary condition \eqref{navier-b}-\eqref{boundary} in $\r^3$, which can be regarded as a continuation of our work in \cite{chs2020-mhd} for isentropic case.
It should be pointed out that it seems much more difficult and complicated to
study the global well-posedness of solutions to full compressible MHD system with vacuum %and the methods dealing with the pressure and the density used in \cite{chs2020-mhd} can not applied directly in this paper,
since the structure of the constitutive quation has changed and some additional difficulties arise, such as the degeneracy of both momentum and energy equations, the strong coupling between the velocity and temperature, and more strong nonlinear terms need to be controlled.
Very resently,  Li et al. \cite{llw2022} established the global well-posedness of classical solutions to full compressible Navier-Stokes system in general 3D bounded domains with small energy but possibly large oscillations and  both the density and temperature are allowed to vanish initially.
Motivated by \cite{cl2019,llw2022}, we would like to obtain the time-independent upper bound of the density and the time-dependent higher-norm estimates of $(\rho,u,\theta,H)$ and extend the classical solution globally in time via some new estimates on velocity, temperature and magnetic field, which are important in controlling the strong coupling effects and boundary terms.

Before formulating our main result, we first explain the notation and conventions used throughout the paper.
%We start with the definition of simply connected domains. \begin{definition} Let $\Omega$ be a domain in $\r^3$. If the first Betti number of $\Omega$ vanishes, namely, any simple closed curve in $\Omega$ can be contracted to a point, we say that $\Omega$ is simply connected. If the second Betti number of $\Omega$ is zero, we say that $\Omega$ has no holes.\end{definition}
For integer $k\geq 1$ and $1\leq q<+\infty$, We denote the standard Sobolev space by $W^{k,q}(\Omega)$ and $H^k(\Omega)\triangleq W^{k,2}(\Omega)$.
%For some $s\in(0,1)$, the fractional Sobolev space $H^s(\Omega)$ is defined by $$ H^s(\Omega)\triangleq\left\{u\in L^2(\Omega)~\text{:} \int_{\Omega\times\Omega}\frac{|u(x)-u(y)|^2}{|x-y|^{3+2s}}dxdy<+\infty\right\},\,\,\text{with the norm:}$$    $$\| u\|_{H^s(\Omega)}\triangleq \|u\|_{L^2(\Omega)}+\left(\int_{\Omega\times\Omega}\frac{|u(x)-u(y)|^2}{|x-y|^{3+2s}}dxdy\right)^\frac{1}{2}.$$
For simplicity, we denote $L^q(\Omega)$, $W^{k,q}(\Omega)$ and $H^k(\Omega)$  by $L^q$, $W^{k,q}$ and ${H^k}$ respectively.
For two $3\times 3$  matrices $A=\{a_{ij}\},\,\,B=\{b_{ij}\}$, the symbol $A\colon  B$ represents the trace of $AB^*$, where $B^*$ is the transpose of $B$, that is,
$$ A\colon  B\triangleq \text{tr} (AB^*)=\sum\limits_{i,j=1}^{3}a_{ij}b_{ij}.$$
Finally, for $v=(v^1,v^2,v^3)$, we denote $\nabla_iv\triangleq(\partial_iv^1,\partial_iv^2,\partial_iv^3)$ for $i=1,2,3,$ and the
material derivative of $v$ by  $\dot v\triangleq v_t+u\cdot\nabla v$.
We denote
$$\int fdx \triangleq \int_\Omega fdx,\quad \int_0^T\int fdxdt\triangleq\int_0^T\int_\Omega fdxdt,$$
and
$$\bar{f} \triangleq \frac{1}{|\Omega|}\int_\Omega fdx,$$
which is the average of a function $f$ over $\Omega$.
Without loss of generality, we assume that
\begin{equation}\label{rho0-b}
\overline{\rho_0}=\frac{1}{|\O|}\int \rho_0 dx=1.
\end{equation}

The initial total energy of \eqref{CMHD} is defined as
\begin{align}\label{c0}
\displaystyle  C_0 =\int_{\Omega}\left(\frac{1}{2}\rho_0|u_0|^2 + R G(\rho_0)+\frac{R}{\gamma-1}\rho_0\Phi(\theta_0)+\frac{1}{2}|H_0|^2 \right)dx.
\end{align}
where
\begin{align}\label{def-g}
\displaystyle  G(\rho)\triangleq 1+\rho\ln\rho-\rho,\quad \Phi(\theta)\triangleq\theta-\ln\theta-1.
\end{align}

Now we can state our first main result, Theorem \ref{th1}, concerning existence of global classical solutions to the problem  \eqref{CMHD}-\eqref{boundary}.
\begin{theorem}\label{th1}
Let $\Omega$ be a simply connected bounded domain in $\r^3$ and its smooth boundary $\partial\Omega$ has a finite number of 2-dimensional connected components. For $q\in (3,6)$ and some given constants $M>0$, $\hat{\rho}>2, \hat{\theta}>1$, and the initial data $(\rho_0,u_0,\theta_0,H_0)$ satisfies the boundary conditions \eqref{navier-b}-\eqref{boundary} and
\begin{align}
\displaystyle & 0\le\inf\rho_0\le\sup\rho_0< \hat{\rho},\quad 0  \le\inf\theta_0\le\sup\theta_0\le \hat{\theta},\label{dt1}\\
&\rho_0\in W^{2,q},\quad u_0\in  H^2,\quad \theta_0 \in  H^1,\quad H_0 \in  H^2,\quad \div H_0=0,\label{dt2}\\
& \|\nabla u_0\|_{L^2}\leq M,\quad \|\nabla H_0\|_{L^2}\leq M,\label{dt-s}
\end{align}
and the compatibility condition
\begin{align}\label{dt3}
\displaystyle  -\mu\triangle u_0-(\mu+\lambda)\nabla \mathop{\mathrm{div}}\nolimits u_0 + R\nabla (\rho_0\theta_0)- (\nabla \times H_0) \times H_0 = \rho_0^{\frac12}g,
\end{align}
for some  $ g\in L^2.$
Then there exists a positive constant $\ve$ depending only on  $\mu$, $\lambda$, $\nu$, $\ga$, $R$, $\kappa$, $\hat{\rho}$, $\hat{\theta}$, $\Omega$, and $M$  such that if
\be  \la{co14} C_0\le\ve, \ee
the system \eqref{CMHD}-\eqref{boundary} has a unique global classical solution $(\rho,u,\theta,H)$ in $\Omega\times(0,\infty)$ satisfying for any $0<\tau<T<\infty$,
\begin{align}\label{esti-rho}
\displaystyle  0\le \rho(x,t)\le 2\hat{\rho},\quad \theta(x,t)\geq 0,\quad  (x,t)\in \Omega\times(0,T),
\end{align}
\begin{equation}\label{esti-uh}
\begin{cases}
\rho\in C([0,T);W^{2,q} ),\\
u\in C([0,T);W^{1,\widetilde{p}} )\cap  L^\infty(0,T;H^2)\cap  L^\infty(\tau,T;W^{3,q}),\\
u_t\in L^{2}(0,T; H^1)\cap L^{\infty}(\tau,T; H^2)\cap H^1(\tau,T; H^1),\\
\theta\in C([\tau,T);W^{3,\widetilde{p}} )\cap  L^\infty(\tau,T;H^4),\\
\theta_t\in L^{\infty}(\tau,T; H^2)\cap H^1(\tau,T; H^1),\\
H \in C([0,T);H^2)\cap  L^\infty(\tau,T; H^4),\\
H_t\in L^{2}(0,T; H^1)\cap L^\infty(\tau,T; H^2)\cap H^1(\tau,T; H^1),	
\end{cases}
\end{equation}
for $\widetilde{p}\in [1,6)$.
Moreover,  for any $p\in [1,\infty)$ and $r\in [1,6],$ there exist positive constants $C$ and $\eta_0$ depending only  on $\mu,$  $\lambda,$ $\nu,$  $\gamma,$ $R$, $\kappa$, $\hat{\rho}$, $\hat{\theta}$, $\Omega$,   $M,$  $r$ and $p$  such that for $t>0,$
\begin{align}\label{esti-t}
\displaystyle  \|\rho(\cdot,t)-1\|_{L^p}+\|u(\cdot,t)\|_{W^{1,r}} +\|(\theta-\theta_\infty)(\cdot,t)\|^2_{H^2}+\|H(\cdot,t)\|_{H^2}\leq Ce^{-\eta_0 t}.
\end{align}
\end{theorem}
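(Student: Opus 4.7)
The plan is to follow the standard continuation scheme: first construct a local classical solution for regularized, strictly positive initial data, then establish a priori estimates that are \emph{uniform in time} under the smallness condition $C_0\le\ve$, so that the local solution can be extended globally; finally pass to the limit to recover the vacuum case and close the exponential decay. To set up the local theory, I would mollify $(\rho_0,u_0,\theta_0,H_0)$ with parameters $\delta,\eta$ so that $\inf\rho_0^{\delta,\eta}\ge\delta>0$, preserve the compatibility condition \eqref{dt3}, and invoke the local existence result of Xi--Hao \cite{xh2017} for the slip boundary problem. The whole game is then to show that on the lifespan of each approximate solution, the quantities controlling the local-existence criterion remain bounded on any finite interval, and in fact certain norms stay uniformly small for all time.

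The core of the argument is a \emph{bootstrap} on two levels. At the lower level, I would posit the working hypotheses
\begin{equation*}
\sup_{0\le t\le T}\|\rho\|_{L^\infty}\le 2\hat\rho,\qquad
\sup_{0\le t\le T}\bigl(\|\nabla u\|_{L^2}^2+\|\nabla H\|_{L^2}^2\bigr)\le 2M^2,
\end{equation*}
and close them with strictly sharper constants under $C_0\le\ve$. The basic energy identity, controlled by $G(\rho_0)$ and $\Phi(\theta_0)$ in \eqref{c0}, gives smallness of $\int(\rho|u|^2+|H|^2+(\rho-1)^2+\rho(\theta-1)^2)\,dx$ uniformly in $t$, with exponential decay once Poincaré/Gronwall are used on the entropy-like functional. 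The first derivative estimates, obtained by testing the momentum and induction equations with $u_t+u\cdot\nabla u$ and $H_t$ respectively, will produce the material-derivative quantities $\int\rho|\dot u|^2+|H_t|^2$; here the Hoff-type effective viscous flux $F=(2\mu+\lambda)\div u-R(\rho\theta-\overline{\rho\theta})-\tfrac12(|H|^2-\overline{|H|^2})$ and $\curl u$ must be analyzed separately, using $\curl u\times n=0$ and $H\cdot n=\curl H\times n=0$ to discard boundary contributions when integrating by parts. Because the slip condition produces curvature boundary terms like $\int_{\partial\Omega}(u\cdot\nabla n\cdot u)$, I would control them via the trace inequality $\|u\|_{L^2(\partial\Omega)}^2\le C\|u\|_{L^2}\|\nabla u\|_{L^2}+C\|u\|_{L^2}^2$ exactly as in \cite{cl2019,chs2020-mhd}.

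The decisive step, which I expect to be the \textbf{main obstacle}, is the time-independent upper bound $\rho\le 2\hat\rho$. I would derive it from the Lagrangian ODE
\begin{equation*}
D_t\log\rho+\frac{1}{2\mu+\lambda}\bigl(P-\overline{P}\bigr)=-\frac{1}{2\mu+\lambda}\bigl(F+\tfrac12(|H|^2-\overline{|H|^2})\bigr),
\end{equation*}
along particle paths, combined with Zlotnik's inequality. To apply it one needs a uniform-in-time $L^1_t L^\infty_x$ bound on the right-hand side, which is the sticky point because $\theta$ may be large and $H$ contributes a new quadratic term absent in the isentropic case \cite{chs2020-mhd}; this is where the temperature estimate enters decisively. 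Using $(\theta-1)$ as a multiplier in the internal-energy equation and exploiting $\int\p_\theta P\,\div u=0$ on the mean, together with the full-Navier--Stokes treatment of \cite{llw2022}, I would produce $\int_0^\infty\|\theta-1\|_{H^1}^2\,dt\le C\ve^{1/2}$, and similarly $\int_0^\infty\|H\|_{H^1}^2\,dt\le C\ve^{1/2}$ from the induction equation together with the perfect-conductor boundary condition and the div-curl estimate $\|H\|_{H^1}\le C(\|\curl H\|_{L^2}+\|H\|_{L^2})$. These, combined with the $L^\infty_t L^2_x$ and $L^2_tL^2_x$ bounds on $\dot u$, will plug $\|F\|_{L^1_tL^\infty_x}+\||H|^2\|_{L^1_tL^\infty_x}\le C\ve^{1/4}$ into Zlotnik's lemma and close the density bound with the sharp constant $2\hat\rho$.

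Once the a priori bounds are closed, I would derive the higher-order estimates needed for \eqref{esti-uh}: successive differentiation of the momentum, energy, and induction equations, testing with $\dot u,u_{tt},\theta_t,\theta_{tt},H_t,H_{tt}$, and using elliptic regularity for the Lamé system with slip condition and for the Neumann problem for $\theta$, yielding time-weighted estimates that blow up at most polynomially near $t=0$ but are uniformly bounded for $t\ge\tau>0$. The continuation criterion of Xi--Hao is then satisfied on $[0,\infty)$. For the limit $\delta,\eta\to 0$, I would pass to the limit in the weak formulation using the uniform estimates, recovering a classical solution with vacuum and the regularity class \eqref{esti-uh}. The exponential decay \eqref{esti-t} then follows by a Gronwall argument on the functional $\|(\rho-1,u,\theta-\theta_\infty,H)\|_{H^1}^2+\|\sqrt{\rho}\dot u\|_{L^2}^2+\|H_t\|_{L^2}^2$, using the dissipation from viscosity, heat conduction, and magnetic resistivity, after noting that $\theta_\infty=\overline{\rho_0\theta_0}+\tfrac{\gamma-1}{2R}\int|u_0|^2\rho_0+\tfrac{\gamma-1}{2R}\int|H_0|^2$ is determined by conservation of total energy.
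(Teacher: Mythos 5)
Your overall scheme (mollify the data, invoke local existence, close a bootstrap of a priori bounds under $C_0\le\ve$, use the effective viscous flux and a Zlotnik/Gr\"onwall argument along particle paths for $\rho\le 2\hat\rho$, then pass to the limit) matches the paper's architecture, but there are two genuine gaps. The first and most serious is that you take the basic energy estimate for granted: you assert that the energy identity "gives smallness of $\int(\rho|u|^2+|H|^2+(\rho-1)^2+\rho(\theta-1)^2)\,dx$ uniformly in $t$." Under the slip condition \eqref{navier-b} this is exactly the step that fails, because integrating by parts leaves the sign-undetermined remainder $-\mu\int\bigl(2(\div u)^2+|\omega|^2-2|\mathbb{D}u|^2\bigr)dx$ on the right-hand side of \eqref{11a} (equivalently, $\int_{\partial\Omega}u\cdot\mathbb{S}n\,ds$ does not vanish). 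The paper resolves this by putting $\int_0^T\|\nabla u\|_{L^2}^2\,dt$ into the bootstrap quantity $A_2(T)\le 2C_0^{1/4}$, which yields only the weaker bound $E(t)\le CC_0^{1/4}$, and by replacing $\bar\theta$ (or $1$) with $\bar P$, whose uniform positive upper and lower bounds come from Jensen's inequality applied to $\Phi$. Without this device your claimed bounds $\int_0^\infty\|\theta-1\|_{H^1}^2\,dt\le C\ve^{1/2}$ and the $L^1_tL^\infty_x$ control of $R\theta-\bar P$ feeding Zlotnik's lemma have no starting point, and the whole density bound collapses.

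The second gap is your identification of $\theta_\infty$ "by conservation of total energy." Total energy is not conserved here, for the same boundary-term reason, so $\theta_\infty$ is not computable from $(\rho_0,u_0,\theta_0,H_0)$ by that formula. In the paper one first shows $|\bar P_t|\le Ce^{-\alpha t}$ (using \eqref{pt} and the decay of $\|\nabla u\|_{L^2},\|\nabla H\|_{L^2},\|P-\bar P\|_{L^2}$), deduces that $P_\infty=\lim_{t\to\infty}\bar P(t)$ exists, and only then sets $\theta_\infty=P_\infty/R$; the convergence $\|\theta-\theta_\infty\|_{H^2}\to0$ is then obtained through $\|R\theta-\bar P\|_{L^2}$ and $|\bar P-R\theta_\infty|$. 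You should also be aware that a two-level bootstrap on $\|\rho\|_{L^\infty}$ and $\sup_t(\|\nabla u\|_{L^2}^2+\|\nabla H\|_{L^2}^2)$ alone is unlikely to close: the paper needs four carefully weighted quantities $A_1(\sigma(T)),A_2(T),A_3(T),A_4(T)$ with distinct powers $C_0^{1/4}$ and $C_0^{1/6}$, the short-time estimate of Lemma \ref{lem-s} giving $\|R\te-\bp\|_{L^2}\le C(C_0^{1/2}+C_0^{1/3}\|\na\te\|_{L^2})$, and time weights $\sigma,\sigma^2$ to handle the absence of a compatibility condition on $\theta_0$.
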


Next, thanks to the exponential decay rate \eqref{esti-t}, by the similar procedure as that in \cite{cl2019,lx2006}, we obtain the following large-time behavior of the gradient of the density when the initial density contains vacuum state.
\begin{theorem}\label{th2}
Under the conditions of Theorem \ref{th1}, assume further  that there exists some point $x_0\in \Omega$ such that $\rho_0(x_0)=0.$  Then the unique global classical solution $(\rho,u,\theta,H)$ to the problem \eqref{CMHD}-\eqref{boundary} obtained in
Theorem \ref{th1}  satisfies that for any $\tilde{r}>3,$   there exist positive constants $\tilde{C}_1$ and $\tilde{C}_2$ depending only  on $\mu$,  $\lambda$, $\nu$, $\gamma$, $R$, $\kappa$, $\hat{\rho}$, $\hat{\theta}$,  $\Omega$, $M,$ and  $\tilde{r}$   such that for any $t>0$,
\begin{align}\label{esti-2}
\displaystyle \|\nabla\rho (\cdot,t)\|_{L^{\tilde{r}}}\geq \tilde{C}_1 e^{\tilde{C}_2 t} .
\end{align}
\end{theorem}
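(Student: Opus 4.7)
\bigskip

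\noindent\textbf{Proof plan.} The argument follows the Li--Xin style obstruction to global regularity and reduces Theorem \ref{th2} to the exponential decay \eqref{esti-t}. The first step is to follow $x_0$ along the flow: since $u\cdot n=0$ on $\partial\Omega$ and, by \eqref{esti-uh}, $u\in C(\bar\Omega\times[0,T])$ with $\nabla u$ bounded, the characteristic ODE $\dot X(t)=u(X(t),t)$, $X(0)=x_0$, admits a unique solution $X(t)\in\bar\Omega$ for all $t\geq0$. Rewriting the continuity equation along this trajectory yields
\begin{equation*}
\frac{d}{dt}\rho(X(t),t)=-\rho(X(t),t)\,\div u(X(t),t),
\end{equation*}
so that $\rho(X(t),t)=\rho_0(x_0)\exp\bigl(-\int_0^t\div u(X(s),s)\,ds\bigr)=0$ for every $t\geq 0$. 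Hence the density vanishes identically along the trajectory issued from the initial vacuum point.

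Next, using the Morrey embedding $W^{1,\tilde r}(\Omega)\hookrightarrow C^{0,\alpha}(\bar\Omega)$ with $\alpha:=1-3/\tilde r\in(0,1)$, one gets
\begin{equation*}
\rho(x,t)=\rho(x,t)-\rho(X(t),t)\leq C_\Omega\|\nabla\rho(\cdot,t)\|_{L^{\tilde r}}\,|x-X(t)|^{\alpha},\qquad x\in\bar\Omega.
\end{equation*}
Define
\begin{equation*}
r(t):=\bigl(2C_\Omega\|\nabla\rho(\cdot,t)\|_{L^{\tilde r}}\bigr)^{-1/\alpha},
\end{equation*}
so that $\rho(x,t)\leq 1/2$ whenever $x\in B_{r(t)}(X(t))\cap\Omega$, and therefore $|1-\rho(x,t)|\geq 1/2$ on this set. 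Since $\partial\Omega$ is smooth and hence satisfies a uniform interior cone condition, there exist $c_\Omega>0$ and $r_\ast>0$ such that $|B_r(y)\cap\Omega|\geq c_\Omega r^3$ for every $y\in\bar\Omega$ and every $r\in(0,r_\ast]$.

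Combining these facts with \eqref{esti-t} applied to some fixed $p\in[1,\infty)$ gives, once $t$ is large enough that $r(t)\leq r_\ast$,
\begin{equation*}
C^p e^{-p\eta_0 t}\geq \|\rho(\cdot,t)-1\|_{L^p}^p
\geq \int_{B_{r(t)}(X(t))\cap\Omega}|1-\rho|^p\,dx
\geq 2^{-p}c_\Omega\,r(t)^3
= C'\,\|\nabla\rho(\cdot,t)\|_{L^{\tilde r}}^{-3/\alpha},
\end{equation*}
which rearranges to $\|\nabla\rho(\cdot,t)\|_{L^{\tilde r}}\geq \tilde C_1\,e^{\tilde C_2 t}$ with $\tilde C_2=p\eta_0\alpha/3$ and $\tilde C_1$ depending only on the declared parameters. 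Absorbing the transient regime $t\in[0,t_\ast]$ into $\tilde C_1$ (by continuity of $t\mapsto\|\nabla\rho(\cdot,t)\|_{L^{\tilde r}}$, which follows from $\rho\in C([0,T);W^{2,q})$ in \eqref{esti-uh} and $q\in(3,6)$, so $W^{2,q}\hookrightarrow W^{1,\tilde r}$) completes the bound for all $t>0$.

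The only genuine obstacle is the possibility that $X(t)$ drifts to $\partial\Omega$, which would destroy the volume lower bound $|B_r(X(t))\cap\Omega|\gtrsim r^3$; this is settled by the uniform interior cone condition provided by the smoothness of $\partial\Omega$ assumed in Theorem \ref{th1}, so the lower bound holds uniformly in the position of $X(t)$. Everything else — invariance of $\bar\Omega$ under the flow, vanishing of $\rho$ along the trajectory, and the Morrey embedding — is soft and uses only the regularity already packaged in \eqref{esti-uh} together with the exponential decay \eqref{esti-t}.
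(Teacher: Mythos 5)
Your argument is correct, and its first half --- propagating the vacuum along the particle path issued from $x_0$, so that $\rho(X(t),t)\equiv 0$ while $\overline{\rho}\equiv 1$ --- is exactly the paper's first step (the paper parametrizes backwards, producing for each $t$ a point $x_0(t)$ with $X(0;t,x_0(t))=x_0$, but this is the same observation). Where you genuinely diverge is the quantitative step. The paper invokes its Gagliardo--Nirenberg inequality \eqref{g2} with $g=\rho-1$, whose additive $L^2$ term vanishes because $\overline{\rho-1}=0$, to get $1\le\|\rho-1\|_{C(\bar\Omega)}\le C\|\rho-1\|_{L^2}^{\theta}\|\nabla\rho\|_{L^{\tilde r}}^{1-\theta}$ with $\theta=\tfrac{2(\tilde r-3)}{5\tilde r-6}$, and concludes in one line from \eqref{esti-t}. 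You instead re-derive the needed interpolation by hand: Morrey's embedding produces a ball of radius $r(t)\sim\|\nabla\rho\|_{L^{\tilde r}}^{-1/\alpha}$ around $X(t)$ on which $\rho\le 1/2$, and the interior cone condition converts its volume into a lower bound for $\|\rho-1\|_{L^p}^p$ of order $\|\nabla\rho\|_{L^{\tilde r}}^{-3/\alpha}$. With $p=2$ your rate $2\eta_0\alpha/3$ coincides exactly with the paper's $\eta_0\theta/(1-\theta)$, so the two routes are quantitatively equivalent; yours is more elementary and self-contained, at the cost of the extra bookkeeping about $r(t)\le r_\ast$ and the transient interval $[0,t_\ast]$. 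For that last absorption you should also record explicitly that $\|\nabla\rho(\cdot,t)\|_{L^{\tilde r}}>0$ there --- immediate, since $\rho(X(t),t)=0$ while $\overline{\rho}=1$ forces $\rho(\cdot,t)$ to be nonconstant --- so that continuity on the compact interval really yields a positive $\tilde C_1$; the paper's interpolation route sidesteps this because its inequality is valid uniformly for all $t>0$.
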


Finally, we state our result concerning the global existence of weak solution to \eqref{CMHD-1}, whose definition is as follows.
\begin{definition}\la{def} We say that $(\n,u,\mathcal{E},H)$ is a weak solution to Cauchy problem (\ref{CMHD-1}), (\ref{navier-b})-(\ref{boundary}) provided that
$$\n\in L^\infty_{\rm loc}([0,\infty);L^\infty(\O)),\quad (u ,\te, H)\in L^2_{\rm loc} ([0,\infty); H^1(\O)),$$
and that for all test functions $\psi\in\mathcal{D}(\O\times(-\infty,\infty)),$
\be \la{def1}
\int_{\O}\n_0\psi(\cdot,0)dx+\int_0^\infty\int_{\O}\left(\n\psi_t+\n u\cdot\na\psi\right) dxdt=0,\ee
\be\la{def2}\ba&
\int_{\O}\n_0u^j_0\psi(\cdot,0)dx+\int_0^\infty\int_{\O}\left(\n u^j\psi_t
   +\n u^ju\cdot\na\psi+P(\n,\te)\psi_{x_j}\right)dxdt\\
&-\!\!\int_0^\infty\!\!\!\int_{\O}\!\!\left(  (\mu\na u^j\!\!+\!H^jH)\cdot\!\na\!\psi\!\!+\!((\mu\!\!+\!\!\lambda)\div u\!\!-\!\!\frac{|H|^2}{2})\psi_{x_j}\right) dxdt\!\!=\!0,\quad
j\!\!=\!1,2,3, \ea\ee
\be \la{def3}\ba
&\int_{\O}\left(\frac{1}{2}\n_0|u_0|^2 +\frac{R}{\ga-1}\n_0\te_0+\frac{1}{2}|H_0|^2 \right)\psi(\cdot,0)dx\\
&+\int_0^\infty\int_{\O}\left(\mathcal{E}\psi_t+ (\n E  +P)u\cdot\na\psi\right) dxdt\\
&-\int_0^\infty\int_{\O}\left(\ka\na\te+\frac{1}{2}\mu \na(|u|^2)
 +\mu  u\cdot \na u+\lambda\div u u\right)\cdot\na \psi dxdt\\
 &-\int_0^\infty\int_{\O}\left(|H|^2u-(H\cdot u)H+\nu H\times(\nabla\times H))\right)\cdot\na \psi dxdt=0.
\ea\ee
\be\la{def4}\ba&
\int_{\O}\!\!H^j_0\psi(\cdot,0)dx\!+\!\!\!\int_0^\infty\!\!\!\int_{\O}\!\!\left(H^j\psi_t
   \!+\!(H^ju\!-\!u^jH\!-\!\nu\nabla H^j)\cdot\na\psi\right)dxdt\!\!=\!0,\
j\!\!=\!1,2,3, \ea\ee
where $\mathcal{E}$ and $E$ are defined in \eqref{ee}.
\end{definition}
Then denoting the effective viscous flux $F$ and the vorticity $\omega$ as follows
\begin{align}\label{flux}
F\triangleq(\lambda+2\mu)\text{div}u-(P-\bar{P})-\frac{1}{2}(|H|^2-\overline{|H|^2}),\quad \omega\triangleq\nabla\times u,
\end{align}
we state our third main result as follows:
\begin{theorem}\la{th3}  Under the conditions of Theorem \ref{th1} except (\ref{dt3}), where the condition (\ref{dt2}) is replaced by
\be\la{dt7}(u_0, H_0)\in H^1,\ee
assume  further that $C_0$  as in (\ref{c0}) satisfies (\ref{co14}) with $\varepsilon$ as in Theorem \ref{th1}. Then there exists a global weak solution $(\n,u,\mathcal{E},H)$ to the problem  (\ref{CMHD-1}), (\ref{navier-b})-(\ref{boundary}) satisfying
\be\la{hq1}
\n\in C([0,\infty);L^p), \quad (\n u,\,\n |u|^2,\,\n\te)\in C([0,\infty);H^{-1}) ,
\ee
\be\la{hq2}u\in L^\infty(0,\infty;H^1)\cap C((0,\infty);L^2 )  ,\quad (\te,H)\in C((0,\infty); W^{1,\tilde p}),\ee
\be \la{hq3}
F(\cdot,t),  \,\,\omega(\cdot,t),\,\,u(\cdot,t),\,\,\na\te(\cdot,t),\,\,\na H(\cdot,t)\in H^1,\quad t>0,\ee
\be\la{hq4}\rho\in [0,2\hat{\rho}] \quad \mbox{ \rm a.e.}, \quad \te\ge 0 \quad\mbox{ \rm a.e.},\ee
 and the exponential decay property (\ref{esti-t})  with $p\in  [1 ,\infty)$, $\tilde p\in [1,6)$, and $r\in[1,6]$. In addition, there exists some  positive constant $C$ depending  only on $ \mu,$ $\lambda,$ $ \ka,$ $ R,$ $ \ga,$  $\on$, $\bt,$ $\O$, and $M $ such that, for  $\si(t)\triangleq\min\{1,t\},$  the following estimates hold
  \be\la{hq5}\ba  \sup_{t\in (0,\infty)} \| u \|_{H^1}^2  +\int_0^\infty\int\left| (\n u)_t+{\rm div}(\n u\otimes u)\right|^2dxdt\le C,\ea\ee
  \be\la{hq7}\ba &\sup_{t\in (0,\infty)} \int\left((\n-1)^2+\n |u|^2+\n(R\te-\overline P)^2+|H|^2\right)dx  \\&\quad   +\int_0^\infty\left( \|\na u\|^2_{L^2}+ \|\na \te\|_{L^2}^2+ \|\na H\|_{L^2}^2 \right)dt\le CC_0^{1/4}, \ea\ee
\be\la{hq8}\ba
&\sup_{t\in (0,\infty)}\left(\si\|\na u \|^2_{L^6}+\si\|H\|^2_{H^2}+\si^2\|\te\|^2_{H^2}\right) \\
&+\int_0^\infty\left( \si\|u_t\|^2_{L^2}+\si\|H_t\|^2_{H^1}+\si^2\|\na \dot u\|_{L^2}^2+\si^2\|\te_t\|^2_{H^1}\right)dt\le C. \ea\ee
Moreover, $(\n,u,\te,H)$ satisfies (\ref{CMHD-1})$_3$ in the weak form, that is, for any test function  $\psi\in
\mathcal{D}(\O\times(-\infty,\infty)),$
\be\la{vu019}\ba &\frac{R}{\ga-1} \int\n_0 \te_0\psi(\cdot,0) dx+\frac{R}{\ga-1}\int_0^\infty\int\n \te \left(\psi_t+u \cdot\na\psi
\right)dxdt\\ & =  \ka \int_0^\infty\int \na\te \cdot\na\psi dxdt+R\int_0^\infty\int  \n\te  \div u \psi dxdt\\&\quad -\int_0^\infty\int \left(\lambda(\div u)^2+2\mu   |\mathbb{D}u|^2 +\nu   |\curl H|^2 \right) \psi dxdt.\ea\ee
\end{theorem}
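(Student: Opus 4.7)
The plan is to obtain the weak solution as a limit of classical solutions to a family of smoothed problems, so that all the estimates in Theorem \ref{th1} are available \emph{a priori} and depend only on the allowed constants. First I would construct an approximating family of initial data $(\rho_0^\eta,u_0^\eta,\te_0^\eta,H_0^\eta)$ with $\eta\in(0,1)$ satisfying \eqref{dt1}--\eqref{dt-s} and the compatibility condition \eqref{dt3}, and such that as $\eta\to 0$,
\[
\rho_0^\eta\to\rho_0 \text{ in } L^p,\quad (u_0^\eta,H_0^\eta)\to(u_0,H_0)\text{ in } H^1,\quad \sqrt{\rho_0^\eta}\,\te_0^\eta\to\sqrt{\rho_0}\,\te_0\text{ in }L^2,\quad C_0^\eta\to C_0.
\]
Concretely, $\rho_0^\eta$ is built by standard mollification plus a shift by $\eta$ and a rescaling to preserve the mean \eqref{rho0-b}; $u_0^\eta$ and $H_0^\eta$ are mollified and then projected onto the appropriate slip/perfect-conductor subspaces via Helmholtz-type decompositions; $\te_0^\eta$ is mollified with a small additive shift that vanishes. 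The compatibility condition \eqref{dt3} is then enforced by modifying $u_0^\eta$ through the solution of a linear Lam\'e system $-\mu\triangle v-(\mu+\la)\na\div v=f^\eta$ with the slip condition \eqref{navier-b}, so that $(u_0^\eta)$ still converges to $u_0$ in $H^1$.

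For $\eta$ small, $C_0^\eta\le\va$, so Theorem \ref{th1} supplies global classical solutions $(\rho^\eta,u^\eta,\te^\eta,H^\eta)$. The crucial point is that every estimate proved along the way to Theorem \ref{th1} --- the uniform bounds \eqref{esti-rho}, the exponential decay \eqref{esti-t}, and the weighted higher-order controls by the Hoff-type time weight $\si(t)=\min\{1,t\}$ --- depends only on $\mu,\la,\nu,\ga,R,\ka,\on,\bt,\O,M$ and not on the extra regularity of $(\rho_0^\eta,u_0^\eta,\te_0^\eta,H_0^\eta)$. Reading off these estimates with the initial data replaced by their $\eta$-approximations yields, uniformly in $\eta$, all the bounds required for \eqref{hq5}--\eqref{hq8}, together with $\si^{1/2}\na\dot u^\eta, \si^{1/2}H_t^\eta\in L^2(L^2)$ and similar controls on $\te^\eta$.

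Next I would pass to a weak limit. The uniform bounds plus the continuity equation give $\rho^\eta\to\rho$ in $C([0,T];L^p)$ for every $p<\infty$ by the standard DiPerna--Lions renormalization argument (one uses only $\rho^\eta\in L^\infty$ and $u^\eta\in L^2(H^1)$). For the velocity and magnetic field, combining $u^\eta\in L^\infty(H^1)$, $H^\eta\in L^\infty(L^2)\cap L^2(H^1)$, together with $\si^{1/2}u_t^\eta,\si^{1/2}H_t^\eta\in L^2(L^2)$, Aubin--Lions gives strong convergence in $L^2_{\rm loc}(\O\times(0,\infty))$, from which one recovers convergence of all nonlinear fluid/Lorentz terms in \eqref{def1}, \eqref{def2} and \eqref{def4}. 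The temperature is handled similarly after multiplying by $\si^2$, so that the $H^2$-in-space and $H^1$-in-time control inherited from \eqref{hq8} provides strong convergence of $\te^\eta$ on $[\tau,T]\times\O$ for every $\tau>0$; the contribution to \eqref{def3} and \eqref{vu019} on $(0,\tau)$ is then estimated by the uniform energy bound and sent to zero with $\tau$.

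The most delicate step is passage to the limit in the quadratic dissipation terms $\la(\div u^\eta)^2$, $2\mu|\mathbb{D}u^\eta|^2$, $\nu|\curl H^\eta|^2$ appearing in \eqref{def3} and \eqref{vu019}. Weak convergence of $\na u^\eta$ and $\na H^\eta$ in $L^2$ is not enough; I would obtain \emph{strong} convergence of $\na u^\eta$ and $\na H^\eta$ in $L^2_{\rm loc}(\O\times(0,\infty))$ by testing the momentum and magnetic equations by $u^\eta-u$ and $H^\eta-H$, using the already established strong $L^2$ convergence of $(u^\eta,H^\eta)$ together with the uniform bound on the effective viscous flux $F$ (see \eqref{flux}) and on $\si\na H^\eta$ in $L^\infty(H^1)$. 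Once this is done, the weak formulations \eqref{def1}--\eqref{def4} and \eqref{vu019} are verified by standard limit passage, the exponential decay \eqref{esti-t} follows from lower semicontinuity of norms under weak-$\ast$ convergence applied to the analogous bound for $(\rho^\eta,u^\eta,\te^\eta,H^\eta)$, and the time continuity statements \eqref{hq1}--\eqref{hq3} follow from the Bochner-space regularity of the weak limit combined with the weak formulation.
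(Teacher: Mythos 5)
Your overall architecture (approximate the data, invoke the global classical theory uniformly, pass to the limit by Aubin--Lions, and handle the quadratic dissipation via the effective viscous flux) is the same as the paper's, but there is a genuine gap at the very first step: the convergence of the initial energy $C_0^\eta\to C_0$, and in particular of the entropy term $\int\rho_0^\eta\Phi(\theta_0^\eta)\,dx$ with $\Phi(\theta)=\theta-\ln\theta-1$. You propose to take ``$\te_0^\eta$ mollified with a small additive shift that vanishes.'' But $\Phi$ is singular at $\theta=0$, the hypothesis only controls $\rho_0\Phi(\theta_0)\in L^1$ (not $\Phi(\theta_0)\in L^1$), and $\theta_0$ is permitted to vanish on the set where $\rho_0>0$; a direct mollification of $\theta_0$ gives no obvious control of $-\int\rho_0^\eta\ln\theta_0^\eta\,dx$ in terms of $-\int\rho_0\ln\theta_0\,dx$, and without $\limsup C_0^{\,\eta}\le C_0+\ve_0/2$ the smallness condition \eqref{co14} for the approximate problems — the linchpin of the whole scheme — is not available. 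The paper circumvents exactly this by approximating the \emph{product} $\rho_0\theta_0$ rather than $\theta_0$ itself, setting $\hat\te_0^{m,\eta}=\big((\n_0\te_0 1_{\O_m})\ast j_{m^{-1}}+\eta\big)/\big((\n_0 1_{\O_m})\ast j_{m^{-1}}+\eta\big)$, which yields the explicit pointwise bound $\hat\n_0^{m,\eta}\Phi(\hat\te_0^{m,\eta})\le\hat\n\,\eta^{-2}\big(j_{m^{-1}}\ast(\n_0(\te_0-1)1_{\O_m})\big)^2$ and permits the dominated-convergence computation proving $\lim_{\eta\to0}\lim_{m\to\infty}\hat C_0^{m,\eta}\le C_0$ (the inequality \eqref{uv9}--\eqref{uv10}). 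You would need to supply an argument of this kind; it is the one nontrivial idea in the construction step.

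Two smaller remarks. First, enforcing the compatibility condition \eqref{dt3} on the approximate data is unnecessary detour: once the approximate density and temperature are bounded below by $\eta>0$ and the data are smooth, the paper applies Proposition \ref{pro2} (global classical solutions for data satisfying \eqref{2.1}), which needs no compatibility condition, and all lower-order bounds are uniform in $\eta$ because they depend only on $\mu,\lambda,\nu,\ga,R,\ka,\on,\bt,\O,M$. Second, your plan to prove strong convergence of $\na u^\eta$ by testing the momentum equation with $u^\eta-u$ is delicate (the limit equation is not yet known at that stage, and there is no uniform $H^2$ bound on $u^\eta$ since $\|\na\rho_0^\eta\|$ is not uniformly controlled); the paper instead extracts compactness of $F^{m_j}$ and $\o^{m_j}$ in $C([\tau,T];L^{\tilde p})$ from their uniform $H^1$ bounds for $t\ge\tau$ and recovers $\na u$ through $\div u=(2\mu+\lambda)^{-1}(F+P-\bp+(|H|^2-\overline{|H|^2})/2)$ together with \eqref{tdu1}, which avoids any bootstrapping through the limit equation.
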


A few remarks are in order:
\begin{remark}\label{rem:1} From Sobolev's inequality and \eqref{esti-uh}$_1$ with $q>3$, it follows that
\begin{equation}\label{rem-1}
\displaystyle \rho, \nabla \rho \in C(\bar\Omega\times [0,T]).
\end{equation}
Moreover, it also follows from \eqref{esti-uh}$_{2-7}$  that
\begin{equation}\label{rem-2}
\displaystyle  u, \theta, H, \nabla u, \nabla\theta, \nabla H, \nabla^2 u, \nabla\theta, \nabla^2 H, u_t, \theta_t, H_t \in C(\bar\Omega\times [\tau,T]),
\end{equation}
due to the following simple fact that $$L^2(\tau,T;H^1)\cap H^1(\tau,T;H^{-1})\hookrightarrow C([\tau,T];L^2).$$
Finally, by \eqref{rem-2} \eqref{CMHD}$_1,$ we have
\begin{equation}\notag
\displaystyle \rho_t=-u\cdot \nabla \rho-\rho\div u\in C(\bar\Omega\times [\tau,T]).
\end{equation}
which together with \eqref{rem-1}-\eqref{rem-2} implies that the solution obtained in Theorem \ref{th1} is a classical one away from the initial time.
\end{remark}

\begin{remark}\label{rem:2}
When we consider the general slip boundary \eqref{navi1} for the velocity field, and assume that the $3\times 3$ symmetric matrix $A$ is smooth and positive semi-definite, and even if the restriction on $A$ is relaxed to $A\in W^{2,6}$ and the negative eigenvalues of $A$ (if exist) are small enough, in particular, set $A = B-2D(n)$, where $B\in W^{2,6}$ is a positive semi-definite $3\times 3$ symmetric matrix, Theorem \ref{th1} and \ref{th2} will still hold provided that $2\mu+3\lambda>0$. This can be achieved by a similar way as in \cite{cl2019}.
%When $H = 0$, i.e., there is no electromagnetic field effect, the full compressible MHD system \eqref{CMHD} turns to be the full compressible Navier-Stokes equations, and Theorem \ref{th1} is the same as the result of Li et al. \cite{llw2022}. Roughly speaking, we generalize the results of \cite{llw2022} to the full compressible MHD equations.
\end{remark}

% \begin{remark} For the magnetic field, we also can subject to the Dirichlet condition
% \begin{equation*}
% \displaystyle  H =0,\,\,\,\text{on} \,\,\,\partial\Omega,
% \end{equation*}
% or the insulating boundary condition (see \cite{gll2006})
% \begin{equation*}
% \displaystyle  H \times n =0,\,\,\,\text{on} \,\,\,\partial\Omega.
% \end{equation*}
% After some slight modification of the proof in this paper, Theorem \ref{th1} and \ref{th2} will still hold.
% \end{remark}

%\begin{remark}\label{rem:3} When the initial state contains vacuum, Theorem \ref{th2} implies that the oscillation of the density will grow unboundedly with an exponential rate, which is somewhat surprisingly compared with the Cauchy problem  \cite{lxz2013} where there is no results concerning the growth rate of the gradient of the density. \end{remark}

\begin{remark}\label{rem:4}
In Theorem \ref{th1}, we get the global existence and uniqueness of classical solutions with vacuum with the compatibility condition on the velocity (\ref{dt3}) as in \cite{chs2020-mhd}, which is much weaker than those in \cite{lz2021-fmhd,lz2022-fmhd} where not only (\ref{dt3}) but also the following compatibility condition on the temperature
  \be\la{co1} \ka\Delta \te_0+2\mu|\mathbb{D}u_0|^2+\lambda (\div u_0)^2+\nu|\curl H_0|^2=\n_0^{\frac12}\tilde{g}, \quad \tilde{g} \in L^2\ee
  is needed. This reveals that the compatibility condition on the temperature (\ref{co1}) is not necessary for establishing the classical solutions with vacuum to the full MHD system, which is similar as the barotropic Navier-Stokes equation \cite{HLX2012, cl2019}.
\end{remark}

We now sketch the main idea used in the proof of Theorem \ref{th1}. Similar to the argument in \cite{llw2022,chs2020-mhd}, the key issue in our proof is to derive the time-independent upper bound of the density in Proposition \ref{pr1}, which can make the standard local classical solutions (see Lemma \ref{lem-local}) extend globally in time. It is worth pointing out that the effective viscous flux $F$ and the vorticity $\omega$ (see \eqref{flux} for the definition) play an important role in the proof.
However, the basic energy estimate can not be derived directly from the system \eqref{CMHD}-\eqref{boundary} as that in \cite{chs2020-mhd} due to the slip boundary condition and non-conservation of temperature equation. More precisely, the basic energy equality is expressed as
\begin{equation}\label{11a}
\begin{aligned}
&\displaystyle  E'(t) + \int\left(\frac{\lambda(\div u)^{2}+2\mu|\mathbb{D}u|^2+\nu|\curl H|^{2}}{\theta}+\frac{\kappa|\nabla\theta|^2}{\theta^2}\right) dx \\
\displaystyle  =&-\mu\int\left(2(\div u)^{2}+|\omega|^2-2|\mathbb{D}u|^2\right)dx,
\end{aligned}
\end{equation}
where the basic energy $E(t)$ is defined by
\begin{equation*}\label{b-energy}
  E(t)=\int \Big(\frac{1}{2}\rho |u|^{2}+ R G(\rho)+\frac{R}{\gamma-1}\rho\Phi(\theta)+\frac{1}{2}|H|^{2}\Big)dx,
\end{equation*}
with $G(\rho),\Phi(\theta)$ is defined in \eqref{def-g}.
Note that the right-hand term in \eqref{11a} is sign-undetermined, thus it seems difficult to obtain directly the usual standard energy estimate $E(t)\le CC_0$ in \cite{chs2020-mhd}.
To deal with this difficulty, we first assume that  $A_2(T)$ (see \eqref{As2}) a priori satisfies $A_2(T)\le 2C_0^{1/4}$ and  obtain the ``weaker" basic energy estimate (see also \eqref{basic1}):
\be\la{weaken} E(t) \le CC_0^{1/4},\ee
which will bring us some essential difficulties to obtain all the a priori estimates (see Proposition \ref{pr1}). Fortunately, we get the uniformly positive lower and upper bounds on the average of the pressure $\bp$ (see \eqref{p-b}) by both ``weaker" basic energy estimate \eqref{weaken} and Jensen's inequality (see \eqref{jen}).
Hence we can replace $\bar \te$ by $\bar P $ to overcome the difficulties caused by ``weaker" basic energy estimate. For the magnetic field, with the help of the magnetic diffusivity structure, we obtain the estimates of $\curl H$ and $\curl^2 H$ to control $\nabla H$ and $\nabla^2 H$, which can be used to deal with the strong coupling and interplay interaction between the fluid motion and the magnetic field, such as the magnetic force $(\nabla \times H)\times H$ and the convection term $\nabla \times (u\times H)$.
Next, in order to estimate $A_2(T)$, we adopt the ideas due to \cite{hl2018,cl2019} to  estimate the material derivatives $\dot u$ and $\dot\te$ (see Lemma \ref{lem-a3a4}). In addition, the slip boundary also makes the time-independent estimates of $A_3(T)$ and $A_4(T)$ more difficult. As in \cite{cl2019}, the observation $u\cdot\nabla u\cdot n=-u\cdot\nabla n\cdot u$ plays an important role to estimate the integrals on the boundary $\partial\Omega$ and combining this with the Poincar\'e-type inequality \eqref{kk} yields the estimate of $\dot{u}$ and $\dot{\te}$. Moreover, the spatial $L^2$-norm of $R\te-\overline P$ can be bounded precisely by the combination of the initial energy and the spatial $L^2$-norm of $\na \te$ (see \eqref{lem-s2}),  which lies in the central position in the process of estimating $A_2(T)$. Finally, with the aid of the uniform bound of $\overline{P}$, we rewrite the continuity equation into \eqref{rho1} and apply the Gr\"{o}nwall-type inequality (see Lemma \ref{lem-z}) and use the estimates on $R\te-\bp$, $F$ and $H$ to get the upper bound of the density.
%In order to estimate the derivatives of the solutions, we give the similar Beale-Kato-Majda-type inequality with the respect to the slip boundary condition to prove the important estimates on the gradients of the density and velocity (see \eqref{x1b2}).

The rest of the paper is organized as follows.
In Section \ref{se2}, we list some elementary inequalities and derive the elementary energy estimates that we use intensively in the paper.
Section \ref{se3} and Section \ref{se4} are devoted to deriving the necessary time-independent lower-order estimates and time-dependent higher-order estimates, which can guarantee the local classical solution to be a global classical one.
In Section \ref{se5}, the proof of Theorem \ref{th1}-\ref{th3} will be completed.

\section{Preliminaries}\label{se2}
In this section, we list some known facts and elementary inequalities that are used extensively in this paper. We also derive the elementary energy estimates for the system \eqref{CMHD}-\eqref{boundary} and some key a priori estimates.

We begin with the following well-known Gagliardo-Nirenberg-Sobolev-type inequality (see \cite{Nir1959}).
\begin{lemma}\label{lem-gn}
Assume that $\Omega$ is a bounded Lipschitz domain in $\r^3$. For  $p\in [2,6],\,q\in(1,\infty), $ and
$ r\in  (3,\infty),$ there exist positive constants
$C,\,\,C_1,\,\,C_2>0$ which may depend  on $p$, $q$, $r$, and $\Omega$ such that for any  $f\in H^1({\O }) $, $g\in  L^q(\O )\cap W^{1,r}(\O), $ and $\varphi,\psi\in H^2$,
\be\label{g1}\|f\|_{L^p(\O)}\le C \|f\|_{L^2}^{\frac{6-p}{2p}}\|\na
f\|_{L^2}^{\frac{3p-6}{2p}}+C_1\|f\|_{L^2} ,\ee
\be\label{g2}\|g\|_{C\left(\ol{\O }\right)} \le C
\|g\|_{L^q}^{{\frac{q(r-3)}{3r+q(r-3}})}\|\na g\|_{L^r}^{\frac{3r}{3r+q(r-3}} + C_2\|g\|_{L^2}.
\ee
\be\la{hs} \|\varphi\psi\|_{H^2}\le C \|\varphi\|_{H^2}\|\psi\|_{H^2}.\ee
Moreover, if $f\cdot n|_{\p \O}=0$ or $\overline{f}=0$, one has $C_1=0$.
Similarly, if $g\cdot n|_{\p \O}=0$ or $\overline{g}=0$, it holds  $C_2=0$.
%Moreover, if either $f\cdot n|_{\partial\Omega}=0$ or $\int_\Omega fdx=0$, either $g\cdot n|_{\partial\Omega}=0$ or $\int_\Omega gdx=0$, then the constant $C_2=0.$
\end{lemma}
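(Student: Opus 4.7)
The plan is to prove each of the three inequalities by combining a bounded extension operator $E\colon H^1(\Omega)\to H^1(\mathbb{R}^3)$ (available since $\partial\Omega$ is smooth) with standard interpolation and Sobolev embeddings, and then to handle the vanishing of $C_1,C_2$ separately via the appropriate Poincar\'e-type inequality on $\Omega$.

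For \eqref{g1}, I would first fix the interpolation exponent via $1/p=(1-\lambda)/2+\lambda/6$, giving $\lambda=(3p-6)/(2p)\in[0,1]$ for $p\in[2,6]$. H\"older's inequality then yields $\|f\|_{L^p}\le\|f\|_{L^2}^{(6-p)/(2p)}\|f\|_{L^6}^{(3p-6)/(2p)}$, and the Sobolev embedding on the bounded domain gives $\|f\|_{L^6}\le C(\|\nabla f\|_{L^2}+\|f\|_{L^2})$. Applying subadditivity $(a+b)^\lambda\le a^\lambda+b^\lambda$ (valid since $\lambda\in[0,1]$) together with the exponent identity $(6-p)/(2p)+(3p-6)/(2p)=1$ then produces \eqref{g1}, the leftover term supplying $C_1\|f\|_{L^2}$. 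Under either hypothesis $\bar f=0$ or $f\cdot n|_{\partial\Omega}=0$, I would replace the Sobolev estimate by the sharper $\|f\|_{L^6}\le C\|\nabla f\|_{L^2}$ furnished by the appropriate Poincar\'e-type inequality, thereby eliminating the $C_1$ term.

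For \eqref{g2}, the exponent $\beta\triangleq 3r/(3r+q(r-3))$ is dictated by scaling: the bound $\|g\|_{L^\infty}\le C\|\nabla g\|_{L^r}^{\beta}\|g\|_{L^q}^{1-\beta}$ must be consistent under $g(\cdot)\mapsto g(\mu\cdot)$, which forces $0=(1-3/r)\beta-(3/q)(1-\beta)$ and pins down $\beta$ as claimed. I would establish this on $\mathbb{R}^3$ by combining Morrey's embedding $W^{1,r}\hookrightarrow C^{0,1-3/r}$ ($r>3$) with a scaling/averaging argument, then transfer to $\Omega$ via extension; the resulting correction $C_2\|g\|_{L^2}$ again vanishes under the Poincar\'e hypothesis. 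For \eqref{hs}, I would apply Leibniz to get $\nabla^2(\varphi\psi)=\psi\,\nabla^2\varphi+2\,\nabla\varphi\cdot\nabla\psi+\varphi\,\nabla^2\psi$ and bound each term by H\"older combined with the 3D embeddings $H^2\hookrightarrow L^\infty$ and $H^2\hookrightarrow W^{1,4}$; the worst cross term gives $\|\nabla\varphi\cdot\nabla\psi\|_{L^2}\le\|\nabla\varphi\|_{L^4}\|\nabla\psi\|_{L^4}\le C\|\varphi\|_{H^2}\|\psi\|_{H^2}$.

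The only delicate point will be justifying carefully that the Poincar\'e-type inequality $\|f\|_{L^2}\le C\|\nabla f\|_{L^2}$ holds under the stated vanishing hypotheses on a general simply connected bounded Lipschitz domain: for mean-zero scalars this is classical, while for vector fields with vanishing normal trace it is more subtle and relies on the Helmholtz decomposition. Otherwise the argument will be entirely routine, as \eqref{g1}--\eqref{hs} are classical Gagliardo--Nirenberg inequalities in the spirit of \cite{Nir1959}.
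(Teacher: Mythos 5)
The paper does not actually prove this lemma: it is quoted as a known Gagliardo--Nirenberg--Sobolev-type result with a citation to Nirenberg's 1959 paper, so there is no in-paper argument to compare against. Your sketch is the standard derivation and is essentially correct: the exponent bookkeeping for \eqref{g1} ($\lambda=(3p-6)/(2p)$, $1-\lambda=(6-p)/(2p)$) is right, the subadditivity step $(a+b)^{\lambda}\le a^{\lambda}+b^{\lambda}$ legitimately produces the $C_1\|f\|_{L^2}$ remainder, the scaling computation pinning down $\beta=3r/(3r+q(r-3))$ in \eqref{g2} is correct, and the Leibniz--plus--embedding argument for \eqref{hs} is exactly how one shows $H^2(\Omega)$ is a Banach algebra in three dimensions. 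Two small remarks. First, for the vanishing of $C_1,C_2$ under $f\cdot n|_{\partial\Omega}=0$, you do not need the Helmholtz decomposition: the Poincar\'e-type inequality $\|f\|_{L^2}\le C\|\nabla f\|_{L^2}$ for vector fields with vanishing normal trace follows from the usual compactness--contradiction argument, since the only constant vector field tangent to the entire boundary of a bounded domain is zero (for any direction $e$ the point of $\overline{\Omega}$ maximizing $x\cdot e$ has $n\cdot e\neq 0$). Second, for $C_2=0$ in \eqref{g2} you should make explicit that you first prove the inhomogeneous bound $\|g\|_{L^\infty}\le C\|g\|_{L^q}^{1-\beta}\|g\|_{W^{1,r}}^{\beta}$ and then replace $\|g\|_{W^{1,r}}$ by $C\|\nabla g\|_{L^r}$ using the $L^r$ Poincar\'e inequality under the vanishing hypothesis; with that said, the argument closes. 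These are presentational points, not gaps.
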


Next, the following Gr\"{o}nwall-type inequality will be used to get the uniform (in time) upper bound of the density $\n$, whose proof can be found in \cite[Lemma 2.5]{hl2018}.
\begin{lemma}\label{lem-z}
  Let the function $y\in W^{1,1}(0,T)$ satisfy
  \be \notag
  y'(t)+\al(t) y(t)\le  g(t)\mbox{  on  } [0,T] ,\quad y(0)=y_0,
  \ee
  where $0<\al_0\le \al(t)$ for any $t\in[0,T]$ and  $ g \in L^p(0,T_1)\cap L^q(T_1,T)$  for some $p,\,q\ge 1, $  $T_1\in [0,T].$ Then it has
  \be \notag%\la{2.34}
  \sup_{0\le t\le T} y(t) \le |y_0| + (1+\al_0^{-1}) \left(\|g\|_{L^p(0,T_1)} + \|g\|_{L^q(T_1,T)}\right).
  \ee
\end{lemma}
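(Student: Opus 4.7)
The plan is to reduce the differential inequality to an integrated form by the classical integrating-factor trick, then exploit the uniform lower bound $\alpha(t)\ge \alpha_0>0$ to turn the weighted exponential kernel into something integrable uniformly in $t$, and finally use H\"older to pair this kernel with $g$ in $L^p$ (on $[0,T_1]$) and $L^q$ (on $[T_1,T]$).

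First I would set $\mu(t)=\exp(\int_0^t\alpha(s)\,ds)$, multiply the inequality $y'+\alpha y\le g$ by $\mu(t)$, note that the left-hand side becomes $(\mu y)'$, and integrate from $0$ to $t$. This yields the representation
\[
y(t)\le y_0\,e^{-\int_0^t\alpha(s)\,ds}+\int_0^t g(s)\,e^{-\int_s^t\alpha(\tau)\,d\tau}\,ds.
\]
Using $\alpha\ge\alpha_0$ I bound the first term by $|y_0|$ and replace the kernel in the second term by $e^{-\alpha_0(t-s)}$. Thus it remains to estimate $I(t):=\int_0^t|g(s)|\,e^{-\alpha_0(t-s)}\,ds$ by $(1+\alpha_0^{-1})(\|g\|_{L^p(0,T_1)}+\|g\|_{L^q(T_1,T)})$ uniformly in $t\in[0,T]$.

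Next I split into two cases according to whether $t\le T_1$ or $t>T_1$. For $t\le T_1$, H\"older's inequality with conjugate exponents $p,p'$ gives
\[
I(t)\le\|g\|_{L^p(0,T_1)}\Big(\int_0^t e^{-p'\alpha_0(t-s)}\,ds\Big)^{1/p'}\le (p'\alpha_0)^{-1/p'}\|g\|_{L^p(0,T_1)}.
\]
For $t>T_1$ I decompose $I(t)=\int_0^{T_1}+\int_{T_1}^t$; the first piece is controlled by pulling out $e^{-\alpha_0(t-T_1)}\le 1$ and applying the previous estimate on $[0,T_1]$, while the second piece is handled identically but with $q,q'$ on $[T_1,T]$. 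Collecting everything, $I(t)\le (p'\alpha_0)^{-1/p'}\|g\|_{L^p(0,T_1)}+(q'\alpha_0)^{-1/q'}\|g\|_{L^q(T_1,T)}$.

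The only step requiring a short observation (and the main technical nuisance) is the universal constant. Since $(r')^{-1/r'}\le 1$ for all $r'\ge 1$, and $\alpha_0^{-1/r'}\le\max(1,\alpha_0^{-1})\le 1+\alpha_0^{-1}$, it follows that $(r'\alpha_0)^{-1/r'}\le 1+\alpha_0^{-1}$ uniformly in $r\in[1,\infty]$. Substituting this for both $r=p$ and $r=q$, combining with the bound $|y_0|$ coming from the homogeneous part, and taking the supremum in $t\in[0,T]$ yields the asserted estimate.
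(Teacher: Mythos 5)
Your argument is correct and complete: the integrating-factor representation, the kernel bound $e^{-\int_s^t\alpha}\le e^{-\alpha_0(t-s)}$, the two-case H\"older estimate, and the uniform bound $(r'\alpha_0)^{-1/r'}\le 1+\alpha_0^{-1}$ all check out (including the $p=1$ or $q=1$ endpoint, where H\"older degenerates to the $L^1$--$L^\infty$ pairing). The paper itself does not prove this lemma but only cites \cite[Lemma 2.5]{hl2018}, and your proof is precisely the standard argument behind that reference, so there is nothing to reconcile.
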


% Next, similar to \cite{BKM1984,hlx2011,hl2013}, we need a Beale-Kato-Majda type inequality with respect to the slip boundary condition \eqref{navier-b} which is given in \cite{cl2019}.
% \begin{lemma}\label{lem-bkm}
% For $3<q<\infty$, assume that $u\cdot n=0$ and $\curl u\times n=0$ on $\partial\Omega$, $ u\in W^{2,q}$, then there is a constant  $C=C(q,\Omega)$ such that  the following estimate holds
% \bnn\ba
% \|\na u\|_{L^\infty}\le C\left(\|{\rm div}u\|_{L^\infty}+\|\curl u\|_{L^\infty} \right)\ln(e+\|\na^2u\|_{L^q})+C\|\na u\|_{L^2} +C .
% \ea\enn
% \end{lemma}

Next, we introduce the following conclusion (see \cite[Theorem III.3.1]{Galdi1994}) to derive the exponential decay property of the solutions.
\begin{lemma} \label{lem-divf}
For the problem
\begin{equation}\label{divf}
\begin{cases}
{\rm div}v=f \,\,\,\,  in \,\,\Omega, \\
v=0\,\,\,\text{on}\,\,\,{\partial\Omega}.
\end{cases}
\end{equation}
There exists a linear operator operator $\mathcal{B} = [\mathcal{B}_1 , \mathcal{B}_2 , \mathcal{B}_3 ]$ enjoying
the properties:

1) $$\mathcal{B}:\{f\in L^p(\O)|\int_\O fdx=0\}\mapsto (W^{1,p}_0(\O))^3$$ is a bounded linear operator, that is,
\be \notag\|\mathcal{B}[f]\|_{W^{1,p}_0(\O)}\le C(p)\|f\|_{L^p(\O)}, \mbox{ for any }p\in (1,\infty),\ee

2) The function $v = \mathcal{B}[f]$ solve the problem \eqref{divf}.

3) if $f$ can be written in the form $f = \div  g$ for a certain $g\in L^r(\O), g\cdot n|_{\pa\O}=0,$  then
\be \notag\|\mathcal{B}[f]\|_{L^{r}(\O)}\le C(r)\|g\|_{L^r(\O)}, \mbox{ for any }r  \in (1,\infty).\ee
\end{lemma}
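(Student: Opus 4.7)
The statement is the classical Bogovskii (or right-inverse of divergence) lemma, and the plan is to follow the standard construction originating with Bogovskii and developed in detail in Galdi's monograph, adapted to a bounded Lipschitz domain. The strategy has two layers: first build $\mathcal{B}$ on a star-shaped-with-respect-to-a-ball subdomain, and then glue such local operators together by a partition of unity subordinate to a covering of $\Omega$ by star-shaped pieces (which exists because $\partial\Omega$ is Lipschitz).

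On a domain $G$ star-shaped with respect to a ball $B_0 \subset G$, I would fix a cut-off $\omega \in C_c^\infty(B_0)$ with $\int \omega = 1$ and define, for $f \in C_c^\infty(G)$ with $\int_G f \,dx = 0$,
\begin{equation*}
\mathcal{B}[f](x) = \int_G f(y)\,N(x,y)\,dy,\qquad N(x,y) = \frac{x-y}{|x-y|^3}\int_{|x-y|}^{\infty} \omega\!\left(y + r\tfrac{x-y}{|x-y|}\right) r^2 \,dr.
\end{equation*}
A direct computation using the mean-zero condition on $f$ shows $\operatorname{div}\mathcal{B}[f] = f$, and the support of $\omega$ in the ball $B_0 \subset G$ guarantees $\mathcal{B}[f] \in C_c^\infty(G)$, so in particular the trace on $\partial G$ vanishes. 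To prove the $W^{1,p}$ estimate of part 1, I would differentiate the kernel and recognize $\partial_i \mathcal{B}_j$ as a singular integral operator of Calderón--Zygmund type plus a bounded remainder, so that the $L^p$-boundedness for $p \in (1,\infty)$ follows from standard singular-integral theory; the full $W^{1,p}_0$ norm then follows by Poincaré. A density argument extends $\mathcal{B}$ to all of $L^p_0(G)$.

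For part 3, when $f = \operatorname{div} g$ with $g \cdot n = 0$ on $\partial G$, the key step is an integration by parts inside the definition of $\mathcal{B}[f]$: the derivative moves from $f$ to the kernel $N(x,y)$, and the boundary term disappears precisely because $g \cdot n = 0$. Since the $y$-derivative of $N(x,y)$ is again a Calderón--Zygmund kernel (of the same type, no new singularity is introduced), this rewrites $\mathcal{B}[\operatorname{div} g](x)$ as a singular integral operator acting on $g$ directly, yielding the $L^r \to L^r$ bound for $r \in (1,\infty)$ without losing a derivative.

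To pass from star-shaped $G$ to a general bounded Lipschitz $\Omega$, I would use the fact (stated in Galdi III.3) that such $\Omega$ is a finite union of Lipschitz subdomains $\Omega_k$ each star-shaped with respect to some ball. Pick a partition of unity $\{\chi_k\}$ subordinate to this covering; then for $f \in L^p_0(\Omega)$ decompose $f = \sum_k f_k$ with $f_k = \chi_k f - c_k$ where the constants $c_k$ are chosen so that $\int_{\Omega_k} f_k = 0$ and $\sum_k c_k = 0$ inductively. Applying the local Bogovskii operators and summing produces the desired global $\mathcal{B}[f]$; the $W^{1,p}_0$ and $L^r$ bounds add up with constants depending only on the covering, hence on $\Omega$, $p$, $r$. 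The main technical obstacle is the bookkeeping needed to ensure the mean-zero conditions on each piece after the partition-of-unity decomposition, and the verification that the kernel derivative in part 3 is genuinely of Calderón--Zygmund type (rather than having a worse singularity at $x=y$); both points are handled by the explicit form of $N(x,y)$ above. Since the statement is cited as a known result from \cite{Galdi1994}, no new proof is needed beyond this sketch.
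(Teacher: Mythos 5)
Your sketch is correct and coincides with the proof in the cited reference \cite{Galdi1994} (Theorem III.3.1), which is all the paper itself offers: the Bogovskii kernel on star-shaped subdomains, Calder\'on--Zygmund bounds for the gradient, integration by parts against the kernel for the $f=\div g$ case, and a partition-of-unity gluing with mean-zero corrections. Nothing further is needed.
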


Consider the Lam\'{e}'s system
\be\label{lame1}\begin{cases}
-\mu\Delta u-(\lambda+\mu)\nabla\div u=f \,\, &in~ \Omega, \\
u\cdot n=0\,\,\text{and}\,\,\curl u\times n=0\,\,&on\,\,\partial\Omega,
\end{cases} \ee
Then, the following estimate is standard (see \cite{adn1964}).
\begin{lemma}  \label{lem-lame}
For the Lam\'{e}'s equation \eqref{lame1}, one has

(1) If $f\in W^{k,q}$ for some $q\in(1,\infty),\,\, k\geq0,$ then there exists a unique solution $u\in W^{k+2,q},$ such that
$$\|u\|_{W^{k+2,q}}\leq C(\|f\|_{W^{k,q}}+\|u\|_{L^q});$$

(2) If $f=\nabla g$ and $g\in W^{k,q}$ for some $q\geq1,\,\,k\geq0,$ then there exists a unique weak solution $u\in W^{k+1,q},$ such that
$$\|u\|_{W^{k+1,q}}\leq C(\|g\|_{W^{k,q}}+\|u\|_{L^q}).$$
\end{lemma}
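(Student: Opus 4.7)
The plan is to recognize this as a direct application of the Agmon–Douglis–Nirenberg (ADN) elliptic regularity theory (cited as \cite{adn1964}) for second-order elliptic systems, specialized to the Lamé operator under the Navier-type slip boundary conditions. Once one verifies that the Lamé system is uniformly elliptic and that the boundary operators satisfy the Lopatinski–Shapiro complementing condition, both parts of the lemma follow from the general ADN a priori estimates; the only work specific to this problem is the verification of these structural hypotheses and, for part (2), a short duality argument to exploit the gradient structure of the forcing.

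First, I would verify ellipticity: the principal symbol of $Lu = -\mu\Delta u - (\lambda+\mu)\nabla\div u$ is $\mu|\xi|^2 I_3 + (\lambda+\mu)\xi\xi^\top$, which is positive definite whenever $\mu > 0$ and $\lambda + 2\mu > 0$, so $L$ is strongly elliptic in the Legendre–Hadamard sense. Next, working in local coordinates at a boundary point with $n = e_3$, the condition $u\cdot n = 0$ becomes a Dirichlet-type condition on the normal component $u^3$, while $\curl u \times n = 0$ imposes that the tangential components of $\curl u$ vanish, a first-order condition on $u^1, u^2$. To check that these cover $L$ in the Lopatinski–Shapiro sense, one freezes coefficients, tangentially Fourier-transforms, and solves the resulting system of ODEs on the half-line $\{x_3 > 0\}$: the requirement is that the only decaying solution with homogeneous boundary data is identically zero. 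This is a direct ODE calculation well-documented for Navier-slip boundary conditions.

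For part (1), with ellipticity and the complementing condition in hand, the ADN theorem yields existence of a $W^{k+2,q}$ solution (unique modulo a finite-dimensional kernel of rigid motions compatible with $u\cdot n = 0$ and $\curl u \times n = 0$) for any $f \in W^{k,q}$, together with the estimate $\|u\|_{W^{k+2,q}} \le C(\|f\|_{W^{k,q}} + \|u\|_{L^q})$. The lower-order $L^q$ term absorbs the kernel contribution. Higher $k$ is obtained by differentiating the system tangentially inside $\Omega$ and near $\partial\Omega$, and iterating the base case $k = 0$.

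For part (2), the gradient structure $f = \nabla g$ should buy one additional derivative. The cleanest route is duality against the self-adjoint operator $L$: given $\varphi \in C_c^\infty(\Omega)$, let $\psi$ solve $L\psi = \varphi$ with the same slip boundary conditions. Since $\psi\cdot n = 0$, $\curl\psi \times n = 0$, $u\cdot n = 0$, and $\curl u \times n = 0$ all hold, integration by parts yields no boundary contributions, so
\[
\langle u,\varphi\rangle = \langle Lu,\psi\rangle = \langle \nabla g,\psi\rangle = -\langle g,\div\psi\rangle.
\]
Bounding $\|\div\psi\|_{L^{q'}} \le \|\psi\|_{W^{2,q'}} \le C\|\varphi\|_{L^{q'}}$ via part (1) gives $\|u\|_{L^q} \lesssim \|g\|_{L^q}$, and repeating after differentiating the equation (commuting derivatives with $L$ up to lower-order boundary corrections) yields $\|u\|_{W^{k+1,q}} \le C(\|g\|_{W^{k,q}} + \|u\|_{L^q})$. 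The main obstacle in this whole program is the explicit Lopatinski–Shapiro verification for the pair of boundary operators $(u\cdot n,\ (\curl u\times n)_\tau)$ on $L$: this is concrete but slightly delicate because the two boundary conditions are of different orders (zeroth and first), so one must carefully count indices in the ADN framework to obtain the correct regularity gain of two derivatives in part (1) and one derivative in part (2).
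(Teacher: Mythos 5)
Your proposal is correct and takes essentially the same route as the paper, which offers no proof of its own and simply cites Agmon--Douglis--Nirenberg for this standard estimate: strong ellipticity of the Lam\'{e} symbol (valid since $\mu>0$ and $2\mu+3\lambda\ge 0$ force $\lambda+2\mu>0$), the complementing condition for the slip boundary operators, and the one-derivative loss for divergence-form data are exactly what that citation is meant to deliver. The only loose end is that your duality step in part (2) directly yields the $L^q$ bound on $u$ while the passage to the full $W^{k+1,q}$ estimate is only sketched, but this is the standard bootstrapping and does not affect the validity of the argument.
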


The following div-curl inequalities (see \cite[heorem 3.2]{Von1992} and \cite[Propositions 2.6-2.9]{Aramaki2014}) are used to get the estimates on the spatial derivatives of the vorticity and magnetic field.
\begin{lemma}   \label{lem-vn}
Let $k\geq0$ be a integer, $1<q<+\infty$, and assume that $\Omega$ is a simply connected bounded domain in $\r^3$ with $C^{k+1,1}$ boundary $\partial\Omega$. Then for $v\in W^{k+1,q}$ with $v\cdot n=0$ on $\partial\Omega$, it holds that
\begin{align}\label{div-curl1}
\|v\|_{W^{k+1,q}}\leq C(\|\div v\|_{W^{k,q}}+\|\curl v\|_{W^{k,q}}).
\end{align}
%In particular, for $k=0$, we have $$\|\nabla v\|_{L^q}\leq C(\|\div v\|_{L^q}+\|\curl v\|_{L^q}).$$
\end{lemma}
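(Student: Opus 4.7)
The plan is to establish this div-curl estimate through a Hodge-type decomposition combined with elliptic regularity, exploiting simple connectedness of $\O$ to remove any lower-order correction term. For the base case $k=0$, I would split $v$ into a gradient part and a divergence-free part, estimate each through a suitable elliptic boundary value problem, and then iterate for general $k$.

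For $k=0$, write $v = \na\phi + w$, where $\phi$ is the mean-zero solution of the Neumann problem
\[
\Delta \phi = \div v \text{ in } \O, \qquad \p\phi/\p n = 0 \text{ on } \p\O.
\]
This problem is solvable because $\int_\O \div v \, dx = \int_{\p\O} v\cdot n\, dS = 0$ under the assumed boundary condition $v\cdot n = 0$, and standard $W^{2,q}$ Neumann regularity (available for $\p\O\in C^{1,1}$) yields $\|\na\phi\|_{W^{1,q}} \le C\|\div v\|_{L^q}$. The complementary field $w := v - \na\phi$ is then divergence-free, satisfies $w\cdot n = 0$ on $\p\O$, and has $\curl w = \curl v$.

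Next, I would bound $w$ by treating the first-order system
\[
\div w = 0, \quad \curl w = \curl v \text{ in } \O, \qquad w\cdot n = 0 \text{ on } \p\O
\]
as an elliptic boundary value problem, equivalently by introducing a vector potential $\psi$ with $w=\curl\psi$ in a Coulomb gauge so that $-\Delta\psi=\curl v$ holds. Agmon-Douglis-Nirenberg theory for this system delivers the a priori bound $\|w\|_{W^{1,q}} \le C(\|\curl v\|_{L^q} + \|w\|_{L^q})$. A compactness-uniqueness argument then removes the lower-order term: the kernel of the homogeneous problem consists of vector fields $h$ with $\div h = \curl h = 0$ and $h\cdot n = 0$, so $h = \na p$ with $\Delta p = 0$ and $\p p/\p n = 0$, which forces $p$ constant and $h \equiv 0$ precisely because $\O$ is simply connected. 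Combining the two estimates proves the inequality for $k=0$.

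For general $k\ge 1$, I would induct via higher elliptic regularity: with $\div v,\,\curl v\in W^{k,q}$ and $\p\O\in C^{k+1,1}$, the Neumann problem for $\phi$ improves to $W^{k+2,q}$ and the ADN bound for $w$ extends to the $W^{k+1,q}$ level, which together yield the stated estimate. The main obstacle is the rigorous verification of the Lopatinski-Shapiro complementing condition for the relevant first-order (or equivalent vector Laplace) elliptic system, together with the compactness-uniqueness step that kills the $\|w\|_{L^q}$ term. This is where simple connectedness of $\O$ is used decisively: without it, a finite-dimensional space of harmonic Neumann fields would leave a non-removable lower-order correction on the right-hand side. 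This topological input, carried out via Hodge/de Rham theory on manifolds with boundary, is precisely the content of the cited works \cite{Von1992, Aramaki2014}.
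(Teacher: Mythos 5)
The paper does not prove this lemma at all: it is quoted directly from the literature (von Wahl \cite{Von1992} and Aramaki \cite{Aramaki2014}), so there is no in-paper argument to compare against. Your reconstruction is, however, essentially the standard proof underlying those references, and I see no genuine gap in it. The Neumann decomposition $v=\na\phi+w$ is well posed because $v\cdot n=0$ supplies the compatibility condition $\int_\O \div v\,dx=0$, the $W^{2,q}$ Neumann estimate handles the gradient part, and the a priori ADN bound with lower-order term plus the compactness--uniqueness argument handles $w$. Your identification of where simple connectedness enters is exactly right: the kernel of the homogeneous div-curl system with $h\cdot n=0$ is the space of harmonic Neumann fields, whose dimension is the first Betti number of $\O$, and it is precisely its vanishing that lets you drop the $\|w\|_{L^q}$ term (compare with Lemma 2.6 of the paper, where the tangential condition $v\times n=0$ is governed by a different topological invariant and the $\|v\|_{L^q}$ term cannot be removed). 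The two technical points you defer --- verification of the complementing condition for the first-order system (or the equivalent vector Laplacian with mixed boundary conditions), and the construction of a vector potential for the divergence-free part, which uses that the flux of $w$ through each boundary component vanishes because $w\cdot n=0$ --- are exactly the content of the cited works, so deferring them is consistent with how the paper itself treats the lemma.
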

\begin{lemma}   \label{lem-curl}
Let $k\geq0$ be a integer, $1<q<+\infty$. Suppose that $\Omega$ is a bounded domain in $\r^3$ and its $C^{k+1,1}$ boundary $\partial\Omega$ only has a finite number of 2-dimensional connected components. Then for $v\in W^{k+1,q}$ with $v\times n=0$ on $\partial\Omega$, we have
\begin{align}\label{div-curl2}
\|v\|_{W^{k+1,q}}\leq C(\|\div v\|_{W^{k,q}}+\|\curl v\|_{W^{k,q}}+\|v\|_{L^q}).
\end{align}
%In particular, if  $\Omega$ has no holes, then $$\|v\|_{W^{k+1,q}}\leq C(\|\div v\|_{W^{k,q}}+\|\curl v\|_{W^{k,q}}).$$
\end{lemma}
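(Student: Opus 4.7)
My plan is to reduce the inequality to an elliptic regularity estimate for the vector Laplacian, exploiting the Bochner--Weitzenb\"ock identity $-\Delta v=\curl\curl v-\nabla\div v$. This identity says that once $\div v$ and $\curl v$ are controlled, $\Delta v$ is controlled (as a distribution), so $v$ should inherit one extra derivative from elliptic regularity, provided the boundary condition $v\times n=0$ together with the natural scalar companion is a well-posed (Lopatinski--Shapiro) boundary condition. The $\|v\|_{L^q}$ term on the right-hand side is necessary because in a multiply-connected domain the tangential harmonic fields $\{h:\div h=0,\,\curl h=0,\,h\times n=0\}$ form a finite-dimensional space (isomorphic to the first cohomology of $\Omega$), so it must appear to kill this kernel.

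I would first handle the base case $k=0$ with $q=2$ via the Gaffney-type integration by parts. Writing $|\nabla v|^2-(\div v)^2-|\curl v|^2$ as a divergence plus lower-order terms, an application of the divergence theorem produces
\[
\int_\Omega |\nabla v|^2 dx=\int_\Omega\!\bigl(|\div v|^2+|\curl v|^2\bigr)dx+\int_{\partial\Omega}\!\mathcal{B}(v,v)\,dS,
\]
where $\mathcal{B}$ is a bilinear form depending only on the second fundamental form of $\partial\Omega$ acting on the tangential trace. Because $v\times n=0$, only the normal component of $v$ survives in a direct computation, so $|\mathcal{B}(v,v)|\le C|v|^2$ on $\partial\Omega$, which by a standard trace/interpolation inequality is absorbed into $\varepsilon\|\nabla v\|_{L^2}^2+C_\varepsilon\|v\|_{L^2}^2$. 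This yields the $q=2,k=0$ estimate.

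To pass from $q=2$ to general $q\in(1,\infty)$ at the $k=0$ level, I would view $v$ as the solution of the elliptic boundary value problem $-\Delta v=\curl g-\nabla f$ in $\Omega$, $v\times n=0$ on $\partial\Omega$ (complemented by the scalar condition $\div v=f$ on $\partial\Omega$, or treated in the sense of normal traces). This system satisfies the complementing condition, so the Agmon--Douglis--Nirenberg $L^q$ regularity theory gives $\|v\|_{W^{1,q}}\le C(\|f\|_{L^q}+\|g\|_{L^q}+\|v\|_{L^q})$; equivalently one can work in terms of singular integrals / layer potentials on $\partial\Omega$ and invoke Calder\'on--Zygmund bounds. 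For $k\ge 1$, I would induct by differentiating in tangential directions along $\partial\Omega$ (which preserve the boundary condition $v\times n=0$ modulo lower-order commutator terms involving $\nabla n$) and then recovering normal derivatives from the equation $\Delta v=\nabla\div v-\curl\curl v$ inside $\Omega$, absorbing commutators into the induction hypothesis.

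The main technical obstacle is the induction step for higher $k$: the boundary condition is tangential rather than scalar, so differentiating it does not give clean homogeneous conditions, and one must track carefully the geometric commutators with the unit normal $n$. Working in a tubular collar of $\partial\Omega$ with adapted coordinates (normal + tangential), choosing a moving orthonormal frame where two vectors are tangent and one is $n$, and separating the tangential and normal components of $v$ makes this manageable; the lower-order terms generated are all controlled by $\|v\|_{W^{k,q}}$, which by the inductive hypothesis is bounded by the right-hand side of the claimed inequality, closing the argument.
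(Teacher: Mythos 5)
The paper does not actually prove this lemma: it is quoted as a known div--curl estimate, with the proof delegated to von Wahl (Theorem~3.2 of \cite{Von1992}) and Aramaki (Propositions~2.6--2.9 of \cite{Aramaki2014}). Your sketch essentially reconstructs the argument that underlies those references, and its skeleton is sound: the identity $-\Delta v=\curl\curl v-\nabla\div v$, the Gaffney integration by parts for the $L^2$, $k=0$ case (where, because $v\times n=0$ forces $v$ to be normal on $\partial\Omega$, the boundary term reduces to a curvature form quadratic in $v\cdot n$ and is absorbed by trace plus interpolation), the passage to general $q$ via Agmon--Douglis--Nirenberg regularity for the vector Laplacian with the boundary conditions $v\times n=0$ together with prescribed $\div v$ on $\partial\Omega$ (which do satisfy the complementing condition), and the induction on $k$ by tangential differentiation plus recovery of normal derivatives from the equation. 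So the proposal is a legitimate, if schematic, route to the stated estimate, whereas the paper simply imports it. One inaccuracy worth flagging: the harmonic fields satisfying $h\times n=0$ are the \emph{normal} harmonic fields, not the tangential ones, and their dimension is governed by the second Betti number of $\Omega$ (equivalently, by the number of connected components of $\partial\Omega$, which is exactly the topological hypothesis appearing in the lemma), not by the first cohomology; the first cohomology controls the kernel for the complementary condition $h\cdot n=0$ used in Lemma~\ref{lem-vn}. This slip does not damage the argument, since the term $\|v\|_{L^q}$ on the right-hand side absorbs any finite-dimensional kernel, but it is precisely the reason the two lemmas carry different topological hypotheses and different right-hand sides.
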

Next, similarly to the compressible Navier-Stokes equations, the effective viscous flux $F$ and the vorticity $\omega$ defined in \eqref{flux} plays an important role in our following analysis. Note that $\overline{F}=0$, then the standard $L^p$-estimate for the following elliptic equation:
\begin{equation*}
\begin{cases}
\Delta F=\div(\rho\dot{u}-H \cdot \nabla H)~~ &in\,\,\Omega,\\ \frac{\partial F}{\partial n}=(\rho\dot{u}-H \cdot \nabla H %-\mu\nabla\times(Au)^\perp
)\cdot n\,\, &on\,\, \partial\Omega.
\end{cases}
\end{equation*}
together with the div-curl type inequalities \eqref{div-curl1}-\eqref{div-curl2} yields the following essential estimates (see also \cite[Lemma 2.9]{cl2019}).
\begin{lemma}\label{lem-f-td}
Let $(\rho,u,\theta,H)$ be a smooth solution of \eqref{CMHD}-\eqref{boundary} on $\O \times (0,T]$. Then for any $p\in[2,6],\,\,1<q<+\infty,$ there exists a positive constant $C$ depending only on $p$, $q$, $\mu$, $\lambda$ and $\Omega$ such that
\begin{align}
\displaystyle & \|\nabla u\|_{L^q}\leq C(\|\div u\|_{L^q}+\|\omega\|_{L^q}),\label{tdu1}\\
& \|\nabla H\|_{L^q}\leq C\|\curl H\|_{L^q},\label{tdh1}\\
& \|\nabla F\|_{L^p}\leq C(\|\rho\dot{u}\|_{L^p}+\|H\! \cdot \nabla H\|_{L^p}),\label{tdf1}\\
& \|\nabla\omega\|_{L^p}\leq  C(\|\rho\dot{u}\|_{L^p}+\|H \cdot \nabla H\|_{L^p}+\|\nabla u\|_{L^2}),\label{tdxd-u1}\\
&\|F\|_{L^p}\leq  C(\|\rho\dot{u}\|_{L^2}\!\!+\!\|H \!\!\cdot \!\!\nabla H\|_{L^2})^{\frac{3p-6}{2p}}(\|\nabla u\|_{L^2}\!\!+\!\|P\!\!-\!\bar{P}\|_{L^2}\!\!+\!\||H|^2\!\!-\!\overline{|H|^2}\|_{L^2})^{\frac{6-p}{2p}},\label{f-lp}\\
&\|\omega\|_{L^p} \leq  C(\|\rho\dot{u}\|_{L^2}+\|H \cdot \nabla H\|_{L^2})^{\frac{3p-6}{2p}}\|\nabla u\|_{L^2}^{\frac{6-p}{2p}}+C\|\nabla u\|_{L^2},\label{xdu1}
\end{align}
Moreover,
 \begin{align}
&\|F\|_{L^p}+\|\omega\|_{L^p}\leq  C(\|\rho\dot{u}\|_{L^2}+\|H \cdot \nabla H\|_{L^2}+\|\nabla u\|_{L^2}),\label{f-curlu-lp}\\
&\begin{aligned}\label{tdu2}
&\|\nabla u\|_{L^p}\!\leq\!  C(\|\rho\dot{u}\|_{L^2}\!\!+\!\!\|H \!\!\cdot \!\!\nabla\! H\|_{L^2})^{\frac{3p-6}{2p}}(\|\nabla u\|_{L^2}\!\!+\!\|P\!\!-\!\bar{P}\|_{L^2}\!\!+\!\||H|^2\!\!-\!\!\overline{|H|^2}\|_{L^2})^{\frac{6-p}{2p}} \\
&\qquad\qquad+C(\|\nabla u\|_{L^2}+\|P\!-\!\bar{P}\|_{L^p}+\||H|^2-\overline{|H|^2}\|_{L^p}).
\end{aligned}
\end{align}
\end{lemma}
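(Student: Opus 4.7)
The plan is to establish the seven inequalities in succession, combining the div-curl inequalities of Lemmas \ref{lem-vn}--\ref{lem-curl} with the structure of the momentum equation \eqref{CMHD}$_2$ and the Gagliardo--Nirenberg inequality \eqref{g1}. The first two bounds are immediate applications: \eqref{tdu1} follows from Lemma \ref{lem-vn} applied to $u$ (using $u\cdot n=0$), and \eqref{tdh1} follows from the same lemma applied to $H$, where the divergence contribution drops out because $\div H=0$ gives the $L^q$ term no opportunity to appear.

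The key algebraic identity underpinning the remaining estimates is obtained by substituting $-\Delta u=\curl\omega-\nabla\div u$ into \eqref{CMHD}$_2$ and regrouping, which yields $\nabla F-\mu\,\curl\omega=\rho\dot u-H\cdot\nabla H$. Taking the divergence (and using $\div\,\curl\omega=0$) gives the Poisson equation $\Delta F=\div(\rho\dot u-H\cdot\nabla H)$ in $\Omega$. Dotting the same identity with $n$ on $\partial\Omega$, and using that $\omega\times n=0$ there forces $(\curl\omega)\cdot n=0$ (via a tangential-divergence identity on the boundary surface), one obtains the Neumann datum $\partial_n F=(\rho\dot u-H\cdot\nabla H)\cdot n$. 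Standard $L^p$ theory for the Neumann problem then delivers \eqref{tdf1}. For \eqref{tdxd-u1}, apply Lemma \ref{lem-curl} to $\omega$ (using $\omega\times n=0$ and $\div\omega=0$) to get $\|\nabla\omega\|_{L^p}\le C(\|\curl\omega\|_{L^p}+\|\omega\|_{L^p})$; the identity together with \eqref{tdf1} controls $\|\curl\omega\|_{L^p}$, while the residual $\|\omega\|_{L^p}$ term is handled by the Sobolev-type bound \eqref{xdu1} (whose $p=2$ case is free, since there $\|\omega\|_{L^2}\le\|\nabla u\|_{L^2}$).

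For the Gagliardo--Nirenberg-type bounds \eqref{f-lp} and \eqref{xdu1}: since $\int\div u\,dx=0$ by $u\cdot n=0$, we have $\overline F=0$, so \eqref{g1} applies with $C_1=0$, giving $\|F\|_{L^p}\le C\|F\|_{L^2}^{(6-p)/(2p)}\|\nabla F\|_{L^2}^{(3p-6)/(2p)}$; bounding $\|F\|_{L^2}$ by its defining expression and $\|\nabla F\|_{L^2}$ via \eqref{tdf1} produces \eqref{f-lp}. The bound \eqref{xdu1} is analogous with $\omega$ replacing $F$, but now $C_1\ne 0$ in \eqref{g1} (because $\omega\times n=0$ does not force $\omega\cdot n=0$ nor $\overline\omega=0$), so an extra term $\|\omega\|_{L^2}\le\|\nabla u\|_{L^2}$ survives; splitting the mixed product $\|\nabla u\|_{L^2}^{(6-p)/(2p)}(\cdots+\|\nabla u\|_{L^2})^{(3p-6)/(2p)}$ via Young's inequality yields the stated form. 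Finally, \eqref{f-curlu-lp} is an immediate consequence of \eqref{f-lp} and \eqref{xdu1} after Young, and \eqref{tdu2} follows from \eqref{tdu1} together with the algebraic identity $(\lambda+2\mu)\div u=F+(P-\bar P)+\tfrac12(|H|^2-\overline{|H|^2})$ and \eqref{f-lp}. The one technical subtlety I expect will require care is the rigorous derivation of the Neumann boundary datum for $F$, namely verifying that $\omega\times n=0$ on $\partial\Omega$ forces $(\curl\omega)\cdot n=0$ there; this relies on a tangential-divergence identity on the boundary surface together with the fact that $\omega\times n$ vanishes identically on all of $\partial\Omega$, so that its surface derivatives also vanish.
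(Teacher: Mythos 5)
Your proposal is correct and follows essentially the same route the paper takes (and leaves mostly implicit, citing \cite{cl2019}): the Neumann problem $\Delta F=\div(\rho\dot u-H\cdot\nabla H)$ with $\partial F/\partial n=(\rho\dot u-H\cdot\nabla H)\cdot n$, the div-curl inequalities of Lemmas \ref{lem-vn}--\ref{lem-curl}, and the Gagliardo--Nirenberg inequality \eqref{g1} with $\overline F=0$. Your verification of the Neumann datum via $n\cdot\curl\omega=\div_{\partial\Omega}(\omega\times n)=0$ and your bootstrapping of \eqref{tdxd-u1} from its $p=2$ case through \eqref{xdu1} supply exactly the details the paper omits.
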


\begin{remark}
From Lemma \ref{lem-curl}, we can get the estimate of $\|\nabla^{2} H\|_{L^p}$ and $\|\nabla^{3} H\|_{L^p}$ for $p\in [2,6]$,
\begin{align}\label{2tdh}
\|\nabla^2 H\|_{L^p}\leq C \|\curl H\|_{W^{1,p}} \leq C(\|\curl^2  H\|_{L^p}+\|\curl H \|_{L^p}),
\end{align}
and
\begin{align}\label{3tdh}
\|\nabla^3 H\|_{L^p}\leq C \|\curl H\|_{W^{2,p}} \leq C(\|\curl^2  H\|_{W^{1,p}}+\|\curl H \|_{L^p}).
\end{align}
On the other hand, we can get the estimates of $\|\nabla^{2}u\|_{L^p}$ and $\|\nabla^{3}u\|_{L^p}$ for $p\in[2,6]$ by Lemma \ref{lem-vn}, which will be devoted to giving higher order estimates in Section \ref{se4}. In fact, by Lemma \ref{lem-vn} and, for $p\in[2,6]$,
\begin{align}\begin{aligned}\label{2tdu}
\displaystyle  &\quad\|\nabla^{2}u\|_{L^p}\leq C(\|\div u\|_{W^{1,p}}+\|\omega\|_{W^{1,p}})\\
%&\leq C(\|F\|_{W^{1,p}}+\||H|^2\|_{W^{1,p}}+\|P-\bar{P}\|_{W^{1,p}}+\|\omega\|_{W^{1,p}})\nonumber \\
&\leq C(\|\rho\dot{u}\|_{L^p}\!\!+\!\!\|H\!\! \cdot\!\! \nabla H\|_{L^p}\!\!+\!\!\|\nabla\! P\|_{L^p}\!\!+\!\!\|P\!\!-\!\!\bar{P}\|_{L^p} %\nonumber \\&\quad
\!\!+\!\!\||H|^2\!\!-\!\!\overline{|H|^2}\|_{L^p}\!\!+\!\!\|\nabla u\|_{L^2}),
\end{aligned}
\end{align}
and
\begin{align}\begin{aligned}\label{3tdu}
 &\quad\|\nabla^{3}u\|_{L^p}\leq C(\|\div u\|_{W^{2,p}}+\|\omega\|_{W^{2,p}}) \\
%&\leq C(\|\nabla^{2} F\|_{L^p}+\|\nabla^{2}|H|^2\|_{L^p}+\|\nabla^{2} P\|_{L^p}+\|\nabla^{2}\omega\|_{L^p}+\|\div u\|_{W^{1,p}}+\|\omega\|_{W^{1,p}})\nonumber \\
&\leq C(\|\nabla(\rho\dot{u})\|_{L^p}\!\!+\!\!\|\nabla(H\!\! \cdot\!\! \nabla H)\|_{L^p}\!\!+\!\!\|\nabla^2\! P\|_{L^p}\!\!+\!\!\|H\!\! \cdot\!\! \nabla\! H\|_{L^p} \\
&\quad +\!\|\rho\dot{u}\|_{L^p}\!\!+\!\!\|\nabla P\|_{L^p}\!\!+\!\!\|P\!\!-\bar{P}\|_{L^p} \!\!+\!\!\||H|^2\!\!-\!\!\overline{|H|^2}\|_{L^p}\!\!+\!\!\|\nabla u\|_{L^2}).
\end{aligned}
\end{align}
\end{remark}

Next, we review the following Poincare-type inequality of $\dot u$ with $u\cdot n|_{\partial\Omega}=0$ (see \cite[Lemma 2.10]{cl2019}).
\begin{lemma}\label{lem-ud}
Let $\Omega \subset \mathbb{R}^3$ with $C^{1,1}$ boundary. Assume that $u$ is smooth enough and $u \cdot n|_{\p \O}=0$,  then there exists a generic positive constant $C$ depending only on   $\Omega$ such that
\begin{align}
&\|\dot{u}\|_{L^6}\le C(\|\nabla\dot{u}\|_{L^2}+\|\nabla u\|_{L^2}^2),\label{udot}\\
&\|\nabla\dot{u}\|_{L^2}\le C(\|\div \dot{u}\|_{L^2}+\|\curl \dot{u}\|_{L^2}+\|\nabla u\|_{L^4}^2).\label{tdudot}
\end{align}
\end{lemma}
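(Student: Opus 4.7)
The central observation is a computation of the normal boundary trace of $\dot u$ that follows from the slip condition $u\cdot n=0$. First I would establish, on $\partial\Omega$,
\[
\dot u \cdot n \;=\; -\,u\cdot \nabla n\cdot u,
\]
where $n$ is extended smoothly off $\partial\Omega$. This is obtained by adding two elementary identities: time-differentiating $u\cdot n=0$ gives $u_t\cdot n|_{\partial\Omega}=0$ (since $n$ is time-independent), while differentiating $u\cdot n=0$ along the vector field $u$ (which is tangent to $\partial\Omega$) yields $(u\cdot\nabla u)\cdot n=-u\cdot\nabla n\cdot u$. In particular $|\dot u\cdot n|\le C|u|^2$ pointwise on $\partial\Omega$, so $\dot u$ fails the structural hypotheses of \eqref{g1} and of Lemma \ref{lem-vn} only by a quantitatively small boundary defect.

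I would then subtract off this defect. Using a standard lifting --- combining the Bogovskii operator of Lemma \ref{lem-divf} with a harmonic correction, or by solving an auxiliary elliptic problem with prescribed normal trace --- construct $v:\Omega\to\r^3$ with $v\cdot n|_{\partial\Omega}=-u\cdot\nabla n\cdot u$ and
\[
\|v\|_{H^1(\Omega)}\;\le\; C\,\|u\cdot\nabla n\cdot u\|_{H^{1/2}(\partial\Omega)}\;\le\; C\,\|\nabla u\|_{L^2}^{2},
\]
with the analogous bound $\le C\|\nabla u\|_{L^4}^{2}$ available when the stronger $W^{1,4}$ information on $u$ is used. The quadratic boundary estimate comes from the trace embedding $H^1(\Omega)\hookrightarrow L^4(\partial\Omega)$, Sobolev product rules, and Poincar\'e for functions with vanishing normal trace (Lemma \ref{lem-vn}). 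Setting $w:=\dot u-v$, the corrected field satisfies $w\cdot n|_{\partial\Omega}=0$, so \eqref{g1} (with $C_1=0$) and the div-curl estimate \eqref{div-curl1} apply cleanly to $w$, yielding
\[
\|w\|_{L^6}\le C\|\nabla w\|_{L^2},\qquad \|\nabla w\|_{L^2}\le C(\|\div w\|_{L^2}+\|\curl w\|_{L^2}).
\]

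Writing $\dot u=v+w$ and using $\div w=\div\dot u-\div v$, $\curl w=\curl\dot u-\curl v$ together with $\|\nabla v\|_{L^2}\le C\|\nabla u\|_{L^2}^2$ (resp.\ $C\|\nabla u\|_{L^4}^2$) closes the first estimate \eqref{udot} from $\|\dot u\|_{L^6}\le\|v\|_{L^6}+\|w\|_{L^6}$, and the second estimate \eqref{tdudot} from $\|\nabla\dot u\|_{L^2}\le\|\nabla v\|_{L^2}+\|\nabla w\|_{L^2}$. The principal obstacle is the sharp construction and estimation of the lifting $v$: one cannot simply take $v=-(u\cdot\nabla\tilde n\cdot u)\,\tilde n$, since $H^1\not\hookrightarrow L^{12}$ in three dimensions and the resulting field is not controlled in $H^1$ by $\|\nabla u\|_{L^2}^2$. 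Instead, the $H^1(\Omega)$-norm of $v$ must be reduced to the $H^{1/2}(\partial\Omega)$-norm of the scalar boundary data $u\cdot\nabla n\cdot u$, which in turn must be bounded via trace inequalities and interpolation rather than by interior pointwise multiplication estimates. Once this lifting is under control (as done in \cite{cl2019}), the remainder of the argument is routine.
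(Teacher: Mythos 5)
The paper does not actually prove Lemma \ref{lem-ud}; it is quoted from \cite[Lemma 2.10]{cl2019}, so what follows compares your plan with the standard argument behind that reference. Your boundary identity $\dot u\cdot n=-u\cdot\nabla n\cdot u$ on $\partial\Omega$ is correct and is indeed the heart of the lemma, and the lifting strategy is sound for the \emph{second} inequality \eqref{tdudot}: there one has $\|u\cdot\nabla n\cdot u\|_{H^{1/2}(\partial\Omega)}\le C\|u\otimes u\|_{H^{1}(\Omega)}\le C\bigl(\|u\|_{L^4}\|\nabla u\|_{L^4}+\|u\|_{L^4}^2\bigr)\le C\|\nabla u\|_{L^4}^2$, and subtracting, say, $\nabla\xi$ with $\Delta\xi=\div\dot u$ in $\Omega$, $\partial_n\xi=\dot u\cdot n$ on $\partial\Omega$ reduces the remainder to the div--curl estimate \eqref{div-curl1} for a field with vanishing normal trace.

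The gap is in \eqref{udot}. Your argument needs a lifting $v$ with $\|v\|_{H^1(\Omega)}\le C\|u\cdot\nabla n\cdot u\|_{H^{1/2}(\partial\Omega)}\le C\|\nabla u\|_{L^2}^2$, and the second inequality is not available: the trace of $u$ lies only in $H^{1/2}(\partial\Omega)$, and on a two-dimensional boundary the bilinear map $H^{1/2}\times H^{1/2}\to H^{1/2}$ is unbounded (the product estimate $\|fg\|_{H^{s}}\le C\|f\|_{H^{s_1}}\|g\|_{H^{s_2}}$ requires $s_1+s_2-s\ge d/2$, and here $\tfrac12+\tfrac12-\tfrac12<1$). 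Concretely, $\||u|\,|\nabla u|\|_{L^2(\Omega)}$ cannot be bounded by $\|\nabla u\|_{L^2}^2$, which is precisely why \eqref{tdudot} carries $\|\nabla u\|_{L^4}^2$ while \eqref{udot} carries only $\|\nabla u\|_{L^2}^2$; and no choice of lifting circumvents this, since $H^1(\Omega)$ control of $v$ is equivalent to $H^{1/2}(\partial\Omega)$ control of $v\cdot n$. The correct (and easier) route to \eqref{udot} uses only the $L^2(\partial\Omega)$ size of the normal trace: combine $\|\dot u\|_{L^6}\le C(\|\nabla\dot u\|_{L^2}+\|\dot u\|_{L^2})$ with the Poincar\'e-type inequality $\|w\|_{L^2(\Omega)}\le C\bigl(\|\nabla w\|_{L^2(\Omega)}+\|w\cdot n\|_{L^2(\partial\Omega)}\bigr)$, valid because a constant vector field whose normal trace vanishes on all of $\partial\Omega$ must be zero (standard compactness argument), and then estimate $\|\dot u\cdot n\|_{L^2(\partial\Omega)}=\|u\cdot\nabla n\cdot u\|_{L^2(\partial\Omega)}\le C\|u\|_{L^4(\partial\Omega)}^2\le C\|u\|_{H^1(\Omega)}^2\le C\|\nabla u\|_{L^2}^2$, which needs only the trace embedding $H^1(\Omega)\hookrightarrow L^4(\partial\Omega)$ and no fractional product estimate. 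With that replacement for \eqref{udot}, the rest of your outline goes through.
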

% \begin{proof}
% First, setting $u^{\perp}=u\times n$, we have by \eqref{navier-b}
% \begin{align*}
% \displaystyle  \dot{u}\cdot n=u\cdot\nabla u\cdot n=-u\cdot\nabla n\cdot u=-(u\cdot\nabla n)\times u^{\perp}\cdot n\,\,\,\text{on}\,\,
% \partial\Omega,
% \end{align*}
% due to
% \begin{align*}
% \displaystyle v\times (u\times n  )=(v\cdot n)u-(v\cdot u)n,
% \end{align*}
% with $v=u\cdot \na n$. This thus implies
% \begin{align}\label{udot2}
% \displaystyle (\dot{u}+(u\cdot\nabla n)\times u^{\perp})\cdot n=0 \mbox{ on } \partial\Omega,
% \end{align}
% which together with Poincar\'{c0}'s inequality gives
% \begin{align}
% \|\dot{u}+(u\cdot\nabla n)\times u^{\perp}\|_{L^\frac{3}{2}}\le C\|\nabla(\dot{u}+(u\cdot\nabla n)\times u^{\perp} )\|_{L^\frac{3}{2}},
% \end{align}
% which leads to
% \begin{align}\label{udot1}
% \displaystyle  \|\dot{u}\|_{L^\frac{3}{2}}\le C(\|\nabla\dot{u}\|_{L^\frac{3}{2}}+\|\nabla u\|_{L^2}^{2}).
% \end{align}
% Furthermore, by Sobolev's embedding theorem,
% \begin{align}
% \displaystyle  \|\dot{u}\|_{L^2}\leq C(\|\nabla\dot{u}\|_{L^\frac{3}{2}}+\|\dot{u}\|_{L^\frac{3}{2}})\leq C(\|\nabla\dot{u}\|_{L^2}+\|\nabla u\|_{L^2}^{2}),
% \end{align}
% so we get \eqref{udot}.

% Finally, the second inequality in Lemma \ref{lem-vn} is still true if we replace $v$ by $\dot{u}+(u\cdot\nabla n)\times u^{\perp}$ due to \eqref{udot2}. Consequently, one can check that \eqref{tdudot} holds. The proof of Lemma \ref{lem-ud} is completed.
% \end{proof}

Finally, we recall the following local existence theorem of classical solution of \eqref{CMHD}-\eqref{boundary}, which can be proved in a similar manner as that in \cite{fy2009,xh2017}, base on the standard contraction mapping principle.
\begin{lemma}\la{lem-local} Let $\O$ be as in Theorem \ref{th1}. Assume  that
 $(\n_0,u_0,\te_0,H_0)$ satisfies the boundary conditions \eqref{navier-b}-\eqref{boundary} and
 \be \la{2.1}
(\n_0,u_0,\te_0,H_0)\in H^3, \quad \inf\limits_{x\in\Omega}\n_0(x) >0, \quad \inf\limits_{x\in\Omega}\te_0(x)> 0.\ee
Then there exist  a small time
$T_0>0$ and a unique classical solution $(\rho , u,\te, H)$ to the problem  (\ref{CMHD})--(\ref{boundary}) on $\Omega\times(0,T_0]$ satisfying
\be\la{mn6}
  \inf\limits_{(x,t)\in\Omega\times (0,T_0]}\n(x,t)\ge \frac{1}{2}
 \inf\limits_{x\in\Omega}\n_0(x), \ee and
 \be\la{mn5}
 \begin{cases}
 (\rho, u, \te, H) \in C([0,T_0];H^3),\quad
 \n_t\in C([0,T_0];H^2),\\   (u_t, \te_t, H_t)\in C([0,T_0];H^1),
 \quad (u, \te, H)\in L^2(0,T_0;H^4).\end{cases}\ee
 \end{lemma}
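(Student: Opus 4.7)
The plan is to construct the local solution via a standard linearization and iteration scheme tailored to the structure of \eqref{CMHD}. Given an iterate $(\rho^n,u^n,\theta^n,H^n)$ starting from $(\rho^0,u^0,\theta^0,H^0)=(\rho_0,u_0,\theta_0,H_0)$, I would define $(\rho^{n+1},u^{n+1},\theta^{n+1},H^{n+1})$ by decoupling the system: first solve the linear transport equation $\rho^{n+1}_t+\text{div}(\rho^{n+1}u^n)=0$ with $\rho^{n+1}(\cdot,0)=\rho_0$ by the method of characteristics (using the boundary condition $u^n\cdot n=0$, so characteristics stay in $\overline\Omega$), which gives $\rho^{n+1}\in C([0,T];H^3)$ and preserves a strictly positive lower bound on a short time interval. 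Then solve the linear parabolic system for $u^{n+1}$ with coefficients $\rho^{n+1}$ and source built from $\theta^{n+1}$ and $H^n$ (handling the momentum equation with slip boundary conditions \eqref{navier-b}), followed by parabolic equations for $\theta^{n+1}$ (with Neumann condition \eqref{theta-b}) and $H^{n+1}$ (with perfect-conductor condition \eqref{boundary}).

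For each linear subproblem I would invoke the standard Lopatinski--Shapiro theory for parabolic systems with the relevant boundary conditions; the slip boundary \eqref{navier-b} and the magnetic boundary \eqref{boundary} both satisfy the complementing condition, yielding $H^3$-regularity estimates (via Lemmas \ref{lem-lame}, \ref{lem-vn}, \ref{lem-curl}) with constants depending only on the coefficients. Next I would close a uniform-in-$n$ energy estimate of the form
\[
\sup_{0\le t\le T_0}\bigl(\|\rho^{n+1}\|_{H^3}^2+\|u^{n+1}\|_{H^3}^2+\|\theta^{n+1}\|_{H^3}^2+\|H^{n+1}\|_{H^3}^2\bigr)+\int_0^{T_0}\|(u^{n+1},\theta^{n+1},H^{n+1})\|_{H^4}^2\,dt\le K,
\]
for some $K$ depending on the initial data, provided $T_0$ is sufficiently small. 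This involves integrating the linearized equations against high-order derivatives, using commutator estimates to handle the variable coefficients, and exploiting the positivity of $\rho^{n+1}$ inherited from the transport step via $\inf\rho^{n+1}\ge \tfrac12\inf\rho_0$ on $[0,T_0]$. The temperature positivity $\theta^{n+1}>0$ follows from a maximum principle applied to its parabolic equation, using $\theta_0>0$ and the Neumann boundary condition; the $\text{div}\,H^{n+1}=0$ constraint is propagated by taking the divergence of the induction equation and using $\text{div}\,H_0=0$ together with the tangential-type boundary condition.

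Having secured uniform $H^3$-bounds, I would show the iteration is contractive in the lower norm
\[
\|(\rho^{n+1}-\rho^n,u^{n+1}-u^n,\theta^{n+1}-\theta^n,H^{n+1}-H^n)\|_{L^\infty(0,T_0;L^2)\cap L^2(0,T_0;H^1)}
\]
by deriving the differences' equations, performing standard $L^2$ energy estimates, and absorbing the resulting right-hand side via interpolation against the uniform higher-order bounds and by shrinking $T_0$ once more. Passing to the limit $n\to\infty$ produces a strong limit in $C([0,T_0];L^2)$ which, together with weak-$*$ compactness in the high-norm spaces, yields a solution in the class \eqref{mn5}. Uniqueness follows from an identical $L^2$ difference estimate applied to two solutions, and the lower bound \eqref{mn6} is inherited from the transport step.

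The main obstacle I anticipate is not the functional-analytic machinery (which is classical) but rather verifying that the slip boundary condition $\curl u\times n=0$ together with $u\cdot n=0$, and the analogous condition on $H$, interact properly with the high-order energy estimates. Naive integration by parts of $\int\nabla^2 u\cdot\nabla^2(\mu\Delta u+(\mu+\lambda)\nabla\text{div}\,u)\,dx$ produces boundary terms involving second fundamental form quantities that must be controlled via Lemma \ref{lem-vn} and careful use of the identities $\curl u\times n=0$ and $H\times n=0$ on $\partial\Omega$; the observation $u\cdot\nabla u\cdot n=-u\cdot\nabla n\cdot u$ alluded to after Remark \ref{rem:4} is the key device for handling convective boundary contributions. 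Once these geometric boundary identities are in place, the rest of the argument proceeds along the lines of \cite{fy2009,xh2017}.
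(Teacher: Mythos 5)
Your proposal is essentially the approach the paper itself takes: the paper does not write out a proof of Lemma \ref{lem-local} but states that it follows "in a similar manner as that in \cite{fy2009,xh2017}, based on the standard contraction mapping principle," which is precisely the linearize--iterate--contract scheme you describe (transport step for $\rho$, linear parabolic steps for $u,\te,H$ with the slip, Neumann and perfect-conductor boundary conditions, uniform $H^3$ bounds, and contraction in a lower norm). Your additional remarks on the boundary terms and the identity $u\cdot\nabla u\cdot n=-u\cdot\nabla n\cdot u$ are consistent with how the paper handles the slip boundary elsewhere, so the proposal matches the intended argument.
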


 \begin{remark} Applying the same arguments as in \cite[Lemma 2.1]{hl2018}, one can deduce that the classical solution $(\rho , u,\te, H)$  obtained in Lemma \ref{lem-local} satisfies
\be \la{mn1}\begin{cases} (t u_{t},  t \te_{t}, tH_t) \in L^2( 0,T_0;H^3)  ,\quad (t u_{tt},  t\te_{tt}, tH_{tt}) \in L^2(0,T_0;H^1), \\
 (t^2u_{tt},  t^2\te_{tt},  t^2H_{tt}) \in L^2( 0,T_0;H^2), \quad
 (t^2u_{ttt},  t^2\te_{ttt},  t^2H_{ttt}) \in L^2(0,T_0;L^2).
\end{cases}\ee
Moreover, for any    $(x,t)\in \Omega\times [0,T_0],$ the following estimate holds:
   \be\la{mn2}
\te(x,t)\ge
\inf\limits_{x\in\Omega}\te_0(x)\exp\left\{-(\ga-1)\int_0^{T_0}
 \|\div u\|_{L^\infty}dt\right\}.\ee
%provided $\inf\limits_{x\in\Omega}\te_0(x)\ge 0$.
\end{remark}

\section{\label{se3} A priori estimates(I): lower order estimates}

In this section, we will establish the time-independent a priori bounds of the solutions of the problem \eqref{CMHD}-\eqref{boundary}. Let $T>0$ be a fixed time and $(\rho,u,\theta,H)$ be a smooth solution to \eqref{CMHD}-\eqref{boundary} on $\Omega \times (0,T]$  with smooth initial data $(\rho_0,u_0,\theta_0,H_0)$ satisfying \eqref{dt1}-\eqref{dt-s}.

Set $\si=\si(t)\triangleq\min\{1,t \},$ we define
\begin{align}
 A_1(T) &\triangleq  ~\sup_{\mathclap{0\le t\le T}}
\left( \|\nabla u\|_{L^2}^2 \!+\! \|\nabla H\|_{L^2}^2 \right) \!+\! \int_0^{T} \!\! (\|\sqrt{\rho}\dot{u}\|_{L^2}^2 \!+\! \|\curl^2 H\|_{L^2}^2\!+\! \|H_t\|_{L^2}^2)dt,\label{As1}\\
 A_2(T)  &\triangleq  \frac{1}{2(\gamma-1)}~\sup_{\mathclap{0\le t\le T}}\|\sqrt{\rho}(R\theta-\bar{P})\|_{L^2}^2\!+\!\!\int_0^{T}\!\!(\|\nabla u\|_{L^2}^2\!+\!\|\nabla H\|_{L^2}^2\!+\!\|\nabla \theta\|_{L^2}^2)dt,\label{As2}\\
 A_3(T) &\triangleq  ~\sup_{\mathclap{0\le t\le T}}~\sigma\left(\|\nabla u\|_{L^2}^2\!+\!\|\nabla H\|_{L^2}^2\right)\!+\!\!\int_0^{T}\!\!\sigma(\|\sqrt{\rho}\dot{u}\|_{L^2}^2 \!+\! \|\curl^2 H\|_{L^2}^2\!+\! \|H_t\|_{L^2}^2)dt,\label{As3} \\
 A_4(T) &\triangleq  ~\sup_{\mathclap{0\le t\le T}}\sigma^2(\|\sqrt{\rho}\dot{u}\|_{L^2}^2\!+\!\|\curl^2 H\|_{L^2}^2\!+\! \|H_t\|_{L^2}^2\!+\!\|\nabla\theta\|_{L^2}^2)\nonumber\\
&\qquad+\!\!\int_0^{T}\!\!\sigma^2(\|\nabla\dot{u}\|_{L^2}^2\!+\!\|\nabla H_t\|_{L^2}^2\!+\!\!\|\sqrt{\rho}\dot{\theta}\|_{L^2}^2)dt,\label{As5}
\end{align}
where $\dot{v}=v_t+u \cdot \nabla v$ is the material derivative.

Now we will give the following key a priori estimates in this section, which guarantees the existence of a global classical solution of \eqref{CMHD}--\eqref{boundary}.
\begin{proposition}\label{pr1}
For given constants $M>0$, $\hat{\rho}> 2,$  and $\hat{\theta}> 1,$ assume further that $(\rho_0,u_0,\te_0,H_0) $  satisfies
\be \la{3.1}
0\!<\!\inf \rho_0 \le\sup \rho_0 \!<\!\hat{\rho},~ 0\!<\!\inf \te_0 \le\sup \te_0 \!\le\! \hat{\theta},~  \|\nabla u_0\|_{L^2}\leq M,~ \|\nabla H_0\|_{L^2}\leq M.
\ee
Then there exist  positive constants $K$,  $C^\ast$, $\alpha$,  $\theta_\infty$, and $\ve_0$ depending on $\mu$, $\lambda$, $\nu$, $\kappa$, $\ga$, $R$, $\hat{\rho}$, $\hat{\theta}$, $\Omega$, and $M$ such that if $(\rho,u,\theta,H)$  is a smooth solution of \eqref{CMHD}-\eqref{boundary}  on $\Omega\times (0,T] $ satisfying
\begin{equation}\label{key1}
%\begin{cases}
0<\rho\le 2\hat{\rho},\quad
A_1(\sigma(T))\le 3K,\quad %\\
A_2(T)\leq 2C_0^{\frac14},\quad A_3(T)+A_4(T)\leq 2C_0^{\frac16},
%\end{cases}
\end{equation}
then the following estimates hold
 \begin{equation}\label{key2}
%\begin{cases}
0<\rho\le \frac32\hat{\rho},\quad
A_1(\sigma(T))\le 2K,\quad %\\
A_2(T)\leq C_0^{\frac14},\quad A_3(T)+A_4(T)\leq C_0^{\frac16},
%\end{cases}
\end{equation}
and for any $t\geq1$,
\begin{equation}\label{key3}
\|\rho-1\|_{L^2}+\|u\|_{W^{1,6}}^2+\|H\|_{H^2}^2+\|\theta-\theta_\infty\|_{H^2}^2\le C^\ast e^{-\al t},
\end{equation}
provided   \be\la{z01}C_0\le \ve_0.\ee
\end{proposition}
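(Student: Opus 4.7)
The plan is to close a bootstrap: under the a priori assumptions \eqref{key1}, improve every bound to the sharper form \eqref{key2}--\eqref{key3} by choosing $C_0\le\varepsilon_0$ small enough. The overall architecture follows \cite{chs2020-mhd,llw2022}, with the magnetic field handled through the diffusion $\nu\,\nabla\times\curl H$ and the div-curl inequalities in Lemmas \ref{lem-vn}--\ref{lem-curl} to replace $\nabla H,\nabla^2 H$ by $\curl H,\curl^2 H$.

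First I would derive the ``weak" basic energy identity \eqref{11a}; because its right-hand side is sign-indeterminate, direct estimation yields only $E(t)\le C A_2(T)\le C C_0^{1/4}$. Applying Jensen's inequality to $-\ln s$ with respect to the measure $\rho\,dx/|\Omega|$ and using mass conservation $\overline\rho\equiv 1$ from \eqref{rho0-b}, I extract uniform two-sided bounds $0<P_\ast\le \overline P(t)\le P^\ast<\infty$ independent of $T$. A Poincar\'e-type bound then gives
\begin{align*}
\|\sqrt\rho(R\theta-\overline P)\|_{L^2}^2\le C\,C_0 + C\,\|\nabla\theta\|_{L^2}^2,
\end{align*}
which combined with the entropy-type inequality above closes $A_2(T)\le C_0^{1/4}$ for $C_0$ small enough.

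Next I would bound $A_1(\sigma(T))$ by multiplying the momentum equation by $\dot u$, integrating by parts, and using the slip-boundary identity $u\cdot\nabla u\cdot n=-u\cdot\nabla n\cdot u$ to absorb the surface integrals, together with the Lam\'e-type estimates in Lemma \ref{lem-f-td} to convert $\nabla F$, $\nabla\omega$ into $\|\sqrt\rho\dot u\|_{L^2}$. Pairing the induction equation with $H_t+\nu\,\nabla\times\curl H$ handles $\|H_t\|_{L^2}$ and $\|\curl^2 H\|_{L^2}$, with the coupling terms $\nabla\times(u\times H)$ and $(H\cdot\nabla)H$ controlled by \eqref{2tdh} and interpolation. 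For $A_3(T)+A_4(T)$ the same procedure is carried out with the time-weight $\sigma^2$, after first acting the material derivative $\dot{\phantom{u}}$ on the momentum equation and invoking \eqref{udot}--\eqref{tdudot}. The temperature contribution comes from multiplying the $\theta$-equation by $\dot\theta$; the critical $P\,\div u$ term is handled via the effective viscous flux $F$ from \eqref{flux}, and the uniform bound on $\overline P$ from Step 1 ensures closure.

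The pointwise density bound then follows by rewriting \eqref{CMHD}$_1$ as
\begin{align*}
(\log\rho)_t+u\cdot\nabla\log\rho+\frac{\overline P}{2\mu+\lambda}=-\frac{1}{2\mu+\lambda}\Bigl(F+(P-\overline P)+\tfrac12(|H|^2-\overline{|H|^2})\Bigr),
\end{align*}
and applying the Gr\"onwall-type Lemma \ref{lem-z} along particle trajectories with $\alpha_0=P_\ast/(2\mu+\lambda)>0$, splitting $[0,T]=[0,1]\cup[1,T]$ and controlling the right-hand side in $L^2$ and $L^\infty$ respectively via the $A_i$ bounds just proved. Smallness of $C_0$ then beats the a priori ceiling $2\hat\rho$ and yields $\rho\le\tfrac32\hat\rho$. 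For the exponential decay \eqref{key3} I would represent $\rho-1=\div \mathcal B[\rho-1]$ via Lemma \ref{lem-divf} and test against $u$ to extract a dissipative inequality $y'+\alpha y\le 0$ for an energy-like quantity; interpolation with the time-weighted bounds from $A_3+A_4$ upgrades the $L^2$-decay to the $W^{1,6}\times H^2\times H^2$-decay in \eqref{key3}, and $\theta_\infty$ is identified as the limit of $\overline P/R$. The main obstacle I expect is Step 1: the weak basic energy contributes only $C_0^{1/4}$ on the right, so every nonlinear term and every surface integral arising from the slip boundary must be reduced to a product of a (small) power of $C_0$ and the dissipation already recorded in $A_2$, which requires careful interpolation between $\|\sqrt\rho(R\theta-\overline P)\|_{L^2}$, $\|\nabla\theta\|_{L^2}$ and the $H^1$-norms of $u$ and $H$. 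A close second difficulty is the density closing, where the stabilising term $-\overline P\log\rho/(2\mu+\lambda)$ must dominate both the flux $F$ (only bounded in $L^\infty_tL^p_x$ for finite $p$) and the magnetic contribution $|H|^2-\overline{|H|^2}$, which is why the two-interval decomposition in Lemma \ref{lem-z} is essential.
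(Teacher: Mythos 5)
Your overall architecture (weak basic energy $\Rightarrow$ uniform bounds on $\overline P$ via Jensen $\Rightarrow$ $A_1$, $A_3+A_4$ via $\dot u$, $\dot\theta$, $H_t$ estimates with the boundary identity $u\cdot\nabla u\cdot n=-u\cdot\nabla n\cdot u$ $\Rightarrow$ density bound via Lemma \ref{lem-z} $\Rightarrow$ decay via $\mathcal B[\rho-1]$) matches the paper's. The genuine gap is in your Step 1, the closing of $A_2(T)$. You claim that the weak energy bound $E(t)\le CC_0^{1/4}$ together with a Poincar\'e-type inequality $\|\sqrt\rho(R\theta-\overline P)\|_{L^2}^2\le CC_0+C\|\nabla\theta\|_{L^2}^2$ closes $A_2(T)\le C_0^{1/4}$. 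It does not, for two reasons. First, $A_2$ in \eqref{As2} contains the unweighted dissipation $\int_0^T(\|\nabla u\|_{L^2}^2+\|\nabla H\|_{L^2}^2+\|\nabla\theta\|_{L^2}^2)dt$, and the basic energy identity \eqref{11a} only produces dissipation weighted by $\theta^{-1}$ and $\theta^{-2}$ with a sign-indeterminate right-hand side; to generate coercive, unweighted dissipation the paper runs a separate multiplier argument (Lemma \ref{lem-a2}), testing the continuity, momentum, temperature and induction equations by $\overline P G'(\rho)$, $u$, $\overline P^{-1}(R\theta-\overline P)$ and $H$, which yields the functional $W(t)$ and the dissipation $D(t)$ in \eqref{ww}--\eqref{dd}. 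Second, with a plain constant $C$ in front of $\|\nabla\theta\|_{L^2}^2$ your Poincar\'e bound cannot control $\sup_t\|\sqrt\rho(R\theta-\overline P)\|_{L^2}^2$ by $C_0^{1/4}$, because $\|\nabla\theta\|_{L^2}$ is not pointwise small in time. The paper needs the sharper bound \eqref{lem-s2}, $\|R\theta-\overline P\|_{L^2}\le C(C_0^{1/2}+C_0^{1/3}\|\nabla\theta\|_{L^2})$, whose small prefactor $C_0^{1/3}$ is what lets the cubic coupling term $\int(R\theta-\overline P)|\nabla u|^2dx$ be absorbed; and \eqref{lem-s2} is not a Poincar\'e inequality — it rests on the short-time modified energy estimate \eqref{lem-s1}, which in turn requires controlling $\int_0^{\sigma(T)}\|R\theta-\overline P\|_{L^\infty}dt\le CC_0^{1/16}$ using the full $\dot u$, $\dot\theta$, $H_t$ machinery from the $A_3,A_4$ step. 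So $A_2$ cannot be closed ``first'' and certainly not by the mechanism you describe.

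Two secondary points on the density step. The damping structure you write, $(\log\rho)_t+u\cdot\nabla\log\rho+\tfrac{\overline P}{2\mu+\lambda}=\dots$, has a constant rather than a term proportional to the unknown, so Lemma \ref{lem-z} does not apply to it as stated; the paper instead works with $y=\rho-1$ and uses $\rho(\rho-1)\ge\rho-1$ to obtain $D_t(\rho-1)+\alpha(\rho-1)\le g(t)$ in \eqref{rho1}. More importantly, your choice of norms on the two time intervals is reversed and too strong: $g$ contains $\|F\|_{L^\infty}$, which near $t=0$ only lies in $L^1(0,\sigma(T))$ (it behaves like a negative power of $\sigma$), and on $[\sigma(T),T]$ only its $L^2$-in-time norm is controlled (an $L^\infty_t$ bound would require a pointwise-in-time bound on $\|\nabla\dot u\|_{L^2}$, which is not available). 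The paper's split $L^1(0,\sigma(T))\cap L^2(\sigma(T),T)$ in \eqref{rho-1}--\eqref{rho-2} is the one that actually closes.
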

\begin{proof} Proposition \ref{pr1} is a consequence of the following Lemmas \ref{lem-a1}, \ref{lem-a3a4}, \ref{lem-a2}, \ref{lem-brho}, \ref{lem-lim} below.
\end{proof}

In the following, we will use the convention that $C$ denotes a generic positive constant depending on $\mu , \lambda , \nu,  \ga ,  \kappa, R, \hat{\rho},  \hat{\theta}, \Omega$,  and $M,$ and use $C(\alpha)$ to emphasize that $C$ depends on $\alpha$. We begin with the following standard energy estimate.
\begin{lemma}\label{lem-basic2}
 Let $(\rho,u,\theta,H)$ be a smooth solution of \eqref{CMHD}-\eqref{boundary} satisfying \eqref{key1}. Then there exist positive constants $C$ and $P_1,P_2$ depending only on $\mu, \gamma, R, \hat{\rho}$ and $\Omega$ such that
 \begin{align}
&\displaystyle \sup_{0\le t\le T}(\|\sqrt{\rho}u\|_{L^2}^2+\|\rho-1\|_{L^2}^2+\|P-\bar{P}\|_{L^2}^2)\leq CC_{0}^{\frac14},\label{basic0}\\
& 0<P_1\leq \bar{P}\leq P_2, \quad for ~any~t\in[0,T].\label{p-b}
\end{align}
\end{lemma}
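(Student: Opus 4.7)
The plan is to use the bootstrap assumption $A_2(T)\le 2C_0^{1/4}$ from \eqref{key1} to close the basic energy identity \eqref{11a} in the ``weaker'' form $E(t)\le CC_0^{1/4}$, and then to convert the resulting weak control of $\int\rho\Phi(\theta)\,dx$ into two-sided bounds on the average pressure $\bar P$ via Jensen's inequality; finally I would split $P-\bar P$ into a temperature-fluctuation piece and a density-fluctuation piece.

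First, starting from \eqref{11a}, the integrand on the left is nonnegative and can be discarded, while the sign-indefinite right-hand side is pointwise controlled by $C|\nabla u|^2$. Integrating on $[0,t]$ and invoking $\int_0^T\|\nabla u\|_{L^2}^2\,dt\le A_2(T)\le 2C_0^{1/4}$ gives $E(t)\le CC_0+CA_2(T)\le CC_0^{1/4}$ after shrinking $C_0$. Each summand of $E(t)$ is nonnegative, so $\|\sqrt\rho u\|_{L^2}^2+\|H\|_{L^2}^2+\int\rho\Phi(\theta)\,dx\le CC_0^{1/4}$. Moreover, since $G(1)=G'(1)=0$ and $G''(\rho)=1/\rho\ge 1/(2\hat\rho)$, an elementary convexity argument on the compact interval $[0,2\hat\rho]$ (using $\rho\le 2\hat\rho$ from \eqref{key1}) gives $G(\rho)\ge c(\hat\rho)(\rho-1)^2$, and therefore $\|\rho-1\|_{L^2}^2\le CC_0^{1/4}$.

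Next, because the continuity equation together with \eqref{rho0-b} enforces $\int\rho\,dx=|\Omega|$ for all $t$, the measure $d\mu=\rho\,dx/|\Omega|$ is a probability measure on $\Omega$. Jensen's inequality applied to the concave function $\ln$ then yields
$$\frac{1}{|\Omega|}\int\rho\ln\theta\,dx\le\ln\!\left(\frac{1}{|\Omega|}\int\rho\theta\,dx\right)=\ln(\bar P/R),$$
and combining with the identity $\int\rho\Phi(\theta)\,dx=|\Omega|\bar P/R-\int\rho\ln\theta\,dx-|\Omega|$ produces
$$\Phi(\bar P/R)=\frac{\bar P}{R}-\ln\!\frac{\bar P}{R}-1\le\frac{1}{|\Omega|}\int\rho\Phi(\theta)\,dx\le CC_0^{1/4}.$$
Since $\Phi$ is strictly convex, vanishes only at $1$, and is coercive at both $0$ and $+\infty$, taking $\varepsilon_0$ small forces $\bar P/R$ into an arbitrarily small neighborhood of $1$; this gives the two-sided bound $P_1\le\bar P\le P_2$ with constants depending only on $R$ and $|\Omega|$.

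For the remaining piece of \eqref{basic0} I would decompose $P-\bar P=\rho(R\theta-\bar P)+\bar P(\rho-1)$, estimate the first term via $\rho\le 2\hat\rho$ and $\|\sqrt\rho(R\theta-\bar P)\|_{L^2}^2\le 2(\gamma-1)A_2(T)\le CC_0^{1/4}$ from \eqref{As2}, and the second via $\bar P\le P_2$ together with the bound on $\|\rho-1\|_{L^2}^2$ just proved. The main obstacle is the Jensen step: because the sign-indefinite tail in \eqref{11a} precludes the usual $E(t)\le CC_0$ bound (as used in \cite{chs2020-mhd}), only the degraded bound $\int\rho\Phi(\theta)\,dx\le CC_0^{1/4}$ is available, and extracting quantitative two-sided control of $\bar P$ from this weak bound is the key new ingredient that will let $\bar P$ act as a substitute for $\bar\theta$ throughout the subsequent a priori estimates.
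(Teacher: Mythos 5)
Your proposal is correct and follows essentially the same route as the paper: the energy identity closed with the bootstrap bound $A_2(T)\le 2C_0^{1/4}$ to get $E(t)\le CC_0^{1/4}$, the quadratic lower bound on $G(\rho)$ for $\|\rho-1\|_{L^2}$, Jensen's inequality with the probability measure $|\Omega|^{-1}\rho\,dx$ for the two-sided bound on $\bar P$ (your version via the concavity of $\ln$ is the same inequality as the paper's application of Jensen to the convex $\Phi$), and the decomposition $P-\bar P=\rho(R\theta-\bar P)+\bar P(\rho-1)$ for the last estimate. No gaps.
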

\begin{proof}
First, we rewrite \eqref{CMHD}$_2$ in the following form:
% \begin{equation}\label{CMHD-u}
% \rho u_t+\rho u \cdot \nabla u - (\lambda\! +\! 2\mu)\nabla\div u+\mu\nabla\!\times\!\omega + \nabla(P\!-\!\bar{P})=H \!\cdot\! \nabla H+\frac12\nabla(|H|^2-\overline{|H|^2}),
% \end{equation}
\begin{equation}\label{CMHD-u}
\rho u_t+\rho u \cdot \nabla u - (\lambda\! +\! 2\mu)\nabla\div u+\mu\nabla\!\times\!\omega + \nabla P=H \!\cdot\! \nabla H-\frac12\nabla |H|^2,
\end{equation}
where we used the fact $-\Delta u=-\nabla\div u+\nabla\times\omega$ and $\omega \triangleq \nabla\times u$.
Multiplying $\eqref{CMHD}_1 $ by $RG'(\rho)$, $\eqref{CMHD-u}$ by $u$, $\eqref{CMHD}_3$ by $\Phi'(\theta)$ and $\eqref{CMHD}_4$ by $H$ respectively, integrating by parts over $\Omega$, summing them up, using the boundary conditions \eqref{navier-b}-\eqref{boundary}, we have
\begin{equation}\label{m2}
\begin{aligned}
&\displaystyle  E'(t) + \int\left(\frac{\lambda(\div u)^{2}+2\mu|\mathbb{D}u|^2+\nu|\curl H|^{2}}{\theta}+\frac{\kappa|\nabla\theta|^2}{\theta^2}\right) dx \\
\displaystyle  =&-\mu\int\left(2(\div u)^{2}+|\omega|^2-2|\mathbb{D}u|^2\right)dx\leq 2\mu\int|\nabla u|^2 dx,
\end{aligned}
\end{equation}
where $E(t)$ is the basic energy defined by
\begin{equation*}%\label{b-energy}
  E(t)=\int \Big(\frac{1}{2}\rho |u|^{2}+ R G(\rho)+\frac{R}{\gamma-1}\rho\Phi(\theta)+\frac{1}{2}|H|^{2}\Big)dx,
\end{equation*}
with $G(\rho)\triangleq 1+\rho\ln\rho-\rho,\ \Phi(\theta)\triangleq\theta-\ln\theta-1.$
%\begin{align*}%\label{def-g}\displaystyle  \end{align*}
Then, integrating \eqref{m2} with respect to $t$ over $(0,T)$ and using \eqref{key1} leads to the following energy estimates
\begin{align}
&\sup_{0\le t\le T}E(t)\leq C_0+2\mu\int_0^T\int|\nabla u|^2 dxdt \le CC_0^{\frac14}.\label{basic1}
\end{align}
It is easy to check that
\begin{align}\label{grho}
\displaystyle  (\rho-1)^{2}\geq G(\rho)=(\n-1)^2\int_0^1\frac{1-\al}{\al (\n-1)+1}d\al  \ge \frac{(\n-1)^2}{ 2(2\hat{\rho}+1)},
\end{align}
which together with \eqref{basic1} gives
\begin{align}\label{basic2}
&\displaystyle \sup_{0\le t\le T}(\|\sqrt{\rho}u\|_{L^2}^2+\|\rho-1\|_{L^2}^2)\leq CC_{0}^{\frac14}.
\end{align}
Next,  it is easy to deduce from $\eqref{CMHD}_1$  and \eqref{rho0-b} that for any $t\in[0,T]$,
\be \notag%\la{rho-b}
\overline\rho(t)=\overline{\n_0}=1. \ee
Observe that $d\mu\triangleq|\Omega|^{-1}\n dx$ is a positive measure satisfying $\mu(\Omega)=1$ and $\Phi(\theta)$ is a convex function in $(0,\infty)$, it thus follows directly from Jensen's inequality ( see \cite[Theorem 3.3]{Rudin1987} )
\be\la{jen}\Phi\left(\int_\Omega fd\mu\right)\le \int_\Omega (\Phi\circ f)d\mu.\ee
and \eqref{basic1} that for any $t\in [0,T]$,
\begin{equation*}
  \Phi(\overline{\rho\theta}(t))\leq \int\Phi(\theta)|\Omega|^{-1}\n dx\leq C,
\end{equation*}
which gives \eqref{p-b}.
Finally, straight calculations show that for any $p\in [2,6],$
\begin{equation}\label{p-lp1}
  \begin{aligned}
\|R\theta-\bar{P}\|_{L^p}&\le R\|\theta- \overline\theta \|_{L^p}+C|R\overline\theta-\overline P|\\
&\leq C\|\nabla \theta\|_{L^2}+\frac{R}{|\Omega|}\left|\int(1-\rho)(\theta-\overline\theta)dx\right|\\
&\leq C\|\nabla \theta\|_{L^2}+C\|\rho-1\|_{L^2}\|\theta-\overline\theta\|_{L^2}\\
&\leq C\|\nabla \theta\|_{L^2}
  \end{aligned}
\end{equation}
Thus, it follows from \eqref{key1}, \eqref{basic1}, \eqref{basic2}, and \eqref{p-lp1} that for any $p\in[2,6]$,
\begin{equation}\label{p-lp2}
  \begin{aligned}
\|P-\bar{P}\|_{L^p}&= \| \n(R\te-\overline P )+ (\n -1)\overline P\|_{L^p}\\
&\le \|\n(R\te-\overline P )\|_{L^2}^{(6-p)/(2p)}\|\n(R\te-\overline P )\|_{L^6}^{ 3(p-2)/(2p)}+ P_2 \|\n-1\|_{L^p} \\
&\leq CC_0^{(6-p)/(16p)} \|\na\theta \|_{L^2}^{ 3(p-2)/(2p)}+ CC_0^{1/(4p)}.
  \end{aligned}
\end{equation}
Note that it holds
\begin{equation}\label{pt0}
  \begin{aligned}
\|P-\overline P\|_{L^2}^2(0)&=R^2\int(\n_0\te_0-\overline{\n_0\te_0})^2dx\\
 &\le C\int \n_0(\te_0-1)^2dx+C|1-\overline{\n_0\te_0}|^2+C\int(\n_0-1)^2dx\\
 &\le C(\hat{\n},\hat{\theta})\int \n_0\Phi(\te_0) dx+C\int(\n_0-1)^2dx\\
&\leq C(\hat{\n},\hat{\theta})C_0,
\end{aligned}
\end{equation}
where we have used \eqref{key1}, \eqref{grho}, and
 \be\la{cz1}\Phi(\te) =(\te-1)^2\int_0^1\frac{\al}{\al (\te-1)+1}d\al\geq\frac{1}{2(\|\te(\cdot,t)\|_{L^{\infty}}+1)}(\te-1)^2.\ee
Then, taking $p=2$ in \eqref{p-lp2} together with \eqref{basic2} yields \eqref{basic0}. The proof of Lemma \ref{lem-basic2} is completed.
\end{proof}

Now, we give the estimate of $A_1(T)$.
\begin{lemma}\label{lem-a1}
 Let $(\rho,u,\theta,H)$ be a smooth solution of
 \eqref{CMHD}-\eqref{boundary} satisfying \eqref{key1}.
  Then there is a positive constant
  $\ve_1>0 $, depending only on $\mu$, $\lambda$, $\nu$, $\kappa$, $\ga$, $R$, $\hat{\rho}$, $\hat{\theta}$, $\Omega$, and $M$ such that
  \begin{align}\label{basic-a1}
  \displaystyle A_1(T)\leq 2K,
  \end{align}
provided $C_0\leq \ve_1$.
\end{lemma}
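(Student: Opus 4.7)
The plan is to derive Hoff-type energy inequalities separately for the velocity field and the magnetic field, combine them, and close via a Gr\"onwall-type argument using the smallness of $C_0$ together with the a priori bound $A_2(T)\le 2C_0^{1/4}$ from \eqref{key1}. The key auxiliary tools are: (i) the effective viscous flux estimates of Lemma \ref{lem-f-td}; (ii) the div-curl inequalities \eqref{div-curl1}--\eqref{div-curl2} together with \eqref{2tdh} which convert $L^2$-norms of $\curl H$ and $\curl^2 H$ into $H^1,H^2$-norms of $H$; and (iii) the identity $u\cdot\nabla u\cdot n=-u\cdot\nabla n\cdot u$ from \cite{cl2019}, which reduces the boundary integrals created by the slip condition to interior ones. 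The estimates will be closed on $[0,\sigma(T)]$, which is what the $A_1$-piece of \eqref{key1} controls.

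First I would test the momentum equation \eqref{CMHD-u} against the material derivative $\dot u=u_t+u\cdot\nabla u$. Using $\curl u\times n=0$ and $u\cdot n=0$, the viscous terms assemble into a time-derivative of $\tfrac12\int\bigl(\mu|\omega|^2+(2\mu+\lambda)(\div u)^2\bigr)dx$ plus $\int\rho|\dot u|^2dx$, modulo boundary residuals handled by the identity above. On the right, the pressure contribution is written as $\tfrac{d}{dt}\!\int(P-\bar P)\div u\,dx-\int P_t\,\div u\,dx$, where $P_t$ is expressed through \eqref{CMHD}$_1$ and \eqref{CMHD}$_3$ in terms of $\n,u,\te,H$. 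The convective piece $\int\rho(u\cdot\nabla u)\cdot\dot u\,dx$ is bounded in the classical Hoff manner by $C\|\nabla u\|_{L^2}\|\nabla u\|_{L^4}^2$ using \eqref{tdu2}, and the Lorentz contribution $\int(H\cdot\nabla H-\tfrac12\nabla|H|^2)\cdot\dot u\,dx$ is absorbed via Sobolev together with the Poincar\'e-type bound \eqref{udot} for $\|\dot u\|_{L^6}$.

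Second, I would test the induction equation $H_t+u\cdot\nabla H-H\cdot\nabla u+H\div u=-\nu\,\nabla\times\curl H$ against $H_t$ and then, independently, against $\nabla\times\curl H$. The conditions $\curl H\times n=0$ and $H\cdot n=0$ annihilate every surface term. The first test yields
$$\tfrac{\nu}{2}\tfrac{d}{dt}\|\curl H\|_{L^2}^2+\tfrac12\|H_t\|_{L^2}^2\le C\|\nabla u\|_{L^4}^2\|H\|_{H^1}^2+C\|u\|_{L^\infty}^2\|\nabla H\|_{L^2}^2,$$
after absorbing $\int(|u||\nabla H|+|H||\nabla u|)|H_t|\,dx$ by \eqref{g1} and \eqref{div-curl2}. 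The second test produces the analogous $L^2_t$-control of $\|\curl^2 H\|_{L^2}^2$, whose right-hand side has the same coupling structure and is treated identically.

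Finally, summing the two inequalities, the overall estimate takes the schematic form
$$\tfrac{d}{dt}\Phi(t)+c_0\!\int\!\bigl(\rho|\dot u|^2+|H_t|^2+|\curl^2 H|^2\bigr)dx\le C\,\Phi(t)\bigl(1+\|\nabla u\|_{L^4}^2+\|\nabla H\|_{L^4}^2\bigr)+R(t),$$
with $\Phi(t)$ equivalent to $\|\nabla u\|_{L^2}^2+\|\nabla H\|_{L^2}^2$ up to the small pressure cross-term $\int(P-\bar P)\div u\,dx$ (controlled by \eqref{basic0} and \eqref{p-b}) and $R(t)$ a residual of order $C_0^{1/8}$. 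Integrating on $[0,\sigma(T)]$, using $\Phi(0)\le CM^2$ from \eqref{3.1}, the $A_2$-bound $\int_0^T\|\nabla u\|_{L^2}^2dt\le 2C_0^{1/4}$, and the Gr\"onwall inequality of Lemma \ref{lem-z} closes the estimate to $A_1(\sigma(T))\le 2K$ provided $C_0\le\ve_1$ for $\ve_1$ small. The main obstacle is the magnetic coupling inside the velocity estimate: since no pointwise control of $H$ is yet available, the Lorentz term $(H\cdot\nabla)H$ must be absorbed purely through the $H^1$-regularity of $H$ together with the div-curl structure, which forces the two estimates to be closed simultaneously rather than sequentially, and makes the smallness of $C_0$ (entering through \eqref{basic0}) essential for the bootstrap.
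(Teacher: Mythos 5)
Your overall architecture (coupled Hoff-type differential inequalities for $u$ and $H$, closed with the $A_2$-bound and the smallness of $C_0$) matches the paper's, and your treatment of the induction equation is equivalent to the paper's \eqref{tdh-3}. But one step fails as written: the absorption of the Lorentz force. You test the momentum equation against $\dot u$ and propose to bound $\int(H\cdot\na H-\tfrac12\na|H|^2)\cdot\dot u\,dx$ via Sobolev together with the Poincar\'e-type inequality \eqref{udot} for $\|\dot u\|_{L^6}$. But \eqref{udot} reads $\|\dot{u}\|_{L^6}\le C(\|\nabla\dot{u}\|_{L^2}+\|\nabla u\|_{L^2}^2)$, and $\|\na\dot u\|_{L^2}$ is not part of the dissipation available at the $A_1$ level: the left-hand side of your differential inequality produces only $\|\sqrt{\n}\dot u\|_{L^2}^2$, $\|H_t\|_{L^2}^2$ and $\|\curl^2 H\|_{L^2}^2$, while $\|\na\dot u\|_{L^2}^2$ enters only through $A_4$ in \eqref{As5}, and there only with the weight $\si^2$, which degenerates at $t=0$. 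So this term cannot be absorbed the way you describe. The paper sidesteps the issue by testing against $u_t$ instead of $\dot u$ (which costs no boundary term since $u_t\cdot n=0$ and $\omega\times n=0$), moving the time derivative onto the magnetic field, and estimating the resulting quantity $I_3=\int\big((H\otimes H)_t:\na u-(|H|^2/2)_t\div u\big)dx\le C\|H_t\|_{L^2}\|H\|_{L^\infty}\|\na u\|_{L^2}$ as in \eqref{a1-i3}; there the dangerous factor is $H_t$, which \emph{is} in the dissipation. If you insist on testing with $\dot u$, you must split $\dot u=u_t+u\cdot\na u$, treat the $u_t$-part as the paper does, and bound the convected part by $\|H\|_{L^6}\|\na H\|_{L^2}\|u\|_{L^6}\|\na u\|_{L^6}$ together with \eqref{tdu2} --- not by $\|\dot u\|_{L^6}$.

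A second, smaller issue: you close the estimate only on $[0,\si(T)]$ and conclude $A_1(\si(T))\le 2K$, whereas the lemma asserts $A_1(T)\le 2K$ on the whole interval, which is what the paper proves by integrating \eqref{a1-3} over $(0,T)$. Your Gr\"onwall factor $1+\|\na u\|_{L^4}^2+\|\na H\|_{L^4}^2$ and a residual ``of order $C_0^{1/8}$'' are harmless on a unit time interval but would produce an $e^{CT}$, respectively a contribution $C_0^{1/8}T$, on $[0,T]$. The paper avoids Gr\"onwall for the velocity part entirely: the sextic terms are absorbed via $\int_0^T\|\na u\|_{L^2}^6dt\le\sup_{t}\|\na u\|_{L^2}^4\int_0^T\|\na u\|_{L^2}^2dt\le 9K^2\cdot 2C_0^{1/4}$, and every remaining right-hand term is time-integrable with integral $O(C_0^{1/4})$ thanks to $A_2(T)\le 2C_0^{1/4}$ in \eqref{key1}; Gr\"onwall is used only for the magnetic energy, where the factor $\int_0^T\|\na u\|_{L^2}^4dt$ is uniformly bounded for the same reason.
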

\begin{proof}
First, multiplying \eqref{CMHD}$_3$ by $H$ and integrating by parts over $\Omega$, by \eqref{boundary} and \eqref{tdh1}, we have
\begin{align*}%\label{tdh-1}
\displaystyle \left(\frac{1}{2}\|H\|_{L^2}^2\right)_t\!+\!\nu \|\curl H\|_{L^2}^2\leq
\|\nabla u\|_{L^2}\|H\|_{L^4}^2\leq \frac{\nu}{2}\|\curl H\|_{L^2}^2\!+\!C\|\nabla u\|^4_{L^2}\|H\|^2_{L^2},
\end{align*}
which together with \eqref{tdh1}, \eqref{key1} and Gronwall's inequality gives
\begin{align}\label{tdh-2}
\displaystyle \sup_{0\le t\le T}\|H\|_{L^2}^2+\int_0^T\|\nabla H\|_{L^2}^2dt \leq
C\|H_0\|^2_{L^2}.
\end{align}
By Lemma \ref{lem-f-td}, one easily deduces from \eqref{CMHD}$_3$ and \eqref{boundary} that
\begin{equation}\label{tdh-3}
\begin{aligned}
\displaystyle & \left(\nu\|\curl H\|_{L^2}^2\right)_t+\nu^2 \|\curl^2  H\|_{L^2}^2+\|H_t\|^2_{L^2} \\
\leq & \int |H \cdot \nabla u-u \cdot \nabla H-H \div u|^2 dx  \\
\leq & C \|\nabla u\|^2_{L^2}\|H\|^2_{L^\infty}+C\|u\|^2_{L^6}\|\nabla H\|^2_{L^3}\\
%\leq & C \|\nabla u\|^2_{L^2}\|H\|_{L^6}\|\nabla H\|_{L^6}+C\|\nabla u\|^2_{L^2}(\|\nabla H\|_{L^2}\|\nabla H\|_{L^6}+\|\nabla H\|_{L^2}^2)\nonumber \\
\leq & C \|\nabla u\|^2_{L^2}\|\nabla H\|_{L^2}\|\curl^2  H\|_{L^2}+C\|\nabla u\|^2_{L^2}\|\nabla H\|_{L^2}^2 \\
\leq & \frac{\nu^2}{2}\|\curl^2  H\|^2_{L^2}+C(\|\nabla u\|^2_{L^2}+\|\nabla u\|^4_{L^2})\|\nabla H\|_{L^2}^2,
\end{aligned}
\end{equation}

using \eqref{tdh1} and Gronwall's inequality, we get
\begin{align}\label{tdh-4}
  \displaystyle \sup_{0\le t\le T  }\|\nabla H\|_{L^2}^2+\int_0^{T}
 \left(\|\curl^2 H\|_{L^2}^2+\|H_t\|_{L^2}^2\right)dt \leq C\|\nabla H_0\|^2_{L^2}\leq CM_2.
  \end{align}
On the other hand, multiplying \eqref{tdh-3} by $\sigma$ and integrating it over $(0,T)$, by \eqref{key1} and \eqref{tdh-2}, we obtain
\begin{align*}%\label{tdh-5}
\displaystyle \sup_{0\le t\le T  }\left(\sigma\|\nabla H\|_{L^2}^2\right)+\int_0^{T} \sigma
 \left(\|\curl^2 H\|_{L^2}^2+\|H_t\|_{L^2}^2\right)dt \leq C\|H_0\|^2_{L^2} \leq CC_0.
\end{align*}
Next, integrating $\eqref{CMHD-u}$ multiplied by $u_t $ over $\Omega $ by parts gives
 \be\ba \la{tdu-1}
 &\frac{d}{dt}\int \left(\frac{2\mu+\lambda}{2}(\div u)^2+ \frac{\mu}{2}|\omega|^2\right)dx+ \int\rho |\dot u|^2dx \\
&\le \int P\div u_t dx+ \int \n|u\cdot \na u|^2dx+\int(H \cdot \nabla H-\frac{1}{2} \nabla|H|^2)\cdot u_tdx\\
 &=  \frac{d}{dt}\int (P-\overline P) \div u  dx-\int (P-\overline{P})_t \div u dx+\int \n|u\cdot \na u|^2dx\\
 &\quad+\int(H \cdot \nabla H-\frac{1}{2} \nabla|H|^2)\cdot u_tdx\\
 &= \frac{d}{dt}\int (P-\overline P) \div u  dx-\frac{d}{dt}\int\frac{(P-\overline P)^2}{2(2\mu+\lambda)}dx+\frac{d}{dt}\int(H \!\cdot\!\nabla H\!-\!\frac{1}{2} \nabla|H|^2)\cdot udx\\
 &\quad-\frac{1}{2\mu\!+\!\lambda}\int P_t (F\!+\!\frac{1}{2}(|H|^2\!-\!\overline{|H|^2}))dx\!+\!\frac{1}{2\mu\!+\!\lambda}\int \overline{P}_t (F\!+\!\frac{1}{2}(|H|^2\!-\!\overline{|H|^2}))dx\\
 &\quad-\int(H \cdot \nabla H- \frac{1}{2}\nabla|H|^2)_t\cdot udx+\int \n|u\cdot \na u|^2dx,
 \ea\ee
where in the last equality  we have used the fact
\be  \notag%\la{ez1}
{\rm div}u=\frac{1}{2\mu+\lambda}(F +P-\overline P+(|H|^2-\overline{|H|^2})/2).\ee
Denote that
\begin{equation}\label{b1}
\begin{aligned}
 \widetilde{B}_1(t)\triangleq&\frac{2\mu+\lambda}{2}\|\div u\|_{L^2}^2+ \frac{\mu}{2}\|\omega\|_{L^2}^2+\frac{1}{2(2\mu+\lambda)}\|P-\overline P\|_{L^2}^2\\&-\int (P-\overline P)\div u dx-\int(H \!\cdot\!\nabla H\!-\!\frac{1}{2} \nabla|H|^2)\cdot udx,
\end{aligned}
\end{equation}
then \eqref{tdu-1} can be rewritten as
\begin{equation}\label{a1-1}
\begin{aligned}
 \widetilde{B}_1'(t)+ \int\rho |\dot u|^2dx
\le &-\frac{1}{2\mu\!+\!\lambda}\int P_t (F\!+\!\frac{1}{2}(|H|^2\!-\!\overline{|H|^2}))dx\\
&+\frac{1}{2\mu\!+\!\lambda}\int \overline{P}_t (F\!+\!\frac{1}{2}(|H|^2\!-\!\overline{|H|^2}))dx\\
 &-\int(H \cdot \nabla H- \frac{1}{2}\nabla|H|^2)_t\cdot udx+\int \n|u\cdot \na u|^2dx\\
  \triangleq & ~I_1+I_2+I_3+I_4.
\end{aligned}
\end{equation}
We have to estimate $I_1$, $I_2$, $I_3$ and $I_4$ one by one.
Note that \eqref{CMHD}$_3$ implies
\be \la{op3} \ba
P_t=&-\div (Pu) -(\gamma-1) P\div u+(\ga-1)\ka \Delta\te\\&+(\ga-1)\left(\lambda (\div u)^2+2\mu |\mathbb{D}(u)|^2+\nu|\curl H|^2\right),
\ea\ee
which along with \eqref{navier-b} and \eqref{theta-b} gives
\be \ba \la{pt}
\overline{P}_t =&
-(\gamma-1) \overline{P\div u} +(\ga-1)\left(\lambda
\overline{(\div u)^2}+2\mu \overline{|\mathbb{D}(u)|^2}+\nu\overline{|\curl H|^2}\right).
\ea \ee
By Lemma \ref{lem-f-td}, \eqref{g1}, \eqref{p-b}, and \eqref{p-lp2}, a direct calculation gives
\be\la{a1-i1}\ba
I_1
&\le C\int P(|F||\na u|+ |u||\na F|+||H|^2\!-\!\overline{|H|^2}||\na u|+ |u||H\na H|)dx\\
&\quad
+ C\int\left(|\na\te||\na F|+|\na\te||H\na H|+|\na u|^2|F|+|\curl H|^2|F|\right)dx \\
&\quad
+ C\int\left(|\na u|^2||H|^2\!-\!\overline{|H|^2}|+|\curl H|^2||H|^2\!-\!\overline{|H|^2}|\right)dx \\
%&\le C\int |P-  \bp|(|F||\na u|+ |u||\na F|+||H|^2\!-\!\overline{|H|^2}||\na u|+ |u||H\na H|)dx\\
&\le C\|P\!\!-\!\!\bp\|_{L^3}\big((\|F\|_{L^6}\!\!+\!\!\||H|^2\!\!\!-\!\!\overline{|H|^2}\|_{L^6})\|\na\! u\|_{L^2}\!\!+\!\!\|u\|_{L^6}(\|\na\! F\|_{L^2}\!\!+\!\!\|H\!\na\! H\|_{L^2})\big)\\
&\quad+ C\bp\big((\|F\|_{L^6}\!+\!\||H|^2\!\!\!-\!\!\overline{|H|^2}\|_{L^6})\|\na\! u\|_{L^2}\!+\!\|u\|_{L^6}(\|\na\! F\|_{L^2}\!\!+\!\|H\!\na\! H\|_{L^2})\big)\\
&\quad + C \|\na F\|_{L^2} \|\na \te\|_{L^2}+ C \|H\na H\|_{L^2} \|\na \te\|_{L^2} + C \| F\|_{L^6} \|\na u\|_{L^2}^{3/2}  \|\na u\|_{L^6}^{\frac12} \\
&\quad + C \| F\|_{L^6} \|\na H\|_{L^2}^{3/2}  \|\na H\|_{L^6}^{\frac12}+C\|\na u\|_{L^2}^2\|H\|_{L^\infty}^2+C\|\na H\|_{L^2}^2\|H\|_{L^\infty}^2 \\
%&\le C(\|\na \te\|_{L^2}^{\frac12}+1)(\|\na F\|_{L^2}+\|H\na H\|_{L^2})\|\na u\|_{L^2}+ C \|\na F\|_{L^2} \|\na \te\|_{L^2} \\
%& \quad+\! C\|\na\! F\|_{L^2} \|\na\! u\|_{L^2}^{3/2} \left(\|\n^{\frac12}\dot u\|_{L^2}\!+\!\|H\na\! H\|_{L^2}\!+\!\|\na\! u\|_{L^2}\!+\!\|\na\!\te\|_{L^2}\!+\!1 \right)^{\frac12} \\
%& \quad+ \!C \|H\na\! H\|_{L^2} \|\na\! \te\|_{L^2}\!+\! C\|\na\! F\|_{L^2} \|\na\! H\|_{L^2}^{2}\!+\!C\|\na\! H\|_{L^2}^3\|\curl^2\! H\|_{L^2}\\
%&\quad+C\|\na\! u\|_{L^2}^2\|\na\! H\|_{L^2}\|\curl^2\! H\|_{L^2}+\! C\|\na\! F\|_{L^2} \|\na\!  H\|_{L^2}^{3/2}\|\curl^2\! H\|_{L^2}^{\frac12}  \\
&\le \de \|\na F\|_{L^2}^2 +\de \|H\na H\|_{L^2}^2 +\de \|\sqrt{\rho}\dot u\|^2_{L^2}+\de \|\curl^2\! H\|^2_{L^2}\\
&\quad+C(\de) \left( \|\na u\|_{L^2}^2+\|\na H\|_{L^2}^4+\|\na \te\|_{L^2}^2+ \|\na u\|^6_{L^2}+ \|\na H\|^6_{L^2}\right) \\
&\le \de(\|\sqrt{\rho}\dot u\|^2_{L^2}\!\!+\!\!\|\curl^2\! H\|^2_{L^2})\!\!+\!C\left( \|\na\! u\|_{L^2}^2\!\!+\!\!\|\na\! H\|_{L^2}^4\!\!+\!\!\|\na\! \te\|_{L^2}^2\!\!+\!\! \|\na\! u\|^6_{L^2}\!\!+\! \!\|\na\! H\|^6_{L^2}\right),
\ea\ee
where one has used
\begin{align}
&\|H\|_{L^\infty} \leq C \|H\|_{L^6}^{\frac12}\|\nabla H\|_{L^6}^{\frac12}\leq C\|\nabla H\|_{L^2}+\|\nabla H\|_{L^2}^{\frac12}\|\curl^2 H\|_{L^2}^{\frac12},\label{h-inf}\\
&\|H\nabla H\|_{L^2} \leq C \|H\|_{L^\infty}\|\nabla H\|_{L^2}\leq C\|\nabla H\|_{L^2}^2+\|\nabla H\|_{L^2}^{\frac32}\|\curl^2 H\|_{L^2}^{\frac12}.\label{hh}
\end{align}
Next, it follows from \eqref{pt} and \eqref{basic0} that
\be\la{pt2} \ba
|\overline P_t|
&\le C\| P-\bp\|_{L^2} \| \na u\|_{L^2} + C \|\na u\|_{L^2}^2+ C \|\na H\|_{L^2}^2 \\
&\le C(C^{\frac18}_0\| \na u\|_{L^2}+\| \na u\|_{L^2}^2+ \|\na H\|_{L^2}^2).
\ea \ee
Thus, we have
\be\la{a1-i2}\ba
I_2\le& C(\| \na u\|_{L^2}+\| \na u\|_{L^2}^2+ \|\na H\|_{L^2}^2)\| F\|_{L^2}\\
\le& \de \|\na F\|_{L^2}^2  +C(\de,\on)(\|\na u\|_{L^2}^2+\|\na u\|_{L^2}^4+ \|\na H\|_{L^2}^4)\\
\le&  \de(\|\sqrt{\rho}\dot u\|^2_{L^2}\!+\! \|\curl^2 H\|_{L^2}^2)\!+\!C(\de) (\|\na u\|_{L^2}^2\!+\!\|\na u\|_{L^2}^4\!+\! \|\na H\|_{L^2}^4\!+ \!\|\na H\|_{L^2}^6).
\ea\ee
Next, by \eqref{navier-b} and \eqref{h-inf}, a direct calculation yields
\begin{equation}\label{a1-i3}
  \begin{aligned}
I_3= & \int \big((H \otimes H)_t:\nabla u- (|H|^2/2)_t\div u\big)dx\\
   \leq &C\|H_t\|_{L^2}\|H\|_{L^\infty}\|\nabla u\|_{L^2}\\
    \leq  &\de(\|H_t\|_{L^2}^{2}+\|\curl^2  H\|_{L^2}^2)+C(\de)(\|\nabla H\|_{L^2}^2\|\nabla u\|_{L^2}^4+\|\nabla H\|_{L^2}^4+\|\nabla u\|_{L^2}^4).
  \end{aligned}
\end{equation}
Finally, it follows from Lemma \ref{lem-f-td} and \eqref{key1} that
\be\la{a1-i4}\ba
I_4&\le C\|u\|_{L^6}^2 \|\na u\|_{L^2} \|\na u\|_{L^6}  \\
&\le \de(\|\sqrt{\rho} \dot u\|_{L^2}^2+\|\curl^2  H\|_{L^2}^2)\\&\quad+ C(\de)\left(\|\na u\|_{L^2}^2\!+\! \|\na H\|_{L^2}^2\!+ \!\|\na\te\|_{L^2}^2+\|\na
u\|_{L^2}^6\!+\! \|\na H\|_{L^2}^6\right).
\ea\ee
Then, substituting \eqref{a1-i1}, \eqref{a1-i2}, \eqref{a1-i3}, and \eqref{a1-i4} into \eqref{a1-1} yields
\begin{equation}\label{a1-2}
\begin{aligned}
\widetilde{B}_1'(t)+ \int\rho |\dot u|^2dx
&\le C\de(\|\sqrt{\rho} \dot u\|_{L^2}^2+\|H_t\|_{L^2}^{2}+\|\curl^2  H\|_{L^2}^2)\\
&+\! C(\de)\left(\|\na\! u\|_{L^2}^2 +  \|\na\! H\|_{L^2}^2\!+ \!\|\na\!\te\|_{L^2}^2+\|\na u\|_{L^2}^6\!+\! \|\na\! H\|_{L^2}^6\right).
\end{aligned}
\end{equation}
Combining \eqref{tdh-4} and \eqref{a1-2}, one obtains after choosing $\de$ suitable small that
\begin{equation}\label{a1-3}
\begin{aligned}
&B_1'(t)+ \frac12\|\sqrt{\rho} \dot u\|_{L^2}^2+\frac{\nu^2}{4}\|\curl^2  H\|_{L^2}^2+\frac12\|H_t\|^2_{L^2}\\
\le& C(\de)\left(\|\na\! u\|_{L^2}^2 +  \|\na\! H\|_{L^2}^2\!+ \!\|\na\te\|_{L^2}^2+\|\na u\|_{L^2}^6\!+\! \|\na\! H\|_{L^2}^6\right),
\end{aligned}
\end{equation}
with
\begin{equation}\label{b11}
\begin{aligned}
 B_1(t)=&\nu\|\curl H\|_{L^2}^2+\widetilde{B}_1(t),%\\
 %=&\nu\|\curl H\|_{L^2}^2+\frac{2\mu+\lambda}{2}\|\div u\|_{L^2}^2+ \frac{\mu}{2}\|\omega\|_{L^2}^2+\frac{1}{2(2\mu+\lambda)}\|P-\overline P\|_{L^2}^2\\&-\int (P-\overline P)\div u dx-\int(H \!\cdot\!\nabla H\!-\!\frac{1}{2} \nabla|H|^2)\cdot udx.
\end{aligned}
\end{equation}
where $\widetilde{B}_1(t)$ is defined by \eqref{b1}.
Then, integrating \eqref{a1-3} over $(0,T)$, one deduces from Lemma \ref{lem-f-td}, \eqref{key1}, \eqref{basic0}, and \eqref{pt0} that
\begin{equation}\label{a1-4}
\begin{aligned}
A_1(T)&\le CM_1^2+CM_2^2+C(\on,\hat\te)C_0^{\frac14} \\
&+ C(\on ) C_0^{\frac14}\sup_{0\le t\le T}(\|\na u\|_{L^2}^4+\|\na H\|_{L^2}^4) + C(\on ) C_0^{\frac18}\sup_{0\le t\le T}\|\na u\|_{L^2}\\
&\le CM_1^2+CM_2^2+C(\on,\hat\te)C_0^\frac{1}{12}+ C(\on ) C_0^{\frac14}\sup_{0\le t\le T}(\|\na u\|_{L^2}^4+\|\na H\|_{L^2}^4) \\
&\le K+9C(\on)C_0^{\frac14}K^2 \\
&\le 2K,
\end{aligned}
\end{equation}
with $K\triangleq CM_1^2+CM_2^2+C(\on,\hat\te) +1$, provided $C_0\le \ep_1 \triangleq \min\left\{1,\xl(9C(\on)K\xr)^{-4}\right\}.$
The proof of Lemma \ref{lem-a1} is completed.
\end{proof}

Next, motivated by  Huang-Li \cite{hl2018},  we will   establish some elementary estimates on $\dot u$ and $\dot\te$ in the following Lemma  \ref{lem-dot}, which are crucial to  estimate $A_3(T)$. The boundary terms shall be solved by the ideas in \cite{cl2019}.

\begin{lemma}\label{lem-dot}
Let $(\rho,u,\theta,H)$ be a smooth solution of
 \eqref{CMHD}-\eqref{boundary} satisfying \eqref{key1}.
  Then there exist positive constants $C$, $ C_1$, and $C_2$ depending only on
  $\ve_1>0 $, depending only on $\mu$, $\lambda$, $\nu$, $\kappa$, $\ga$, $R$, $\hat{\rho}$, $\hat{\theta}$, $\Omega$, and $M$ such that, for any $\eta\in (0,1]$ and $m\geq0,$
the following estimates hold:
\begin{equation}\label{b1-d}
\begin{aligned}
&(\sigma B_1)'(t) + \sigma \left(\frac12\|\sqrt{\rho} \dot u\|_{L^2}^2+\frac{\nu^2}{4}\|\curl^2  H\|_{L^2}^2+\frac12\|H_t\|^2_{L^2}\right)\\
&\le   C C_0^{\frac14} \sigma' + C\left(\|\na u\|_{L^2}^2+\|\na H\|_{L^2}^2+\|\na\te\|_{L^2}^2\right),
 \end{aligned}
\end{equation}
\begin{equation}\label{b2-d}
\begin{aligned}
&(\sigma^{m} B_2)'(t) + C_1\sigma^{m} \left(\|\nabla \dot u\|_{L^2}^2+\|\nabla H_t\|^2_{L^2}\right)\\
&\le   C_2\sigma^{m}\|\sqrt{\rho} \dot \theta\|_{L^2}^2+ Cm\sigma^{m-1}\sigma' \|H_t\|\ltwo+C\sigma^{m}\|\nabla u\|_{L^4}^4+C\sigma^{m}\|\theta\nabla u\|_{L^2}^2\\
&\quad+C(\sigma^m\!+\!\sigma^{m-1}\sigma')(\|\nabla H\|_{L^2}^2\!+\!\|\nabla u\|_{L^2}^2)(\|\sqrt{\rho}\dot{u}\|_{L^2}^{2}\!+\!\|\curl^2  H\|\ltwo\!+\!\|H_t\|\ltwo)\\
&\quad+ C\left(\|\na u\|_{L^2}^2+\|\na H\|_{L^2}^2\right),
 \end{aligned}
\end{equation}
\begin{equation}\label{b3-d}
\begin{aligned}
&(\sigma^{m} B_3)'(t) + \sigma^{m} \|\sqrt{\rho} \dot \theta\|_{L^2}^2 \\
&\le   C \eta \sigma^{m}\left(\|\nabla \dot u\|_{L^2}^2+\|\nabla H_t\|^2_{L^2}\right)+ C\|\na\te\|_{L^2}^2+C\sigma^{m}(\|\nabla u\|_{L^4}^4+\|\nabla H\|_{L^4}^4)\\
&\quad+C\sigma^{m}(\|\theta\nabla u\|_{L^2}^2+\|\theta\curl H\|_{L^2}^2),
 \end{aligned}
\end{equation}
where
$B_1$ is defined by \eqref{b11} and
\begin{align}
&B_2(t)\triangleq \frac12\|\sqrt{\rho} \dot u\|_{L^2}^2+\!\frac12\|H_t\|^2_{L^2}\!+\int_{\partial\Omega}\sigma^{m} (u\cdot\nabla n\cdot u)Fds,\label{b22}\\
&B_3(t)\triangleq\frac{\ga-1}{R}\left(\ka \|\na
\te\|_{L^2}^2-2 \int (\lambda (\div u)^2+2\mu|\mathbb{D}u|^2+\nu|\curl H|^2)\te dx\right).\label{b33}
\end{align}
\end{lemma}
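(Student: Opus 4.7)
\textbf{Proof plan for Lemma \ref{lem-dot}.}

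The plan is to derive each of the three estimates by testing an appropriate evolution equation against a material-derivative multiplier, then encoding any problematic boundary contributions into the definition of $B_i$ via the slip-boundary geometry, following \cite{cl2019}.

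For \eqref{b1-d}, I would multiply the differential inequality \eqref{a1-3} already obtained in Lemma \ref{lem-a1} by $\sigma(t)$ and expand $(\sigma B_1)' = \sigma' B_1 + \sigma B_1'$. The extra contribution $\sigma' B_1$ is bounded by $C C_0^{1/4}\sigma'$ by noting that the definition \eqref{b11}, combined with Lemma \ref{lem-basic2} and the magnetic bounds \eqref{h-inf}--\eqref{hh}, yields $|B_1(t)|\le CC_0^{1/4}$. The sixth-power terms on the right of \eqref{a1-3} are absorbed using the smallness provided by \eqref{basic-a1} and $C_0\le \ve_1$.

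For \eqref{b2-d}, I would apply the operator $\pa_t + \div(u\,\cdot)$ to the momentum equation \eqref{CMHD-u}, test against $\dot u$, and integrate by parts; simultaneously, differentiate \eqref{CMHD}$_4$ in time and test against $H_t$. This produces $\frac{d}{dt}\bigl(\tfrac12\|\sqrt\rho\dot u\|_{L^2}^2 + \tfrac12\|H_t\|_{L^2}^2\bigr)$ on the left and recovers the desired dissipation $\|\na\dot u\|_{L^2}^2 + \|\na H_t\|_{L^2}^2$ through Lemmas \ref{lem-vn}--\ref{lem-curl} and \ref{lem-ud}. The main obstacle is the boundary integral of the form $\int_{\pa\Omega}F\,(u\cdot\na u)\cdot n\, ds$ that arises when $\na F\cdot \dot u$ is integrated by parts. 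Differentiating the slip condition $u\cdot n|_{\pa\Omega}=0$ along the flow gives the identity $(u\cdot\na u)\cdot n=-u\cdot\na n\cdot u$ on $\pa\Omega$, so this bad term equals $-\frac{d}{dt}\int_{\pa\Omega}(u\cdot\na n\cdot u)F\, ds$ modulo manageable remainders; moving the time derivative to the left-hand side is exactly the reason why $\int_{\pa\Omega}\sigma^m(u\cdot\na n\cdot u)F\, ds$ is placed inside $B_2$. The Lorentz terms $H\cdot\na H$ and $\na|H|^2$ are estimated through \eqref{h-inf}--\eqref{hh}, producing the coupling factor $(\|\na H\|_{L^2}^2+\|\na u\|_{L^2}^2)(\|\sqrt\rho\dot u\|_{L^2}^2+\|\curl^2 H\|_{L^2}^2+\|H_t\|_{L^2}^2)$ on the right. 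The pressure time derivative via \eqref{op3} generates the $\|\theta\na u\|_{L^2}^2$ contribution and the residual $C_2\sigma^m\|\sqrt\rho\dot\theta\|_{L^2}^2$ term, to be absorbed later by \eqref{b3-d}; the weight derivative produces the $m\sigma^{m-1}\sigma'\|H_t\|_{L^2}^2$ term.

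For \eqref{b3-d}, I would multiply the temperature equation \eqref{CMHD}$_3$ by $\dot\theta$ and integrate over $\Omega$, producing $\|\sqrt\rho\dot\theta\|_{L^2}^2$ as dissipation. Splitting $\dot\theta=\theta_t+u\cdot\na\theta$: pairing $\theta_t$ with $\ka\Delta\theta$ contributes $\frac{d}{dt}\bigl(\tfrac{\ka}{2}\|\na\theta\|_{L^2}^2\bigr)$ via \eqref{theta-b}, while pairing $\theta_t$ with the viscous heating $\lambda(\div u)^2+2\mu|\mathbb{D}u|^2+\nu|\curl H|^2$ equals $\frac{d}{dt}\int(\lambda(\div u)^2+2\mu|\mathbb{D}u|^2+\nu|\curl H|^2)\theta\, dx$ minus cross-products such as $\int\theta(\na u)_t\cdot\na u\, dx$ and $\int\theta(\na H)_t\cdot\na H\, dx$. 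Writing $(\na u)_t=\na\dot u-\na(u\cdot\na u)$ and applying Cauchy's inequality with parameter $\eta$ absorbs these into $\eta\sigma^m(\|\na\dot u\|_{L^2}^2+\|\na H_t\|_{L^2}^2)$ at the cost of the $\|\theta\na u\|_{L^2}^2$, $\|\theta\curl H\|_{L^2}^2$, and quartic $\|\na u\|_{L^4}^4+\|\na H\|_{L^4}^4$ terms on the right of \eqref{b3-d}. The $u\cdot\na\theta$ part paired with $\ka\Delta\theta$, after one integration by parts using \eqref{theta-b}, is controlled by $\|\na\theta\|_{L^2}^2$-type quantities. Multiplying by $\sigma^m$ and moving the time derivative to the left produces \eqref{b3-d}. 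The subsequent closure of the scheme relies on absorbing the $\|\sqrt\rho\dot\theta\|_{L^2}^2$ appearing on the right of \eqref{b2-d} by the dissipation of \eqref{b3-d}, and conversely the $\eta$-small $\|\na\dot u\|_{L^2}^2+\|\na H_t\|_{L^2}^2$ by the dissipation of \eqref{b2-d}; this combination is carried out in the next lemma rather than here.
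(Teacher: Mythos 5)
Your proposal follows essentially the same route as the paper: \eqref{b1-d} by multiplying \eqref{a1-3} by $\sigma$ and absorbing $\sigma'B_1$ via Lemma \ref{lem-basic2}, \eqref{b2-d} by applying $\partial_t+\div(u\,\cdot)$ to the momentum equation tested against $\dot u$ together with the time-differentiated magnetic equation tested against $H_t$, using $\dot u\cdot n=(u\cdot\nabla u)\cdot n=-u\cdot\nabla n\cdot u$ to fold the boundary term into $B_2$, and \eqref{b3-d} by testing the temperature equation against $\sigma^m\dot\theta$ and splitting $\dot\theta=\theta_t+u\cdot\nabla\theta$. The only slight imprecisions are cosmetic (the boundary integrand is $F_t\,\dot u\cdot n$ rather than $F(u\cdot\nabla u)\cdot n$, and $B_1$ is not uniformly $O(C_0^{1/4})$ — its gradient parts must instead be absorbed into the $C(\|\na u\|_{L^2}^2+\|\na H\|_{L^2}^2+\|\na\te\|_{L^2}^2)$ term), neither of which affects the argument.
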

\begin{proof}
First, multiplying \eqref{a1-3} by $\sigma$ and integrating it over $(0,T)$, by \eqref{key1} and \eqref{basic0} yields \eqref{b1-d}.
Now we will claim \eqref{b2-d}. By the definition of $\dot{u}$ and $F$, we rewrite \eqref{CMHD}$_2$ as
\begin{equation}\label{CMHD-uu}
\rho \dot{u}=F-\mu\nabla\!\times\!\omega + \div (H\otimes H).
\end{equation}
Operating $ \sigma^{m}\dot{u}^{j}[\pa/\pa t+\div (u\cdot)] $ to $ \eqref{CMHD-uu}^j,$ summing with respect to $j$, and integrating over $\Omega,$ together with $ \eqref{CMHD}_1 $, we get
\begin{equation}\label{J0}
\begin{aligned}
\displaystyle  &\left(\frac{\sigma^{m}}{2}\int\rho|\dot{u}|^{2}dx\right)_t-\frac{m}{2}\sigma^{m-1}\sigma'\int\rho|\dot{u}|^{2}dx  \\
& = \int_{\partial\Omega}\sigma^{m}F_t\dot{u}\cdot nds-\int\sigma^{m}\big(F_t\,\div\dot{u}-u \cdot \nabla\dot{u}^j\partial_jF\big)dx\\
&\quad+\mu\int\sigma^{m}(-\dot{u}\cdot\nabla\times\omega_t-\dot{u}^{j}\div((\nabla\times\omega)^j\,u))dx  \\
&\quad+\int\sigma^{m}(\dot{u}\cdot(\div (H\otimes H))_t+\dot{u}^{j}\div((\div (H\otimes H^j)\,u))dx  \\
& \triangleq J_1+ J_2+ J_3+J_4.
\end{aligned}
\end{equation}
Let us estimate $J_1, J_2, J_3$ and $J_4$.
It is necessary to estimate the boundary terms $J_1$ and we will use \eqref{key1}, Lemma \ref{lem-f-td}, Lemma \ref{lem-ud}, Sobolev trace theorem and Young's, H\"{o}lder's, Poincar\'{c0}'s, Sobolev's inequalities. %Denote $h\triangleq u\cdot(\nabla n+\nabla n^{*})$ and $u^\bot=-u\times n$.
\begin{align}\label{J10}
J_1=&-\int_{\partial\Omega}\sigma^{m}F_t\,(u\cdot\nabla n\cdot u)ds \nonumber\\
= & -\left(\int_{\partial\Omega}\sigma^{m}(u\cdot\nabla n\cdot u)Fds\right)_t+m\sigma^{m-1}\sigma'\int_{\partial\Omega}(u\cdot\nabla n\cdot u)Fds \nonumber\\
&\quad +\int_{\partial\Omega}\sigma^{m}\big(F\dot{u}\cdot\nabla n \cdot u+Fu\cdot\nabla n \cdot\dot{u}\big)ds \nonumber \\
&\quad  -\int_{\partial\Omega}\sigma^{m}\big(F(u \cdot \nabla) u\cdot\nabla n \cdot u+Fu\cdot\nabla n \cdot(u \cdot \nabla) u\big)ds\nonumber\\
\leq& -\left(\int_{\partial\Omega}\sigma^{m}(u\cdot\nabla n\cdot u)Fds\right)_t+Cm\sigma^{m-1}\sigma'\|\nabla u\|_{L^2}^{2}\|\nabla F\|_{L^2} \nonumber\\
&\quad+\delta\sigma^{m}\|\dot{u}\|_{H^1}^2+C\sigma^{m}\|\nabla u\|_{L^2}^{4}+C\sigma^{m}\|\nabla u\|_{L^2}^{2}\|\nabla F\|_{L^2} \\
&\quad +C\|\na F\|_{L^6} \|\na u\|^3_{L^2}+ C  \|  F\|_{H^1}\|\na u\|_{L^2} \left(\|\na u\|^2_{L^4} +\|\na u\|^2_{L^2}\right)\nonumber\\
\leq& -\left(\int_{\partial\Omega}\sigma^{m}(u\cdot\nabla n\cdot u)Fds\right)_t+Cm\sigma^{m-1}\sigma'\|\nabla u\|_{L^2}^{2}(\|\sqrt{\rho}\dot{u}\|_{L^2}^2+\|\curl^2H\|_{L^2}^2) \nonumber\\&
\quad+\delta\sigma^{m}\|\nabla\dot{u}\|_{L^2}^2+C\sigma^{m}\|\nabla u\|_{L^2}^{2}(\|\sqrt{\rho}\dot{u}\|_{L^2}^2+\|\curl^2H\|_{L^2}^2)\nonumber\\
&\quad+C\sigma^{m}\|\nabla u\|_{L^2}^2(\|\nabla u\|_{L^2}^{4}+1)+C\sigma^{m}\|\nabla H\|_{L^2}^4(\|\nabla H\|_{L^2}^{2}+1), \nonumber
\end{align}
where we used the fact that
\begin{equation}\label{bz5}
\begin{aligned}
\|F\|_{H^1}+\|\omega\|_{H^1}
&\le C(\|\n \dot u\|_{L^2}+\|H\na  H\|_{L^2}+\|\na  u\|_{L^2})\\
&\le C(\|\n \dot u\|_{L^2}\!+\!\|\nabla H\|_{L^2}^2\!+\!\|\nabla H\|_{L^2}^{\frac32}\|\curl^2 H\|_{L^2}^{\frac12}\!+\!\|\na  u\|_{L^2}),
\displaystyle
\end{aligned}
\end{equation}
and
\be \la{bz6}\ba  &\|\na F\|_{L^6}+\| \dot u\|_{H^1}
 \le C(\|\n \dot u\|_{L^6}+\|H \na H\|_{L^6})+\| \dot u\|_{H^1}
\\&\le C(\|\na \dot u\|_{L^2}\!+\!\|\nabla H\|_{L^2}^2\!+\!\|\nabla H\|_{L^2}^{\frac12}\|\curl^2 H\|_{L^2}^{\frac32}\!+\!\|\na  u\|_{L^2}^2).\ea \ee
By \eqref{navier-b} and \eqref{CMHD}$_1$, a direct computation yields
\begin{align}\label{J20}
J_2 %& =\int\sigma^{m}\dot{u}\cdot\nabla F_tdx+\int\sigma^{m}\dot{u}^{j}\div(u\partial_jF)dx \nonumber \\
& = -\int\sigma^{m}F_t\,\div\dot{u}dx- \int\sigma^{m}u \cdot \nabla\dot{u}^j\partial_jFdx \nonumber \\
& =  - (\lambda+2\mu)\int\sigma^{m}(\div\dot{u})^{2}dx + (\lambda+2\mu)\int\sigma^{m}\div\dot{u}\,\nabla u:\nabla udx \nonumber\\
& \quad -\int\sigma^{m} P\div\dot{u}\,\div udx+\int\sigma^{m}\div \dot{u}\, u \cdot \nabla F dx \nonumber \\
 &\quad +\int\sigma^{m}\div\dot{u}\,H \cdot H_tdx+\int\sigma^{m}\div\dot{u}\,u \cdot \nabla H \cdot H dx \nonumber\\
 &\quad +R\int\sigma^{m}\div\dot{u}\,\rho \dot{\theta}dx-R\overline{\rho \dot{\theta}}\int\sigma^{m}\div\dot{u}dx-\int\sigma^{m}u \cdot\nabla\dot{u}^j\partial_jF dx \nonumber\\
& \leq  - (\lambda\!+\!2\mu)\int\sigma^{m}(\div\dot{u})^{2}dx + C\sigma^{m}\|\nabla\dot{u}\|_{L^2}\|\nabla u\|_{L^4}^2+C\sigma^{m}\|\nabla\dot{u}\|_{L^2}\|\theta\nabla u\|_{L^2}\\
& \quad+ C\sigma^{m}\|\nabla\dot{u}\|_{L^2}\|\nabla F\|_{L^3}\|u\|_{L^6}+ C\sigma^{m}\|\nabla\dot{u}\|_{L^2}\|H_t\|_{L^2}^{\frac12}\|\nabla H_t\|_{L^2}^{\frac12}\|H\|_{L^6}\nonumber\\
& \quad+ C\sigma^{m}\|\nabla\dot{u}\|_{L^2}\|H\|_{L^6}\|\nabla H\|_{L^6}\|u\|_{L^6}+ C\sigma^{m}\|\nabla\dot{u}\|_{L^2}\|\sqrt{\rho}\dot{\theta}\|_{L^2}\nonumber\\
& \leq  - (\lambda\!+\!2\mu)\int\sigma^{m}(\div\dot{u})^{2}dx + \delta\sigma^{m}(\|\nabla\dot{u}\|_{L^2}^2\!+\!\|\nabla H_t\|_{L^2}^2)+\!C\sigma^{m}\|\nabla u\|_{L^4}^4\nonumber\\
& \quad+C\sigma^{m}\|\nabla u\|_{L^2}^{2}(\|\sqrt{\rho}\dot{u}\|_{L^2}^2+\|\curl^2H\|_{L^2}^2)+\!C\sigma^{m}(\|\sqrt{\rho}\dot{\theta}\|_{L^2}^2\!+\!\|\theta\nabla u\|_{L^2}^2)\nonumber\\
& \quad+C\sigma^{m}\|\nabla u\|_{L^2}^2(\|\nabla u\|_{L^2}^{4}+1)+C\sigma^{m}\|\nabla H\|_{L^2}^4(\|\nabla H\|_{L^2}^{2}+1),\nonumber
\end{align}
where in the second equality we have used
\begin{equation*}
\begin{aligned}
\displaystyle F_t%&=(2\mu+\lm)\div u_t-P_t-H \cdot H_t\\
%&=(2\mu+\lm)\div\dot  u-(2\mu+\lm)\div(u\cdot\na u) +u\cdot\na P+\ga P\div u-H \cdot H_t\\
=&(2\mu+\lm)\div\dot  u-(2\mu+\lm)\na u:\na u  - u\cdot\na F+u \cdot \nabla H \cdot H\\&+ P\div u-H \cdot H_t-R\rho\dot{\theta}+R\overline{\rho\dot{\theta}}.
\end{aligned}
\end{equation*}
Next, by $ \omega_t=\curl \dot u-u\cdot \na \omega-\na u^i\times \pa_iu$ and a straightforward calculation leads to
\begin{align}\label{J30}
J_3 %&=-\mu\int\sigma^{m}\dot{u}\cdot(\nabla\times\omega_t)dx-\mu\int\sigma^{m}\dot{u}\cdot(\nabla\times\omega)\,\div udx\nonumber\\  &\quad-\mu\int\sigma^{m} u^{i}\dot{u}\cdot\nabla\times(\nabla_i\omega)dx  \nonumber\\
&=%\mu\int_{\partial \Omega}\sigma^{m}\dot{u}\cdot(\omega\times n)ds
-\mu\int\sigma^{m}|\curl \dot{u}|^{2}dx+\mu\int\sigma^{m}\curl\dot{u}\cdot(\nabla u^i\times\nabla_i u)dx \nonumber\\
&\quad+\mu\int\sigma^{m}u \cdot \nabla\omega \cdot \curl\dot{u}dx+\mu\int\sigma^{m} u \cdot\nabla\dot{u} \cdot(\nabla\times \omega)  dx\nonumber\\
&\leq -\mu\int\sigma^{m}|\curl \dot{u}|^{2}dx+\delta\sigma^{m}\|\nabla\dot{u}\|_{L^2}^2+C\sigma^{m}\|\nabla u\|_{L^4}^4 \\
&\quad +C\sigma^{m}\|\nabla u\|_{L^2}^{2}(\|\sqrt{\rho}\dot{u}\|_{L^2}^2+\|\curl^2H\|_{L^2}^2)+C\sigma^{m}(\|\nabla u\|_{L^2}^4+\|\nabla H\|_{L^2}^6).\nonumber
\end{align}

Finally, a directly computation shows that
\begin{align}\label{J40}
\displaystyle J_4 %&=\int\sigma^{m}(\dot{u}\cdot(\div (H\otimes H))_t+\dot{u}^{j}\div((\div (H\otimes H^j)\,u))dx \nonumber \\
& =-\int\sigma^{m}\nabla\dot{u}:(H\otimes H)_t dx-\mu\int\sigma^{m}H \cdot \nabla H^j u \cdot \nabla\dot{u}^{j}dx\nonumber \\
& \leq C\sigma^{m}(\|\nabla\dot{u}\|_{L^2} \|H\|_{L^6} \|H_t\|_{L^3}+\|\nabla\dot{u}\|_{L^2} \|H\|_{L^6} \|\nabla H\|_{L^6}\|u\|_{L^6} ) \nonumber \\
& \leq \delta\sigma^{m}(\|\nabla\dot{u}\|_{L^2}^2+\|\nabla H_t\|\ltwo)+C\sigma^m(\|\nabla H\|_{L^2}^4+\|\nabla u\|_{L^2}^4)\|\curl^2  H\|\ltwo \\
& \quad +C\sigma^m \|\nabla H\|_{L^2}^4\|H_t\|_{L^2}^2+C\sigma^m (\|\nabla H\|_{L^2}^6+\|\nabla u\|_{L^2}^6).\nonumber
\end{align}
Hence, submitting \eqref{J10}, \eqref{J20}, \eqref{J30}, and \eqref{J40}, into \eqref{J0}, one obtains
\begin{align}\label{J01}
&\left(\frac{\sigma^{m}}{2}\|\rho^{\frac{1}{2}}\dot{u}\|_{L^2}^2+\int_{\partial\Omega}\sigma^{m}(u\cdot\nabla n\cdot u)Fds\right)_t+(\lambda+2\mu)\sigma^{m}\|\div\dot{u}\|_{L^2}^2+\mu\sigma^{m}\|\curl\dot{u}\|_{L^2}^2 \nonumber\\
&\leq 4\delta\sigma^{m}(\|\nabla\dot{u}\|\ltwo+\|\nabla H_t\|\ltwo)+\!C\sigma^{m}(\|\sqrt{\rho}\dot{\theta}\|_{L^2}^2\!+\!\|\theta\nabla u\|_{L^2}^2+\|\nabla u\|^4_{L^4})\\
& \quad +C\sigma^m(\|\nabla H\|_{L^2}^2+\|\nabla u\|_{L^2}^2)(\|\sqrt{\rho}\dot{u}\|_{L^2}^{2}+\|\curl^2  H\|\ltwo+\|H_t\|\ltwo) \nonumber\\
& \quad +Cm\sigma^{m-1}\sigma'\|\nabla u\|_{L^2}^{2}(\|\sqrt{\rho}\dot{u}\|_{L^2}^{2}+\|\curl^2  H\|\ltwo)+C\sigma^{m}(\|\nabla u\|_{L^2}^{2}+\|\nabla H\|\ltwo).\nonumber
\end{align}
%For the boundary term in the right-hand side of \eqref{J03} , from Lemma \ref{lem-f-td}, we have
% \begin{align}\label{J0b1}
% &\quad\int_{\partial\Omega}(u\cdot\nabla n\cdot u)Fds\leq C\|\nabla u\|_{L^2}^{2}\|F\|_{H^1}\nonumber\\
% &\leq\frac{1}{2}\|\rho^{\frac{1}{2}}\dot{u}\|_{L^2}^2+CC_0^{2 \delta_0/3}\|\curl^2  H\|\ltwo+C(\|\nabla u\|_{L^2}^2+\|\nabla H\|_{L^2}^2+\|\nabla u\|_{L^2}^4).
% \end{align}
Next, we need to estimate the term $\|\nabla H_t\|_{L^2}$. Noticing that
\begin{equation*}%\label{ht}
\begin{cases}
 H_{tt}+\nu \nabla \times (\curl H_t)=(H \cdot \nabla u-u \cdot \nabla H-H \div u)_t,&\text{in}\quad \Omega,\\
 H_t \cdot n=0,\quad \curl H_t \times n=0,& \text{on}\quad \partial\Omega,\\
 \end{cases}
\end{equation*}
and after directly computations we obtain
\begin{align}\label{ht1}
&\quad \left(\frac{\sigma^{m}}{2}\|H_t\|_{L^2}^2\right)_t+\nu\sigma^{m}\|\curl H_t\|_{L^2}^2-\frac{m}{2}\sigma^{m-1}\sigma' \|H_t\|\ltwo\nonumber\\
&= \int \sigma^{m}(H_t \cdot \nabla u-u \cdot \nabla H_t-H_t \div u)\cdot H_t dx \nonumber \\
&\quad+ \int \sigma^{m}(H \cdot \nabla \dot{u}-\dot{u} \cdot \nabla H-H \div \dot{u})\cdot H_t dx  \\
&\quad - \int \sigma^{m}(H \cdot \nabla (u \cdot \nabla u)-(u \cdot \nabla u)\cdot \nabla H-H \div(u \cdot \nabla u) )\cdot H_t dx \nonumber \\
& \triangleq K_1+K_2+K_3.\nonumber
\end{align}
By Lemma \ref{lem-gn} and Lemma \ref{lem-f-td}, a direct calculation leads to
\begin{align}\label{htk1}
K_1 %&=\int \sigma^{m}(H_t \cdot \nabla u-u \cdot \nabla H_t-H_t \div u)\cdot H_t dx\nonumber\\
& \leq C\sigma^{m}(\|H_t\|_{L^3}\|H_t\|_{L^6}\|\nabla u\|_{L^2}
+\|u\|_{L^6}\|H_t\|_{L^3}\|\nabla H_t\|_{L^2})\nonumber\\
%&\leq C\sigma^{m}\|H_t\|_{L^2}^{\frac12}\|\nabla H_t\|_{L^2}^{3/2}\|\nabla u\|_{L^2}\nonumber\\
&\leq \delta\sigma^{m}\|\nabla H_t\|_{L^2}^{2}+C\sigma^{m}\|\nabla u\|_{L^2}^4\|H_t\|_{L^2}^{2}.
\end{align}
Similarly,
\begin{align}\label{htk20}
K_2 %&=\int \sigma^{m}(H \cdot \nabla \dot{u}-\dot{u} \cdot \nabla H-H \div \dot{u})\cdot H_t dx \nonumber \\
%& \leq C\sigma^{m}\|H\|_{L^3}\|H_t\|_{L^6}\|\nabla \dot{u}\|_{L^2} -\int\sigma^{m}\dot{u} \cdot \nabla H \cdot H_t dx \nonumber\\
& \leq C\sigma^{m}\|H\|_{L^6}\|H_t\|_{L^3}\|\nabla \dot{u}\|_{L^2}+
C\sigma^{m}\|\nabla H\|_{L^2}\|H_t\|_{L^3}\| \dot{u}\|_{L^6} \nonumber\\
%& \leq-\int_{\partial \Omega}\sigma^{m}(\dot{u} \cdot n)( H \cdot H_t) ds+ C\sigma^{m}(\|H\|_{L^3}\|H_t\|_{L^6}\|\nabla \dot{u}\|_{L^2} + \|H\|_{L^3}\|\nabla H_t\|_{L^2}\|\dot{u}\|_{L^6})\nonumber\\
&\leq \delta\sigma^{m}(\|\nabla \dot{u}\|_{L^2}^{2}\!+\!\|\nabla H_t\|_{L^2}^{2})\!+\!C\sigma^{m}\|\nabla H\|_{L^2}^{4}\|H_t\|_{L^2}^2\!+\!C\sigma^{m}\|\nabla u\|_{L^2}^{4}.
\end{align}
By Lemma \ref{lem-f-td}, a direct computation yields
\begin{align}\label{htk3}
K_3 %&=- \int \sigma^{m}(H \cdot \nabla (u \cdot \nabla u)-(u \cdot \nabla u)\cdot \nabla H-H \div(u \cdot \nabla u) )\cdot H_t dx \nonumber \\
& =\int \sigma^{m} H \cdot\nabla H_t \cdot( u \cdot \nabla u) dx %+\int \sigma^{m} (u \cdot \nabla u)\cdot \nabla H \cdot H_t\,dx \nonumber \\& \quad
+\int_{\partial \Omega}\sigma^{m} H\cdot H_t\,(u \cdot \nabla u \cdot n) ds%-\int \sigma^{m} u \cdot \nabla u \cdot \nabla H \cdot H_t  dx
\nonumber \\
& \quad -\int \sigma^{m} u \cdot \nabla u \cdot \nabla H_t \cdot H dx \nonumber \\
%& \leq C\sigma^{m}\|\nabla H\|_{L^2}\|\nabla H_t\|_{L^2}\|\nabla u\|_{L^2}\|\nabla u\|_{L^6}+\int_{\partial \Omega}\sigma^{m} H\cdot H_t\,(u \cdot \nabla n \cdot u) ds \nonumber\\
&\leq \int_{\partial \Omega}\sigma^{m} H\cdot H_t\,(u \cdot \nabla u \cdot n) ds+C\sigma^{m}\|H\|_{L^6}\|\nabla H_t\|_{L^2}\|\nabla u\|_{L^6}\|u\|_{L^6}\\
&\leq \int_{\partial \Omega}\sigma^{m} H\cdot H_t\,(u \cdot \nabla u \cdot n) ds+\delta\sigma^{m}\|\nabla H_t\|_{L^2}^2+C\sigma^{m}(\|\nabla H\|_{L^2}^6+\|\nabla u\|_{L^2}^6)\nonumber\\
&\quad +(\|\nabla H\|_{L^2}^4+\|\nabla u\|_{L^2}^4)(\|\sqrt{\rho}\dot{u}\|_{L^2}^2+\|\curl^2H\|_{L^2}^2+1).\nonumber
\end{align}
By Sobolev trace theorem and Lemma \ref{lem-f-td}, it indicates that
\begin{align}\label{htk31}
& \quad\int_{\partial \Omega}\sigma^{m}(u \cdot \nabla n \cdot u)( H \cdot H_t) ds\nonumber\\
&\leq C\sigma^{m}(\|u\|_{L^6}\|\nabla u\|_{L^2}\|H\|_{L^6}\|H_t\|_{L^6}+\|u\|_{L^6}^2\|\nabla H\|_{L^2}\|H_t\|_{L^6}\nonumber\\
& \quad+\|u\|_{L^6}^2\|\nabla H_t\|_{L^2}\|H\|_{L^6}+\|u\|_{L^4}^{2}\|H\|_{L^3}\|H_t\|_{L^6})\\
%&\leq C\sigma^{m}(\|\nabla u\|_{L^2}^2\|\nabla H\|_{L^2}\|\nabla H_t\|_{L^2}+CC_0^{\delta_0/3}\|\nabla u\|_{L^2}^{2}\|\nabla H_t\|_{L^2})\nonumber \\
&\leq \delta\sigma^{m}\|\nabla H_t\|_{L^2}^{2}+C\sigma^{m}\|\nabla u\|_{L^2}^4\|\nabla H\|_{L^2}^{2}.\nonumber
\end{align}
Combining \eqref{htk3} and \eqref{htk31}, we have
\begin{equation}\label{htk32}\begin{aligned}
 K_3 &\leq 2\delta\sigma^{m}\|\nabla H_t\|_{L^2}^{2}+C\sigma^{m}(\|\nabla u\|_{L^2}^4+\|\nabla H\|_{L^2}^{4})(\|\rho^{\frac{1}{2}}\dot{u}\|_{L^2}^{2}+\|\curl^2  H\|\ltwo)\\
 &\quad+C\sigma^{m}(\|\nabla u\|_{L^2}^4+\|\nabla H\|_{L^2}^{4}+\|\nabla u\|_{L^2}^6+\|\nabla H\|_{L^2}^{6}).
\end{aligned}\end{equation}
Putting \eqref{htk1}, \eqref{htk20} and \eqref{htk32} into \eqref{ht1}, we have
\begin{align}\label{ht3}
&\quad \left(\frac{\sigma^{m}}{2}\|H_t\|_{L^2}^2\right)_t+\nu\sigma^{m}\|\nabla H_t\|_{L^2}^2\nonumber\\
&\leq 4\delta\sigma^{m}(\|\nabla \dot{u}\|_{L^2}^{2}+\|\nabla H_t\|_{L^2}^{2})+Cm\sigma^{m-1}\sigma' \|H_t\|\ltwo\\
&\quad+C\sigma^{m}(\|\nabla u\|_{L^2}^2+\|\nabla H\|_{L^2}^{2})(\|\rho^{\frac{1}{2}}\dot{u}\|_{L^2}^{2}+\|\curl^2  H\|\ltwo+\|H_t\|_{L^2}^2) \nonumber\\
 &\quad+C\sigma^{m}(\|\nabla u\|_{L^2}^2+\|\nabla H\|_{L^2}^{2}).\nonumber
\end{align}
Combining \eqref{J01} with \eqref{ht3}, we obtains
\begin{align}\label{b22-1}
&B_2'(t)+(\lambda+2\mu)\sigma^{m}\|\div\dot{u}\|_{L^2}^2+\mu\sigma^{m}\|\curl\dot{u}\|_{L^2}^2+\nu\sigma^{m}\|\nabla H_t\|_{L^2}^2 \nonumber\\
&\leq 8\delta\sigma^{m}(\|\nabla\dot{u}\|\ltwo+\|\nabla H_t\|\ltwo)+\!C\sigma^{m}(\|\sqrt{\rho}\dot{\theta}\|_{L^2}^2\!+\!\|\theta\nabla u\|_{L^2}^2+\|\nabla u\|^4_{L^4})\\
& \quad +C(\sigma^m+\sigma^{m-1}\sigma')(\|\nabla H\|_{L^2}^2+\|\nabla u\|_{L^2}^2)(\|\sqrt{\rho}\dot{u}\|_{L^2}^{2}+\|\curl^2  H\|\ltwo+\|H_t\|\ltwo) \nonumber\\
& \quad +Cm\sigma^{m-1}\sigma'\|H_t\|\ltwo+\|\curl^2  H\|\ltwo)+C\sigma^{m}(\|\nabla u\|_{L^2}^{2}+\|\nabla H\|\ltwo).\nonumber
\end{align}
Applying \eqref{tdh1} and \eqref{tdudot} to \eqref{b22-1} and choosing $\delta$ small enough infer \eqref{b2-d} directly.

Now we will prove \eqref{b3-d}. First, we rewrite \eqref{CMHD}$_3$ as the following elliptic problem:
\begin{equation}\label{CMHD-th}
\displaystyle \kappa\Delta\theta=\frac{R}{\gamma-1}\rho \dot{\theta}+R\rho\theta\mathop{\mathrm{div}} u-\lambda (\mathop{\mathrm{div}} u)^2-2\mu |\mathbb{D}(u)|^2-\nu|\mathop{\rm curl} H|^2.
\end{equation}
with the boundary condition \eqref{theta-b}. Then, we have
\begin{equation}\label{td2-th}
\displaystyle \|\na^2\te\|_{L^2} \le C (\on)\left(\|\n^{\frac12}\dot \te\|_{L^2}+ \|\na u\|_{L^4}^2+ \|\na H\|_{L^4}^2+\|\te\na u\|_{L^2}\right).
\end{equation}
Multiplying \eqref{CMHD-th} by $\sigma^{m}\dot{\theta}$ and integrating the resulting equality over $\Omega$ yield that
\begin{equation}\label{b33-1}
\begin{aligned}
&\frac{\ka
{\sigma^m}}{2}\left( \int |\nabla\theta|^2dx\right)_t+\frac{R\sigma^m}{\ga-1} \int\rho|\dot{\te}|^2dx
\\&=-\ka\sigma^m\int\na\te\cdot\na(u\cdot\na\te)dx
+\lambda\sigma^m\int  (\div u)^2\dot\te dx\\&\quad
+2\mu\sigma^m\int |\mathbb{D}(u)|^2\dot\te dx+\nu\sigma^m\int |\curl H|^2\dot\te dx-R\si^m\int\n\te \div
u\dot\te dx \\
&\triangleq \sum_{i=1}^5L_i .
\end{aligned}
\end{equation}
First, by \eqref{key1} and \eqref{td2-th}, we have
\be\notag%\la{b33-l1}
\ba
L_1%&=-\ka\sigma^m\int\p_i\te\p_i u^j\p_j \te dx-\ka\sigma^m\int\p_i\te u^j\p_j (\p_i\te) dx\\
%&=-\ka\sigma^m\int\p_i\te\p_i u^j\p_j \te dx+\frac{1}{2}\ka\sigma^m\int\div u  |\na\te|^2 dx\\
&\le C\sigma^{m}   \|\na u\|_{L^2}\|\na\te\|^{\frac12}_{L^2}
\|\na^2\te\|^{3/2}_{L^2}  \\
&\le  \de\sigma^{m} \|\n^{\frac12}\dot\te\|^2_{L^2}+C\si^m \left(\|\na u\|_{L^4}^4+\|\na H\|_{L^4}^4+\|\te\na u\|_{L^2}^2\right) +C\sigma^{m}
\|\na\te\|^2_{L^2}.\ea\ee
Next, it holds  that  for any $\eta\in (0,1],$
\be\notag%\la{b33-l2}
\ba L_2 =&\lambda\si^m\int (\div u)^2 \te_t
dx+\lambda\si^m\int (\div u)^2u\cdot\na\te
dx\\%=&\lambda\si^m\left(\int (\div u)^2 \te dx\right)_t-2\lambda\si^m \int \te \div u \div (\dot u-u\cdot\na u)dx\\&+\lambda\si^m\int (\div u)^2u\cdot\na\te dx \\
=&\lambda\si^m
\left(\int (\div u)^2 \te dx\right)_t-2\lambda\si^m\int \te \div u
\div \dot udx\\&+2\lambda\si^m\int \te \div u \pa_i u^j\pa_j  u^i dx
+ \lambda\si^m\int u \cdot\na\left(\te   (\div u)^2 \right)dx
 \\
\le &\lambda\left(\si^m\int (\div u)^2 \te dx\right)_t-\lambda
m\si^{m-1}\si'\int (\div u)^2 \te dx\\& +\eta\si^m\|\na \dot
u\|_{L^2}^2+C(\eta)\si^m\|\te\na u\|_{L^2}^2+C\si^m\|\na u\|_{L^4}^4,\ea\ee
and
 \be \notag%\la{b33-l3}
 \ba L_3&\le 2\mu\left(\si^m\int
|\mathbb{D}u|^2 \te dx\right)_t-2\mu m\si^{m-1}\si'\int
|\mathbb{D}u|^2 \te dx
 \\&\quad+ \eta\si^m\|\na \dot
u\|_{L^2}^2+C(\eta)\si^m\|\te\na u\|_{L^2}^2+C\si^m\|\na u\|_{L^4}^4 .    \ea\ee
Similarly, it holds that
\be\notag%\la{b33-l4}
\ba L_4 =&\nu\si^m\int |\curl H|^2 \te_t
dx+\nu\si^m\int |\curl H|^2u\cdot\na\te
dx\\
=&\nu
\left(\si^m\int |\curl H|^2 \te dx\right)_t-\nu\si^{m-1}\si'\int  |\curl H|^2\te dx\\
&-2\nu\si^m\int \curl H\cdot\curl H_t \te dx+\nu\si^m\int |\curl H|^2 u\cdot \nabla \te dx \\
\le &\nu
\left(\si^m\int |\curl H|^2 \te dx\right)_t-\nu\si^{m-1}\si'\int  |\curl H|^2\te dx\\
& +\eta\si^m\|\na H_t\|_{L^2}^2+C(\eta)\si^m\|\te\curl H\|_{L^2}^2+C\si^m\|\te\na u\|_{L^2}^2+C\si^m\|\na H\|_{L^4}^4.\ea\ee
Finally, Cauchy's inequality gives
 \be\notag%\la{b33-l5}
 \ba
 |L_5|    \le  \delta \si^m \|\sqrt{\n}\dot\te\|_{L^2}^2+C( \delta,\on)\si^m \|\te\na u\|_{L^2}^2.  \ea\ee
Substituting  these estimates of $L_i,(i=1,\cdots,5)$ into \eqref{b33-1} and choosing $\de$ suitably small yield \eqref{b3-d} and complete the proof of Lemma \ref{lem-dot}.
\end{proof}

Now we are in a position to prove the following estimate on $A_3(T)$ and $A_4(T)$.
\begin{lemma}\label{lem-a3a4} Let $(\rho,u,\theta,H)$ be a smooth solution of
 \eqref{CMHD}-\eqref{boundary} satisfying \eqref{key1}.
  Then there is a positive constant $\ve_2>0 $, depending only on $\mu$, $\lambda$, $\nu$, $\kappa$, $\ga$, $R$, $\hat{\rho}$, $\hat{\theta}$, $\Omega$, and $M$ such that
 \begin{align}\label{a3a4}
 \displaystyle  A_3(T)+A_4(T)\le C_0^{\frac{1}{6}},
 \end{align}
provided $C_0\leq\varepsilon_2$.
\end{lemma}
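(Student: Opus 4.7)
\medskip

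\noindent\textbf{Proof proposal.} The plan is to estimate $A_3(T)$ and $A_4(T)$ separately by integrating the three differential inequalities \eqref{b1-d}, \eqref{b2-d}, \eqref{b3-d} established in Lemma~\ref{lem-dot}, and closing the estimates using the bootstrapping bound $A_2(T)\le 2C_0^{1/4}$ together with the $\nabla u$ and $\nabla H$ control given by $A_1(\sigma(T))\le 3K$ (since $A_3(T)$ is weighted by $\sigma$, any time value $t\le\sigma(T)=\min\{1,T\}$ falls in the regime where $A_1$ is already controlled, and for $t\geq 1$ the weight $\sigma\equiv 1$).

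For $A_3(T)$ I would integrate \eqref{b1-d} on $[0,T]$. Since $\sigma'\le 1$ and $\int_0^T\sigma'\,dt=\sigma(T)\le 1$, the first term contributes only $CC_0^{1/4}$; the spatial integrals on the right are exactly the integrands defining $A_2(T)$ and are therefore bounded by $CC_0^{1/4}$ as well. The quantity $\sigma B_1$ on the left dominates $c\,\sigma(\|\nabla u\|_{L^2}^2+\|\nabla H\|_{L^2}^2)$ up to the quadratic correction coming from $\int(P-\bar P)\div u\,dx$ and $\int H\!\cdot\!\nabla H\cdot u\,dx$ in \eqref{b1}; those are absorbed using Young's inequality together with the pressure bound \eqref{basic0} and \eqref{hh}. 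This yields $A_3(T)\le CC_0^{1/4}$, which is at most $\tfrac12 C_0^{1/6}$ provided $C_0\le C^{-12}$.

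For $A_4(T)$ I would take $m=2$ and form the combination $\Psi\triangleq B_2+\beta B_3$ with $\beta$ chosen larger than $2C_2$, so that the bad term $C_2\sigma^{m}\|\sqrt{\rho}\dot\theta\|_{L^2}^2$ on the right of \eqref{b2-d} is absorbed by $\beta\sigma^{m}\|\sqrt{\rho}\dot\theta\|_{L^2}^2$ on the left of $\beta\cdot$\eqref{b3-d}. Next choose $\eta$ so small that $C\beta\eta<C_1/2$ to absorb $\eta(\|\nabla\dot u\|_{L^2}^2+\|\nabla H_t\|_{L^2}^2)$. The boundary term inside $B_2$ is controlled by the Sobolev trace inequality and \eqref{bz5}--\eqref{bz6}, giving $\bigl|\int_{\partial\Omega}\sigma^{m}(u\!\cdot\!\nabla n\!\cdot\!u)F\,ds\bigr|\le \tfrac14\sigma^{m}\|\sqrt\rho\dot u\|_{L^2}^2+\tfrac14\sigma^{m}\|\curl^2 H\|_{L^2}^2+(\text{lower order})$ under the a priori bounds \eqref{key1}; this allows $\sigma^{m}\Psi$ to dominate $c\sigma^{m}(\|\sqrt\rho\dot u\|_{L^2}^2+\|H_t\|_{L^2}^2+\kappa\|\nabla\theta\|_{L^2}^2)$. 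Integrating in $t$ then produces an integrated bound where the remaining right-hand side terms $\sigma^{m}\|\nabla u\|_{L^4}^4$, $\sigma^{m}\|\theta\nabla u\|_{L^2}^2$, $\sigma^{m}\|\theta\,\curl H\|_{L^2}^2$, $\sigma^{m-1}\sigma'\|H_t\|_{L^2}^2$ and the cross terms with $(\|\nabla H\|_{L^2}^2+\|\nabla u\|_{L^2}^2)(\|\sqrt\rho\dot u\|_{L^2}^2+\|\curl^2H\|_{L^2}^2+\|H_t\|_{L^2}^2)$ have to be bounded.

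The hard part will be estimating $\int_0^T\sigma^{2}\|\nabla u\|_{L^4}^4 dt$ and the temperature--velocity coupling $\int_0^T\sigma^{2}\|\theta\nabla u\|_{L^2}^2 dt$. For the former I would interpolate $\|\nabla u\|_{L^4}\lesssim\|\nabla u\|_{L^2}^{1/4}\|\nabla u\|_{L^6}^{3/4}$ and then plug in estimate \eqref{tdu2} with $p=6$, together with \eqref{f-curlu-lp}, \eqref{hh}, so that $\sigma^{2}\|\nabla u\|_{L^4}^4$ is controlled by products of $\|\nabla u\|_{L^2}^{\alpha}(\|\sqrt\rho\dot u\|_{L^2}^2+\|\curl^2H\|_{L^2}^2)$ with weights compatible with $A_3(T)$ and $A_1(\sigma(T))$. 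For the coupling term I would write $\|\theta\nabla u\|_{L^2}\le \|R\theta-\bar P\|_{L^6}\|\nabla u\|_{L^3}+\bar P\|\nabla u\|_{L^2}$, and control the first factor via \eqref{p-lp1} (thus by $\|\nabla\theta\|_{L^2}$, hence by $A_2$) and the second factor via interpolation, $\bar P$ being uniformly bounded by \eqref{p-b}. The term $\sigma\|H_t\|_{L^2}^2$ is already integrated against $\sigma'$ and controlled by $A_3(T)$ itself. Combining everything and using the smallness hypothesis $C_0\le\varepsilon_2$ and the a priori bounds \eqref{key1} to absorb the cross terms on the left, the remaining bound reads $A_4(T)\le C(A_2(T)+A_2(T)^{1/2}A_1(\sigma(T))^2+A_3(T))\le CC_0^{1/4}$, and choosing $\varepsilon_2$ so that $CC_0^{1/12}\le 1/2$ yields $A_3(T)+A_4(T)\le C_0^{1/6}$.
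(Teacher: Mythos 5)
Your overall architecture coincides with the paper's: integrate \eqref{b1-d} to get $A_3(T)\le CC_0^{1/4}$ after bounding $B_1$ from below, then combine \eqref{b2-d} with a large multiple of \eqref{b3-d} (the paper uses $B_4=(C_2+1)B_2+B_3$, the same absorption you describe), choose $\eta$ small, control the boundary term in $B_2$ by the trace inequality and \eqref{bz5}--\eqref{bz6}, and estimate $\|\nabla u\|_{L^4}^4$, $\|\theta\nabla u\|_{L^2}^2$, $\|\theta\curl H\|_{L^2}^2$ exactly as you propose via \eqref{tdu2}, \eqref{p-lp1}, \eqref{p-b}. Up to that point the sketch is sound.

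There is, however, one genuine gap. When you expand $\|\nabla u\|_{L^4}^4$ (or $\|\nabla u\|_{L^6}$ via \eqref{tdu2}), the pressure contribution $\|P-\bar P\|_{L^p}$ splits as $\|\rho(R\theta-\bar P)\|_{L^p}+\bar P\|\rho-1\|_{L^p}$; the first piece is controlled by $\|\nabla\theta\|_{L^2}$ through \eqref{p-lp1}, but the second is not. It produces, after integration in time, a term of the form $\int_0^T\sigma\|\rho-1\|_{L^2}^2\,dt$ (this is exactly the $C(\hat\rho)\|\rho-1\|_{L^2}^2$ term in the paper's \eqref{b44-5} and the last term in \eqref{a4-0}). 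None of $A_1,\dots,A_4$ nor the basic energy controls this time integral — the energy only gives $\sup_t\|\rho-1\|_{L^2}^2\le CC_0^{1/4}$, which integrated over $[0,\infty)$ is useless — so your claimed closing inequality $A_4(T)\le C\bigl(A_2(T)+A_2(T)^{1/2}A_1(\sigma(T))^2+A_3(T)\bigr)$ cannot be reached as stated. The paper closes this loop with a separate argument: rewriting the momentum balance as $\bar P(\rho-1)=-F+(2\mu+\lambda)\div u-\rho(R\theta-\bar P)$ (equation \eqref{CMHD-uuu}), so that $\|\rho-1\|_{L^2}^2\lesssim\|F\|_{L^2}^2+\|\nabla u\|_{L^2}^2+\|R\theta-\bar P\|_{L^2}^2$, and then using \eqref{f-curlu-lp} to bound $\|F\|_{L^2}^2$ by $\|\sqrt{\rho}\dot u\|_{L^2}^2+\|\curl^2H\|_{L^2}^2+\|\nabla u\|_{L^2}^2$, whose $\sigma$-weighted time integrals are part of $A_3(T)$ and $A_2(T)$; this yields $\int_0^T\sigma\|\rho-1\|_{L^2}^2\,dt\le CC_0^{1/4}$ (the paper's \eqref{a4-1}). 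Without this effective-viscous-flux step your estimate for $A_4(T)$ does not close, so you should add it explicitly.
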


\begin{proof} First, it follows from \eqref{tdu1}, \eqref{tdh1}, \eqref{p-lp2}, and \eqref{b1-d} that
	\be\ba \label{a3-1} B_1(t)
	&\ge  C\|\na u\|_{L^2}^2\!+\!C_3\|\na H\|_{L^2}^2\!-\!\delta\|\na u\|_{L^2}^2\!-\!C\|P\!-\!\bp\|^2_{L^2}\!-\!C(\delta)C_0\|\na H\|_{L^2}^2 \\
	&\ge C  (\|\nabla u\|_{L^2}^2+\|\na H\|_{L^2}^2)-C C_0^{\frac14},\ea\ee
with $C_0\leq \frac{C_3}{2C(\delta)}$ and $\delta$ small enough.
Then \eqref{a3-1} together with \eqref{key1} implies that
	\begin{align}\label{a3-0}
		A_3(T) \le CC_0^{\frac14},
	\end{align}
provided $C_0\leq \ve_{21}\triangleq\min\{1,\frac{C_3}{2C(\delta)}\}.$

Next, adding \eqref{b3-d} multiplying by $C_2+1$ to \eqref{b2-d} and choosing $\eta$ suitably small gives
\begin{equation}\label{b44}
\begin{aligned}
&\left(\sigma^m B_4\right)'(t) + \frac{C_1}{2}\sigma^m\left(\|\nabla\dot{u}\|_{L^2}^2+\|\nabla H_t\|_{L^2}^2\right) +\sigma^m\|\sqrt{\rho}\dot{\theta}\|_{L^2}^2\\
	&\le C(\si^{m-1}\si'+\si^m )(\|\sqrt{\rho}\dot{u}\|_{L^2}^{2}\!+\!\|\curl^2  H\|\ltwo\!+\!\|H_t\|\ltwo)\\
	&\quad+ C(\|\na u\|_{L^2}^2 +\|\na H\|_{L^2}^2 + \|\na \te\|_{L^2}^2)+C\si^m (\|\na u\|^4_{L^4} + \|\na H\|_{L^4}^4)\\
	&\quad+C\sigma^{m}(\|\theta\nabla u\|_{L^2}^2+\|\theta\curl H\|_{L^2}^2),
\end{aligned}
\end{equation}
where $B_4(t):=(C_2+1)B_2(t)+B_3(t)$ and $B_2(t), B_3(t)$ are defined by \eqref{b22}-\eqref{b33}.
Next, we deal with the terms of right hand of \eqref{b44}.
By \eqref{key1}, \eqref{p-b}, \eqref{p-lp1}, and \eqref{p-lp2}, one gets
\be \la{b44-3}\ba
	\|\te\na u\|_{L^2}^2
	&\le C\|R\te-\bp \|_{L^6}^2 \|\na u\|_{L^2} \|\na u\|_{L^6} + C\bp^2 \|\na u\|_{L^2}^2 \\
	&\le C\|\na \theta\|_{L^2}^2\left( \|\n^{\frac12}\dot u\|^2_{L^2} +\|\curl^2H\|^2_{L^2}+\|\na\te\|^2_{L^2} +\|\nabla H\|^4_{L^2}\right)\\
	&\quad +C\|\na \theta\|_{L^2}^2\|\na u\|_{L^2}^2+C\|\na u\|_{L^2}^2,\ea\ee
where we used the fact that
\begin{equation}\label{u-l6}
\begin{aligned}
&\|\nabla u\|_{L^6}\leq C(\|\rho\dot{u}\|_{L^2}+\|H\nabla H\|_{L^2}+\|\nabla u\|_{L^2}+\|P\!-\!\overline{P}\|_{L^6}+\||H|^2\!-\!\overline{|H|^2}\|_{L^6}) \\
&\leq C(\|\rho\dot{u}\|_{L^2}\!+\!\|\nabla H\|_{L^2}^2\!+\!\|\nabla H\|_{L^2}^{3/2}\|\curl^2 H\|_{L^2}^{\frac12}\!+\!\|\nabla u\|_{L^2}\!+\!\|\nabla \theta\|_{L^2}\!+\!C_0^{1/{24}}).
\end{aligned}
\end{equation}
due to \eqref{tdu2} and \eqref{hh}.
Similarly,
\be \la{b44-4}\ba
	\|\te\curl H\|_{L^2}^2
	&\le C\|R\te-\bp \|_{L^6}^2 \|\na H\|_{L^2} \|\curl^2H\|_{L^6} + C\|\na H\|_{L^2}^2 \\
	&\le C\|\na \theta\|_{L^2}^2\left(\|\curl^2H\|^2_{L^2}+\|\nabla H\|^2_{L^2}\right)+C\|\na H\|_{L^2}^2.\ea\ee
On the other hand, by virtue of \eqref{tdu2}, \eqref{hh} and \eqref{key1}, one obtains
\be\la{b44-5}\ba
	\|\na u\|_{L^4}^4
	&\le C(\|\n\dot u\|_{L^2}+\|H\nabla H\|_{L^2})^3(\|\na u\|_{L^2}+\|P-\bp\|_{L^2}+\||H|^2\!-\!\overline{|H|^2}\|_{L^2})\\&\quad+C(\|\na u\|_{L^2}^4+\|P-\bp\|_{L^4}^4+\||H|^2\!-\!\overline{|H|^2}\|_{L^4}^4)\\
	&\le C\left( \|\n^{\frac12}\dot u\|_{L^2}^3+\|\curl^2H\|_{L^2}^3 + \|\na \te\|_{L^2}^3\right) \\&\quad +C(\on)\|\n-1\|_{L^2}^2+C(\|\na u\|_{L^2}^2+\|\na H\|_{L^2}^2),
	\ea\ee
and
\be\la{b44-6}\ba
	&\|\na H\|_{L^4}^4\le C\|\curl^2H\|_{L^2}^3 +C\|\na H\|_{L^2}^4.
	\ea\ee
Then it follows from \eqref{key1}, \eqref{b44-5} and \eqref{b44-6} that
\be \la{b44-7}\ba
	\si (\|\na u\|_{L^4}^4+\|\na H\|_{L^4}^4)&\le C\left(\|\n^{\frac12}\dot u\|_{L^2}^2  +\|\curl^2H\|_{L^2}^2+\|\na \te\|_{L^2}^2\right)\\&\quad+C\left(\|\na u\|_{L^2}^2+\|\na H\|_{L^2}^2\right)+C(\on)\si\|\n-1\|_{L^2}^2.
	\ea\ee
Thus, taking $m=2$ in \eqref{b44}, by \eqref{key1}, \eqref{b44-3}, \eqref{b44-4} and \eqref{b44-7}, we have
\begin{equation}\label{b44-8}
\begin{aligned}
&\left(\sigma^2 B_4\right)'(t) + \sigma^2\left(\frac{C_1}{2}(\|\nabla\dot{u}\|_{L^2}^2+\|\nabla H_t\|_{L^2}^2)+\|\sqrt{\rho}\dot{\theta}\|_{L^2}^2\right) \\
	&\le C\si(\|\sqrt{\rho}\dot{u}\|_{L^2}^{2}\!+\!\|\curl^2  H\|\ltwo\!+\!\|H_t\|\ltwo)\\
	&\quad+ C(\|\na u\|_{L^2}^2 +\|\na H\|_{L^2}^2 + \|\na \te\|_{L^2}^2)+C\si\|\rho-1\|^2_{L^2}.
\end{aligned}
\end{equation}
Next, we deduce from \eqref{tdu2}, \eqref{key1}, \eqref{p-b}, \eqref{p-lp1}, and \eqref{hh} that
\begin{equation}\label{b44-1}
\begin{aligned}
&\int (\lambda (\div u)^2+2\mu|\mathbb{D}u|^2+\nu|\curl H|^2)\te dx\\
&\leq C \int |R\theta-\overline{P}|(|\nabla u|^2+|\curl H|^2)dx+C\overline{P}(\|\nabla u\|_{L^2}^2+\|\nabla H\|_{L^2}^2)\\
&\leq C\|R\theta\!-\!\overline{P}\|_{L^6}(\|\nabla u\|_{L^2}^{\frac32}\|\nabla u\|_{L^6}^{\frac12}\!+\!\|\nabla H\|_{L^2}^{\frac32}\|\nabla H\|_{L^6}^{\frac12})\!+\!C(\|\nabla u\|_{L^2}^2\!+\!\|\nabla H\|_{L^2}^2)\\
&\leq \delta(\|\nabla \theta\|_{L^2}^2\!+\!\|\sqrt{\rho}\dot{u}\|_{L^2}^2\!+\!\|H_t\|_{L^2}^2)+C\|\nabla u\|_{L^2}^2\!+\!\|\nabla H\|_{L^2}^2),
\end{aligned}
\end{equation}
where in the last inequality we have used the fact
\begin{equation}\label{h2xd1}
\begin{aligned}
\|\curl^2  H\|_{L^2}%\nonumber \\&\leq C(\|H_t\|+\|\nabla H\|_{L^3}\|\nabla u\|_{L^2}+\|H\|_{L^\infty}\|\nabla u\|_{L^2})\nonumber \\
&\leq C(\|H_t\|+\|\curl^2  H\|_{L^2}^{\frac12}\|\nabla H\|_{L^2}^{\frac12}\|\nabla u\|_{L^2}+\|\nabla H\|_{L^2}\|\nabla u\|_{L^2})\\
 &\leq \frac{1}{2}\|\curl^2  H\|_{L^2}+C(\|H_t\|+\|\nabla H\|_{L^2}\|\nabla u\|_{L^2}^2+\|\nabla H\|_{L^2}\|\nabla u\|_{L^2}),
\end{aligned}
\end{equation}
due to \eqref{CMHD}$_4$ and \eqref{tdh1}.
Then it follows that for $\delta$ suitably small,
\begin{equation}\label{b44-22}
\begin{aligned}
B_4(t)\geq & \frac{\kappa(\gamma-1)}{2R}\|\nabla \theta\|_{L^2}^2\!+\frac{1}{4}\!\|\sqrt{\rho}\dot{u}\|_{L^2}^2\!+\frac{1}{4}\!\|H_t\|_{L^2}^2\\&
+\int_{\partial\Omega}(u\cdot\nabla n\cdot u)Fds-C(\|\nabla H\|_{L^2}^2+\|\nabla u\|_{L^2}^2).
\end{aligned}
\end{equation}
Thus, we have
\begin{equation}\label{b44-2}
\begin{aligned}
\sigma^2 B_4(t)\geq & C\sigma^2(\|\nabla \theta\|_{L^2}^2\!+\!\|\sqrt{\rho}\dot{u}\|_{L^2}^2\!+\!\|H_t\|_{L^2}^2)-CC_0^{\frac14},
\end{aligned}
\end{equation}
where we used \eqref{a3-0} and the fact that
\be \ba\label{b44-9}
	&\left|\int_{\p \O}  \sigma^2 u \cdot \na n \cdot u F dS\right|
	\le C \sup_{ 0\le t\le T} \si^2   \|u\|_{H^1}^2 \|F\|_{H^1} \\
	\le & C(\si \|\na u\|_{L^2}^2)(\si (\|\sqrt{\rho} \dot u\|_{L^2}\!+\!\|\curl^2H\|_{L^2}))\!+\!(\si \|\na u\|_{L^2}^2)(\si \|\na H\|_{L^2}^2\!+\!\si \|\na u\|_{L^2})\\
	\le & C(\on) C_0^{\frac14},
	\ea \ee
due to \eqref{bz5} and \eqref{key1}.
Thus, integrating \eqref{b44-8} over $(0,T)$, using \eqref{a3-0} and \eqref{b44-2} yields
\begin{equation}\label{a4-0}
\displaystyle A_4(T)\leq CC_0^{\frac14}+C\int_0^T \si\|\rho-1\|^2_{L^2}dt.
\end{equation}
Finally, note that \eqref{CMHD-uu} is equivalent to
\be%\notag
\label{CMHD-uuu}%(\n-1)_t+\frac{\bp}{2\mu+\lambda}(\n-1)=-\div((\n-1)u)-\frac{G}{2\mu+\lambda}- \frac{ \n (R\te-\bp)  }{2\mu+\lambda} .
\bp(\n-1)=-F+(2\mu+\lambda)\div u-\n(R\te-\bp),
\ee
which together with \eqref{f-curlu-lp}, \eqref{hh}, \eqref{p-lp1} and \eqref{key1} implies
\be\ba\la{a4-1}
&\int_0^T\si\|\n-1\|_{L^2}^2 dt\\
&\le C\int_0^T\si(\|F\|_{L^2}^2+\|\na u\|_{L^2}^2) dt+C(\on)\int_0^T\|R\te-\bp\|_{L^2}^2 dt\\
&\le C(\on)\int_0^T\left(\si\|\n^{\frac12}\dot u\|_{L^2}^2+\si\|\curl^2H\|_{L^2}^2+\|\na u\|_{L^2}^2+\|\na H\|_{L^2}^2+\|\na \te\|_{L^2}^2\right)dt\\
&\le CC_0^{\frac14}.
\ea\ee
Thus, combining \eqref{a3-0} with \eqref{a4-0} and \eqref{a4-1} yields
\begin{equation*}%\label{a4-2}
\displaystyle A_3(T)+A_4(T)\leq C_4C_0^{\frac14}\leq C_0^{1/6},
\end{equation*}
provided $C_0\leq \ve_{2}\triangleq\min\left\{\ve_{21},C_4^{-12}\right\}.$
The proof of Lemma \ref{lem-a3a4} is completed.
\end{proof}

Note that the modified basic energy estimate in short time $[0, \si(T)]$ is necessary, so that the spatial $L^2$-norm of $R\te-\overline P$ can be bounded precisely by the combination of the initial energy and the spatial $L^2$-norm of $\na \te$,  which lies in the central position in the process of estimating $A_2(T)$. We thus give the following lemma.
\begin{lemma}\label{lem-s}
Let $(\rho,u,\theta,H)$ be a smooth solution of
 \eqref{CMHD}-\eqref{boundary} satisfying \eqref{key1}.
  Then there exist positive constants $C$ and $\varepsilon_3$ depending only on
  $\ve_1>0 $, depending only on $\mu$, $\lambda$, $\nu$, $\kappa$, $\ga$, $R$, $\hat{\rho}$, $\hat{\theta}$, $\Omega$, and $M$ such that, for any $\eta\in (0,1]$ and $m\geq0,$
the following estimates hold:
\begin{align}\label{lem-s1}
 &\sup_{0\le t\le \si(T)}\int\left( \n |u|^2+|H|^2+(\n-1)^2 + \n\Phi(\te) \right)dx\le C C_0,
 \end{align}
and
\begin{align}\label{lem-s2}
\displaystyle \|(R\te-\bp)(\cdot,t)\|_{L^2} \le C \left(C_0^{\frac12} +C_0^\frac13\|\na\te(\cdot,t)\|_{L^2}\right), \quad \text{for}~ t\in (0,\sigma(T)],
\end{align}
provided $C_0\leq\varepsilon_3$.
\end{lemma}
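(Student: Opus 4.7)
The plan is to first establish the stronger short-time basic energy bound \eqref{lem-s1}, and then deduce \eqref{lem-s2} from it via a weighted decomposition.

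For \eqref{lem-s1}, at $t=0$ the estimate $\int (\rho_0|u_0|^2 + |H_0|^2 + (\rho_0-1)^2 + \rho_0\Phi(\theta_0))\,dx \le CC_0$ follows directly from the initial hypotheses and the definition of $C_0$ in \eqref{c0}, together with the elementary comparison $(\rho-1)^2 \le C(\hat\rho)G(\rho)$ from \eqref{grho}. To propagate this to $[0,\sigma(T)]$, I would integrate the basic energy identity \eqref{m2} to get $E(t) \le E(0) + 2\mu \int_0^t\|\nabla u\|_{L^2}^2\,ds$, and separately control $\int_0^t\|\nabla u\|_{L^2}^2\,ds$ by $CC_0$ on the short interval. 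This is achieved by testing the momentum equation \eqref{CMHD-u} by $u$: the resulting identity produces dissipation $c\|\nabla u\|_{L^2}^2$ balancing the pressure-work term $\int(P-\bar P)\mathop{\mathrm{div}}u\,dx$ via Cauchy--Schwarz, while the needed bound $\|P-\bar P\|_{L^2}^2 \le CC_0$ on $[0,\sigma(T)]$ is propagated from the sharp initial bound $\|P_0-\bar P_0\|_{L^2}^2 \le CC_0$ of \eqref{pt0} by coupling with the continuity and temperature equations; a Gronwall-type argument closes the estimate on the short interval $\sigma(T) \le 1$.

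For \eqref{lem-s2}, I would split
\[
\|R\theta - \bar P\|_{L^2}^2 = \|\sqrt{\rho}(R\theta - \bar P)\|_{L^2}^2 + \int (1-\rho)(R\theta - \bar P)^2\,dx.
\]
The first term is controlled by $CC_0$ through the chain $\|\sqrt{\rho}(R\theta - \bar P)\|_{L^2}^2 = R^2\int \rho(\theta - \overline{\rho\theta})^2\,dx \le C\int \rho(\theta - 1)^2\,dx \le C\int \rho\Phi(\theta)\,dx \le CC_0$, where the second inequality uses Cauchy--Schwarz together with mass conservation to bound $(\overline{\rho\theta} - 1)^2 \le |\Omega|^{-1}\int \rho(\theta - 1)^2\,dx$, the third uses the elementary comparison $(\theta - 1)^2 \le C\Phi(\theta)$ valid while $\theta$ remains uniformly bounded on $[0,\sigma(T)]$ (by continuity from $\theta_0 \le \hat\theta$), and the last uses \eqref{lem-s1}. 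For the second term, H\"older with exponents $(3,3/2)$ gives
\[
\left|\int(1-\rho)(R\theta - \bar P)^2\,dx\right| \le \|\rho - 1\|_{L^3}\|R\theta - \bar P\|_{L^3}^2,
\]
where by interpolation $\|\rho - 1\|_{L^3} \le \|\rho - 1\|_{L^2}^{2/3}\|\rho - 1\|_{L^\infty}^{1/3} \le CC_0^{1/3}$ (using \eqref{lem-s1} and \eqref{key1}), and $\|R\theta - \bar P\|_{L^3} \le C\|\nabla\theta\|_{L^2}$ by \eqref{p-lp1}. Combining the two pieces yields \eqref{lem-s2}.

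The main obstacle is the propagation step in Part~1, since the weaker global bound $E(t) \le CC_0^{1/4}$ from \eqref{basic1} combined with the hypothesis $A_2(T) \le 2C_0^{1/4}$ only yields $\int_0^{\sigma(T)}\|\nabla u\|_{L^2}^2\,ds \le 2C_0^{1/4}$, which returns $E(t) \le CC_0^{1/4}$ rather than $CC_0$. The key is to exploit the sharp initial bound $\|P_0 - \bar P_0\|_{L^2}^2 \le CC_0$ from \eqref{pt0} together with the shortness $\sigma(T) \le 1$ of the interval, allowing a self-contained Gronwall closure that is unavailable on the full time interval $[0,T]$; one must also be careful to avoid circular dependence between the $\bar P$-bound of \eqref{p-b} (proved via the weaker energy) and the sharper bounds needed here.
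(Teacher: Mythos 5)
Your proposal correctly identifies the two target estimates and even flags the right obstacle in Part 1, but the resolutions you sketch for both parts have genuine gaps.

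\textbf{Part 1 (the $O(C_0)$ energy bound).} Your plan hinges on propagating $\|P-\bar P\|_{L^2}^2\le CC_0$ on $[0,\sigma(T)]$ so that the pressure-work term can be absorbed by Cauchy--Schwarz. But no mechanism is given for this propagation, and none is available from the energy alone: $P-\bar P$ involves $\rho\theta$ quadratically in the $L^2$ norm, while $\int\rho\Phi(\theta)\,dx$ controls $\int\rho(\theta-1)^2\,dx$ only where $\theta$ is bounded (for large $\theta$, $\Phi(\theta)\sim\theta$ grows linearly, so $(\theta-1)^2\le C\Phi(\theta)$ fails). The initial bound \eqref{pt0} uses $\sup\theta_0\le\hat\theta$ explicitly, and no uniform $L^\infty$ bound on $\theta$ persists for $t>0$. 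The paper's actual proof keeps the critical term linear in $\theta-1$, bounding $R\int\rho(\theta-1)\div u\,dx$ by $\delta\|\nabla u\|_{L^2}^2+C(\|R\theta-\bar P\|_{L^\infty}+1)\int\rho\Phi(\theta)\,dx$, and then closes a Gr\"onwall argument on \eqref{ss-3} by proving $\int_0^{\sigma(T)}\|R\theta-\bar P\|_{L^\infty}\,dt\le CC_0^{1/16}$. That integrability is the real content of the lemma: it requires the interpolation \eqref{ss-7} together with the weighted second-order estimates $\sigma\|\sqrt\rho\dot u\|_{L^2}^2+\cdots\le C$ and $\int_0^T\sigma\|\nabla^2\theta\|_{L^2}^2\,dt\le C$ (\eqref{ss-5}--\eqref{ss-6}), which come from the $m=1$ case of the $B_4$ functional of Lemma \ref{lem-dot}. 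None of this machinery appears in your proposal.

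\textbf{Part 2.} The step $\int\rho(\theta-1)^2\,dx\le C\int\rho\Phi(\theta)\,dx$ ``while $\theta$ remains uniformly bounded \dots by continuity from $\theta_0\le\hat\theta$'' is not justified: continuity gives a bound on a time interval depending on the solution, not uniformly on $[0,\sigma(T)]$ (which can be all of $[0,1]$), and the paper never establishes a pointwise upper bound for $\theta$. The paper avoids this precisely by splitting $\Phi(\theta)\ge\frac18(\theta-1)1_{(\theta>2)}+\frac1{12}(\theta-1)^21_{(\theta<3)}$, using only linear control of $\theta-1$ on the large-temperature set and recovering the $L^2$ norm there via the interpolation $\|\theta-1\|_{L^2}^2\le\|\theta-1\|_{L^1}^{4/5}\|\theta-1\|_{L^6}^{6/5}$. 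Separately, even granting your first piece, your H\"older split with exponents $(3,3/2)$ gives $\|\rho-1\|_{L^3}\|R\theta-\bar P\|_{L^3}^2\le CC_0^{1/3}\|\nabla\theta\|_{L^2}^2$, hence only $C_0^{1/6}\|\nabla\theta\|_{L^2}$ after taking square roots --- weaker than the claimed $C_0^{1/3}\|\nabla\theta\|_{L^2}$; the paper's $L^2$--$L^6$ split $\|\rho-1\|_{L^2}\|\theta-1\|_{L^2}^{1/2}\|\theta-1\|_{L^6}^{3/2}$ followed by Young's inequality with exponents $(4,4/3)$ is what produces the coefficient $C_0^{2/3}$ on $\|\nabla\theta\|_{L^2}^2$.
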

\begin{proof}
First, multiplying $\eqref{CMHD}_1 $ by $RG'(\rho)$, $\eqref{CMHD-u}$ by $u$, and $\eqref{CMHD}_4$ by $H$ respectively, integrating by parts over $\Omega$, summing them up, using \eqref{navier-b}, \eqref{boundary}, and \eqref{cz1}, we have
\be \la{ss-1} \ba
	&\frac{d}{dt}\int\left(\frac{1}{2}\n |u|^2\!+\!RG(\rho)\!+\!\frac{1}{2}|H|^2
	\right)dx\\
	&\qquad+ \int(\mu|\curl u|^2\!+\!(2\mu\!+\!\lambda)(\div u)^2\!+\!\nu|\curl H|^2)dx \\
	&=  R \int \rho (\te -1) \div u dx\\
	&\le \de \|\na u\|_{L^2}^2 + C(\de, \on) \int \n(\te-1)^2dx\\
&\le\de \|\na u\|_{L^2}^2 + C(\de, \on)(\|\te(\cdot,t)\|_{L^{\infty}} +1)  \int \rho \Phi(\theta)dx.
	\ea\ee
By virtue of Lemma \ref{lem-f-td} and for $\delta$ small enough in \eqref{ss-1}, one obtains
\be \la{ss-2} \ba
	&\frac{d}{dt}\int\left(\frac{1}{2}\n |u|^2\!+\!RG(\rho)\!+\!\frac{1}{2}|H|^2
	\right)dx+ C_5(\|\nabla u\|_{L^2}^2+\|\nabla H\|_{L^2}^2) \\
	&\le C(\de, \on)(\|\te(\cdot,t)\|_{L^{\infty}} +1)  \int \rho \Phi(\theta)dx.
	\ea\ee
Then, adding \eqref{ss-2} multiplied by $(2\mu+1)C_5^{-1}$ to \eqref{m2}, one has
\be \la{ss-3} \ba
	&\left((2\mu\!+\!1)C_5^{-1}\!+\!1\right)\frac{d}{dt}\int\left(\frac{1}{2}\n |u|^2\!+\!RG(\rho)\!+\!\frac{1}{2}|H|^2
	\right)dx\\&\quad+\!\frac{R}{\gamma\!-\!1}\frac{d}{dt}\int \n \Phi(\te) dx+ \|\nabla u\|_{L^2}^2+(2\mu+1)\|\nabla H\|_{L^2}^2 \\
	&\le C(\|\te(\cdot,t)\|_{L^{\infty}} +1)  \int \rho \Phi(\theta)dx\\
	&\leq C(\|R\te-\overline{P}\|_{L^{\infty}} +1)\int \rho \Phi(\theta)dx,	\ea\ee
where in the last term we used \eqref{p-b}.
Now we proceed to estimate the term $\|R\te-\overline{P}\|_{L^{\infty}}$.
Taking $m=1$ into \eqref{b44} and integrating the resulting inequality, one deduces from \eqref{key1}, \eqref{h2xd1}, \eqref{b44-3}, \eqref{b44-4}, \eqref{b44-5}, and \eqref{b44-6} that
\begin{equation*}%\label{ss-4}
\begin{aligned}
&\sigma B_4 + \int_0^T\sigma\left(\frac{C_1}{2}(\|\nabla\dot{u}\|_{L^2}^2+\|\nabla H_t\|_{L^2}^2) +\|\sqrt{\rho}\dot{\theta}\|_{L^2}^2\right)dt\\
	&\le C\int_0^T(\|\sqrt{\rho}\dot{u}\|_{L^2}^{2}\!+\!\|\curl^2  H\|\ltwo\!+\!\|H_t\|\ltwo+\|\na H\|_{L^2}^2+\|\na \te\|_{L^2}^2)dt\\
	&\quad+ C\int_0^T\sigma\|\rho-1\|_{L^2}^2dt +C\int_0^T\|\na \te\|_{L^2}^2\sigma(\|\sqrt{\rho}\dot{u}\|_{L^2}^{2}\!+\!\|H_t\|\ltwo\!+\!\|\nabla \te\|\ltwo) dt,
\end{aligned}
\end{equation*}
which together with \eqref{b44-22}, \eqref{key1} and \eqref{a4-1} yields
\begin{equation*}%\label{ss-5}
\begin{aligned}
&\sigma(\|\sqrt{\rho}\dot{u}\|_{L^2}^{2}\!+\!\|H_t\|\ltwo\!+\!\|\nabla \te\|\ltwo)\\&\quad + \int_0^T\sigma\left(\frac{C_1}{2}(\|\nabla\dot{u}\|_{L^2}^2+\|\nabla H_t\|_{L^2}^2) +\|\sqrt{\rho}\dot{\theta}\|_{L^2}^2\right)dt\\
	&\le C+C\int_0^T\|\na \te\|_{L^2}^2\sigma(\|\sqrt{\rho}\dot{u}\|_{L^2}^{2}\!+\!\|H_t\|\ltwo\!+\!\|\nabla \te\|\ltwo) dt,
\end{aligned}
\end{equation*}
then Gr\"onwall inequality together with \eqref{key1} yields
\begin{equation}\label{ss-5}
\begin{aligned}
&\sigma(\|\sqrt{\rho}\dot{u}\|_{L^2}^{2}\!+\!\|H_t\|\ltwo\!+\!\|\nabla \te\|\ltwo)%\\&\quad
\!+ \!\!\int_0^T\!\!\!\sigma\left(\|\nabla\!\dot{u}\|_{L^2}^2\!\!+\!\!\|\nabla\! H_t\|_{L^2}^2\!\!+\!\!\|\sqrt{\rho}\dot{\theta}\|_{L^2}^2\right)dt\le C.
\end{aligned}
\end{equation}
Thus, \eqref{td2-th} together with \eqref{ss-5}, \eqref{b44-3}, \eqref{b44-4}, and \eqref{a4-1} get
	\be\ba  \la{ss-6}
	\int_0^{T }\si \|\na^2\te\|_{L^2}^2dt
	&\le   C\int_0^{T } \left(\si \|\n^{\frac12}\dot\te\|_{L^2}^2+
	\|\n^{\frac12}\dot u \|_{L^2}^2+
	\|\curl^2H\|_{L^2}^2  \right) dt\\
	&\quad+ C\int_0^{T } \left( \| \na u\|_{L^2}^2+\| \na H\|_{L^2}^2+\| \na\te\|_{L^2}^2+\si\|\n-1\|_{L^2}^2\right) dt \\
	&\le C(\hat{\n},M).
	\ea\ee
Next, by \eqref{g1} and \eqref{p-lp1}, we have
\begin{equation}
	    \label{ss-7}\ba
	    \|R\te-\bp\|_{L^\infty} \le& C \|R\te-\bp\|_{L^6}^{\frac12} \|\na\te\|_{L^6}^{\frac12} + \|R\te-\bp\|_{L^2}\\
	    \le& C(\hat{\rho}) \|\na \te\|_{L^2}^{\frac12} \|\na^2 \te\|_{L^2}^{\frac12} + C(\hat{\n}) \|\na \te\|_{L^2},
	    \ea
	\end{equation}
which together with  \ref{key1} and \eqref{ss-6}  gives that
	\be\notag%\la{ss-8}
  \ba
	& \int_0^{\si(T)}\|R\te-\bp\|_{L^\infty}dt \\
	%&\le C \int_0^{\si(T)}\| R\te-\bp\|_{L^6}^{\frac12}\|\na\te\|_{L^6}^{\frac12}dt\\
	&\le C(\hat{\rho}) \int_0^{\si(T)}\|\na\te\|_{L^2}^{\frac12} \left(\si\|\na^2\te\|^2_{L^2}\right)^{\frac14}\si^{-1/4}dt + C(\hat{\rho}) \left(\int_0^{\si(T)} \|\na\te\|_{L^2}^2 dt\right)^{\frac12}\\
	&\le C(\hat{\rho}) \left(\int_0^{\si(T)} \|\na \te\|_{L^2}^2dt \int_0^{\si(T)}\si\|\na^2\te\|_{L^2}^2dt\right)^{\frac14} %\left(\int_0^{\si(T)}\si^{-\frac{1}{2}}dt\right)^{\frac12}
+ C(\on)C_0^{\frac18}\\
	&\le C(\on,M)C_0^{1/16}.
	\ea\ee
Combining this with \eqref{ss-3}, \eqref{grho}, and Gr\"onwall inequality  implies \eqref{lem-s1} directly.

Note that
\begin{equation}\label{ss2-1}
\displaystyle \| R\te-\bp \|_{L^2} \le R \|\te  -1\|_{L^2}+C\left|\int\n(1-\te)dx\right|\le C(\on) \|\te  -1\|_{L^2}.
\end{equation}
Direct calculations together with \eqref{cz1} lead to
  \be\notag\ba
\Phi(\te)
\ge \frac{1}{8} (\te-1)1_{(\te(\cdot,t)>2)
}+\frac{1}{12}(\te-1)^21_{(\te(\cdot,t)<3)},\ea\ee
with $(\te(\cdot,t)> 2)\triangleq \left.\left\{x\in \Omega\right|\te(x,t)> 2\right\}$
and  $(\te(\cdot,t)< 3)\triangleq \left.\left\{x\in \Omega\right|\te(x,t)<3\right\}.$
Combining this with \eqref{lem-s1} gives
\be \la{ss2-2}\ba
	\sup_{0\le t\le \si(T)}\int \left(\n(\te-1)1_{(\te(\cdot,t)>2)}+\n(\te-1)^21_{(\te(\cdot,t)<3)}\right)dx \le CC_0.
	\ea\ee
Next, it follows from \eqref{ss2-2}, \eqref{lem-s1}, and the Sobolev inequality that for $t \in (0,\sigma(T)]$,
	\begin{equation}\label{ss2-3}
	\begin{aligned}
	 &\|\te -1\|_{L^2(\te(\cdot,t)<3)}^2 \\
	 &\le\int  \n (\te-1)^2 1_{(\te(\cdot,t)<3)}dx  +  \left|\int  (\n-1) (\te -1)^2 dx \right|\\
	&\le C(\on,M)C_0  +  C\|\n-1\|_{L^2} \| \te-1 \|_{L^2}^{\frac12} \|\te -1\|_{L^6}^{3/2}\\
	%&\le C(\on,M)C_0   +  C(\on,M)C_0^{\frac12}  \| \te-1 \|_{L^2}^{\frac12} \left(\| \te-1 \|_{L^2}+ \|\na \te\|_{L^2}\right)^{3/2}\\
	&\le C(\on,M)\left(C_0+ C(\delta) C_0^{2/3} \|\na \te\|_{L^2}^2 + (\delta+C_0^{\frac12}) \| \te-1 \|_{L^2}^2 \right),
	\end{aligned}
	\end{equation}
	and
	\be\label{ss2-4}\ba
	 &\|\te-1\|_{L^2(\te(\cdot,t)> 2)}^2\\
	&\le \|\te-1\|^{4/5}_{L^1(\te(\cdot,t)> 2)} \| \te-1\|_{L^6}^{6/5}\\
	%	\le& C \|\te -1\|^{4/5}_{L^1(\te(\cdot,t)> 2)} (\| \te-1\|_{L^2}+ \|\na \te\|_{L^2})^{6/5}\\
	&\le C(\on,M) \left(C_0 +C_0^{\frac12} \|\te-1\|_{L^2}\right)^{4/5} (\| \te-1\|_{L^2}+ \|\na \te\|_{L^2})^{6/5}\\
	&\le  C(\on,M)\left(C_0+C(\delta)C_0^{2/3}\|\na \te\|_{L^2}^2 + (\delta+C_0^{2/5} ) \|\te -1\|_{L^2}^2\right),
	\ea\ee
Hence, adding \eqref{ss2-3} with \eqref{ss2-4} together and choosing $\delta$ small enough in the resulting inequality, one has  for any $t\in (0,\sigma(T)],$
	\be\ba\notag
	\|\te-1\|_{L^2}^2 \le  C_6\left(C_0+C_0^{2/3}\|\na \te\|_{L^2}^2 + C_0^{2/5}  \|\te -1\|_{L^2}^2\right),
	\ea\ee
which implies that
	\be\ba\la{ss2-5}
	\|\te-1\|_{L^2}^2 \le  C(\on,M)\left(C_0+C_0^{2/3}\|\na \te\|_{L^2}^2 \right),
	\ea\ee
provided $C_0\le \ve_{3} \triangleq\min\left\{1,(2C_6)^{-5/2}\right\}.$
Thus, \eqref{ss2-1} together with \eqref{ss2-5} yields \eqref{lem-s2} and the proof of Lemma \ref{lem-s} is completed.
\end{proof}

\begin{lemma}\label{lem-a2}
Let $(\rho,u,\theta,H)$ be a smooth solution of
 \eqref{CMHD}-\eqref{boundary} satisfying \eqref{key1}.
  Then there exists a positive constant  $\varepsilon_4$ depending only on
  $\ve_4>0 $, depending only on $\mu$, $\lambda$, $\nu$, $\kappa$, $\ga$, $R$, $\hat{\rho}$, $\hat{\theta}$, $\Omega$, and $M$ such that the following estimates hold:
\begin{align}\label{a2-0}
A_2(T)\leq C_0^{\frac14},
 \end{align}
provided $C_0\leq\varepsilon_4$.
\end{lemma}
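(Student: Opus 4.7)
The plan is to derive an evolution identity for the weighted temperature deviation $\|\sqrt{\n}(R\te-\bp)\|_{L^2}^2$ and then couple it with a momentum-magnetic energy estimate, using Lemma \ref{lem-s} on the short-time window $[0,\si(T)]$ and the refined bounds $A_3(T)+A_4(T)\le 2C_0^{1/6}$ from Lemma \ref{lem-a3a4} on $[\si(T),T]$ in order to close the bootstrap. First I would rewrite $\eqref{CMHD}_3$ as $\tfrac{R\n\dot\te}{\ga-1} = -R\n\te\,\div u + \ka\Delta\te + \lambda(\div u)^2 + 2\mu|\mathbb{D}u|^2 + \nu|\curl H|^2$, multiply by $R\te-\bp$, and integrate over $\Omega$. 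Since $\bar\n\equiv 1$ and $\overline{R\n\te}=\bp$, one has $\int\n(R\te-\bp)dx=0$, which kills the contribution of $\bp_t$; combining this with $\eqref{CMHD}_1$ and the Neumann condition $\na\te\cdot n|_{\p\Omega}=0$ produces
\begin{equation*}
\frac{d}{dt}\int\frac{\n(R\te-\bp)^2}{2(\ga-1)}dx + R\ka\|\na\te\|_{L^2}^2 = -\int P(R\te-\bp)\,\div u\,dx + \int(R\te-\bp)\Psi\,dx,
\end{equation*}
with $\Psi \triangleq \lambda(\div u)^2+2\mu|\mathbb{D}u|^2+\nu|\curl H|^2$.

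To control the right-hand side I would substitute the effective-viscous-flux decomposition $(\lambda+2\mu)\div u = F + (P-\bp) + \tfrac12(|H|^2-\overline{|H|^2})$ and split $P = \bp + (P-\bp)$, then apply Lemma \ref{lem-f-td}, \eqref{h-inf}, \eqref{p-lp1}, and \eqref{g1} to bound $\|R\te-\bp\|_{L^6}\le C\|\na\te\|_{L^2}$, $\|F\|_{L^p}$, and the $L^p$ norms of $P-\bp$. Young's inequality with small constants then yields a differential inequality whose right-hand side is $C(\|\na u\|_{L^2}^2+\|\na H\|_{L^2}^2) + \tfrac{R\ka}{2}\|\na\te\|_{L^2}^2 + C\Lambda(t)$, where $\Lambda(t)$ involves $\|\sqrt\n\dot u\|_{L^2}^2$, $\|\curl^2H\|_{L^2}^2$, $\|H_t\|_{L^2}^2$ each multiplied by small powers of the a priori quantities. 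In parallel, testing \eqref{CMHD-u} against $u$ and $\eqref{CMHD}_4$ against $H$, adding and integrating in space-time, then using \eqref{basic0}, \eqref{f-curlu-lp}, and the splitting $\|P-\bp\|_{L^2}\le C\|\sqrt\n(R\te-\bp)\|_{L^2}+C\bp\|\n-1\|_{L^2}$, I would bound $\int_0^T(\|\na u\|_{L^2}^2+\|\na H\|_{L^2}^2)dt$ by $CC_0^{1/4} + \delta\int_0^T\|\na u\|_{L^2}^2\,dt + C\int_0^T\|\sqrt\n(R\te-\bp)\|_{L^2}^2\,dt$, absorbing the $\delta$-term into the left-hand side.

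Integrating the temperature identity in time and combining with the above then produces a single Gr\"onwall-type inequality for $A_2(T)$. On $[0,\si(T)]$ I would invoke \eqref{lem-s2} to replace $\|R\te-\bp\|_{L^2}^2$ by $C(C_0+C_0^{2/3}\|\na\te\|_{L^2}^2)$, letting the gradient piece be absorbed by the dissipation; on $[\si(T),T]$ the $\si$-weighted integrals of $\|\sqrt\n\dot u\|_{L^2}^2$, $\|\curl^2 H\|_{L^2}^2$, $\|H_t\|_{L^2}^2$ appearing inside $\Lambda(t)$ are controlled by \eqref{a3a4}. The outcome is an estimate of the form $A_2(T)\le C_7 C_0^{1/2}$, which improves to $A_2(T)\le C_0^{1/4}$ provided $C_0\le \varepsilon_4 \triangleq \min\{\varepsilon_3,\,C_7^{-4}\}$. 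The main difficulty is the tight power-counting in $C_0$: because the bootstrap hypothesis $A_2(T)\le 2C_0^{1/4}$ supplies only $O(C_0^{1/2})$-smallness for the temperature energy, every nonlinear product must pick up at least one factor chosen from $\|\n-1\|_{L^2}$, $\|P-\bp\|_{L^2}$, or $\|\sqrt\n(R\te-\bp)\|_{L^2}$, each of order $C_0^{1/8}$ or better. A secondary obstacle is the boundary contribution generated by the slip condition when integrating the pressure work against $F$; this is handled as in Lemma \ref{lem-dot} via the identity $u\cdot\na u\cdot n = -u\cdot\na n\cdot u$ together with the Sobolev trace inequality.
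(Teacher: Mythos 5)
Your overall architecture is the right one --- a weighted temperature energy coupled to the momentum--magnetic energy, with Lemma \ref{lem-s} supplying the short-time bound for $\|R\te-\bp\|_{L^2}$ and the $\si$-weighted quantities from Lemma \ref{lem-a3a4} handling $[\si(T),T]$ --- but the way you decouple the two energy identities breaks the power counting that the bootstrap requires, and this is a genuine gap. The target is $A_2(T)\le C_0^{1/4}$ with constant exactly $1$, starting from the hypothesis $A_2(T)\le 2C_0^{1/4}$; this can only be achieved if the final estimate reads $A_2(T)\le C\,C_0^{\beta}$ for some $\beta>1/4$, i.e.\ if \emph{every} source term carries an explicit positive power of $C_0$ beyond what the bootstrap already supplies. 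Two of your intermediate bounds violate this. First, testing the temperature equation against the unweighted multiplier $R\te-\bp$ leaves the cross term $-\int P(R\te-\bp)\div u\,dx$, whose leading part $-\bp\int(R\te-\bp)\div u\,dx=R\bp\int u\cdot\na\te\,dx$ is of size $C\|\na u\|_{L^2}\|\na\te\|_{L^2}$ with an $O(1)$ constant --- exactly the $C(\|\na u\|_{L^2}^2+\|\na H\|_{L^2}^2)$ you allow on the right-hand side. Second, your momentum--magnetic estimate only gives $\int_0^T(\|\na u\|_{L^2}^2+\|\na H\|_{L^2}^2)dt\le CC_0^{1/4}$ with a non-small $C$. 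Feeding the second into the first yields $A_2(T)\le CC_0^{1/4}$, not the $C_7C_0^{1/2}$ you claim, and $CC_0^{1/4}\le C_0^{1/4}$ fails for $C>1$ no matter how small $C_0$ is.

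The paper closes this by a specific cancellation you are missing: multiply the continuity equation by $\bp G'(\n)$, the momentum equation by $u$, and the temperature equation by $\bp^{-1}(R\te-\bp)$ (note the weight $\bp^{-1}$), and add. The pressure work $\int(P-\bp)\div u\,dx$ from the momentum equation then combines with $-\bp^{-1}\int P(R\te-\bp)\div u\,dx$ so that the terms linear in $R\te-\bp$ cancel exactly, leaving only $M_4=-\bp^{-1}\int\n(R\te-\bp)^2\div u\,dx$ in \eqref{a2-1}, which is cubic in small quantities and is bounded by $CC_0^{1/16}(\|\na u\|_{L^2}^2+\|\na\te\|_{L^2}^2)$ via $\sup\|\sqrt\n(R\te-\bp)\|_{L^2}\le CC_0^{1/8}$. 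With this device every $M_i$ carries a factor $C_0^{1/24}$, $C_0^{1/16}$, $C_0^{1/8}$ or $C_0^{7/24}$, and the conclusion is $\sup W+\int D\le CC_0^{7/24}\le C_0^{1/4}$ for $C_0$ small. With your unweighted multiplier the leftover is $(1-\bp)\int\n(R\te-\bp)\div u\,dx$, and $|1-\bp|$ is not small since $\bp$ is only known to lie in $[P_1,P_2]$. A secondary inaccuracy: there is no boundary term from ``integrating the pressure work against $F$'' at this energy level (with $u\cdot n=0$ the pressure work is $\int(P-\bp)\div u\,dx$ with no boundary contribution); the $F$-boundary terms you mention belong to the $\dot u$-estimates of Lemma \ref{lem-dot}, not here.
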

\begin{proof}
First, multiplying $\eqref{CMHD}_1 $ by $\overline{P}G'(\rho)$, $\eqref{CMHD-u}$ by $u$, $\eqref{CMHD}_3$ by $\overline{P}^{-1}(R\te-\overline{P})$  and $\eqref{CMHD}_4$ by $H$ respectively, integrating by parts over $\Omega$, summing them up, using \eqref{navier-b}-\eqref{boundary}, we have
\be\la{a2-1} \ba
&  \frac{dW(t)}{dt}+D(t)\\
&= -  \frac{1}{\ga-1} {\bp}^{-1} \bp_t \int
\n {(R\te-\bp)}  dx - \frac{1}{2(\ga-1)} {\bp}^{-2} \bp_t \int \n {(R\te-\bp)^2} dx\\
& \quad+\bp_t \int  G(\rho) dx - \bp^{-1}\int  \n {(R\te-\bp)^2} \div u dx \\
&\quad+ \bp^{-1} \int  {(R\te-\bp)} (\lambda (\div u)^2+2\mu |\mathfrak{D}(u)|^2+\nu \|\curl H\|_{L^2}^2) dx\\&\triangleq \sum_{i=1}^{5} M_i.
\ea\ee
where
\begin{align}
& W(t)\triangleq \int\left( \frac{1}{2}\n |u|^2+\overline{P}G(\rho)+\frac{1}{2}|H|^2+\frac{\bp^{-1}}{2(\ga-1)} \n {(R\te-\bp)^2}\right)dx,\label{ww}\\
& D(t)\triangleq \mu \|\curl u\|_{L^2}^2 +(2\mu+\lambda)\|\div u\|_{L^2}^2+\nu \|\curl H\|_{L^2}^2 +  \ka R \bp^{-1}  \|\na\te\|_{L^2}^2.\label{dd}
\end{align}
The terms $M_i \,(i=1,\cdots,5)$ can be estimated as follows.
It follows from \eqref{key1}, \eqref{p-b}, \eqref{p-lp1}, and \eqref{pt2} that
\be \ba\notag%\la{a2-m1m2}
M_1+M_2 \le & C|\overline P_t|\left(\| \n (R\te-\bp)\|_{L^2}+\| \n^{\frac12} (R\te-\bp)\|_{L^2}^2\right)\\
\le &C(\on)\left(C_0^{\frac18} \|\na u \|_{L^2}+\| \na u\|_{L^2}^2+\| \na H\|_{L^2}^2 \right)\|\n^{\frac12}(R\te-\bp)\|_{L^2}\\
%\le &C(\on)C_0^{\frac18}\left( \|\na u \|_{L^2}\|\na \te \|_{L^2}+\| \na u\|_{L^2}^2 \right)\\
\le & C(\on) C_0^{\frac18} \left( \|\na u\|_{L^2}^2+\| \na \te \|_{L^2}^2+\| \na H\|_{L^2}^2\right).
\ea \ee
Similarly, by virtue of \eqref{grho}, \eqref{basic0}, and \eqref{p-b}, we have
\be \ba\notag%\la{a2-m3}
M_3 %\le & |\overline P_t|\left|\int G(\rho)dx\right|\\
\le & C(\on)\left(  C_0^{\frac18}\|\na u \|_{L^2}+\| \na u\|_{L^2}^2+\| \na H\|_{L^2}^2 \right)\| \n -1\|_{L^2}^2\\
\le & C(\on) C_0^{\frac14}  (\|\na u\|_{L^2}^2 +\| \na H\|_{L^2}^2+ \| \n -1\|_{L^2}^2).
\ea \ee
and
\be \ba\notag%\la{a2-m4}
M_4 \le & C   \|\n^{\frac12} {(R\te-\bp)}\|_{L^2}^{\frac12} \|\n^{\frac12} {(R\te-\bp)}\|_{L^6}^{3/2} \|\na u\|_{L^2} \\
\le & C(\on) C_0^{1/16} \|\na \te\|_{L^2}^{3/2} \|\na u\|_{L^2}\\
\le & C(\on,M) C_0^{1/16} (\|\na u\|_{L^2}^2+\|\na \te\|_{L^2}^{2}).
\ea \ee
Now, we will estimate the term $M_5$ for the short time $t\in [0, \si(T))$ and the large time $t\in [\si(T),T]$, respectively.

For $t\in[0, \si(T))$, it follows from \eqref{key1}, \eqref{p-lp2}, \eqref{p-b}, \eqref{u-l6}, and \eqref{lem-s2} that
\be \ba\notag%\la{a2-m5}
M_5 \le& C \int  |R\te-\bp| |\na u|^2dx\\
\le & C \| {R\te-\bp}\|_{L^2}^{\frac12} \|R\te-\bp\|_{L^6}^{\frac12}  \|\na u\|_{L^2} \|\na u\|_{L^6}\\
\le & C(\on)  \|{R\te-\bp}\|_{L^2}^{\frac12} \|\na \te\|_{L^2}^{\frac12}  \|\na u\|_{L^2}\\
 & \left(\|\n^{\frac12}\dot u\|_{L^2}+\|\curl^2H\|_{L^2}+\|\nabla H\|_{L^2}^2+\|\na u\|_{L^2}+\|\na\te\|_{L^2}+C_0^{1/24}\right)\\
%\le & C  \|\n {(\te-\tu)}\|_{L^2}^{\frac12} \|\na \te\|_{L^2}^{\frac12}  \|\na u\|_{L^2} \|\n\dot u\|_{L^2}\\
%&+C C_0^{1/16} \|\na \te\|_{L^2}^{3/2}  \|\na u\|_{L^2} +C  C_0^{1/16} \|\na \te\|_{L^2}^{\frac12}  \|\na u\|_{L^2}^2 +C C_0^{1/24}  \|\na \te\|_{L^2} \|\na u\|_{L^2} \\
\le & C (\on) \| {R\te-\bp}\|_{L^2}^{\frac12} \|\na \te\|_{L^2}^{\frac12}  \|\na u\|_{L^2}(\|\n^{\frac12}\dot u\|_{L^2}+\|\curl^2H\|_{L^2})\\
&+C(\on,M) C_0^{1/24} (\|\na \te\|_{L^2}^{2}+\|\na H\|_{L^2}^{2}+  \|\na u\|_{L^2}^2)\\
\le & C(\on,M) C_0^{7/24} (\|\rho^{\frac12}\dot u\|_{L^2}^2+\|\curl^2H\|_{L^2})\\
& + C(\on,M)C_0^{1/24} (\|\na u\|_{L^2}^2 +\|\na H\|_{L^2}^2 +  \|\na \te\|_{L^2}^2).
\ea \ee
For $t\in[\si(T), T]$, it holds that
\be \ba\notag%\la{a2-n5}
M_5 %\le& C \int  | {(R\te-\bp)}| |\na u|^2dx\\
\le &C\|R\te-\bp\|_{L^3}\|\na u\|_{L^2}\|\na u\|_{L^6}
\le  C(\on)C_0^{1/24}(\|\na u\|_{L^2}^2+\|\na \te\|_{L^2}^2),
\ea \ee
where one has used \eqref{p-lp1} and the following fact:
\be\notag\ba \sup_{0\le t\le T}\xl(\si\|\na
u\|_{L^6}\xr)
%&\le C(\on)\sup_{0\le t\le T}\left(  \si\|\n \dot u\|_{L^2}+\si\|\na u\|_{L^2}+ \si\|\na \te\|_{L^2}+C_0^{1/24}\right) \\
&\le C(\on)C_0^{1/24} \ea\ee
due to \eqref{key1} and \eqref{u-l6}.
Finally, substituting the estimates of $M_i (i=1,\cdots,5)\ $%\eqref{a2-m1m2}--\eqref{a2-n5}
 into \eqref{a2-1}, one obtains after using \eqref{key1}, \eqref{p-b}, and \eqref{pt0} that
\be \ba  \label{a2-2}
&  \sup_{ 0\le t\le T} W(t) + \int_{0}^{T} D(t)dt\\
&\le  C(\on,M)C_0^{1/24} \int_{ 0}^{T} \!\!\! ( \|\na u\|_{L^2}^2\!\!+\!\|\na H\|_{L^2}^2\!\!+\!\| \na \te \|_{L^2}^2 ) dt\!\!+\! C(\on) C_0^{\frac14}\int_{ 0}^{T}\!\!\! \| \n \!-\!1\|_{L^2}^2 dt
\\&\quad + C(\on,M)C_0^{7/24} \int_{0}^{\si(T)} (\|\rho^{\frac12}\dot{u}\|_{L^2}^2+\|\curl^2H\|_{L^2}^2) dt+CC_0\\
&\le  C(\on,\hat{\te},M)C_0^{7/24},
\ea \ee
where one has used
\be\la{a2-3}
\int_{ 0}^{T} \| \n -1\|_{L^2}^2 dt \le \sup_{0\le t\le\si(T)} \|\n -1\|_{L^2}^2 + \int_{\si(T)}^{T} \| \n -1\|_{L^2}^2 dt \le C(\on,M) C_0^{\frac14},
\ee
due to \eqref{a1-4} and \eqref{lem-s1}. Thus, one deduces from \eqref{a2-2}, \eqref{tdu1}, and \eqref{p-b} that
%=\frac{R}{2(\ga-1)}\sup_{t\in[0,T] }\int\n (\te-\tu)^2dx +\int_0^T\left( \|\na u\|_{L^2}^2+\|\na \te\|_{L^2}^2\right)dt
\be \notag%\la{kyu1}
A_2(T)\le C_7C_0^{7/24}\le C_0^{1/4},  \ee %which implies \eqref{a2-0}
%\be \la{kyu2} A_2(T)  \le C_0^{\frac14},\ee
provided $C_0\le \ve_4\triangleq\min
\left\{\ve_{3},  (C_7)^{-24}\right\}$. The proof
of Lemma \ref{lem-a2} is completed.
\end{proof}

We now proceed to proof the uniform (in time) upper bound for the
density.
\begin{lemma}\label{lem-brho}
Let $(\rho,u,\theta,H)$ be a smooth solution of
 \eqref{CMHD}-\eqref{boundary} satisfying \eqref{key1}.
  Then there exists a positive constant $\ve_5$ depending only on $\mu$, $\lambda$, $\nu$, $\kappa$, $\ga$, $R$, $\hat{\rho}$, $\hat{\theta}$, $\Omega$, and $M$ such that the following estimates hold:
\begin{align}\label{brho-0}
\sup_{0\le t\le T}\|\n(t)\|_{L^\infty}  \le
\frac{3\hat{\rho} }{2},
 \end{align}
provided $C_0\leq\varepsilon_5$.
\end{lemma}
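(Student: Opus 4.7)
The plan is to derive a differential inequality for $\log\rho$ along particle trajectories in the form required by the Zlotnik-type Lemma~\ref{lem-z}, and then to bound its right-hand side using the lower-order estimates already established in Lemmas~\ref{lem-a1}--\ref{lem-s}.

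Fix $x_0\in\Omega$ and let $X(t)=X(t;x_0)$ solve $\dot X = u(X,t)$ with $X(0)=x_0$. Along this characteristic, $\dot\rho = -\rho\,\div u$ by \eqref{CMHD}$_1$. Using the definition \eqref{flux} of $F$ together with the splitting $P-\bar P = \rho(R\theta-\bar P)+\bar P(\rho-1)$, this becomes
\[
(2\mu+\lambda)\dot\rho + \bar P(t)\,\rho(\rho-1) = -\rho\!\left[F + \rho(R\theta-\bar P) + \tfrac12(|H|^2-\overline{|H|^2})\right]\!(X(t),t).
\]
Because $\rho-1\ge\log\rho$ for all $\rho>0$ and $\bar P>0$, substituting $z=\log\rho$ gives (at vacuum points $\rho\equiv 0$ along $X$, so the upper bound is trivial there)
\[
z'(t) + \frac{\bar P(t)}{2\mu+\lambda}\,z(t) \le g(X(t),t),
\]
where $g := -(2\mu+\lambda)^{-1}\bigl[F+\rho(R\theta-\bar P)+\tfrac12(|H|^2-\overline{|H|^2})\bigr]$. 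By \eqref{p-b}, the damping coefficient is bounded below by $\alpha_0 = P_1/(2\mu+\lambda)>0$, so Lemma~\ref{lem-z} is applicable.

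I then split the time axis at $T_1=\sigma(T)$ and estimate $|g|$ pointwise by $\|g(\cdot,t)\|_{L^\infty}$. On the short interval $(0,\sigma(T)]$ I control the $L^1$-in-time norm via the Gagliardo--Nirenberg interpolation $\|f\|_{L^\infty}\le C\|f\|_{L^6}^{1/2}\|\nabla f\|_{L^6}^{1/2}+C\|f\|_{L^2}$, applied to $F$, to $R\theta-\bar P$, and to $|H|^2-\overline{|H|^2}$; the necessary $L^6$ and $H^1$ bounds come from Lemma~\ref{lem-f-td}, from \eqref{h-inf}--\eqref{hh}, from the refined estimate \eqref{lem-s2}, and from the short-time integrability \eqref{ss-5}--\eqref{ss-6}, giving $\int_0^{\sigma(T)}\|g(\cdot,t)\|_{L^\infty}\,dt\le CC_0^{\delta_1}$ for some $\delta_1>0$. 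On the long interval $(\sigma(T),T]$ I use a H\"older exponent $q>1$ in time: combining $A_2(T)\le C_0^{1/4}$ (Lemma~\ref{lem-a2}) and $A_3(T)+A_4(T)\le C_0^{1/6}$ (Lemma~\ref{lem-a3a4}) with \eqref{p-lp1} and Lemma~\ref{lem-f-td} yields $\|g\|_{L^q(\sigma(T),T;L^\infty)}\le CC_0^{\delta_2}$ for a suitable $q$ and some $\delta_2>0$.

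Plugging both estimates into Lemma~\ref{lem-z} yields
\[
\log\rho(\cdot,t) \le \log(\sup\rho_0) + C C_0^{\delta},\qquad \delta = \min\{\delta_1,\delta_2\}>0,
\]
so that $\rho(\cdot,t)\le \hat\rho\exp(CC_0^\delta)\le \tfrac{3}{2}\hat\rho$ once $C_0\le\varepsilon_5$ is chosen small enough, using the strict inequality $\sup\rho_0<\hat\rho$ in \eqref{3.1}. The main technical obstacle is the short-time control of $\int_0^{\sigma(T)}(\|F\|_{L^\infty}+\|R\theta-\bar P\|_{L^\infty})\,dt$: the higher-order estimates near $t=0$ carry the weight $\sigma^m$, so removing the weight produces a factor $\sigma^{-m/2}$ that must be absorbed by the time integration; Lemma~\ref{lem-s} is crucial here, since it supplies the extra $C_0^{1/2}$ smallness of the $L^2$-norm of $R\theta-\bar P$ that makes the short-time integral both convergent and small.
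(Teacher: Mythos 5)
Your proposal follows essentially the same route as the paper's proof: the continuity equation is recast along particle paths via the effective viscous flux decomposition and the splitting $P-\overline P=\rho(R\theta-\overline P)+\overline P(\rho-1)$, the damping coefficient $\overline P/(2\mu+\lambda)$ is bounded below by \eqref{p-b}, Lemma \ref{lem-z} is invoked with the time axis split at $\sigma(T)$, and the forcing is controlled in $L^1_tL^\infty_x$ on the short interval and in $L^2_tL^\infty_x$ on the long one using exactly the ingredients you name (Lemma \ref{lem-f-td}, \eqref{hh}, \eqref{ss-5}--\eqref{ss-6}, and the interpolation bound for $\|R\theta-\overline P\|_{L^\infty}$). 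The paper's explicit outcome is $\int_0^{\sigma(T)}g\,dt\le CC_0^{1/48}$ and $\int_{\sigma(T)}^T g^2\,dt\le CC_0^{1/8}$, which instantiates your $\delta_1,\delta_2$.

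The one place your argument does not quite close is the substitution $z=\log\rho$. Lemma \ref{lem-z} as stated returns $\sup y\le|y_0|+\cdots$, and for $y=\log\rho$ the quantity $|y_0|=|\log\rho_0(x_0)|$ blows up as $\rho_0(x_0)\to0^+$; since the constants in Proposition \ref{pr1} must be independent of $\inf\rho_0$ (the vacuum case is reached by letting the positive lower bound of the approximate initial density tend to zero), the bound you would literally obtain from the quoted lemma is not uniform. Your stated conclusion $\log\rho\le\log(\sup\rho_0)+CC_0^{\delta}$ is nevertheless correct, but it requires the one-sided refinement $\sup y\le\max\{y_0,0\}+(1+\alpha_0^{-1})(\cdots)$, which follows from the integrating-factor computation because a negative initial value is only helped by the damping; it does not follow from Lemma \ref{lem-z} verbatim. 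The paper sidesteps the issue entirely by taking $y=\rho-1$, for which $|y_0|\le\hat\rho$ is harmless and the final bound $\rho\le\hat\rho+1+C_8C_0^{1/48}\le\tfrac32\hat\rho$ uses $\hat\rho>2$. With that substitution (or with the one-sided Gr\"onwall step made explicit) your argument coincides with the paper's.
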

\begin{proof}
First, the equation of  mass conservation $\eqref{CMHD}_1$ can be equivalently rewritten in the form
\begin{align*}%\label{rho1}
\displaystyle  (2\mu+\lambda)D_t \n&=-\overline{P}\rho(\rho-1)-\rho^2(R\theta-\overline{P})-\rho F-\frac{\rho}{2}(|H|^2-\overline{|H|^2})\\
&\leq -\overline{P}(\rho-1)+C(\hat{\rho})(\|R\theta-\overline{P}\|_{L^\infty}+\|F\|_{L^\infty}+\||H|^2-\overline{|H|^2}\|_{L^\infty}),
\end{align*}
which gives
\begin{align}\label{rho1}
\displaystyle  D_t (\n-1)+\alpha(\n-1)\leq g(t),
\end{align}
where
\begin{align*}
 &\displaystyle D_t\rho\triangleq\rho_t+u \cdot\nabla \rho ,\qquad
\alpha\triangleq\frac{\overline{P}}{2\mu+\lambda},\\
 &g(t)\triangleq C(\hat{\rho})(\|R\theta-\overline{P}\|_{L^\infty}+\|F\|_{L^\infty}+\||H|^2-\overline{|H|^2}\|_{L^\infty}).
 \end{align*}
Naturally, we shall prove our conclusion by Lemma \ref{lem-z}.
It follows from \eqref{tdf1}, \eqref{hh}, \eqref{h2xd1}, and \eqref{udot} that
\begin{equation}\label{f-inf}
\begin{aligned}
&\|F\|_{L^\infty}\leq C\|\nabla F\|_{L^2}^{\frac12}\|\nabla F\|_{L^6}^{\frac12}\\
&\leq C(\|\sqrt{\n}\dot{u}\|_{L^2}\!\!+\!\|H\nabla\! H\|_{L^2})^{\frac12}(\|\nabla\!\dot{u}\|_{L^2}\!\!+\!\|\nabla\! u\|_{L^2}^2\!\!+\!\|\nabla\! H\|_{L^2}^2\!\!+\!\|\nabla\! H\|_{L^2}^{\frac12}\| H_t \|_{L^2}^{3/2})^{\frac12}\\
&\leq C(\|\sqrt{\n}\dot{u}\|_{L^2}^{\frac12}\!+\!\| H_t \|_{L^2}^{\frac12})\|\nabla\!\dot{u}\|_{L^2}^{\frac12}\!+\!\|\nabla\! H\|_{L^2}\|\nabla\!\dot{u}\|_{L^2}^{\frac12}\!+\!C\|\nabla\! H\|_{L^2}\| H_t \|_{L^2}\\
&\quad +\!C(\!\|\nabla\! u\|_{L^2}\!\!+\!\|\nabla\! H\|_{L^2})(\|\sqrt{\n}\dot{u}\|_{L^2}^{\frac12}\!+\!\| H_t \|_{L^2}^{\frac12})\!+\!C\|\nabla\! H\|_{L^2}^{\frac14}\|\sqrt{\n}\dot{u}\|_{L^2}^{\frac12}\|H_t\|_{L^2}^{\frac34},
\end{aligned}
\end{equation}
and
\begin{equation}\label{h2-inf}
\displaystyle  \||H|^2\!-\!\overline{|H|^2}\|_{L^\infty}\leq C\|H \nabla\! H\|_{L^2}^{\frac12}\|H \nabla\! H\|_{L^6}^{\frac12}\leq C(\|\nabla\! H\|_{L^2}^2\!+\!\|\nabla\! H\|_{L^2}\|H_t\|_{L^2}).
\end{equation}
Then we deduce from \eqref{f-inf}, \eqref{h2-inf}, \eqref{ss-5}, and \eqref{key1} that
\be\notag\ba &\int_0^{\si(T)}(\|F\|_{L^\infty}+\||H|^2\!-\!\overline{|H|^2}\|_{L^\infty})dt\\
&\le C(\on)\int_0^{\si(T)}\!\!\left(\si(\|\n \dot u\|_{L^2}\!\!+\!\| H_t \|_{L^2})\right)^{\frac14} \left(\si(\|\n \dot u\|_{L^2}^2\!\!+\!\| H_t \|_{L^2}^2)\right)^{\frac18} \left(\si\|\na \dot u\|^2_{L^2}\right)^{\frac14}\si^{-\frac58}dt\\
& \quad+ C(\on) \int_0^{\si(T)}\left(\si \|\nabla H\|_{L^2}^2\right)^{\frac14}\left(\si\|\na \dot u\|^2_{L^2}\right)^{\frac14} \|\na H\|_{L^2}^{\frac12} \si^{-\frac12} dt\\
& \quad+ C(\on) \int_0^{\si(T)}\left(\si \|H_t\|_{L^2}^2\right)^{\frac12}\left(\si\|\nabla H\|^2_{L^2}\right)^{\frac14} \si^{-\frac34} dt\\
& \quad+ C(\on) \int_0^{\si(T)}\left(\si(\|\n \dot u\|_{L^2}\!\!+\!\| H_t \|_{L^2})\right)^{\frac12}(\!\|\nabla\! u\|_{L^2}\!\!+\!\|\nabla\! H\|_{L^2})\si^{-\frac12} dt\\
& \quad+ C(\on)\int_0^{\si(T)}\!\!\left(\si\|\n \dot u\|_{L^2}^2\right)^{\frac14} \left(\si\| H_t \|_{L^2}^2\right)^{\frac38} \left(\si\|\nabla H\|^2_{L^2}\right)^{\frac18}\si^{-\frac34}dt\\
&\le C(\on,M)C_0^{1/48}\left(\int_0^{\si(T)} \si\|\na \dot u\|^2_{L^2} dt\right)^{\frac14}\left(\int_0^{\si(T)} \si^{-\frac56}dt\right)^{\frac34} \\
&\quad + C(\on,M)C_0^{1/32}\left(\int_0^{\si(T)} \si\|\na \dot u\|^2_{L^2} dt\right)^{\frac14}\left(\int_0^{\si(T)} \si^{-\frac23}dt\right)^{\frac34} \\
&\quad+ C(\on,M) C_0^{1/24} \int_0^{\si(T)}
(\si^{-\frac34}+\si^{-\frac12}) dt+ C(\on,M) C_0^{1/48} \int_0^{\si(T)}
\si^{-\frac12} dt\\
&\le C(\on,M)C_0^{1/48},
\ea\ee
which together with \eqref{ss-5} yields
\begin{equation}\label{rho-1}
\displaystyle \int_0^{\sigma(T)} g(t)dt\leq C(\on,M)C_0^{1/48}.
\end{equation}
On the other hand, it follows from \eqref{ss-6}, \eqref{ss-7}, \eqref{f-inf}, \eqref{h2-inf}, and \eqref{key1} that
\begin{equation}\label{rho-2}
\begin{aligned}
 \int_{\si(T)}^T|g(t)|^2dt
\leq& \int_{\si(T)}^T\|R\te-\bp\|^2_{L^\infty}dt+\int_{\si(T)}^T(\|F\|^2_{L^\infty}+\||H|^2\!-\!\overline{|H|^2}\|^2_{L^\infty})dt\\
\le& C\left(\int_{\si(T)}^T\!\!\!\|\na\te\|^2_{L^2}dt\right)^{\frac12}\!\!\! \left(\int_{\si(T)}^T\!\! \|\na^2\te\|^2_{L^2}dt\right)^{\frac12}\!\! +\! C\int_{\si(T)}^T\!\!\|\na\te\|^2_{L^2}dt \\
 &+ C\int_{\si(T)}^T\!\! (\|\sqrt{\n}\dot{u}\|_{L^2}^2\!\!+\!\| H_t \|_{L^2}^2+\|\nabla\!\dot{u}\|_{L^2}^2\!\!+\!\|\nabla\! H\|_{L^2}^2\!\!+\!\|\nabla\!u\|_{L^2}^2)dt \\
\le& C(\on,M) C_0^{1/8}.
\end{aligned}
\end{equation}
Thus we deduce from \eqref{rho1}, \eqref{rho-1}, \eqref{rho-2}, and Lemma \ref{lem-z} that
 \bnn\ba\n
 & \le \on+1 +C\left(\|g\|_{L^1(0,\si(T))}+\|g\|_{L^2(\si(T),T)}\right) \le \on+1 +C_8C_0^{1/48},
 \ea\enn
which gives \eqref{brho-0}
 provided $C_0\le \ve_5\triangleq\min\left\{1,\left(\frac{\hat \n-2 }{2C_8}\right)^{48}\right\}$ and  complete the proof of Lemma \ref{lem-brho}.
\end{proof}

\begin{lemma}\label{lem-th}
Let $(\rho,u,\theta,H)$ be a smooth solution of
 \eqref{CMHD}-\eqref{boundary} satisfying \eqref{key1}.
  Then there exists a positive constant  $C$ depending only on $\mu$, $\lambda$, $\nu$, $\kappa$, $\ga$, $R$, $\hat{\rho}$, $\hat{\theta}$, $\Omega$, and $M$ such that the following estimates hold:
\be \la{ae3.7}\sup_{0< t\le T}\si^2 \|\sqrt{\n}\dot\te\|_{L^2}^2dx + \int_0^T\si^2 \|\na\dot\te\|_{L^2}^2dt\le C.\ee
Moreover, it holds that
\be\la{vu15}\ba
&\sup_{0< t\le T}\left(  \si\|\na u \|^2_{L^6}+\si\|H\|^2_{H^2}+\si^2\|\te\|^2_{H^2}\right)\\
&+\!\int_0^T(\si \|\na\! u \|_{L^4}^4\!\!+\!\si\|H_t\|_{H^1}^2\!\!+\!\si\|\na\!\te \|_{H^1}^2\!\!+\!\si\|u_t\|_{L^2}^2\!\!+\!\si^2\|\te_t\|^2_{H^1}\!\!+\!\|\n\!\! -\!1\|_{L^2}^2)dt\le C.
\ea\ee
\end{lemma}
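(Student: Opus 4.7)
The plan is to first establish \eqref{ae3.7} via a material-derivative argument applied to the temperature equation, and then to deduce \eqref{vu15} from the elliptic machinery already collected in Lemmas \ref{lem-f-td}, \ref{lem-vn}, \ref{lem-curl} together with the $A_2(T),A_3(T)+A_4(T)$ bounds and the density upper bound $\n\le 3\hat\n/2$.

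For \eqref{ae3.7}, I would start from the reformulation \eqref{CMHD-th},
\[
\tfrac{R}{\ga-1}\n\dot\te=\ka\Delta\te-R\n\te\div u+\lambda(\div u)^2+2\mu|\mathbb{D}u|^2+\nu|\curl H|^2,
\]
and apply the operator $\pa_t+\div(u\,\cdot)$ to both sides; using $\eqref{CMHD}_1$ the left-hand side becomes $\tfrac{R}{\ga-1}\n\ddot\te$. Multiplying by $\si^2\dot\te$, integrating over $\Omega$, and using $\na\te\cdot n=0$ together with $u\cdot n=0$, the $\ka\Delta\te$ contribution yields the dissipation $\tfrac{\ka}{2}\si^2\|\na\dot\te\|_{L^2}^2$. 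The remaining terms produce expressions involving $\si^2\int(\na\dot u:\na u)\dot\te\,dx$, $\si^2\int\curl H\cdot\curl H_t\,\dot\te\,dx$, $\si^2\int\n\te\div\dot u\,\dot\te\,dx$, and convective commutators; these are controlled by H\"older's inequality, Sobolev embedding (in particular \eqref{udot}), the bounds $\n\le 3\hat\n/2$ and $\bp\le P_2$, and Cauchy's inequality, so that they are absorbed into $\tfrac{\ka}{4}\si^2\|\na\dot\te\|_{L^2}^2$ plus quantities already controlled by $A_3(T)+A_4(T)\le C_0^{1/6}$ and by \eqref{ss-5} (notably $\int_0^T\si^2(\|\na\dot u\|_{L^2}^2+\|\na H_t\|_{L^2}^2)dt$). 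Integrating in time (with $(\si^2\|\sqrt\n\dot\te\|_{L^2}^2)|_{t=0}=0$ because of the $\si^2$ weight) and applying Gr\"onwall's inequality yields \eqref{ae3.7}.

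For \eqref{vu15}, each summand is now elliptic-type book-keeping. The bound $\si\|\na u\|_{L^6}^2$ follows from \eqref{u-l6} combined with the $A_3(T)$-control of $\si\|\sqrt\n\dot u\|_{L^2}^2$ and $\si\|\curl^2 H\|_{L^2}^2$; $\si\|H\|_{H^2}^2$ follows from \eqref{2tdh} and \eqref{h2xd1} together with the $A_3(T)$-bound on $\si\|H_t\|_{L^2}^2$; and $\si^2\|\te\|_{H^2}^2$ follows from \eqref{td2-th} and \eqref{ae3.7}. The time integrals are handled similarly: $\int_0^T\si\|\na u\|_{L^4}^4dt$ follows from \eqref{b44-5} and $A_3(T)$; $\int_0^T\si\|H_t\|_{H^1}^2dt$ from $A_3(T)$ plus a time-weighted reuse of \eqref{ht1}; $\int_0^T\si\|\na\te\|_{H^1}^2dt$ from $A_2(T)$ and \eqref{td2-th}; $\int_0^T\si\|u_t\|_{L^2}^2dt$ from $u_t=\dot u-u\cdot\na u$ together with $A_3(T)$; and $\int_0^T\si^2\|\te_t\|_{H^1}^2dt$ from $\te_t=\dot\te-u\cdot\na\te$, \eqref{ae3.7}, and \eqref{ss-6}. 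Finally $\int_0^T\|\n-1\|_{L^2}^2dt\le C$ is obtained exactly as in \eqref{a4-1}: combine Lemma \ref{lem-s} on $[0,\si(T)]$ with the rewriting \eqref{CMHD-uuu} on $[\si(T),T]$, estimating $\|F\|_{L^2}$ through \eqref{f-curlu-lp} and $\|R\te-\bp\|_{L^2}$ through \eqref{p-lp1}, both bounded by $A_2(T)+A_3(T)$.

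The main obstacle I expect is the clean treatment of the cross-terms produced when differentiating the right-hand side of the temperature equation in the material sense---especially the magnetic contribution $\nu|\curl H|^2$, whose material derivative generates both $\curl H\cdot\curl H_t$ and the commutator $\curl H\cdot\curl(u\cdot\na H)$. These must be integrated by parts and paired with the dissipation $\si^2\|\na\dot\te\|_{L^2}^2$ so that, after invoking smallness of $C_0$, they are absorbed rather than merely dominated. The boundary terms arising in this integration by parts need the identity $u\cdot\na u\cdot n=-u\cdot\na n\cdot u$ and the Sobolev trace inequality, applied in the same spirit as the control of $J_1$ in Lemma \ref{lem-dot}.
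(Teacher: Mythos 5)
Your proposal matches the paper's proof in all essentials: the paper likewise applies $\pa_t+\div(u\,\cdot)$ to the temperature equation, tests the result against $\si^2\dot\te$ using the boundary conditions $u\cdot n=0$ and $\na\te\cdot n=0$ to extract the dissipation $\si^2\|\na\dot\te\|_{L^2}^2$, controls the commutator and coupling terms with the already-established bounds (\ref{ss-5}), (\ref{ss-6}), (\ref{u-l6}), (\ref{h2xd1}) and (\ref{key1}), and then obtains every summand of (\ref{vu15}) by exactly the elliptic book-keeping you describe, including the splitting of $\int_0^T\|\n-1\|_{L^2}^2dt$ at $t=\si(T)$. The only cosmetic difference is that the paper estimates the magnetic term $2\nu\curl H\cdot(\curl H_t+u\cdot\na\curl H)\dot\te$ directly by H\"older rather than integrating it by parts, so no boundary identity is actually needed in this lemma.
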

\begin{proof}
First, applying the operator $\pa_t+\div(u\cdot) $ to \eqref{CMHD}$_3 $ and using  \eqref{CMHD}$_1$, one   gets
\be\la{3.96}\ba
&\frac{R}{\ga-1} \n \left(\pa_t\dot \te+u\cdot\na\dot \te\right)\\
&=\ka \Delta  \te_t +\ka \div (\Delta \te u)+\left( \lambda (\div u)^2+2\mu |\mathfrak{D}(u)|^2+\nu|\curl H|^2\right)\div u \\
&\quad -R\n \dot\te \div u-R\n \te\div \dot u+R\n \te  \pa_ku^l\pa_lu^k +2\lambda \left( \div\dot u-\pa_ku^l\pa_lu^k\right)\div u\\
&\quad + \mu (\pa_iu^j+\pa_ju^i)\left( \pa_i\dot u^j+\pa_j\dot u^i-\pa_iu^k\pa_ku^j-\pa_ju^k\pa_ku^i\right)\\
&\quad +2\nu\curl H\cdot(\curl H_t+u\cdot\nabla\curl H).
\ea\ee
%Direct calculations show that
% \be \ba\la{bea}
%  \int  (\Delta  \te_t + \div (\Delta \te u)) \dot \te dx &=  - \int  (\na  \te_t \cdot \na \dot\te + \Delta \te u \cdot \na \dot \te) dx\\
% &= - \int  |\na \dot\te|^2 dx  + \int ( \na(u\cdot \na \te) \cdot \na \dot \te - \Delta \te u \cdot \na \dot \te) dx.
% \ea \ee
Multiplying (\ref{3.96}) by $\dot \te$ and integrating the resulting equality over $\O$, it holds that
\be\la{3.99}\ba
& \frac{R}{2(\ga-1)}\left(\int \n |\dot\te|^2dx\right)_t + \ka   \|\na\dot\te\|_{L^2}^2 \\
&\le  C  \int|\na \dot \te|\left( |\na^2\te||u|+ |\na \te| |\na u|\right)dx+C\int  \n|R\te-\bp| |\na\dot u| |\dot \te|dx\\
&\quad +C(\on)  \int|\na u|^2|\dot\te|\left(|\na u|+|R\te-\bp| \right)dx+C   \int |\na\dot u|\n|\dot \te| dx \\
&\quad +C   \int\left( |\na u|^2|\dot \te|+\n  |\dot
\te|^2|\na u|+|\na u| |\na\dot u| |\dot \te|\right)dx \\
&\quad +C   \int\left( |\na H|^2||\na u|\dot \te|+|\na H||\na H_t||\dot \te|+|u| |\na H||\na^2H| |\dot \te|\right)dx \\
&\le C\|\na u\|^{1/2}_{L^2}\|\na u\|^{1/2}_{L^6}\|\na^2\te\|_{L^2}\|\na \dot \te\|_{L^2}+C(\on)\|\na\te\|_{L^2} \|\na\dot u\|_{L^2} \|\dot\te\|_{L^6}\\
&\quad+C(\on)  \|\na u\|_{L^2}\|\na u\|_{L^6}\left(\|\na u\|_{L^6}+\|\na \te\|_{L^2}\right)
\|\dot\te\|_{L^6} +C  \|\na\dot u\|_{L^2} \|\n\dot\te\|_{L^2} \\
&\quad+C\|\na u\|^{1/2}_{L^6}\|\na u\|^{1/2}_{L^2} \|\dot\te\|_{L^6}\left(\|\na u\|_{L^2}
+\|\n\dot\te\|_{L^2}+\|\na\dot u\|_{L^2}\right)  \\
&\quad+C\left(\|\na\! u\|_{L^6}\|\na\! H\|_{L^2} \|\nabla\! H\|_{L^6}\!\!
+\!\|u\|_{L^6}\|\na\! H\|_{L^2}\|\curl^2\! H\|_{L^2}\right)\|\dot\te\|_{L^6}\\
&\quad+C\|\na\! H\|_{L^2}\|\na\! H_t\|_{L^2}\|\dot\te\|_{L^6}\\
&\le\frac{\ka}{2}\|\na\dot\te\|_{L^2}^2\!\!+\!C(\|\na u\|_{L^2}^2+\|\na H\|_{L^2}^2)\left(\|\na u\|_{L^6}^4\!\!+\!\|\curl^2H\|_{L^2}^4\!\!+\!\|\na\te \|_{L^2}^4\right)\\
&\quad+C\left(1\!+\!\|\na u\|_{L^6}\!+\!\|\na\te\|_{L^2}^2\right) \left(\|\na^2\te\|_{L^2}^2\!+\!\|\na\dot u\|_{L^2}^2\!+\!\|\rho^{1/2} \dot \te\|_{L^2}^2\!+\!\|\curl^2H\|_{L^2}^2\right)\\
&\quad +\!C\|\na\! u\|_{L^6}\|\na\! u\|_{L^2}^2\!\!+\!C\|\na\! u\|_{L^6}^2\|\na\! H\|_{L^2}^2\!\!+\!C(\|\na\! H\|_{L^2}^2\!\!+\!\|\curl^2 H\|_{L^2}^2)\|\nabla\! H_t\|_{L^2}^2,
\ea\ee
where we have used \eqref{g1}, \eqref{g2}, \eqref{key1}, \eqref{p-lp1}, and the following Poincar\'e-type inequality (\cite[Lemma 3.2]{Feireisl2004} ):
\be \la{kk}
\|f\|_{L^p}\le C(\on)(\|\n^{1/2}f\|_{L^2}+\|\na f \|_{L^2}),~~~p\in[2,6],
\ee
for any $f\in\{h\in H^1 \left|\n^{1/2}h\in L^2\}\right.$.
Multiplying (\ref{3.99}) by $\si^2$ and integrating the resulting inequality over $(0,T),$
we obtain after integrating by parts that
\begin{align*}
& \sup_{0\le t\le T}\si^2\int \n|\dot\te|^2dx + \int_0^T\si^2 \|\na\dot\te\|_{L^2}^2dt  \\
&\le C(\on) \sup_{0\le t\le T} \left(\si^2(\|\na u\|_{L^6}^4\!\!+\!\|\curl^2H\|_{L^2}^4+\|\na\te\|_{L^2}^4)\right)\int_0^T(\|\na u\|_{L^2}^2+\|\na H\|_{L^2}^2)dt \\
&\quad + C(\on,M) \sup_{0\le t\le T} \left(\si\left(1+\|\na u\|_{L^6}+\|\na\te\|_{L^2}^2\right)\right)\\
&\quad \quad \quad \quad \quad \quad \quad \cdot\int_0^T\si\left(\|\na^2\te\|_{L^2}^2+\|\na\dot u\|_{L^2}^2+\|\rho^{1/2} \dot \te\|_{L^2}^2+\|\curl^2H\|_{L^2}^2\right)dt\\
&\quad +C(\on,M) \sup_{0\le t\le T} \left(\si\|\na u\|_{L^6} \right) \int_0^T\|\na u\|_{L^2}^2dt\\
&\quad +C(\on,M) \sup_{0\le t\le T} \left(\si\|\na u\|_{L^6}^2 \right) \int_0^T\|\na H\|_{L^2}^2dt\\
&\quad +C(\on,M) \sup_{0\le t\le T} \left(\si(\|\na\! H\|_{L^2}^2\!\!+\!\|\curl^2 H\|_{L^2}^2)\right) \int_0^T\sigma\|\nabla\! H_t\|_{L^2}^2dt\\
&\le  C(\on,M),
\end{align*}
where we have used   (\ref{key1}), (\ref{ss-5}),  (\ref{ss-6}), and the following fact:
\be\ba\la{ong}\sup_{0\le t\le T}(\si\|\na u\|_{L^6}^2+\si\|H\|_{H^2}^2)\le C(\on,M)\ea\ee
due to \eqref{u-l6}, \eqref{h2xd1}, \eqref{ss-5}, and \eqref{key1}.
Next, it follows from (\ref{key1} ),  \eqref{ss-5} , (\ref{td2-th} ), (\ref{b44-3}), (\ref{b44-7}), (\ref{ae3.7}), (\ref{a2-3}), \eqref{ss2-5}, and  (\ref{ss-6}) that
\be \la{vu02}\ba
\sup_{0\le t\le T}\left(\si^2\|\te\|^2_{H^2}\right)\!\!+\!\!\int_0^T \!\!\!\left(\si(\|\na u  \|_{L^4}^4\!\!+\!\|H_t\|_{H^1}^2\!\!+\!\|\na\te \|_{H^1}^2)\!\!+\!\|\n\! -\!1\|_{L^2}^2\right)dt%+\int_0^T\|\n -1\|_{L^2}^2 dt
 \le C(\on,M), \ea\ee
which along with (\ref{key1} ), \eqref{ss-5} , (\ref{ss-6}), \eqref{kk}, \eqref{ong}, and \eqref{ae3.7} gives
 \be\la{vu12}\ba    \int_0^T  \si \|u _t\|_{L^2}^2dt
 &\le C\int_0^T  \si(\| \dot u \|_{L^2}^2+\|u\cdot\na  u \|_{L^2}^2)dt\\
 &\le C(\hat\n)\int_0^T  \si(\|\n^{1/2} \dot u \|_{L^2}^2\!\!+\!\|\na\!\dot u\|_{L^2}^2+\|u \|_{L^\infty}^2\|\na  u \|_{L^2}^2)dt\\
 &\le C(\hat \n,M) ,\ea\ee
\be\la{vu11}\ba
\int_0^T  \si^2 \|  \te _t\|_{L^2}^2dt
&\le C\int_0^T  \si^2(\| \dot \te \|_{L^2}^2+\|u \cdot\na  \te \|_{L^2}^2)dt\\
&\le C(\hat\n)\int_0^T  \si^2(\|\n^{1/2} \dot \te\|_{L^2}^2+\|\na\dot\te\|_{L^2}^2+\|u\|_{L^6}^2\|\na\te\|_{L^3}^2)dt\\
&\le C(\hat \n,M) ,\ea\ee
 and
\be\la{vu01}\ba
\int_0^T  \si^2 \|  \na\te _t\|_{L^2}^2dt
&\le C\int_0^T  \si^2\|\na \dot \te \|_{L^2}^2  dt+ C\int_0^T  \si^2\|\na(u \cdot\na  \te )\|_{L^2}^2dt\\
&\le C(\on,M) +C\int_0^T\si^2\left(\|\na u \|_{L^3}^2+\|u \|_{L^\infty}^2\right)\|\na^2 \te \|_{L^2}^2dt  \\ &\le C(\on,M).\ea\ee
Hence, (\ref{vu15}) is derived from (\ref{ong})--\eqref{vu01} immediately.
The proof of Lemma \ref{lem-th} is finished.
\end{proof}

Finally, we end this section by establishing the exponential decay-in-time for the classical solutions.
\begin{lemma}\label{lem-lim}
Let $(\rho,u,\theta,H)$ be a smooth solution of
 \eqref{CMHD}-\eqref{boundary} satisfying \eqref{key1}.
  Then there exist positive constants $C^\ast$, $\alpha$,  $\theta_\infty$, and $\ve_0$ depending only on $\mu$, $\lambda$, $\nu$, $\kappa$, $\ga$, $R$, $\hat{\rho}$, $\hat{\theta}$, $\Omega$, and $M$ such that for any $t\geq 1$,
\begin{align}\label{lim-0}
\|\rho-1\|_{L^2}+\|u\|_{W^{1,6}}^2+\|H\|_{H^2}^2+\|\theta-\theta_\infty\|_{H^2}^2\le C^\ast e^{-\al t},
\end{align}
provided $C_0\leq\varepsilon_0$.
\end{lemma}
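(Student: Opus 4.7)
The plan is to revisit the modified-energy identity \eqref{a2-1} from the proof of Lemma \ref{lem-a2} and upgrade the estimates there from ``small'' to ``exponentially decaying.'' Concretely, I will derive a differential inequality of the form
$$W'(t)+\eta_0 W(t)+\tfrac12 D(t)\le g(t),$$
where $W(t)$, $D(t)$ are as in \eqref{ww}--\eqref{dd} and $g(t)$ is an error that is itself exponentially decaying. Combined with Gr\"onwall this yields $W(t)\le C e^{-\eta_0 t}$, giving exponential decay of the leading pieces $\|\rho-1\|_{L^2}^2$, $\|\sqrt{\rho}u\|_{L^2}^2$, $\|H\|_{L^2}^2$ and $\|\sqrt{\rho}(R\te-\bp)\|_{L^2}^2$ of $W(t)$.

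The coercivity $D(t)\ge 2\eta_0 W(t)$ (up to $g(t)$) will rest on four ingredients. First, Lemma \ref{lem-vn} applied to $u$ with $u\cdot n=0$ gives $\|u\|_{L^2}\le C\|\na u\|_{L^2}$, and applied to $H$ with $H\cdot n=0$ and $\div H=0$ gives $\|H\|_{L^2}\le C\|\curl H\|_{L^2}$. Second, \eqref{p-lp1} yields $\|R\te-\bp\|_{L^2}\le C\|\na\te\|_{L^2}$. Third, the effective-viscous-flux relation \eqref{CMHD-uuu} together with \eqref{f-curlu-lp} and the uniform positivity $\bp\ge P_1>0$ from \eqref{p-b} yields
$$\|\n-1\|_{L^2}^2\le C\bigl(\|\sqrt{\n}\dot u\|_{L^2}^2+\|H\!\cdot\!\na H\|_{L^2}^2+\|\na u\|_{L^2}^2+\|\na\te\|_{L^2}^2\bigr);$$
the surplus $\|\sqrt{\n}\dot u\|_{L^2}^2$ is absorbed by adding a small multiple of \eqref{a1-3} to the main identity, which is legitimate because Proposition \ref{pr1} guarantees every nonlinear coupling constant is controlled by a small power of $C_0$. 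The $|H\!\cdot\!\na H|^2$ term is handled using \eqref{hh} together with the already-known decay-in-time integrability of $\|\curl^2 H\|_{L^2}^2$ from \eqref{a3-0}.

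To upgrade from $L^2$-decay of $W(t)$ to the full norm in \eqref{lim-0}, I will revisit the higher-order inequalities \eqref{b1-d}--\eqref{b3-d} of Lemma \ref{lem-dot} and Lemma \ref{lem-th} on the sliding window $[t-1,t+1]$ for $t\ge 1$, treating the left endpoint as a fresh initial time. A window Gr\"onwall argument of the same type used to derive \eqref{ss-5} promotes the $W(t)\le Ce^{-\eta_0 t}$ decay to
$$\sup_{t\le s\le t+1}\bigl(\|\na u\|_{L^2}^2+\|\na H\|_{L^2}^2+\|\sqrt{\n}\dot u\|_{L^2}^2+\|H_t\|_{L^2}^2+\|\na\te\|_{L^2}^2\bigr)\le Ce^{-\alpha t}$$
for some $\alpha\in(0,\eta_0)$. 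The elliptic estimates \eqref{tdu2}, \eqref{2tdh}, \eqref{3tdh} and the temperature bound \eqref{td2-th} then bootstrap to $\|u\|_{W^{1,6}}^2+\|H\|_{H^2}^2+\|\te-\te_\infty\|_{H^2}^2\le C^{\ast}e^{-\alpha t}$.

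The limit $\te_\infty$ is identified as $P_\infty/R$, where $P_\infty:=\lim_{t\to\infty}\bp(t)$: formula \eqref{pt} combined with the already-established decay of $\|\na u\|_{L^2}+\|\na H\|_{L^2}$ gives $|\bp_t(t)|\le Ce^{-\alpha t/2}$, so $\bp(t)$ is Cauchy and converges exponentially to some $P_\infty\in[P_1,P_2]$; then
$$\|\te-\te_\infty\|_{L^2}\le R^{-1}\|R\te-\bp\|_{L^2}+R^{-1}|\Omega|^{1/2}|\bp(t)-P_\infty|$$
inherits the exponential decay. I expect the main obstacle to be the very first step: in Lemma \ref{lem-a2} the boundary and pressure-coupling terms were only shown to be bounded by small powers of $C_0$ (using the ``weak'' basic energy \eqref{basic1}), whereas here each such term must be dominated by $\varepsilon D(t)+g(t)$ with $g(t)$ integrable and itself decaying, so that the Gr\"onwall coefficient $\eta_0$ is strictly positive \emph{uniformly} in $t$. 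The uniform lower bound $\bp\ge P_1>0$ and the smallness of $C_0$ (strengthened if needed beyond the thresholds $\varepsilon_1,\dots,\varepsilon_5$) are precisely what make this closure possible, yielding the final threshold $\varepsilon_0$.
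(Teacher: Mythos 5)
Your proposal is correct and reaches the conclusion, but it handles the one genuinely delicate step by a different mechanism than the paper. The difficulty in both arguments is that the dissipation $D(t)$ of \eqref{dd} does not see the $\|\n-1\|_{L^2}^2$ part of $W(t)$. You close this gap through the effective viscous flux: \eqref{CMHD-uuu} together with \eqref{f-curlu-lp} gives $\|\n-1\|_{L^2}^2\le C(\|\sqrt{\n}\dot u\|_{L^2}^2+\|\na u\|_{L^2}^2+\|\na H\|_{L^2}^2+\|\na\te\|_{L^2}^2)$ (this is precisely the paper's own estimate \eqref{a4-1}), and you then purchase the missing $\|\sqrt{\n}\dot u\|_{L^2}^2$ dissipation by adding $\epsilon B_1$ from \eqref{a1-3} to the Gr\"onwall functional. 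The paper instead tests the momentum equation against the Bogovskii field $\mathcal{B}[\n-1]$ of Lemma \ref{lem-divf}, obtaining \eqref{lim-3}, and corrects $W$ by $-c\int\n u\cdot\mathcal{B}[\n-1]\,dx$, which stays at the level of the basic energy and never touches $\dot u$. Your route requires checking that $W+\epsilon B_1$ remains nonnegative and equivalent to $W+\epsilon(\|\na u\|_{L^2}^2+\|\na H\|_{L^2}^2)$ — the cross terms in $B_1$ are dominated by $W$ itself since $\|P-\bp\|_{L^2}^2\le CW$ — and it yields a smaller $\al$, but in exchange it delivers the decay of $\|\na u\|_{L^2}^2+\|\na H\|_{L^2}^2$ already at the first Gr\"onwall step, which the paper only obtains afterwards by weighting \eqref{a1-3} with $e^{\al t}$. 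The subsequent bootstrap (your sliding window on $[t-1,t+1]$ versus the paper's $e^{\al t}$-weighted versions of \eqref{a1-3}, \eqref{b44-lim} and \eqref{3.99}) and the identification $\te_\infty=P_\infty/R$ via the exponential decay of $|\bp_t|$ from \eqref{pt} are essentially identical. Two small cautions: the error $g(t)$ in your master inequality cannot be assumed exponentially decaying a priori — every term must be absorbed into $\eta_0 W+\tfrac12 D$ or the added dissipation, which your absorption discussion does in fact accomplish; and the appeal to ``decay-in-time integrability of $\|\curl^2H\|_{L^2}^2$ from \eqref{a3-0}'' is not what the term $\|H\cdot\na H\|_{L^2}^2$ actually needs — by \eqref{hh} you should split it as $\de\|\curl^2H\|_{L^2}^2+C(\de)\|\na H\|_{L^2}^2$ and absorb the first piece into the $\epsilon\|\curl^2H\|_{L^2}^2$ dissipation supplied by $B_1$.
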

\begin{proof}
First, it follows from \eqref{a2-1}, \eqref{key1}, \eqref{p-lp1}, \eqref{p-b}, \eqref{pt2}, \eqref{grho}, \eqref{basic0}, and \eqref{u-l6} that for any $t\geq 1,$
\begin{equation}\label{lim-1}
\begin{aligned}
 W'(t)+ D(t)
&\le C|\bp_t|(\|R\te-\bp\|_{L^2}+\|R\te-\bp\|_{L^2}^2)+|\bp_t| \|\n-1\|_{L^2}^2 \\
&\quad+C\|R\te-\bp\|_{L^\infty}(\|R\te-\bp\|_{L^2}^2+\|\na u\|_{L^2}^2+\|\na H\|_{L^2}^2)\\
%&\le C(C_0^{1/8}\|\na u\|_{L^2}+\|\na u\|_{L^2}^2)(\|\na\te\|_{L^2}+\|\na\te\|_{L^2}^2+ \|\n-1\|_{L^2}^2)\\
%&\quad+C(\|\na\te\|_{L^2}^{1/2}\|\na^2\te\|_{L^2}^{1/2}+\|\na\te\|_{L^2})(\|\na\te\|_{L^2}^2+\|\na u\|_{L^2}^2)\\
&\le CC_0^{1/24}(\|\na u\|_{L^2}^2+\|\na H\|_{L^2}^2+\|\na\te\|_{L^2}^2+ \|\n-1\|_{L^2}^2),
\end{aligned}
\end{equation}
where $W(t),D(t)$ are defined in \eqref{ww} and \eqref{dd}.
Combining \eqref{lim-1} with \eqref{tdu1}, \eqref{tdh1} and \eqref{p-b} yields that
\begin{equation}\label{lim-2}
\begin{aligned}
& W'(t)+\hat C_1(\|\na u\|_{L^2}^2+\|\na H\|_{L^2}^2+\|\na \te\|_{L^2}^2)
\\&\le \hat C_2C_0^{1/24}(\|\na u\|_{L^2}^2+\|\na H\|_{L^2}^2+\|\na\te\|_{L^2}^2+ \|\n-1\|_{L^2}^2).
\end{aligned}
\end{equation}
Next, rewriting $\eqref{CMHD}_2$ as
\begin{align*}
&(\n u)_t+\div(\n u\otimes u)-\mu\Delta u-(\mu+\lambda)\na(\div u)\\&=\div(H\otimes H)-\frac12\nabla(|H|^2-\overline{|H|^2})-\na(\n(R\te-\bp))-\bp\na(\n-1),
\end{align*}
multiplying this by $\mathcal{B}[\n-1]$ and using Lemma \ref{lem-divf}, \eqref{key1}, and \eqref{p-lp1},  one gets that for any $t\geq 1$,
\be\notag \ba
&\bp\int(\n-1)^2 dx \\
&= \left(\int\rho u\cdot\mathcal{B}[\n-1] dx\right)_t -\int\rho u\cdot\mathcal{B}[\n_t]dx
  -\int\rho u\cdot\nabla\mathcal{B}[\n-1]\cdot u dx \\
& \quad  +\mu\int\p_j u\cdot\p_j\mathcal{B}[\n-1] dx +(\mu+\lambda)\int(\rho-1)\div udx -\int\n(R\te-\bp)(\n-1)dx\\
& \quad  +C\int H\cdot\nabla\mathcal{B}[\n-1]\cdot Hdx-\frac12\int(\rho-1)(|H|^2-\overline{|H|^2})dx \\
& \le\left(\int\n u\cdot\mathcal{B}[\n-1]dx\right)_t+C\|\n u\|_{L^2}^2+C\|u\|_{L^{4}}^{2}\|\n-1\|_{L^2}\\
& \quad  +C\|\rho-1\|_{L^2}\|\na u\|_{L^2}+C\|\rho-1\|_{L^2}\|R\te-\bp\|_{L^2}+C\|H\|_{L^{4}}^{2}\|\n-1\|_{L^2} \\
& \leq \left(\int\rho u\cdot\mathcal{B}[\n-1] dx\right)_t+\frac{P_1}{2}\|\n-1\|_{L^2}^2+C(\|\na u\|_{L^2}^2+\|\na H\|_{L^2}^2+\|\na \te\|_{L^2}^2),
\ea\ee
which as well as \eqref{p-b} leads to
\be\la{lim-3}\|\rho-1\|_{L^2}^2\le\frac{2}{P_1}\left(\int\rho u\cdot\mathcal{B}[\n-1] dx\right)_t+\hat C_3(\|\na u\|_{L^2}^2+\|\na \te\|_{L^2}^2).\ee
By virtue of \eqref{grho}, \eqref{p-b}, and Lemma \ref{lem-divf}, it holds
\be \la{lim-4} \ba
\left|\int\rho u\cdot\mathcal{B}[\n-1] dx\right|
&\leq  C \left(\|\n u\|^2_{L^2}\!+\!\|\n\!-\!1\|_{L^2}^2\right)%\\&
\le  C\left(\frac12\|\n^{1/2} u\|^2_{L^2}\!+\!\bp G(\n)\right)
%\le \hat C_4W(t)
.
\ea\ee
Adding \eqref{lim-2} to \eqref{lim-3} multiplied by $\hat C_5$ with $\hat C_5=\min\{\frac{\pi_1}{4\hat C_4},\frac{\hat C_1}{4\hat C_3}\}$ yields
\be\ba\la{lim-5}
&W_1'(t)+\frac{3\hat C_1}{4}(\|\na u\|_{L^2}^2+\|\na H\|_{L^2}^2+\|\na \te\|_{L^2}^2)+\hat C_5\|\n-1\|_{L^2}^2\\
&\le \hat C_2C_0^{1/24}(\|\na u\|_{L^2}^2+\|\na H\|_{L^2}^2+\|\na\te\|_{L^2}^2+ \|\n-1\|_{L^2}^2),\ea\ee
where
$$W_1(t)\triangleq W(t)-\frac{2\hat C_5}{\pi_1}\int\rho u\cdot\mathcal{B}[\n-1] dx,$$
satisfies
\begin{equation}\label{w1}
\displaystyle  W_1(t) \sim \|\sqrt{\rho} u\|_{L^2}^2+\|\rho-1\|_{L^2}^2+\|H\|_{L^2}^2+\|\sqrt{\rho}(R\te-\overline{P}) \|_{L^2}^2
\end{equation}
due to \eqref{lim-4}, \eqref{grho} and \eqref{p-b}.
Thus we infer from \eqref{lim-5} that
\be\ba\la{lim-6}
&W_1'(t)+\frac{\hat C_1}{2}(\|\na u\|_{L^2}^2+\|\na H\|_{L^2}^2+\|\na \te\|_{L^2}^2)+\frac{\hat C_5}{2}\|\n-1\|_{L^2}^2\leq 0,\ea\ee
provided
$C_0\le\ve_0\triangleq \min\left\{\ve_1,\cdots,\ve_5,\left(\frac{\hat C_5}{2\hat C_2}\right)^{24},\left(\frac{\hat C_1}{4\hat C_2}\right)^{24}\right\}.$
Furthermore, by \eqref{w1}, \eqref{key1}, \eqref{p-lp1}, and \eqref{p-b}, we have
\begin{equation*}
\displaystyle W_1(t)\leq  {\hat C_6}(\|\na u\|_{L^2}^2+\|\na H\|_{L^2}^2+\|\na \te\|_{L^2}^2+\|\n-1\|_{L^2}^2),
\end{equation*}
which together with \eqref{lim-6} implies that for $\alpha=\frac{1}{3}\min\{\frac{\hat C_1}{2\hat C_6},\frac{\hat C_5}{2\hat C_6}\}$,
\be\notag W_1'(t)+3\alpha W_1(t)\le 0.\ee
Therefore, it follows that for any $t\geq 1$,
\be\la{lim-7}\|\sqrt{\rho} u\|_{L^2}^2+\|\rho-1\|_{L^2}^2+\|H\|_{L^2}^2+\|\sqrt{\rho}(R\te-\overline{P}) \|_{L^2}^2\le C W_1(t)\le Ce^{-3\al t}.\ee
Moreover, we deduce from \eqref{lim-6} and \eqref{lim-7} that for any $1\le t\le T<\infty$,
\be\la{lim-8}\int_1^Te^{\al t}(\|\na u\|_{L^2}^2+\|\na \te\|_{L^2}^2)dt\le C.\ee

Next, multiplying \eqref{a1-3} by $e^{\al t}$ and using \eqref{key1} imply that for $B_1$ defined in \eqref{b11},
\be\ba \la{lim-9}
&(e^{\al t}B_1(t))'+ e^{\al t}(\frac{1}{2}\|\sqrt{\rho}\dot u\|_{L^2}^2+\frac{\nu^2}{4}\|\curl^2H\|_{L^2}^2+\frac{1}{2}\|H_t\|_{L^2}^2)\\&\le Ce^{\al t}\left(\|\na u\|_{L^2}^2+\|\na H\|_{L^2}^2+\|\na\te\|_{L^2}^2+\|P-\overline P\|_{L^2}^2\right).
\ea\ee
Note that by \eqref{key1}, \eqref{p-b}, and \eqref{lim-7},
\be\la{lim-10}\|P-\overline P\|_{L^2}\le \|\n(R\te-\bp)\|_{L^2}+\bp\|\n-1\|_{L^2}\le Ce^{-\al t},\ee
which together with  \eqref{lim-8}, \eqref{lim-9}, and Lemma \ref{lem-f-td} gives for any $1\le t\le T<\infty$,
\be\la{lim-11}\ba
\sup_{1\leq t\leq T}\!e^{\al t}\left(\|\na u\|_{L^2}^2\!\!+\!\!\|\na H\|_{L^2}^2\right)\!+\!\int_1^T\!\!\! e^{\al t}(\|\sqrt{\rho}\dot u\|_{L^2}^2\!\!+\!\|\curl^2H\|_{L^2}^2\!\!+\!\|H_t\|_{L^2}^2)dt\!\le C.
\ea \ee
Next, choosing $m=0$ in \eqref{b44}, it follows from \eqref{b44-3}, \eqref{b44-4}-\eqref{b44-6} and \eqref{key1} that for $t\geq 1$,
\begin{equation}\label{b44-lim}
\begin{aligned}
&B_4'(t) + \frac{C_1}{2}\left(\|\nabla\dot{u}\|_{L^2}^2+\|\nabla H_t\|_{L^2}^2\right) +\|\sqrt{\rho}\dot{\theta}\|_{L^2}^2\\
	&\le \!C(\|\sqrt{\rho}\dot{u}\|_{L^2}^{2}\!\!+\!\|\curl^2\!  H\|\ltwo\!+\!\|H_t\|\ltwo\!\!+\!\|\na\! u\|_{L^2}^2\! +\!\|\na\! H\|_{L^2}^2\!\! +\! \|\na\! \te\|_{L^2}^2\!\!+\!\|\rho\!-\!1\|_{L^2}^2).
\end{aligned}
\end{equation}
Similarly, multiplying \eqref{b44-lim} by $e^{\al t}$ and using \eqref{key1}, \eqref{b44-22}, \eqref{b44-9}, \eqref{h2xd1}, \eqref{lim-7}, \eqref{lim-8}, and \eqref{lim-11} imply that for any $t\geq 1$,
\be\la{t15}\ba
&\sup_{1\leq t\leq T}e^{\al t}\left(\|\n^{1/2}\dot u\|_{L^2}^2+\|\curl^2H\|_{L^2}^2+\|H_t\|_{L^2}^2+\|\na\te\|_{L^2}^2\right)\\&+\int_1^T e^{\al t}(\|\na\dot u\|_{L^2}^2+\|\na H_t\|_{L^2}^2+\|\n^{1/2}\dot \te\|^2_{L^2})dt\le C.
\ea \ee

Next, adopting the analogous method and applying \eqref{3.99}, \eqref{vu15}, \eqref{lim-7}, \eqref{lim-8}, \eqref{lim-11}, \eqref{t15}, \eqref{b44-3}, \eqref{b44-5} \eqref{td2-th}, and  \eqref{key1}, we obtain that
\be\la{t17}\ba
\sup_{1\leq t\leq T}\left(e^{\al t}(\|\n^{1/2}\dot\te\|_{L^2}^2+\|\na^2\te\|_{L^2}^2)\right)+\int_1^T e^{\al t}\|\na\dot \te\|_{L^2}^2dt\le C.
\ea \ee
Finally, it remains to determine the limit of $\te$ as $t$ tends to infinity. Combining \eqref{pt}, \eqref{lim-10}, and \eqref{lim-11} shows that for any $t\geq 1$,
\be\ba\notag
|\bp_t|\le C(\|\na u\|_{L^2}^2+\|\na H\|_{L^2}^2+\|P-\bp\|_{L^2}^2)\le C e^{-\al t},
\ea\ee
which implies there exists a constant $P_\infty$ such that
\begin{equation}\label{t18}
\displaystyle  \lim_{t\rightarrow \infty}\overline{P}=P_\infty,\quad \textit{and} \quad |\overline{P}-P_\infty|\le Ce^{-\al t}.
\end{equation}\label{te-inf}
Denote \begin{equation}
\displaystyle  \te_\infty\triangleq P_\infty/R,
\end{equation}
by \eqref{p-lp1}, \eqref{t15}, and \eqref{t18}, we have
\be\la{t19}\|\te-\te_\infty\|_{L^2}^2\le C\|R\te-\bp\|_{L^2}^2+C|\bp-R\te_\infty|^2\le Ce^{-\al t}.\ee
Combining \eqref{tdu2}, \eqref{lim-7}, \eqref{lim-11}, \eqref{t15}, \eqref{t17}, and \eqref{t19} yields \eqref{lim-0} and finishes the proof of Lemma \ref{lem-lim}.
\end{proof}

\section{\label{se4} A priori estimates (II): higher order estimates }
In this section, we derive the time-dependent higher order estimates, which are necessary for the global existence of classical solutions. Here we adopt the method of the article \cite{cl2019,lxz2013,lx2016}, and follow their work with a few modifications. We sketch it here for completeness. Let $(\rho,u,\theta,H)$ be a smooth solution of \eqref{CMHD}-\eqref{boundary} satisfying Proposition \ref{pr1} and the initial energy $C_0\leq \ve_0$, and the positive constant $C $ may depend on $T,$ $\mu$, $\lambda$, $\nu$, $\kappa$, $\ga$, $\on,$ $\hat{\theta},$ $\Omega$, $R,$ $M$,  $\|\na u_0\|_{H^1}, \|\na \te_0\|_{L^2}, \|\na H_0\|_{H^1}, \|\n_0\|_{W^{2,q}},  \| \tilde g\|_{L^2}$ for $q\in(3,6)$, where $\tilde g\in L^2(\Omega)$ is defined as
\be \la{co12}\tilde g\triangleq\n_0^{-1/2}\left(
-\mu \Delta u_0-(\mu+\lambda)\na\div u_0+R\na (\n_0\te_0)-\curl H_0\times H_0\right).\ee

\begin{lemma}\label{lem-x1}
 There exists a positive constant $C,$ such that
\begin{align}\label{x1b1}
&\begin{aligned}
 &\sup_{0\le t\le T}(\|\rho^{\frac{1}{2}}\dot{u}\|_{L^2}^2+\| H_t\|_{L^2}^2+\|H\|_{H^2}^2+\|\te\|_{H^1}^2)\\
&\quad+\int_0^T(\|\nabla\dot{u}\|_{L^2}^{2}+\|\nabla H_t\|_{L^2}^2+\|\rho^{\frac{1}{2}}\dot{\te}\|_{L^2}^2+\| \nabla^2\te\|_{L^2}^2)dt\leq C,
 \end{aligned}\\
& \sup_{0\le t\le T}\sigma(\|\rho^{\frac{1}{2}}\dot{\te}\|_{L^2}^2+\| \nabla^2\te\|_{L^2}^2)+\int_0^T\sigma\|\nabla \dot{\te}\|_{L^2}^{2}dt\leq C,\label{x1b3}\\
&\sup_{0\le t\le T}(\|\rho\|_{H^2}\!\!+\!\|u\|_{H^2})\!\!+\!\int_0^T\!\!(\|\nabla u\|_{L^\infty}^{3/2}\!\!+\!\|u\|_{H^3}^{2}\!\!+\!\|H\|_{H^3}^{2}\!\!+\!\sigma\|\nabla^3\te\|_{L^2}^{2})dt\!\leq\! C.\label{x1b2}
\end{align}
\end{lemma}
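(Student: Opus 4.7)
The plan is to derive \eqref{x1b1}, \eqref{x1b3} and \eqref{x1b2} in sequence, upgrading the $\sigma$-weighted short-time bounds of Section \ref{se3} to estimates without time weight, at the price of losing time-independence. The single new input is that the compatibility condition \eqref{dt3}, evaluated against the momentum equation at $t=0$, gives $\sqrt{\rho_0}\dot u(\cdot,0)=-g\in L^2$; together with $u_0,H_0\in H^2$ and $\theta_0\in H^1$ this bounds $\|\sqrt{\rho_0}\dot u(\cdot,0)\|_{L^2}$, $\|H_t(\cdot,0)\|_{L^2}$ (computed from \eqref{CMHD}$_4$ at $t=0$) and $\|\nabla\theta_0\|_{L^2}$ a priori.

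\textbf{Step 1: \eqref{x1b1}.} I would take $m=0$ in Lemma \ref{lem-dot}, form $B_4:=(C_2+1)B_2+B_3$ and choose $\eta$ small in \eqref{b3-d} to absorb $\|\nabla\dot u\|_{L^2}^2+\|\nabla H_t\|_{L^2}^2$ on the right of \eqref{b2-d}+\eqref{b3-d}, obtaining
$$
B_4'(t)+C\bigl(\|\nabla\dot u\|_{L^2}^2+\|\nabla H_t\|_{L^2}^2+\|\sqrt\rho\dot\theta\|_{L^2}^2\bigr)\le C\Phi_1(t)+C\Phi_2(t)\,B_4(t).
$$
All the source terms of type $\|\nabla u\|_{L^4}^4$, $\|\nabla H\|_{L^4}^4$, $\|\theta\nabla u\|_{L^2}^2$, $\|\theta\curl H\|_{L^2}^2$ and cross terms $(\|\nabla u\|_{L^2}^2+\|\nabla H\|_{L^2}^2)(\|\sqrt\rho\dot u\|_{L^2}^2+\|\curl^2H\|_{L^2}^2+\|H_t\|_{L^2}^2)$ can be reorganized into $\Phi_1,\Phi_2$ via \eqref{b44-3}--\eqref{b44-7}. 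The $L^1_t$-integrability of $\|\sqrt\rho\dot u\|_{L^2}^2+\|\curl^2 H\|_{L^2}^2+\|H_t\|_{L^2}^2$ that is needed to bound $\Phi_2$ is obtained by splitting $\int_0^T=\int_0^{\sigma(T)}+\int_{\sigma(T)}^T$ and applying $A_1(\sigma(T))\le 3K$ on the first piece and $A_3(T)\le 2C_0^{1/6}$ on the second. With the initial value $B_4(0)$ controlled as above, Gronwall yields the $\sup$-bounds in \eqref{x1b1}; the $H^2$-bound on $H$ follows from \eqref{2tdh} via $\nu\curl^2 H=-H_t+\text{l.o.t.}$, and the integral of $\|\nabla^2\theta\|_{L^2}^2$ from \eqref{td2-th}.

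\textbf{Step 2: \eqref{x1b3} and \eqref{x1b2}.} Estimate \eqref{x1b3} is obtained by multiplying the $\dot\theta$-equation \eqref{3.96} by $\sigma\dot\theta$ and repeating the computation leading to \eqref{ae3.7}, but now invoking \eqref{x1b1} on the right-hand side so that $\|\sqrt\rho\dot u\|,\|H_t\|,\|\nabla^2\theta\|$ carry bounded sup-norms; the $\nabla^2\theta$-part comes directly from \eqref{td2-th}. For \eqref{x1b2}, the $\|\rho\|_{H^2}$-bound comes from testing the twice-differentiated \eqref{CMHD}$_1$ against $\partial^\alpha\rho$, which produces
$$
\tfrac{d}{dt}\|\rho\|_{H^2}^2\le C\bigl(1+\|\nabla u\|_{W^{1,q}}\bigr)\|\rho\|_{H^2}^2+\text{l.o.t.};
$$
the $L^1_t$-integrability of $\|\nabla u\|_{W^{1,q}}$ follows from \eqref{2tdu} at $p=q\in(3,6)$ after interpolating $\|\rho\dot u\|_{L^q}\lesssim\|\sqrt\rho\dot u\|_{L^2}^{a}\|\nabla\dot u\|_{L^2}^{1-a}$ and using $\int_0^T\|\nabla\dot u\|_{L^2}^2dt\le C$ from \eqref{x1b1}. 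Feeding $\|\rho\|_{H^2}$ (whence $\|\nabla P\|_{H^{1}}$) back into \eqref{2tdu}, \eqref{3tdu} and \eqref{2tdh}, \eqref{3tdh} yields the $H^2\cap L^2_tH^3$ control of $u$ and $H$; the Sobolev inequality $\|\nabla u\|_{L^\infty}\le C\|u\|_{W^{2,q}}$ and interpolation deliver $\int_0^T\|\nabla u\|_{L^\infty}^{3/2}dt\le C$, and $\int_0^T\sigma\|\nabla^3\theta\|_{L^2}^2dt$ comes from differentiating \eqref{CMHD-th} once and invoking \eqref{x1b3}.

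\textbf{Main obstacle.} The decisive difficulty lies in Step 1: once the weight $\sigma$ is dropped, the source estimates \eqref{b44-5}--\eqref{b44-6} for $\|\nabla u\|_{L^4}^4$ and $\|\nabla H\|_{L^4}^4$ produce cubic terms $\|\sqrt\rho\dot u\|^3$ and $\|\curl^2 H\|^3$ which are \emph{not} in $L^1(0,T)$ from Proposition \ref{pr1} alone. These must be absorbed into the bootstrap through the factorization $\|\sqrt\rho\dot u\|^3=\|\sqrt\rho\dot u\|\cdot\|\sqrt\rho\dot u\|^2$ with $\|\sqrt\rho\dot u\|\le CB_4^{1/2}$, converting the problematic integrand into $B_4^{1/2}(t)\,g(t)$ with $g\in L^1(0,T)$, and closing by Gronwall on $\sup_{[0,t]}B_4$ using the initial bound from \eqref{dt3}.
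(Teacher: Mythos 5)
Your Step 1 and your derivation of \eqref{x1b3} follow the paper's proof essentially verbatim: take $m=0$ in \eqref{b44}, control the initial value via the compatibility condition \eqref{dt3} (which gives $\sqrt{\rho_0}\,\dot u(\cdot,0)=-g$) together with \eqref{hh00}, obtain the $L^1(0,T)$ bound on $\|\sqrt\rho\dot u\|_{L^2}^2+\|\curl^2H\|_{L^2}^2+\|H_t\|_{L^2}^2$ by splitting $\int_0^T=\int_0^{\sigma(T)}+\int_{\sigma(T)}^T$ and using $A_1(\sigma(T))$ and $A_3(T)$, factor the cubic source terms as (integrable quadratic)$\times\varphi^{1/2}$, and close by Gr\"onwall; this is exactly the paper's \eqref{x1-1}--\eqref{x1-2} and \eqref{a5}.

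The gap is in your treatment of $\sup_t\|\rho\|_{H^2}$ in Step 2. You claim the $L^1_t$-integrability of $\|\nabla u\|_{W^{1,q}}$ "follows from \eqref{2tdu}", but the right-hand side of \eqref{2tdu} contains $\|\nabla P\|_{L^q}$, and $\|\nabla P\|_{L^q}$ controls $\|\theta\nabla\rho\|_{L^q}$ up to $\|\rho\nabla\theta\|_{L^q}$ --- that is, it involves $\|\nabla\rho\|_{L^q}$, precisely the quantity you are trying to bound, so the claimed integrability is circular. Writing it out, the best available bound is $\|\nabla u\|_{L^\infty}\le C\|\nabla u\|_{W^{1,q}}\le g(t)+C(1+\|\nabla^2\theta\|_{L^2})\|\nabla\rho\|_{L^q}$ with $g\in L^1_t$, and your differential inequality becomes
\begin{equation*}
\frac{d}{dt}\|\nabla\rho\|_{L^q}\le C\,g(t)\,\|\nabla\rho\|_{L^q}+C\bigl(1+\|\nabla^2\theta\|_{L^2}\bigr)\|\nabla\rho\|_{L^q}^2+\cdots,
\end{equation*}
a Riccati-type inequality whose quadratic coefficient is only in $L^2_t$; plain Gr\"onwall does not close it, and solutions of such inequalities can blow up in finite time. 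The paper circumvents this with the Beale--Kato--Majda logarithmic inequality \eqref{u13}: $\|\nabla u\|_{L^\infty}$ is bounded by $(\|\div u\|_{L^\infty}+\|\curl u\|_{L^\infty})\log(e+\|\nabla\rho\|_{L^6})$ plus terms free of $\nabla\rho$, where $\|\div u\|_{L^\infty}+\|\curl u\|_{L^\infty}\in L^2_t$ is obtained in \eqref{p2} through the effective viscous flux $F$ and the vorticity $\omega$, whose elliptic estimates \eqref{tdf1} and \eqref{tdxd-u1} involve only $\rho\dot u$ and $H\cdot\nabla H$ and no $\nabla\rho$. This turns the evolution of $f=e+\|\nabla\rho\|_{L^6}$ into $f'\le Chf\ln f$, i.e.\ $(\ln\ln f)'\le Ch$ with $h\in L^1_t$, which closes. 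You need this (or an equivalent logarithmic device) to obtain $\sup_t\|\nabla\rho\|_{L^6}$ first; only then do $\int_0^T\|\nabla u\|_{L^\infty}^{3/2}dt$, $\|u\|_{H^2}$, $\|\rho\|_{H^2}$, $\|u\|_{H^3}$, $\|H\|_{H^3}$ and $\int_0^T\sigma\|\nabla^3\theta\|_{L^2}^2dt$ follow as you describe.
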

\begin{proof}
First, choosing $m=0$ in \eqref{b44}, it follows from \eqref{b44-3}, \eqref{b44-4}-\eqref{b44-6}, \eqref{h2xd1}, and \eqref{key1} that for $t\geq 1$,
\begin{equation}\label{x1-1}
\begin{aligned}
&B_4'(t) + \frac{C_1}{2}\left(\|\nabla\dot{u}\|_{L^2}^2+\|\nabla H_t\|_{L^2}^2\right) +\|\sqrt{\rho}\dot{\theta}\|_{L^2}^2\\
  &\le \!C(\|\sqrt{\rho}\dot{u}\|_{L^2}^{2}\!\!+\!\|\curl^2\!  H\|\ltwo\!+\!\|H_t\|\ltwo\!\!+\!\|\na\! u\|_{L^2}^2\! +\!\|\na\! H\|_{L^2}^2\!\! +\! \|\na\! \te\|_{L^2}^2\!\!+\!\|\rho\!-\!1\|_{L^2}^2)\\
  &\quad+ \!C(\|\sqrt{\rho}\dot{u}\|_{L^2}^{3}\!\!\!+\!\|H_t\|_{L^2}^{3}\!\! +\! \|\na\! \te\|_{L^2}^3)\!\!+\!\|\na\! \te\|_{L^2}^2(\|\sqrt{\rho}\dot{u}\|_{L^2}^{2}\!\!+\!\|H_t\|\ltwo\!\!+\!\|\na\! \te\|_{L^2}^2),
\end{aligned}
\end{equation}
Denote that $$\varphi(t)\triangleq \|\rho^{\frac{1}{2}}\dot{u}\|_{L^2}^2+\| H_t\|_{L^2}^2+\|\nabla\theta\|_{L^2}^2,$$
and
\begin{equation}\label{hh00}
H_t(\cdot,0) \triangleq -\nu \curl^2H_0-u_0\cdot\nabla H_0+H_0\cdot\nabla u_0-H_0\div u_0,
\end{equation}
which along with \eqref{dt2}, \eqref{b44-22} and \eqref{co12} yields that
\be \la{wq02}
|\varphi(0)|\le C\| \tilde g\|_{L^2}^2+C\le C.
\ee
Then integrating \eqref{x1-1} over $(0,T)$, using \eqref{key1}, \eqref{bz5}, \eqref{b44-22}, \eqref{b44-9} and \eqref{wq02} implies that
\begin{align}%\label{x1b4}
\displaystyle  &\varphi(t)+\int_0^t(\|\nabla\dot{u}\|_{L^2}^{2}+\|\nabla H_t\|_{L^2}^2+\|\sqrt{\rho} \dot{\theta}\|_{L^2}^2)dt \nonumber\\
%& \leq C+C \int_0^T \|\nabla u\|_{L^4}^4dt \nonumber\\
& \leq -\left(\int_{\p\O} F\left( u\cdot \na n \cdot u \right)dS\right)_t+C\int_0^t \left(\|\n^{1/2}\dot u\|_{L^2}^2+\|H_t\|_{L^2}^2+ \|\na \te \|_{L^2}^2\right)\varphi ds+C\nonumber\\
&\le  C (\|\na u\|_{L^2}^2\|\nabla F\|_{L^2})(t)+C\int_0^t \left(\|\n^{1/2}\dot u\|_{L^2}^2+\|H_t\|_{L^2}^2 +\|\na \te \|_{L^2}^2\right)\varphi ds+C\nonumber\\
&\le  \frac{1}{2} \varphi(t) + C \int_{0}^{t} \left(\|\n^{1/2}\dot u\|_{L^2}^2+\|H_t\|_{L^2}^2+ \|\na \te \|_{L^2}^2\right)\varphi ds+C,\nonumber
\end{align}
and applying Gr\"{o}nwall's inequality to this, it holds
\begin{align}\label{x1-2}
\displaystyle  &\sup_{0\le t\le T} \varphi(t)+\int_0^t(\|\nabla\dot{u}\|_{L^2}^{2}+\|\nabla H_t\|_{L^2}^2+\|\sqrt{\rho} \dot{\theta}\|_{L^2}^2)dt\leq C,
\end{align}
which together with \eqref{2tdh}, \eqref{tdh-4}, \eqref{h2xd1}, \eqref{p-lp1}, \eqref{p-b} and  \eqref{td2-th} yields
\be\la{ff1} \ba
\|H\|_{H^2}+\|\te\|_{L^2} &\le C+C( \|R \te-\overline P \|_{L^2} +\overline P ) \le C(\|\na \te\|_{L^2}+1) \le C.
\ea \ee
Furthermore, it follows from \eqref{td2-th}, \eqref{b44-3}, \eqref{b44-5}, \eqref{b44-6}, and \eqref{x1-2} that
\be \notag \int_{0}^{T}\|\na^2\te\|_{L^2}^2 dt \le C,\ee
which together with \eqref{x1-2} and \eqref{ff1} yields \eqref{x1b1}.

Next, multiplying (\ref{3.99}) by $\sigma$ and integrating over $(0,T)$  lead to
\be  \ba \la{a5}
&\sup\limits_{0\le t\le T} \si \int \n|\dot\te|^2dx+\int_0^T \si \|\na\dot\te\|_{L^2}^2dt\\
&\le
C\int_0^T\left(  \|\na^2\te\|_{L^2}^2+\|\na\dot u\|_{L^2}^2+\|\na H_t\|_{L^2}^2+ \|\n^{1/2}\dot\te\|_{L^2}^2 \right)dt + C\\
%&\quad+C\int_{ 0}^{\si(T)} \int \rho |\dot \theta|^2 dx dt +C\\
&\le  C,
\ea\ee
where we have used (\ref{x1-2}),  (\ref{key1}), (\ref{b44-3}), \eqref{b44-5},   (\ref{td2-th}), and the following fact:
\be\la{u-l6-2}\|\na u\|_{L^6}\le C\ee
due to \eqref{u-l6}, \eqref{key1}, and \eqref{x1-2}.
Furthermore, it follows from \eqref{td2-th}, \eqref{b44-3}, \eqref{b44-5}, \eqref{b44-6}, and \eqref{key1} that
\be \notag \sup\limits_{0\le t\le T} \si\|\na^2\te\|_{L^2}^2 \le C,\ee
which together with \eqref{a5} yields \eqref{x1b3}.

Finally, it remains to prove \eqref{x1b2}.
Standard calculations show  that for $ 2\le p\le 6$,
%\be \la{4.12}\ba&\p_t(|\na \n|^p)+\div(|\na \n|^p u)+(p-1)|\na \n|^p \div u
%\\&+p|\na\n|^{p-2}(\na\n)^{tr}\na u(\na \n)   +p\n|\na\n|^{p-2}\na \n\cdot\na \div u=0.\ea\ee
%Integrating \eqref{4.12}, yields that
\be\la{L11}\ba
\partial_t\norm[L^p]{\nabla\rho}
&\le C\norm[L^{\infty}]{\nabla u} \norm[L^p]{\nabla\rho}+C\|\na^2u\|_{L^p}\\
&\le C\left(1\!+\!\|\na u\|_{L^{\infty}}\!+\!\|\na^2\te \|_{L^2}\right)
\norm[L^p]{\nabla\rho}\!+\!C\left(1\!+\!\|\na\dot u\|_{L^2}\!+\!\|\na^2\te \|_{L^2}\right), \ea\ee
where we have used
%\be \la{b55}\|\te\| _{L^\infty} \le\|\te-\overline\te\| _{L^\infty}+|\overline\te-\tu|+\tu\le C+ C\|\na^2 \te\|_{L^2},\ee
\be\ba\la{ua1}
\|\na^2 u\|_{L^p} %& \le C(\|\rho \dot u\|_{L^p} + \|\na P\|_{L^p} + \|\na u\|_{L^2}  ) \\
& \le   C\left(1+\|\na\dot u\|_{L^2}+\|\na \te\|_{L^p}+\|\te\|_{L^\infty}\|\nabla\n\|_{L^p}\right)\\
&\le  C\left(1+\|\na\dot u\|_{L^2}+\|\na^2\te\|_{L^2}+(\|\na^2\te \|_{L^2} + 1)\|\nabla\n\|_{L^p}\right)
%\\&\le  C\left(1+\|\na\dot u\|_{L^2}+ \|\nabla\n\|_{L^p}\right),
\ea\ee
due to \eqref{g1}, \eqref{udot}, \eqref{2tdu},  \eqref{key1},  and  \eqref{x1b1}.
It follows from the Beale-Kato-Majda-type inequality (see \cite[Lemma 2.7]{cl2019}), \eqref{key1}, and (\ref{ua1})  that
\be\la{u13}\ba
\|\na u\|_{L^\infty }
%&\le C\left(\|{\rm div}u\|_{L^\infty}+\|\curl u\|_{L^\infty}\right)\log(e+\|\na^2 u\|_{L^6}) +C\|\na u\|_{L^2}+C \\
&\le C\left( \|{\rm div}u\|_{L^\infty } + \|\curl u\|_{L^\infty }
\right)\log(e+ \|\na\dot u\|_{L^2 } + \|\na^2\te \|_{L^2})\\
&\quad +C\left(\|{\rm div}u\|_{L^\infty }+ \|\curl u\|_{L^\infty } \right)
\log\left(e  + \|\na \n\|_{L^6}\right)+C.
\ea\ee
Denote
\bnn\la{gt}\begin{cases}
  f(t)\triangleq  e+\|\na \n\|_{L^6},\\
  h(t)\triangleq 1+  \|{\rm div}u\|_{L^\infty }^2+ \|\curl u\|_{L^\infty }^2
  + \|\na\dot u\|_{L^2 }^2 +\|\na^2\te \|_{L^2}^2.
\end{cases}\enn
One obtains after submitting \eqref{u13} into (\ref{L11}) with  $p=6$ that
\bnn f'(t)\le   C h(t) f(t)\ln f(t) ,\enn
which implies \be\la{w2}  (\ln(\ln f(t)))'\le  C h(t).\ee
Note that by virtue of \eqref{flux}, (\ref{p-b}), (\ref{x1b1}),  (\ref{key1} ), \eqref{bz5}, and (\ref{bz6}),  one gets
\be \la{p2}\ba
& \int_0^T\left(\|\div u\|^2_{L^\infty}+\|\curl u\|^2_{L^\infty} \right)dt \\
& \le  C\int_0^T\left(\|F\|^2_{L^\infty}+ \|\curl u\|^2_{L^\infty}+\|P-\bp\|^2_{L^\infty}+\||H|^2-\overline{|H|^2}\|^2_{L^\infty}\right)dt \\
%& \le C\ia\left(\| G\|^2_{  W^{1,6} } + \|\n(\te-\overline\te)\|^2_{L^\infty} + |\overline\te-\tu|^2 + \|\n-1\|^2_{L^\infty} + \| \o\|^2_{W^{1,6}}\right)dt \\
%&\le  C\ia\left(\| G\|^2_{  W^{1,6} } + \| \curl u\|^2_{W^{1,6}} + \|\te\|_{L^\infty}^2\right)dt + C \\
& \le   C\ia\left(\|\na F\|^2_{L^6}+\|\na\curl  u\|^2_{L^6}+\|\na^2\te \|_{L^2}^2\right)dt+C \\
&\le C\ia(\|\na \dot u\|^2_{L^2}+\|\na^2\te \|_{L^2}^2)dt+C \\
&\le  C,
\ea\ee
which as well as \eqref{w2} and \eqref{x1b1}  yields that
\be \la{u113} \sup\limits_{0\le t\le T}\|\nabla \rho\|_{L^6}\le C.\ee
Combining this with (\ref{u13}),  \eqref{p2}, and (\ref{x1b1}) leads to
\be \la{v6}\ia\|\nabla u\|_{L^\infty}^{3/2}dt \le C.\ee
%Finally,   \bnn\la{aa94}\ba
%\sup\limits_{0\le t\le T}\|\nabla P\|_{L^2}
%&\le C\sup\limits_{0\le t\le T}\left(\|\na\te\|_{L^2}+\|\na\n \te\|_{L^2}\right) \\ & \le C\left(\|\na\te\|_{L^2}+\|\na\n\|_{L^6}\|\te\|_{L^3}\right)   \\
%& \le C,\ea\enn due to (\ref{x1b1}) and (\ref{u113}).
Moreover, it follows from (\ref{2tdu}), \eqref{basic0}, \eqref{key1}, \eqref{u113}, and (\ref{x1b1})  that
\be\ba\la{aa95}
\sup\limits_{0\le t\le T} \| u\|_{H^2}
\le &C \sup\limits_{0\le t\le T}\left(1+\|\nabla P\|_{L^2}\right)\le C.
\ea\ee
Moreover, by \eqref{CMHD}$_4$, \eqref{3tdh}, \eqref{x1b1} and \eqref{aa95}, one has
\begin{equation}\label{3tdh-1}
\displaystyle  \|H\|_{H^3}\leq C(\|\nabla H_t\|_{L^2}+\| u\|_{H^2}\| H\|_{H^2})\leq C(\|\nabla H_t\|_{L^2}+ 1).
\end{equation}
Next, applying operator $\p_{ij}~(1\le i,j\le 3)$ to $(\ref{CMHD})_1$ gives
\be\la{4.52}  (\p_{ij} \n)_t+\div (\p_{ij} \n u)+\div (\n\p_{ij} u)+\div(\p_i\n\pa_j u+\p_j\n\p_i u)=0. \ee
Multiplying (\ref{4.52}) by $2\p_{ij} \n$ and  integrating the resulting equality over $\O,$ it holds
\be\la{ua2}\ba
\frac{d}{dt}\|\na^2\n\|^2_{L^2}
& \le C(1+\|\na u\|_{L^{\infty}})\|\na^2\n\|_{L^2}^2+C\|\na u\|^2_{H^2}\\
& \le C(1+\|\na u\|_{L^{\infty}} +\|\na^2\te \|_{L^2}^2)(1+\|\na^2\n\|_{L^2}^2) +C\|\na\dot u \|^2_{L^2}, \ea\ee
where one has used \eqref{key1}, \eqref{u113}, and the following estimate:
\be\ba\la{va2}
\| u\|_{H^3}&\le C\left(\|\na(\n\dot u)\|_{L^2}+\| \na^2 P\|_{L^2}+1\right)
%\\&\le C\left(\|\na\n\|_{L^3}\|\dot u\|_{L^6}\!+\!\|\na \dot u\|_{L^2}\!+\!\|\na^2 \te\|_{L^2}\!+\!\||\na\n||\na\te|\|_{L^2}\!+\!\|\te\|_{L^\infty}\|\na^2 \n\|_{L^2}\!+\!1\right)
\\ &\le  C\|\na\dot u \|_{L^2}+ C  (1+\|\na^2\te \|_{L^2})(1+\|\na^2\n\|_{L^2}),
\ea\ee
due to (\ref{3tdu}), (\ref{udot}), (\ref{x1b1}), (\ref{u113}),  and (\ref{key1} ).
Then applying Gr\"{o}nwall's inequality to \eqref{ua2} and using  (\ref{x1b1}), (\ref{v6}) yield
\be\la{ja3} \sup_{0\le t\le T} \|\na^2\n \|_{L^2}  \le C,\ee
which together with \eqref{va2}, \eqref{3tdh-1} and \eqref{x1b1} gives
\be\la{ja4} \int_0^T(\|u\|_{H^3}^2+\|H\|_{H^3}^2 )dt\le C . \ee
Furthermore, applying the standard $H^1$-estimate to elliptic problem (\ref{CMHD-th}), one derives from \eqref{key1} ,  \eqref{x1b1}, \eqref{u113}, \eqref{kk}, and \eqref{aa95} that
\be\la{ex4}\ba
\|\na^2\te\|_{H^1}
& \le C\left(\|\n \dot \te\|_{H^1}+\|\n\te\div u\|_{H^1}+\||\na u|^2\|_{H^1}+\||\na H|^2\|_{H^1}\right)\\
& \le C\left(1\!+\! \|\na \dot \te\|_{L^2} \!+ \! \|\rho^{1/2} \dot \theta\|_{L^2} \! +\! \|\na(\n\te\div u)\|_{L^2}\!+\! \|u\|_{H^3}\!+\!\|H\|_{H^3} \right) \\
& \le C\left(1+ \|\na \dot \te\|_{L^2} +  \|\rho^{1/2} \dot \theta\|_{L^2}  +\|\na^2 \theta\|_{L^2}+ \|u\|_{H^3}+\|H\|_{H^3} \right),
\ea\ee
which along with \eqref{key1} , \eqref{ja3}, \eqref{ja4}, \eqref{u113}--\eqref{aa95}, and \eqref{x1b1} yields (\ref{x1b2}).
This finishes the proof.

\end{proof}

\begin{lemma}\label{lem-x3}
There exists a positive constant $C$ such that
\begin{align}
& \sup_{0\le t\le T}\|\rho_t\|_{H^1}^2 + \int\left(\|u_t\|_{H^1}+\|\sqrt{\rho}u_t\|_{H^1}^2+\sigma(\|\te_{t}\|_{H^1}^2\!+\!\|\rho\te_{t}\|_{H^1}^2)\right)dt\le C, \label{x3b1}\\
%&\sup_{0\le t\le T}(\|{\rho- \bar{\rho}}\|_{H^2} +  \|{P- \bar{P}}\|_{H^2})\le C.\label{x3bb}\\
%&\int_0^T\!\!\sigma\left(\|\te_{t}\|_{H^1}^2\!+\!\|\rho\te_{t}\|_{H^1}^2\right)dt\le C, \label{x3b1} \\
& \begin{aligned}\label{x3b2}
&\sup\limits_{0\le t\le T}\sigma (\|\nabla u_t\|_{L^2}^2+\|\nabla H_t\|_{L^2}^2+\|\rho_{tt}\|_{L^2}^2+\|u\|_{H^3}^2+\|H\|_{H^3}^2)\\
&\quad  + \int_0^T\sigma(\|\rho^{\frac{1}{2}}u_{tt}\|_{L^2}^2+\|H_{tt}\|_{L^2}^2+\|\nabla H_{t}\|_{H^1}^2+\|\nabla u_{t}\|_{H^1}^2)dt
\le C,
\end{aligned}\\
& \sup_{0\le t\le T}\|\rho\|_{W^{2,q}}+\int_{0}^T\|\nabla ^2u\|_{W^{1,q}}^{p_0}dt\le C,\label{y2}
\end{align}
for any $q\in (3,6)$ and $1<p_0<\frac{4q}{5q-6}$.
\end{lemma}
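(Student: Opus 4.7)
The plan is to bootstrap the lower-order estimates from Lemma \ref{lem-x1} through standard differentiation-in-time and elliptic regularity arguments, treating the three displays \eqref{x3b1}, \eqref{x3b2}, \eqref{y2} in sequence.

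For \eqref{x3b1}, I would first exploit the continuity equation: writing $\rho_t=-u\cdot\na\rho-\rho\div u$ and using \eqref{x1b2} together with \eqref{u113}, \eqref{ja3} gives $\sup_{t}\|\rho_t\|_{H^1}\le C(\|u\|_{H^2}\|\rho\|_{H^2}+\|\rho\|_{H^2}\|\nabla u\|_{L^\infty})$, which is controlled by Lemma \ref{lem-x1}. For $\|u_t\|_{H^1}$ I would use $u_t=\dot u-u\cdot\na u$ so that $\|\nabla u_t\|_{L^2}\le \|\nabla\dot u\|_{L^2}+C\|u\|_{H^2}^2$, and the time integral is controlled by \eqref{x1b1} plus \eqref{x1b2}. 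The analogous argument for $\sqrt{\rho}u_t\in H^1$ uses the bounded density and $\|\na\rho\|_{L^6}$. For the $\te_t$ bounds, write $\te_t=\dot\te-u\cdot\na\te$ and combine \eqref{x1b3} with Lemma \ref{lem-x1}, inserting the $\sigma$-weight to absorb the singularity near $t=0$ created by the absence of compatibility for $\te$.

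For \eqref{x3b2}, the key step is to differentiate the momentum equation \eqref{CMHD-u} in time, multiply by $\sigma u_{tt}$, and integrate over $\O$; this produces, after integration by parts and use of the slip boundary conditions (following the boundary-term trick $u\cdot\nabla u\cdot n=-u\cdot\nabla n\cdot u$ from \cite{cl2019} which was already used in Lemma \ref{lem-dot}), a differential inequality for $\sigma\|\nabla u_t\|_{L^2}^2$ with $\sigma\|\sqrt{\rho}u_{tt}\|_{L^2}^2$ on the left. The coupling with $H$ produces the term $\int\sigma u_{tt}\cdot(H\cdot\nabla H-\frac12\nabla|H|^2)_t dx$, which is handled by integration by parts in $t$ and control of $\nabla H_t$ via the parallel argument for \eqref{CMHD}$_4$: differentiate in time, multiply by $\sigma H_{tt}$, and use the boundary condition $H\cdot n=0,\curl H\times n=0$. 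Together these yield the $u_t,H_t$ half of \eqref{x3b2} after a Gr\"onwall argument based on \eqref{x1b1}. The $\rho_{tt}$ bound follows from $\rho_{tt}=-(u\cdot\na\rho)_t-(\rho\div u)_t$ once $\nabla u_t\in L^2$ is known. For $\|u\|_{H^3}$ and $\|H\|_{H^3}$, I would apply Lemma \ref{lem-lame} and \eqref{3tdh} to \eqref{CMHD-u} and \eqref{CMHD}$_4$ respectively, using the already-proved $\sigma$-weighted bounds on $\nabla u_t,\nabla H_t$, $\nabla^2\te\in L^\infty_tL^2$, and the density bound $\|\rho\|_{W^{2,q}}$ from \eqref{y2}. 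The $\nabla u_t\in L^2(0,T;H^1)$ bound follows by applying Lemma \ref{lem-lame} to the Lam\'e-type equation for $u_t$ derived by differentiating \eqref{CMHD-u} in $t$.

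For \eqref{y2}, I would apply $\na^2$ to \eqref{CMHD}$_1$, multiply by $|\na^2\rho|^{q-2}\na^2\rho$, and integrate; the resulting inequality reads
\be\notag
\frac{d}{dt}\|\na^2\rho\|_{L^q}\le C(1+\|\na u\|_{L^\infty})\|\na^2\rho\|_{L^q}+C\|\na^3 u\|_{L^q}+C\|\na u\|_{L^\infty}\|\na^2\rho\|_{L^q},
\ee
while Lemma \ref{lem-lame} applied to \eqref{CMHD-u} yields $\|\na^3 u\|_{L^q}\le C(\|\na\dot u\|_{L^q}+\|\na^2 P\|_{L^q}+\|\na^2(H\otimes H)\|_{L^q}+\text{l.o.t.})$, where $\|\na^2 P\|_{L^q}\le C(1+\|\na^2\rho\|_{L^q})$ times a $\te$-factor controlled by Lemma \ref{lem-x1}. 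After estimating $\|\na\dot u\|_{L^q}$ via interpolation using the freshly-obtained $\nabla u_t\in L^\infty_t(\tau,T;L^2)\cap L^2(0,T;H^1)$, a Gr\"onwall argument with the integrability $\int_0^T(1+\|\na u\|_{L^\infty})dt<\infty$ (a slight refinement of \eqref{p2}) closes the estimate on $\|\rho\|_{W^{2,q}}$; feeding this back gives $\|\na^2 u\|_{W^{1,q}}\in L^{p_0}(0,T)$ for the stated range of $p_0$, determined by H\"older in time against the $L^2$-in-time control of $\na \dot u$.

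The main technical obstacle is the second paragraph: coupling the time-differentiated momentum, magnetic, and continuity equations while keeping the $\sigma$-weights consistent, so that the boundary integrals (from the slip condition) and the cross terms $\int H_t\cdot\nabla H\cdot u_{tt}$ and $\int u_t\cdot\nabla H\cdot H_t$ can be absorbed by the dissipative terms $\sigma\|\sqrt\rho u_{tt}\|_{L^2}^2+\sigma\|\nabla H_t\|_{H^1}^2$. The estimate \eqref{ae3.7} and the Poincar\'e-type bound \eqref{kk} together with the already-available $H^2$ control of $H$ from \eqref{x1b1} are what make this absorption possible.
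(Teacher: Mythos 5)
Your proposal follows essentially the same route as the paper: \eqref{x3b1} via $\rho_t=-\div(\rho u)$, $u_t=\dot u-u\cdot\na u$, $\te_t=\dot\te-u\cdot\na\te$ combined with Lemma \ref{lem-x1}; \eqref{x3b2} by time-differentiating $\eqref{CMHD}_{2}$ and $\eqref{CMHD}_{4}$, testing with $u_{tt}$ and $H_{tt}$, and running a $\sigma$-weighted Gr\"onwall argument; and \eqref{y2} by an $L^q$ energy estimate for $\na^2\rho$ closed with elliptic estimates for $\na^3 u$ and the integrability of $\|\na u\|_{L^\infty}$. The only wrinkle is that you invoke the $W^{2,q}$ density bound from \eqref{y2} inside your argument for the $H^3$ bounds in \eqref{x3b2}, which would be circular since \eqref{y2} in turn uses $\sigma\|\na u_t\|_{H^1}^2\in L^1(0,T)$ from \eqref{x3b2}; this is harmless because the $H^2$ bound on $\rho$ already established in Lemma \ref{lem-x1} (see \eqref{ja3} and \eqref{va2}) is all that is needed there.
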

\begin{proof}
First, one deduces from $(\ref{CMHD})_1$, \eqref{x1b1}, (\ref{x1b2}), and \eqref{hs} that
\begin{equation}
\begin{aligned}\label{sp1}
&\|\n_t\|_{H^1}\le \|\div (\rho u)\|_{H^1}
\le  C \|u\|_{H^2}\|
\n\|_{H^2}
\le C,\\
&\|u_t\|_{H^1}\!\!+\!\|\sqrt{\rho}u_t\|_{H^1}\le \|\dot{u}\|_{H^1}\!\!+\!\|u\cdot\nabla u\|_{H^1}\!\!+\!\|\nabla\rho\|_{L^3}\|u_t\|_{L^6}\le  C \|\nabla\dot{u}\|_{L^2}\!\!+\!\|u\|_{H^3}.
 \end{aligned}
\end{equation}
Similarly, it follows from \eqref{x1b2} and \eqref{kk} that
\begin{equation}
\begin{aligned}\label{va1}
\|\theta_t\|_{H^1}+\|\rho\theta_t\|_{H^1}&\le \|\nabla\dot{\te}\|_{L^2}+\|\sqrt{\rho}\dot{\te}\|_{L^2}+\|u\cdot\nabla \theta\|_{H^1}+\|\nabla\rho\|_{L^3}\|\theta_t\|_{L^6}\\
&\le  C \|\nabla\dot{\theta}\|_{L^2}+\|\sqrt{\rho}\dot{\te}\|_{L^2}+\|\nabla\theta\|_{H^1},
\end{aligned}
\end{equation}
where we have used
\be\la{w3}\|\te_t\|_{L^6}\le C\|\n^{1/2}\te_t\|_{L^2}+C\|\na\te_t\|_{L^2}.%\le C(1+\|\na\te_t\|_{L^2})
\ee
Combining \eqref{sp1}, \eqref{va1} and  (\ref{x1b2}) gives \eqref{x3b1}.

Next, differentiating $(\ref{CMHD})_2$ with respect to $t$ yields that
\be   \la{va7}\ba (\n u_t)_t
=-(\n u\cdot\na u)_t \!+\! \left(\mu\Delta u+(\mu+\lambda)\na\div u \right)_t\! -\!\na P_t\!+\!\left(H\cdot\nabla H\!-\!\nabla|H|^2/2\right)_t.\ea\ee
%\be   \la{va7}\ba \frac{R}{\ga-1}(\n \te_t)_t
%=&-\frac{R}{\ga-1}(\n u\cdot\na \te)_t -R(\n\te\div u)_t+\ka\Delta \te_t\\
%&+ \left(\lambda(\div u)^2+2\mu   |\mathfrak{D}(u)|^2\right)_t .\ea\ee
It follows from (\ref{x3b1}), \eqref{x1b1}--\eqref{x1b2}, \eqref{td2-th}, and \eqref{kk} that
\be   \la{va9}\ba  &\|(\n u\cdot\na u)_t  \|_{L^{2}}
%&=\| \n_t u\cdot\na u+ \n u_t\cdot\na u + \n u\cdot\na u_t  \|_{L^{2}}\\
\le C\|\n_t\|_{L^6} \|\na u\|_{L^3}+  C\|u_t\|_{L^6} \|\na u\|_{L^3}+  C\|u \|_{L^\infty} \|\na u_t\|_{L^2}\\
&\qquad\qquad\qquad\ \le C+   C\|\na u_t\|_{L^2},\\
&\|(H\cdot\nabla H\!-\!\nabla|H|^2/2)_t  \|_{L^{2}}
\le C\|H_t\|_{L^6} \|\na H\|_{L^3}+  C\|H \|_{L^\infty} \|\na H_t\|_{L^2}\\
&\qquad\qquad\qquad\qquad\qquad\quad\le C+   C\|\na H_t\|_{L^2},\\
\ea\ee
and
\be   \la{va10}\ba  \|\na P_t  \|_{L^2}
&=R\|\n_t\na\te +\n \na\te_t +\na\n_t\te +\na\n\te_t\|_{L^2}\\
&\le C\left(\|\n_t\|_{L^6}\|\na\te\| _{L^3}+\|\na\te_t\|_{L^2}
+\|\te\|_{L^\infty}\|\na \n_t\|_{L^2}+\|\na\n\|_{L^6}\|\te_t\| _{L^3}\right)\\
&\le C+C\|\na\te_t\|_{L^2}+C\|\n^{1/2}\te_t\|_{L^2}+C\|\nabla^2\te\|_{L^2}.\ea\ee
%\be   \la{va9}\ba  \|(\n u\cdot\na \te)_t  \|_{L^{2}}
%&=\| \n_t u\cdot\na \te+ \n u_t\cdot\na \te + \n u\cdot\na \te_t  \|_{L^{2}}\\
%&\le C\|\n_t\|_{L^6} \|\na\te\|_{L^3}+  C\|u_t\|_{L^6} \|\na\te\|_{L^3}+  C\|u \|_{L^\infty}
%\|\na\te_t\|_{L^2}\\
%&\le C+    C\|u_t\|_{H^1}+C \| \te_t\|_{H^1},\ea\ee
%\be   \la{va10}\ba  \|(\n\te \div u)_t  \|_{L^2}
%&=\|\n_t\te \div u+\n \te_t \div u+\n\te \div u_t\|_{L^2}\\
%&\le C\|\n_t\|_{L^6}\|\te\| _{L^\infty}\|\na u\|_{L^3}+C\|\te_t\|_{L^6}\|\na u\|_{L^3}
%+\|\te\|_{L^\infty}\|\na u_t\|_{L^2}\\
%&\le C+ C\|u_t\|_{H^1}+C\|\te_t\|_{H^1},\ea\ee
%and \be  \la{va8} \ba  \|((\div u)^2)_t\|_{L^{6/5}}+ \|  ( |\mathfrak{D}(u)|^2)_t   \|_{L^{6/5}}
%&\le C\|\na u\|_{L^3}\|\na u_t\|_{L^2} \le C\|u_t\|_{H^1}.\ea\ee
Combining (\ref{va7})--(\ref{va10}) with   (\ref{x3b1}) shows
\be\la{vva04}\ba\int_0^T \si \|(\n u_t)_t\|_{H^{-1}}^2dt\le C.\ea\ee
Similarly, we have \bnn\int_0^T \si  \|(\n \te_t)_t\|_{H^{-1}}^2dt\le C,  \enn
which combined with (\ref{vva04}) implies
\be\la{vva5}\ba
  \int_0^T \sigma \left( \|(\n u_t)_t\|_{H^{-1}}^2+\|(\n \te_t)_t\|_{H^{-1}}^2
  \right)dt\le C.\ea\ee

Next, we need to prove \eqref{x3b2}. Introducing the function
$$\widetilde{D}(t)=(\lambda+2\mu)\int(\div u_t)^{2}dx+\mu\int|\omega_t|^{2}dx +\nu \int|\curl H_t|^{2}dx.$$
Since $u_t\cdot n = 0, H_t \cdot n=0$ on $\partial\Omega$, by Lemma \ref{lem-vn}, we have
\begin{align}\label{x4b6}
\displaystyle  \|\nabla u_t\|_{L^2}^2+\|\nabla H_t\|_{L^2}^2\leq C(\Omega)\widetilde{D}(t).
\end{align}
Differentiating  $\eqref{CMHD}_{2,4}$  with respect to $t,$
\begin{align}\label{utt}\rho u_{tt}\!-\!(\lambda\!+\!2 \mu)\nabla \div u_t\!+\!\mu \nabla \!\times\! \omega_t\!=-\nabla\! P_t\!-\!\rho_t u_t\!-\!(\rho u\! \cdot\! \nabla\! u)_t\!+\!(H\! \cdot\! \nabla\! H\!-\!\nabla |H|^2/2)_t, \end{align}
and
\begin{align}\label{htt} \quad H_{tt}-\nu \nabla \times \curl H_t =(H \cdot \nabla u-u \cdot \nabla H-H\div u)_t, \end{align}
then multiplying \eqref{utt} by $2u_{tt}$, multiplying \eqref{htt} $2H_{tt}$ respectively, we obtain
\begin{align}\label{x4b7}
\displaystyle &\quad\frac{d}{dt}\widetilde{D}(t)+2\int(\rho|u_{tt}|^2+|H_{tt}|^2)dx \nonumber \\
&=\frac{d}{dt}\Big(-\int\rho_t|u_t|^{2}dx-2\int\rho_tu\cdot\nabla u\cdot u_tdx+2\int P_t\div u_tdx \nonumber \\
&\qquad\quad -\int (2(H \otimes H)_t : \nabla u_t-|H|^2_t\div u_t dx)\Big)\nonumber \\
&\quad +\int\rho_{tt}|u_t|^{2}dx + 2\int(\rho_tu\cdot\nabla u)_t\cdot u_tdx-2\int\rho (u\cdot\nabla u)_t\cdot u_{tt}dx \nonumber\\
&\quad - 2\int P_{tt}\div u_tdx+\int (2(H \otimes H)_{tt} : \nabla u_t-|H|^2_{tt}\div u_t) dx\nonumber\\
&\quad +2\int (H \cdot \nabla u-u \cdot \nabla H- H \div u)_t \cdot H_{tt} dx\nonumber\\
&\triangleq\frac{d}{dt}D_0 + \sum\limits_{i=1}^6 D_i .
\end{align}
Let us estimate $D_i$, $i=0,1,\cdots, 6.$
We conclude from $\eqref{CMHD}_1$, \eqref{udot}, \eqref{x1b2}, \eqref{x1b3}, \eqref{x4b6} and Sobolev's, Poincar\'{e}'s  inequalities that
\begin{align}
&\begin{aligned}\label{x4k0}
D_0 & \le \left|\int{\rm div}(\rho u)\,|u_t|^2dx\right|+C\norm[L^3]{\rho_t}\| u\|_{L^\infty}\|\nabla u\|_{L^2}\norm[L^6]{u_t}\\&\quad+C\|P_t\|_{L^2}\|\nabla u_t\|_{L^2} + C \|H\|_{L^\infty}\|H_t\|_{L^2}\|\nabla u_t\|_{L^2}  \\
%&\le C \int_{}  |u||\n u_t||\nabla u_t| dx +C(\|\nabla u_t\|_{L^2}^2+\|\nabla H_t\|_{L^2}^2+1) \nonumber\\
%&\le C\|u\|_{L^6}\|\sqrt{\rho} u_t\|_{L^2}^{\frac12}\|u_t\|_{L^6}^{\frac12}\|\nabla u_t\|_{L^2} +C(\|\nabla u_t\|_{L^2}^2+\|\nabla H_t\|_{L^2}^2+1)\nonumber\\
%&\le C\|\nabla u\|_{L^2}\|\sqrt{\rho} u_t\|_{L^2}^{\frac12}\|\nabla u_t\|_{L^2}^{3/2}+C(\|\nabla u_t\|_{L^2}^2+\|\nabla H_t\|_{L^2}^2+1)\nonumber\\
&\le C(1+\|\n^{1/2}\te_t\|_{L^2})\|\nabla u_t\|_{L^2},
\end{aligned}\\
%\end{align}\begin{align}
&D_1 \leq \left|\int_{ }\rho_{tt}\, |u_t|^2 dx\right|%= \left|\int_{ }\div(\rho u)_t\,|u_t|^2 dx\right|= 2\left|\int_{ }(\rho_tu + \rho u_t)\cdot\nabla u_t\cdot u_tdx\right|\nonumber\\
%& \le  C\left(\norm[H^1]{\rho_t}\norm[H^2]{u}   +\norm[L^2]{\rho^{{\frac12}}u_t}^{\frac{1}{2}}\|\nabla u_t\|_{L^2}^{\frac{1}{2}}\right)\|\nabla u_t\|_{L^2}(\|\nabla u_t\|_{L^2}+\|\nabla u\|_{L^2}^2) \nonumber\\
%& \le C\|\nabla u_t\|_{L^2}^4+C\|\nabla u_t\|_{L^2}^2+C\nonumber\\
\le C\|\nabla u_t\|_{L^2}^4 +C\norm[L^2]{\rho_{tt}}^2,\notag%\label{x4k1}
\\
&D_2+D_3 %&\leq 2\left|  \int_{ }\left(\rho_{tt} u\cdot\nabla u\cdot u_t +\rho_tu_t\cdot\ abla u\cdot u_t+\rho_t u\cdot\nabla u_t\cdotu_t\right)dx\right|\nonumber\\
%&\le\norm[L^2]{\rho_{tt}}\norm[L^3]{u\cdot\nabla u}\norm[L^6]{u_t}+\norm[L^2]{\rho_t}\|u_t\|_{L^6}^2\norm[L^6]{\nabla u} \nonumber\\ &\quad+\norm[L^3]{\rho_t}\norm[L^{\infty}]{u}\norm[L^2]{\nabla u_t}\norm[L^6]{u_t}\nonumber\\
 \le C\norm[L^2]{\rho_{tt}}^2 + C\norm[L^2]{\nabla u_t}^2+\delta\norm[L^2]{\rho^{{\frac12}}u_{tt}}^2 ,\notag%\label{x4k2}
 \\
&D_5 %&\leq \left|\int (2(H \otimes H)_{tt} : \nabla u_t-|H|^2_{tt}\div u_t) dx\right|\nonumber \\
% &\leq C \|H\|_{L^\infty}\|H_{tt}\|_{L^2}\|\nabla u_t\|_{L^2}+C\|H_{t}\|_{L^4}^2\|\nabla u_t\|_{L^2}\nonumber\\
 \leq \delta\|H_{tt}\|_{L^2}^2+C\|H_{t}\|_{L^2}^2\widetilde{D}(t)+C(\|\nabla H_t\|_{L^2}^2+\|\nabla u_t\|_{L^2}^2),\notag%\label{x4k5}
 \\
&D_6 %&\leq 2\left|\int (H \cdot \nabla u-u \cdot \nabla H- H \div u)_t \cdot H_{tt}dx\right| \nonumber\\
% &\leq C (\|H_t\|_{L^6}\|\nabla u\|_{L^3}+\|H\|_{L^\infty}\|\nabla u_t\|_{L^2})\|H_{tt}\|_{L^2}\nonumber \\   &\quad +C(\|u_t\|_{L^6}\|\nabla H\|_{L^3}+\|u\|_{L^\infty}\|\nabla H_t\|_{L^2})\|H_{tt}\|_{L^2}\nonumber \\
 \leq \delta\|H_{tt}\|_{L^2}^2+C(\|\nabla H_t\|_{L^2}^2+\|\nabla u_t\|_{L^2}^2).\notag%\label{x4k6}
\end{align}
Next, by virtue of \eqref{op3}, \eqref{x3b1}, \eqref{va10}, and Lemma \eqref{lem-x1}, one
\begin{align}
\begin{aligned}\notag%\label{x4k3}
D_4 &\leq C\left| \int(P_{tt}-\kappa(\gamma-1)\Delta\te)\div u_{t} dx\right| +C\left| \int_{ }\nabla\te_t\cdot\nabla\div u_tdx\right|\\
& \le   C\|P_{tt}-\kappa(\gamma-1)\Delta\te\|_{L^2}\|\nabla u_t\|_{L^2}+C\|\nabla^2u_t\|_{L^2}\|\na \te_t\|_{L^2} \\
& \le   C(\|(u\!\cdot\na\! P)_t\|_{L^2}\!\!+\!\!\|(P\div u)_t\|_{L^2}\!\!+\!\!\||\na u||\na u_t|\|_{L^2}\!\!+\!\!\||\na H||\na H_t|\|_{L^2})\|\nabla u_t\|_{L^2}\\
&\quad+C\|\nabla^2u_t\|_{L^2}\|\na \te_t\|_{L^2} \\
&\leq \delta\|\rho^{1/2}u_{tt}\|_{L^2}^2+C\left(1+\|\na u\|_{L^\infty}+\|H\|_{H^3}+\|\na^2\te \|_{L^2}\right)\widetilde{D}(t)\\
&\quad+C\left(1+\|\na\te_t\|^2_{L^2}+\|\na^2\te\|^2_{L^2}+\|\n^{1/2}\te_t\|_{L^2}^2\right),
\end{aligned}
\end{align}
where we used the fact
\be\la{nt4}\ba
\|\na^2u_t\|_{L^2}%&\le C\|\tilde f\|_{L^2}+C\|\na u_t\|_{L^2}\\
%&\le C\|\n  u_{tt}+\n_t u_t+\n_t u\cdot\nabla u  +\n u_t\cdot\nabla u
%+\n u\cdot\nabla u_t+\nabla P_t\|_{L^2}+C\\
%&\le C\|\n u_{tt}\|_{L^2}+C\|\n_t\|_{L^3}\|u_t\|_{L^6}+C\|\n_t\|_{L^3}\|\na u\|_{L^6}\|u\|_{L^\infty}\\&\quad  +C\|u_t\|_{L^6}\|\na u\|_{L^3}+C\|\na u_t\|_{L^2}\|u\|_{L^\infty}+C\|\na P_t\|_{L^2}\\
&\le C\left(\|\n u_{tt}\|_{L^2}\!+\!\|\na u_t\|_{L^2}\!+\!\|\na H_t\|_{L^2}\!+\!\|\n^{1/2} \te_{t}\|_{L^2}\!+\!\|\na \te_t\|_{L^2}\!+\!1\right),\ea\ee
due to \eqref{va7} and Lemma \ref{lem-lame}.
Putting the estimates of $D_i (i=1,\cdots,6)\ $ in \eqref {x4b7} and choosing $\delta$ suitably small implies
\begin{align}
\begin{aligned}\label{x4b8}
\displaystyle  &\quad\frac{d}{dt}(\widetilde{D}(t)-D_0)+\int(\rho|u_{tt}|^{2}+|H_{tt}|^2)dx  \\
&\le C\left(1+\|\na u\|_{L^\infty}+\|H\|_{H^3}+\|\na u_t \|_{L^2}+\|\na^2\te \|_{L^2}\right)\widetilde{D}(t)\\
&\quad+C(1+\|\nabla \te_t\|_{L^2}^2+\|\rho^{1/2} \te_t\|_{L^2}^2+\|\rho_{tt}\|_{L^2}^2),
\end{aligned}
\end{align}
where we used the fact
\be \la{s4} \ba
\|\n_{tt}\|_{L^2} %&= \|\div(\rho u)_t\|_{L^2} \\
& \le C\left(\|\n_t\|_{L^6}\|\nabla u\|_{L^3}+ \|\nabla u_t\|_{L^2}
+\|u_t\|_{L^6}\|\nabla \n\|_{L^3}+\|\nabla \n_t\|_{L^2}\right) \\ &\le C+C\|\na u_t\|_{L^2}.\ea\ee
Consequently, multiplying \eqref{x4b8} by $\sigma$, together with and Gr\"{o}nwall's inequality, \eqref{x1b2}, \eqref{x3b1}, \eqref{x4b6}, \eqref{x3b1} and \eqref{x4k0}, we derive that
\begin{align}\label{x4b9}
\displaystyle \sup_{0\le t\le T}\sigma(\|\nabla u_t\|_{L^2}^2+\|\nabla H_t\|_{L^2}^2)+\int_0^T\sigma(\|\rho^{\frac{1}{2}}u_{tt}\|_{L^2}^2+\|H_{tt}\|_{L^2}^2)dt\le C.
\end{align}
Moreover, by Lemma \ref{lem-x1}, \eqref{s4}, (\ref{va2}), \eqref{nt4}, \eqref{va1}, and  (\ref{x4b9}), we have
\be\notag
\sup\limits_{0\le t\le T}\si\left(\|\n_{tt}\|_{L^2}^2+\|u\|^2_{H^3}+\|H\|^2_{H^3}\right) + \int_0^T \si(\|\nabla u_t\|_{H^1}^2+\|\nabla H_t\|_{H^1}^2) dt\le C,
\ee
which along with \eqref{x4b9} gives \eqref{x3b2}.

Finally, it remains to prove \eqref{x6b}.
By Sobolev's inequality, \eqref{udot}, \eqref{x1b2}, \eqref{x3b1} and \eqref{x3b2}, we get for any $q\in (3,6)$,
\be\notag%\la{4.49}
\ba     \|\na(\n\dot u)\|_{L^q}
&\le C\|\na \n\|_{L^6}\|\dot u \|_{L^{6q/(6-q)}}+C\|\na\dot u \|_{L^q}\\
%&\le C\|\dot u \|_{W^{1,6q/(6+q)}}+C\|\na\dot u \|_{L^q}\\
%&\le C\|\na\dot u \|_{L^q}+C\|\n^{1/2} \dot u\|_{L^2}\\
%&\le C\|\na u_t \|_{L^q}+C\|\na(u\cdot \na u ) \|_{L^q}+C\\
&\le C\|\na u_t \|_{L^2}^{(6-q)/2q}\|\na u_t \|_{L^6}^{3(q-2)/{2q}}\\
& \quad+C\|\na u \|_{L^q}\| \na u \|_{L^\infty}+C\| u \|_{L^\infty}\|\na^2 u \|_{L^q}+C\\
&\le C\si^{-\frac12} \left(\si\|\na u_t \|^2_{H^1}\right)^{3(q-2)/{4q}}
+C\|u\|_{H^3}+C,\ea \ee
and
\begin{equation}\notag
    \begin{aligned}
    \| \na^2 \theta\|_{L^q} \le& C \|\na^2 \theta\|_{L^2}^{(6-q)/2q} \|\na^3 \theta\|_{L^2}^{3(q-2)/2q} +C\|\na^2 \theta\|_{L^2}\\
    \le & C \si^{-\frac12} \left(\si\|\na^3 \theta \|^2_{L^2} \right)^{3(q-2)/{4q}} +C \|\na^2 \theta\|_{L^2},
\end{aligned}
\end{equation}
which combined with Lemma \ref{lem-x1} and \eqref{x3b2} shows that for $1<p_0<\frac{4q}{5q-6}$,
\be \la{4.53}\int_0^T \left(\|\na(\n \dot u)\|^{p_0}_{L^q} + \|\na^2 \theta\|_{L^q}^{p_0} \right) dt\le C. \ee
Next, multiplying (\ref{4.52}) by $q|\p_{ij} \n|^{q-2}\p_{ij} \n$ and  integrating the resulting equality over $\O,$ we obtain that
\be\la{sp28}\ba
\frac{d}{dt}\|\na^2\n\|_{L^q}^q &\le C\|\na u\|_{L^\infty}\|\na^2\n\|_{L^q}^q
+C\|\na^{2} u\|_{W^{1,q}}\|\na^2\n\|_{L^q}^{q-1}(\|\na\n\|_{L^q}+1)\\
%&\le C\| u\|_{H^3}\|\na^2\n\|_{L^q}^q+C\|\na^2 u\|_{W^{1,q}}\|\na^2\n\|_{L^q}^{q-1}\\
&\le C\left(\| u\|_{H^3}  \!\! +\! \|\na\dot u\|_{L^2} \!\!+\!\|\na(\n\dot u)\|_{L^q}\!\!+\!\|\na^2 \theta\|_{L^q}\!\!+\!1\right)\left(\|\na^2 \n\|_{L^q}^q\!\!+\!1\right).
\ea\ee
where in the last inequality we have used the  following fact that
\be\la{a4.74}\ba
\|\na^2u\|_{W^{1,q}}
\le & C\left(\|\n \dot u\|_{W^{1,q}}+\|H\cdot\nabla H\|_{W^{1,q}}+\|\na P\|_{W^{1,q}}+\|\na u\|_{L^2}\right)\\
%\le & C(\|\na\dot u\|_{L^2}+\|\na(\n\dot u)\|_{L^q}+  \|\na^2\te\|_{L^2}+ \|\te\na^2\n\|_{L^q}\\& +\| |\na\n||\na\te|\|_{L^q}+  \|\na^2\te\|_{L^q}+1)\\
\le & C\left(\|\na\dot u\|_{L^2}+\|H\|_{H^3}+\|\na(\n\dot u)\|_{L^q}  + \| \na^2 \theta\|_{L^q}+1 \right)\\
&+ C(1+ \|\na^2 \theta\|_{L^2})\| \na^2 \n\|_{L^q}.
%&  +C \|\na^2 \theta\|_{L^2}^{(6-q)/2q} \|\na^3 \theta\|_{L^2}^{3(q-2)/2q}  + C.
\ea\ee
due to \eqref{2tdu}, \eqref{3tdu}, \eqref{x1b2} and \eqref{x3b1}.
Hence, applying  Gr\"{o}nwall's inequality to (\ref{sp28}), we obtain after using Lemma \ref{lem-x1} and (\ref{4.53}) that
\bnn  \sup\limits_{0\le t\le T}\|\na^2 \n\|_{L^q}\le C,\enn
which together with   Lemma \ref{lem-x1},    (\ref{4.53}),
and (\ref{a4.74}) gives \eqref{y2}. The proof of Lemma \ref{lem-x3} is finished.
\end{proof}

\begin{lemma}\label{lem-x6}
There exists a positive constant $C$ such that
\begin{align}
\begin{aligned}\label{x6b}
\displaystyle & \sup_{0\le t\le T}\sigma\left(\|\sqrt{\rho} u_{tt}\|_{L^2}+\|H_{tt}\|_{L^2}+\|\te_t\|_{H^1}\right)\\
& +\sup_{0\le t\le T}\sigma(\|\te_t\|_{H^1}+\|u_t\|_{H^2}+\|H_t\|_{H^2}+\|H\|_{H^4}+\|u\|_{W^{3,q}}+\|\nabla^2\te\|_{H^1})\\
& +\int_{0}^T\sigma^2(\|\nabla u_{tt}\|_{2}^2+\|\nabla H_{tt}\|_{2}^2+\|\n^{1/2}\te_{tt}\|_{L^2}^2)dt\le C,
\end{aligned}
\end{align}

for any $q\in (3,6)$.
\end{lemma}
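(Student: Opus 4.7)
The plan is to push the material-derivative arguments of Lemma~\ref{lem-x3} one order further in time, and then bootstrap via elliptic regularity for the momentum, magnetic, and temperature equations. First, differentiating $\eqref{CMHD}_2$ and $\eqref{CMHD}_4$ once more in $t$ gives evolution equations for $u_{tt}$ and $H_{tt}$ of the schematic form
\begin{equation*}
\rho u_{ttt}+\rho_t u_{tt}-(\lambda+2\mu)\nabla\div u_{tt}+\mu\nabla\times\omega_{tt}=F_1,\qquad H_{ttt}+\nu\nabla\times\curl H_{tt}=F_2,
\end{equation*}
where $F_1,F_2$ collect $\partial_t^2$-derivatives of the convective, pressure and Lorentz nonlinearities. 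I would multiply the first equation by $\sigma^2 u_{tt}$ and the second by $\sigma^2 H_{tt}$, integrate over $\Omega$, and use $\eqref{CMHD}_1$ together with the boundary identity $u\cdot\nabla u\cdot n=-u\cdot\nabla n\cdot u$ to handle the slip condition exactly as in \eqref{J10} and \eqref{x4b7}. This yields an inequality of the shape
\begin{equation*}
\bigl(\sigma^2\|\sqrt{\rho}u_{tt}\|_{L^2}^2+\sigma^2\|H_{tt}\|_{L^2}^2\bigr)'+C\sigma^2\bigl(\|\nabla u_{tt}\|_{L^2}^2+\|\nabla H_{tt}\|_{L^2}^2\bigr)\le \text{(RHS)},
\end{equation*}
where the RHS is controlled by the already-established bounds \eqref{x1b1}--\eqref{y2}, provided we can close $\|\nabla P_{tt}\|_{L^2}$, which reduces to controlling $\sigma\|\sqrt{\rho}\te_{tt}\|_{L^2}$ and $\sigma\|\nabla\te_{tt}\|_{L^2}$.

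For the temperature piece, I would differentiate \eqref{CMHD-th} twice in $t$, test against $\sigma^2\te_{tt}$, and integrate. The principal terms yield $\tfrac{d}{dt}\bigl(\tfrac{R}{2(\gamma-1)}\sigma^2\|\sqrt{\rho}\te_{tt}\|_{L^2}^2\bigr)+\kappa\sigma^2\|\nabla\te_{tt}\|_{L^2}^2$, while the nonlinear remainders involve products like $(\nabla u)_t(\nabla u_t)\te_{tt}$, $(\rho\te\div u)_{tt}\te_{tt}$, $\curl H\,\curl H_t\,\te_{tt}$, etc.; all are handled by H\"older, \eqref{kk}, and Lemmas~\ref{lem-x1}--\ref{lem-x3}. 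Adding the three estimates, absorbing $\sigma^2\|\nabla\te_{tt}\|_{L^2}^2$ into the left-hand side, and applying Gr\"onwall on the combined quantity $\sigma^2(\|\sqrt{\rho}u_{tt}\|_{L^2}^2+\|H_{tt}\|_{L^2}^2+\|\sqrt{\rho}\te_{tt}\|_{L^2}^2+\|\nabla\te_t\|_{L^2}^2)$ with initial value controlled by \eqref{x3b2} through $\sigma^2(0)=0$, we obtain
\begin{equation*}
\sup_{0\le t\le T}\sigma^2\bigl(\|\sqrt{\rho}u_{tt}\|_{L^2}^2+\|H_{tt}\|_{L^2}^2+\|\te_t\|_{H^1}^2\bigr)+\int_0^T\sigma^2\bigl(\|\nabla u_{tt}\|_{L^2}^2+\|\nabla H_{tt}\|_{L^2}^2+\|\sqrt{\rho}\te_{tt}\|_{L^2}^2\bigr)dt\le C.
\end{equation*}

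The remaining higher-spatial estimates follow by bootstrapping. Applying Lemma~\ref{lem-lame} to \eqref{utt} with the newly-obtained $\sigma\|\rho u_{tt}\|_{L^2}$ bound and \eqref{va10}, \eqref{nt4}, I would deduce $\sigma\|u_t\|_{H^2}\le C$. Then Lemma~\ref{lem-curl} applied to the $H_t$-equation yields $\sigma\|H_t\|_{H^2}\le C$, which, fed back into \eqref{CMHD}$_4$ via \eqref{3tdh} iterated once more, gives $\sigma\|H\|_{H^4}\le C$. Next, Lemma~\ref{lem-lame} applied to \eqref{CMHD-uu} with $W^{1,q}$ estimates on $\rho\dot u$ and $\nabla P$ (obtained from \eqref{y2}, $\sigma\|u_t\|_{H^2}$, and \eqref{ex4}) yields $\sigma\|u\|_{W^{3,q}}$. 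Finally, standard $H^1$-elliptic regularity for \eqref{CMHD-th} applied with the upgraded source terms produces $\sigma\|\nabla^2\te\|_{H^1}\le C$, completing \eqref{x6b}.

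The principal obstacle is the second-time-derivative estimate for $u_{tt}$: the forcing $F_1$ contains $\nabla P_{tt}\sim\rho\nabla\te_{tt}+\cdots$ and $(\rho u\cdot\nabla u)_{tt}$, whose $L^2$-norms are borderline and must be absorbed against $\sigma^2\|\nabla u_{tt}\|_{L^2}^2$ or controlled by $\sigma^2\|\sqrt\rho\te_{tt}\|_{L^2}^2$ only after the temperature estimate is added in. Also delicate is the boundary term $\int_{\partial\Omega}\sigma^2 F_t\,u_{tt}\cdot n\,dS$, which must be rewritten via $u_{tt}\cdot n=-(u\cdot\nabla n\cdot u)_t-\text{l.o.t.}$ to exploit the slip condition, paralleling the treatment in \eqref{J10}; bounding it uses $\sigma\|\nabla F_t\|_{L^2}$ via Lemma~\ref{lem-f-td} applied to the time-differentiated Lam\'e system, where the $H_{tt}$ contribution in the source is precisely what forces the coupled energy argument above.
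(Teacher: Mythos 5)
Your overall architecture — couple an energy estimate for $\sigma^2(\|\sqrt{\rho}u_{tt}\|_{L^2}^2+\|H_{tt}\|_{L^2}^2)$ with a temperature estimate, close by Gr\"onwall, then bootstrap the spatial regularity through the Lam\'e, div-curl and elliptic lemmas — is the same as the paper's. But your temperature step is the wrong one, and it cannot be closed at this stage. You propose to differentiate \eqref{CMHD-th} \emph{twice} in $t$ and test against $\sigma^2\te_{tt}$, aiming for $\sup\sigma^2\|\sqrt{\rho}\te_{tt}\|_{L^2}^2+\int\sigma^2\|\na\te_{tt}\|_{L^2}^2\,dt$. That is precisely the estimate of Lemma \ref{lem-x7}, and there it requires the weight $\sigma^4$, not $\sigma^2$: the twice-differentiated equation contains forcings such as $\rho_{tt}\te_t$, $u_t\cdot\na\te_t$, $(\te\div u)_{tt}$ and $(|\na u|^2)_{tt}$, whose bounds (cf.\ the terms $Q_2$--$Q_5$ in \eqref{eg3}) produce factors of order $\sigma^{-4}$ and $\sigma^{-1}\|\na u_{tt}\|_{L^2}^2$ after Young's inequality, because at this point one only knows $\|\rho_{tt}\|_{L^2}+\|\na u_t\|_{L^2}\lesssim\sigma^{-1/2}$ and $\|\na\te_t\|_{L^2}$ is not yet bounded pointwise in time. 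Multiplying by $\sigma^2$ leaves non-integrable powers $\sigma^{-2}$ on the right-hand side, so the Gr\"onwall argument does not close. The correct ingredient here is the \emph{once}-differentiated equation tested against $\te_{tt}$, as in \eqref{ex5}: there $\|\na\te_t\|_{L^2}^2$ is the energy and $\|\sqrt{\rho}\te_{tt}\|_{L^2}^2$ appears as dissipation, which is exactly what delivers $\sup\sigma^2\|\te_t\|_{H^1}^2$ and $\int\sigma^2\|\n^{1/2}\te_{tt}\|_{L^2}^2\,dt$ in \eqref{x6b}; note also that your combined Gr\"onwall quantity contains $\sigma^2\|\na\te_t\|_{L^2}^2$, whose evolution your twice-differentiated equation does not produce, so you would need the once-differentiated estimate anyway.

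A secondary error: the boundary term $\int_{\partial\Omega}\sigma^2F_t\,u_{tt}\cdot n\,dS$ that you flag as delicate does not arise. Since $u\cdot n=0$ on $\partial\Omega$ for all $t$ and $n$ is time-independent, $u_{tt}\cdot n=0$ and $\curl u_{tt}\times n=0$ on $\partial\Omega$, so all boundary contributions in the $u_{tt}$ energy identity \eqref{x6b2} vanish; the identity $\dot u\cdot n=-u\cdot\na n\cdot u$ pertains to the material derivative, not to $u_{tt}$, and your proposed substitution $u_{tt}\cdot n=-(u\cdot\na n\cdot u)_t$ is false. This does not damage the proof (the term is zero), but the machinery you invoke to handle it is not needed and would be incorrect if applied. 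The final bootstrap chain you describe for $\|u_t\|_{H^2}$, $\|H_t\|_{H^2}$, $\|H\|_{H^4}$, $\|u\|_{W^{3,q}}$ and $\|\na^2\te\|_{H^1}$ matches the paper's derivation of \eqref{sp20} and is fine once \eqref{eg10} is in hand.
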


\begin{proof} Differentiating $\eqref{CMHD}_{2,4}$ with respect to $t$ twice,
%we have \begin{align}\label{x6b1} \displaystyle &\quad\n u_{ttt}+\n u\cdot\na u_{tt}-(\lambda+2\mu)\nabla{\rm div}u_{tt}+\mu\nabla\times\omega_{tt}\nonumber \\ &= 2{\rm div}(\n u)u_{tt} +{\rm div}(\n u)_{t}u_t-2(\n u)_t\cdot\na u_t-(\n_{tt} u+2\n_t u_t)\cdot\na u \nonumber \\ & \quad- \rho u_{tt}\cdot\na u-\na P_{tt}+(H \cdot \nabla H- \nabla |H|^2/2)_{tt}. \end{align} and \begin{align}\label{x6h1} \displaystyle  H_{ttt}+\nu \nabla \times \curl H_{tt}=(H \cdot \nabla u-u \cdot \nabla H -H \div u)_{tt}.\end{align} Then,
multiplying them by $2u_{tt}$ and $2H_{tt}$ respectively, and integrating over $\Omega$ lead to
\begin{align}\begin{aligned}\label{x6b2}
&\quad \frac{d}{dt}\int(\rho |u_{tt}|^2+|H_{tt}|^2)dx  \\
 &\quad +2(\lambda+2\mu)\int(\div u_{tt})^2dx+2\mu\int|\omega_{tt}|^2dx+2\nu\int|\curl H_{tt}|^2dx   \\
&=-8\int_{ }  \n u^i_{tt} u\cdot\na
 u^i_{tt} dx-2\int_{ }(\n u)_t\cdot \left[\na (u_t\cdot u_{tt})+2\na
u_t\cdot u_{tt}\right]dx   \\
&\quad -2\int_{}(\n_{tt}u\!+\!2\n_tu_t)\cdot\na u\cdot u_{tt}dx\!-\!2\int (\n
u_{tt}\cdot\na u\cdot  u_{tt}-P_{tt}{\rm div}u_{tt})dx   \\
&\quad -2\int (H\!\! \cdot\!\! \nabla\! H\!-\!\nabla |H|^2/2)_{tt}u_{tt}dx\!\!+\!2 \int(H\!\! \cdot\!\! \nabla u\!\!-\! u \cdot\!\! \nabla\! H\!\!-\! H \div u)_{tt}H_{tt}dx   \\
&\triangleq\sum_{i=1}^6 R_i.
\end{aligned}
\end{align}
Let us estimate $R_i$ for $i=1,\cdots,6$. H\"{o}lder's inequality and \eqref{x1b2} give
\begin{align}\notag%\label{x6r1}
\displaystyle  R_1 &\le
C\|\sqrt{\rho}u_{tt}\|_{L^2}\|\na u_{tt}\|_{L^2}\| u \|_{L^\infty}
\le \de \|\na u_{tt}\|_{L^2}^2+C(\de)\|\sqrt{\rho}u_{tt}\|^2_{L^2} .
\end{align}
By \eqref{x1b2}, \eqref{x3b1}, \eqref{x3b1} and \eqref{x3b2}, we conclude that
\begin{align}
&\begin{aligned}\notag%\label{x6r2}
R_2&\le \de \|\na u_{tt}\|_{L^2}^2+C(\de)\|\nabla u_t\|_{L^2}^3+C(\de)\|\nabla u_t\|_{L^2}^2%\\&
\le \de \|\na u_{tt}\|_{L^2}^2+C(\de)\sigma^{-3/2},
\end{aligned}\\
&R_3 %&\le C\left(\|\n_{tt}\|_{L^2} \|u\|_{L^\infty}\|\na u\|_{L^3}+\|\n_{t}\|_{L^6}\|u_{t}\|_{L^6}\|\na u \|_{L^2}\right)\|u_{tt}\|_{L^6} \nonumber \\
\le \de \|\na u_{tt}\|_{L^2}^2+C(\de)\sigma^{-1},\notag%\label{x6r3}
\\
&\begin{aligned}\notag%\label{x6r4}
R_4 %&\le C\|\n u_{tt}\|_{L^2} \|\na u\|_{L^3}\|u_{tt}\|_{L^6} +C \|P_{tt}\|_{L^2}\|\na u_{tt}\|_{L^2} \\
&\le \de \|\na u_{tt}\|_{L^2}^2+C(\de)\left( \|\n^{1/2}u_{tt}\|^2_{L^2}
+\|\na\te_t\|_{L^2}^2 +\|\n^{1/2}\te_{tt}\|_{L^2}^2+\sigma^{-2}\right),
\end{aligned}\\
& R_5 %&\le C\|\nabla u_{tt}\|_{L^2} \|H_{t}\|_{L^4}^2 +C \|H\|_{L^\infty}\|H_{tt}\|_{L^2}\|\nabla u_{tt}\|_{L^2}\nonumber \\
\le \de \|\na u_{tt}\|_{L^2}^2+C(\de)(\|H_{tt}\|^2_{L^2}+\sigma^{-3/2}),\notag%\label{x6r5}
\\
& R_6 %&\le C \|H\|_{L^\infty}\|\nabla u_{tt}\|_{L^2} \|H_{tt}\|_{L^2} +C \|H_t\|_{L^3}\|\nabla u_{t}\|_{L^2}\|H_{tt}\|_{L^6}\nonumber \\  &\quad+ C \|H_{tt}\|_{L^2}\|\nabla u\|_{L^3} \|H_{tt}\|_{L^6} +C \|u\|_{L^\infty}\|\nabla H_{tt}\|_{L^2}\|H_{tt}\|_{L^2}\nonumber \\  &\quad+ C \|u_{t}\|_{L^3}\|\nabla H_t\|_{L^2} \|H_{tt}\|_{L^6} +C \|u_{tt}\|_{L^6}\|\nabla H\|_{L^2}\|H_{tt}\|_{L^2}\nonumber \\
% \le \de (\|\na u_{tt}\|_{L^2}^2+\|\na H_{tt}\|_{L^2}^2)+C(\de)\|H_{tt}\|^2_{L^2} \nonumber \\
%& \quad +C(\de)(\|\nabla u_{t}\|_{L^2}\|\nabla H_{t}\|^2_{L^2}+\|\nabla u_{t}\|^2_{L^2}\|\nabla H_{t}\|^2_{L^2}),\label{x6r6}
 \le \de (\|\na u_{tt}\|_{L^2}^2+\|\na H_{tt}\|_{L^2}^2)+C(\de)(\|H_{tt}\|^2_{L^2}+\sigma^{-3/2}+\sigma^{-2}).\notag%\label{x6r6}
\end{align}
Substituting these estimates these estimates into \eqref{x6b2}, utilizing the fact that
\begin{align}\notag%\label{x6b3}
\displaystyle  \|\nabla u_{tt}\|_{L^2}\leq C(\|\div u_{tt}\|_{L^2}+\|\omega_{tt}\|_{L^2}), \quad \|\nabla H_{tt}\|_{L^2}\leq C\|\curl H_{tt}\|_{L^2},
\end{align}
due to Lemma \ref{lem-vn}. Since $u_{tt}\cdot n=0, H_{tt}\cdot n=0\ $ on $\partial\Omega,$ and then choosing $\de$ small enough, we can get
\begin{align}
\begin{aligned}\label{x6b4}
&\frac{d}{dt}(\|\sqrt{\rho}u_{tt}\|^2_{L^2}+\|H_{tt}\|^2_{L^2})+\widetilde{C}_1(\|\na u_{tt}\|_{L^2}^2+\|\na H_{tt}\|_{L^2}^2) \\
&\le C (\|\sqrt{\rho}u_{tt}\|^2_{L^2}+\|H_{tt}\|^2_{L^2}+\sigma^{-2}+\|\na\te_t\|_{L^2}^2) +\widetilde{C}_2\|\n^{1/2}\te_{tt}\|_{L^2}^2.
\end{aligned}
\end{align}

Next, differentiating \eqref{CMHD-th} with respect to $t$ %infers
% \be\la{eg1}\ba \begin{cases}
% -\frac{\ka(\ga-1)}{R}\Delta \te_t+\n\te_{tt}\\
% =-\n_t\te_{t}- \n_t\left(u\cdot\na \te+(\ga-1)\te\div u\right)-\n\left( u\cdot\na
% \te+(\ga-1)\te\div u\right)_t\\
% \quad+\frac{\ga-1}{R}\left(\lambda (\div u)^2+2\mu |\mathfrak{D}(u)|^2\right)_t,  \\
% \na \te_t\cdot n|_{ \p\O\times(0,T)}=0  .
% \end{cases}\ea\ee
and multiplying the resulting equality by $\te_{tt}$ and integrating over $\Omega$ lead to
\be\la{ex5}\ba
& \left(\frac{\ka(\ga-1)}{2R}\|\na \te_t\|_{L^2}^2+N_0\right)_t+ \int\n\te_{tt}^2dx \\
&=\frac{1}{2}\int\n_{tt}\left( \te_t^2
+2\left(u\cdot\na \te+(\ga-1)\te\div u\right)\te_t\right)dx\\
&\quad + \int\n_t\left(u\cdot\na\te+(\ga-1)\te\div u \right)_t\te_{t}dx\\
& \quad-\int\n\left(u\cdot\na\te+(\ga-1)\te\div u\right)_t\te_{tt}dx\\
& \quad -\frac{\ga-1}{R}\int \left(\lambda (\div u)^2+2\mu |\mathbb{D}u|^2+\nu |\curl H|^2\right)_{tt}\te_t dx\triangleq\sum_{i=1}^4 N_i,\ea\ee
where
\bnn\ba N_0\triangleq & \frac{1}{2}\int \n_t\te_{t}^2dx
+\int\n_t\left(u\cdot\na\te+(\ga-1)\te\div u\right) \te_tdx\\
&- \frac{\ga-1}{R}\int\left(\lambda (\div u)^2+2\mu |\mathbb{D}u|^2+\nu |\curl H|^2\right)_t\te_t dx. \ea\enn
It follows from  $(\ref{CMHD})_1,$   (\ref{va1}),    \eqref{w3}, \eqref{s4}, and Lemmas \ref{lem-x1}--\ref{lem-x3} that
\be\la{ex6}\ba
|N_0|\le & C\int \n|u||\te_{t}||\na\te_{t}|dx+C\|\n_t\|_{L^3}\|\te_t\|_{L^6}\left( \|\na\te\|_{L^2} \|u\|_{L^\infty}+ \|\theta\|_{L^6} \|\na u\|_{L^3}\right)\\
&+ C(\|\na u\|_{L^3}\|\na u_t\|_{L^2}+\|\na H\|_{L^3}\|\na H_t\|_{L^2}) \|\te_t\|_{L^6} \\
\le &C  (\|\rho^{1/2}\theta_t \|_{L^2}+\|\na\te_t\|_{L^2})\left(\|\n^{1/2}\te_t\|_{L^2}+\|\na u_t\|_{L^2}+\|\na H_t\|_{L^2}+1\right)\\
\le &\frac{\ka(\ga-1)}{4R} \|\na\te_t\|_{L^2}^2+C\si^{-1},\ea\ee
and
%(\ref{va1}) and (\ref{f}) yield \be\la{aa196} \|\te_t\|_{L^2}\le C+C\|\na\te_t\|_{L^2},\ee which as well as (\ref{x3b1}) gives
\be\la{ex7}\ba
|N_1|& \le C\|\n_{tt}\|_{L^2}\left(\|\te_t\|_{L^4}^{2}
+\|\te_t\|_{L^6}\left(\|u\cdot\na \te\|_{L^3}+\| \te\div u\|_{L^3} \right)\right)\\
%&\le C\|\n_{tt}\|_{L^2}\left(\|\rho^{1/2} \te_t\|_{L^2}^{2} + \|\na \te_t\|_{L^2}^{2}+\si^{-1}  \right) \\
& \le  C(1+\|\na u_{t}\|_{L^2} )\|\na \te_t\|^2_{L^2}+C\si^{-3/2}.\ea\ee
Similarly, by \eqref{x1b1}, \eqref{x3b1},  (\ref{va1}), and \eqref{w3}, one obtains that
\be\la{ex9}\ba
|N_2|+|N_3|&\le C\|\left(u\cdot\na\te+(\ga-1)\te\div u \right)_t\|_{L^2}
\left(\|\n_t\|_{L^3} \|\te_t\|_{L^6}+\|\n \te_{tt}\|_{L^2}\right)\\
&\le \frac12\|\n^{1/2} \te_{tt}\|_{L^2}+C\|\na\te_t\|_{L^2}^2+C\si^{-1 } \|\na u_t\|^2_{L^2}+C\si^{-1 },\ea\ee
where we used the fact that
\be\la{eg12}\ba
 \|\left(u\!\cdot\!\na\!\te\!\!+\!(\ga\!\!-\!1)\te\div u \right)_t\|_{L^2}%\\
%& \le  C\left(\|u_t\|_{L^6}\|\na\te\|_{L^3}+\|\na\te_t\|_{L^2}+\|\te_t\|_{L^6}\|\na u\|_{L^3} +\|\te\|_{L^\infty}\|\na u_t\|_{L^2}\right)\\
\le  C\|\na\! u_t\|_{L^2}(\|\na^2\! \te\|_{L^2}\!\!+\!1)\!\!+\! C\|\na\! \te_t\|_{L^2}\!\!+\!C\|\rho^{1/2} \theta_t \|_{L^2},\ea\ee
due to Lemma \ref{lem-x1} and (\ref{w3}). Furthermore, one deduces from \eqref{x1b2}, (\ref{va1}), \eqref{w3}, and (\ref{x3b2}) that
\be\la{ex10}\ba
|N_4|&\le C\int \left(|\na u_t|^2+|\na u||\na u_{tt}|+|\na H_t|^2+|\na H||\na H_{tt}|\right)|\te_t|dx\\
%&\le C\left(\|\na u_t\|_{L^2}^{3/2}\|\na u_t\|_{L^6}^{1/2}+ \|\na u\|_{L^3} \|\na u_{tt}\|_{L^2}\right)\|\te_t\|_{L^6}\\
&\le \de(\|\na u_{tt}\|^2_{L^2}+\|\na H_{tt}\|^2_{L^2})+C(\|\na^2 u_t\|^2_{L^2}+\|\na^2 H_t\|^2_{L^2})\\
&\quad+C(\de)(\|\na\te_t\|_{L^2}^2+\si^{-1 })+C\si^{-2}(\|\na u_t\|_{L^2}^2+\|\na H_t\|_{L^2}^2).\ea\ee
Substituting (\ref{ex7}), (\ref{ex9}), and (\ref{ex10}) into (\ref{ex5}) gives
\be\la{ex11}\ba
& \frac{d}{dt}\left(\frac{\ka(\ga-1)}{2R}\|\na \te_t\|_{L^2}^2+N_0\right)
+\frac{1}{2}\|\n^{1/2}\te_{tt}\|_{L^2}^2 \\
&\le  \de(\|\na u_{tt}\|^2_{L^2}+\|\na H_{tt}\|^2_{L^2})+C(\de)((1+\|\na u_{t}\|_{L^2}) \|\na \te_t\|^2_{L^2}+\si^{-3/2})\\
&\quad +C\left(\|\na^2 u_t\|^2_{L^2}+\|\na^2 H_t\|^2_{L^2} +\si^{-2}(\|\na u_t\|_{L^2}^2+\|\na H_t\|^2_{L^2})\right).\ea\ee
Next, adding (\ref{ex11}) multiplied by $2 (\widetilde{C}_2+1) $ to  (\ref{x6b4}) and choosing $\de$ suitably small yield that
\begin{equation}
\begin{aligned}\la{ex13}
& \left[ 2 (C_5+1)\left(\frac{\ka(\ga-1)}{2R}\|\na \te_t\|_{L^2}^2+N_0\right)
+\|\sqrt{\rho}u_{tt}\|^2_{L^2}+\|H_{tt}\|^2_{L^2}\right]_t\\
&\quad + \|\n^{1/2}\te_{tt}\|_{L^2}^2+\frac{\widetilde{C}_1}{2}(\|\na u_{tt}\|^2_{L^2}+\|\na H_{tt}\|^2_{L^2})\\
&\le C (1+\|\na u_{t}\|_{L^2}^2) (\si^{-2} +\|\na \te_t\|^2_{L^2})+\si^{-2}\|\na H_{t}\|_{L^2}^2\\
&\quad  +C(\|\n^{1/2}u_{tt}\|^2_{L^2} +\|H_{tt}\|^2_{L^2}) + C\|\na^2 u_t\|^2_{L^2} + C\|\na^2 H_t\|^2_{L^2}.
\end{aligned}
\end{equation}

Multiplying this by $\si^2$ and integrating the resulting inequality over $(0,T),$
we  obtain after using (\ref{ex6}),  (\ref{x3b2}),  (\ref{x3b1}), \eqref{ex6}, and Gr\"{o}nwall's inequality that
\be \la{eg10}
\begin{aligned}
&\sup_{ 0\le t\le T}\si^2\left(\|\sqrt{\rho}u_{tt}\|^2_{L^2}+\|H_{tt}\|^2_{L^2}+\|\na \te_t\|_{L^2}^2\right)\\
&+\int_{0}^T\si^2\left(\|\n^{1/2}\te_{tt}\|_{L^2}^2+\|\na u_{tt}\|^2_{L^2}+\|\na H_{tt}\|^2_{L^2}\right)dt\le C,
\end{aligned}\ee
which together with Lemmas \ref{lem-x1}-\ref{lem-x3},  (\ref{nt4}), (\ref{ex4}),  \eqref{va1}, and (\ref{a4.74}) gives
\be\la{sp20} \sup_{ 0\le t\le T}\si \left(\|\na u_t\|_{H^1}
+ \|\na H_t\|_{H^1}+ \|\na^2 H\|_{H^2}
+\|\na^2\te\|_{H^1}+\|\na^2u\|_{W^{1,q}} \right)\le C.\ee
We thus derive (\ref{x6b}) from Lemma \ref{lem-x1}-\ref{lem-x3}, (\ref{eg10}),   (\ref{sp20}), and complete the proof of Lemma \ref{lem-x6}.
\end{proof}

\begin{lemma}\la{lem-x7}There exists a positive constant $C$ such that the following estimate holds:
	\be \la{egg17}\sup_{ 0\le t\le T}\si^2  \left(\|\na^2\te\|_{H^2}+\| \te_t\|_{H^2}+\|\n^{1/2}\te_{tt}\|_{L^2}  \right)
	+\int_0^T\si^4\|\na \te_{tt}\|_{L^2}^2 dt\le C.\ee
\end{lemma}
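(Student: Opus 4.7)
The plan is to close the hierarchy of energy estimates by testing the twice-time-differentiated temperature equation against $\theta_{tt}$, with the weight $\sigma^4$, and then recovering the remaining norms via elliptic regularity. Differentiating \eqref{CMHD-th} twice in $t$ yields
\be\la{planA}\ba
\kappa\Delta\te_{tt}=&\frac{R}{\ga-1}\bigl[\n\te_{ttt}+2\n_t\te_{tt}+\n_{tt}\te_t+(\n u\cdot\na\te)_{tt}\bigr]\\
&+R(\n\te\div u)_{tt}-\bigl[\lambda(\div u)^2+2\mu|\mathbb{D}u|^2+\nu|\curl H|^2\bigr]_{tt}.
\ea\ee
Since $\na\te\cdot n=0$ on $\p\O$ and $n$ is time-independent, the same boundary condition propagates to $\te_t$ and $\te_{tt}$, so multiplying \eqref{planA} by $\te_{tt}$ and integrating by parts produces the dissipation $-\kappa\|\na\te_{tt}\|_{L^2}^2$ with no boundary contribution. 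Using the identity $\int\n u\cdot\na\te_{tt}\,\te_{tt}dx=\frac12\int\n_t\te_{tt}^2dx$ from the continuity equation to absorb the top-order transport term, we should arrive at the energy inequality
\bnn
\frac{R}{2(\ga-1)}\frac{d}{dt}\|\sqrt\n\te_{tt}\|_{L^2}^2+\kappa\|\na\te_{tt}\|_{L^2}^2\le C|\text{RHS}|,
\enn
where RHS collects all the commutator terms.

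The main step is to multiply by $\si^4$, integrate over $(0,T)$, and bound each term of RHS using the estimates already stockpiled in Lemmas \ref{lem-x1}--\ref{lem-x6}. The most dangerous contributions come from $\n_{tt}\te_t$, $\n u_{tt}\cdot\na\te$, $(\na u)_{tt}\cdot\na u$, and the analogous magnetic terms $H_t\cdot H_{tt}\div u$ and $\curl H\cdot\curl H_{tt}$: these are singular like $\si^{-3/2}$ near $t=0$. The weight $\si^4$ is precisely tuned to absorb this: by Lemma \ref{lem-x6} we have $\si\|u_t\|_{H^2}+\si\|H_t\|_{H^2}+\si^2\|\sqrt\n u_{tt}\|_{L^2}^2+\si^2\|H_{tt}\|_{L^2}^2\le C$ and $\int_0^T\si^2(\|\na u_{tt}\|_{L^2}^2+\|\na H_{tt}\|_{L^2}^2+\|\sqrt\n\te_{tt}\|_{L^2}^2)dt\le C$, while \eqref{s4} controls $\|\n_{tt}\|_{L^2}$; combined with Sobolev and H\"older these provide enough slack for Cauchy--Schwarz and Young's inequality (with $\de$ small to absorb $\de\si^4\|\na\te_{tt}\|_{L^2}^2$ and $\de\si^2\|\na u_{tt}\|_{L^2}^2+\de\si^2\|\na H_{tt}\|_{L^2}^2$) to close. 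An application of Gr\"onwall's inequality with the coefficient $(1+\|\na u_t\|_{L^2}^2)$ treated by the $L^1_t$ bound on $\|\na u_t\|_{H^1}^2$ from \eqref{x3b2} then delivers
\be\la{planB}
\sup_{0\le t\le T}\si^2\|\sqrt\n\te_{tt}\|_{L^2}^2+\int_0^T\si^4\|\na\te_{tt}\|_{L^2}^2dt\le C.
\ee

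To upgrade \eqref{planB} to the remaining bounds in \eqref{egg17}, I apply elliptic $H^2$-regularity with Neumann data to the equation for $\te_t$,
\bnn
-\kappa\Delta\te_t=-\frac{R}{\ga-1}\bigl[\n\te_{tt}+\n_t\te_t+(\n u\cdot\na\te)_t\bigr]-R(\n\te\div u)_t+Q_t,
\enn
and estimate each RHS piece in $L^2$ using Lemma \ref{lem-x6} for $\|u_t\|_{H^2}$, $\|H_t\|_{H^2}$, $\|\na^2\te\|_{H^1}$, and \eqref{planB} for $\|\sqrt\n\te_{tt}\|_{L^2}$, giving $\si^2\|\te_t\|_{H^2}\le C$. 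A second application of elliptic regularity to the original equation \eqref{CMHD-th} at the $H^2$ level (differentiating twice in space, controlling the RHS by $\|\n\dot\te\|_{H^2}$, $\|\n\te\div u\|_{H^2}$, $\||\na u|^2\|_{H^2}$, $\||\na H|^2\|_{H^2}$ using $\si\|u\|_{W^{3,q}}\le C$, $\si\|H\|_{H^4}\le C$, $\si^2\|\te_t\|_{H^2}\le C$) then yields $\si^2\|\na^2\te\|_{H^2}\le C$, completing the proof.

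The chief difficulty is the delicate accounting of the $\si$-weights: each second time-derivative introduced a factor $\si^{-1}$ in the available bounds, and the energy estimate for $\te_{tt}$ requires $\si^4$ precisely to balance these when combined with $L^\infty_t$-bounds like $\si\|\na u_t\|_{H^1}\le C\si^{-1/2}$ from Lemma \ref{lem-x3}. A careful but routine bookkeeping of these powers, together with the cancellation between the pressure term $(\n\te\div u)_{tt}\cdot\te_{tt}$ and the transport term via an integration by parts, is what makes the scheme close.
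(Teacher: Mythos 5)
Your proposal follows essentially the same route as the paper: differentiate the temperature equation twice in time, test against $\te_{tt}$ with the weight $\si^4$, absorb the singular commutator terms using the bounds from Lemmas \ref{lem-x1}--\ref{lem-x6}, and then recover $\si^2\|\te_t\|_{H^2}$ and $\si^2\|\na^2\te\|_{H^2}$ by elliptic regularity. One small bookkeeping slip: the weight-$\si^4$ energy argument naturally yields $\sup_{0\le t\le T}\si^4\|\sqrt\n\te_{tt}\|_{L^2}^2\le C$ rather than the stronger $\sup_{0\le t\le T}\si^2\|\sqrt\n\te_{tt}\|_{L^2}^2\le C$ you state in your intermediate claim, but the weaker (correct) version is exactly what \eqref{egg17} requires, so the proof closes as intended.
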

\begin{proof}
First, differentiating $\eqref{CMHD-th}$ with respect to $t$ twice,
multiplying the resulting equality by $\te_{tt}$ and integrating that over $\Omega$ yield that
\be\la{eg3}\ba
&\frac{1}{2}\frac{d}{dt}\int\n|\te_{tt}|^2dx +\frac{\ka(\ga-1)}{R}\int|\na \te_{tt}|^2dx\\
&=-4\int \te_{tt}\n u\cdot\na\te_{tt}dx  -\int \n_{tt}\left(\te_t
+ u\cdot\na \te+(\ga-1)\te\div u\right)\te_{tt}dx\\
&\quad - 2\int\n_t\left(u\cdot\na \te+(\ga-1)\te\div u\right)_t\te_{tt}dx\\
& \quad - \int\n\left(u_{tt}\cdot\na \te+2u_t\cdot\na\te_t
+(\ga-1)(\te\div u)_{tt}\right)\te_{tt}dx\\
& \quad +\frac{\ga-1}{R}\int  \left(\lambda (\div u)^2+2\mu |\mathbb{D}u|^2+\nu |\curl H|^2\right)_{tt}\te_{tt}dx
\triangleq \sum_{i=1}^5Q_i.\ea\ee
It follows from Lemmas \ref{lem-x1}--\ref{lem-x6},  \eqref{kk}, (\ref{eg10}), (\ref{eg12}), and \eqref{va1}  that
\begin{align}
&\ba\notag%\la{eg4}
|Q_1|&\le C\|\n^{1/2}\te_{tt}\|_{L^2}\|\na \te_{tt}\|_{L^2}\|u\|_{L^\infty}\\
&\le \de \|\na \te_{tt}\|_{L^2}^2+C(\de) \|\n^{1/2}\te_{tt}\|^2_{L^2} ,\ea\\
&\ba\notag%\la{eg16}
|Q_2|&\le C \|\n_{tt}\|_{L^2}\|\te_{tt}\|_{L^6} \left( \|\te_t\|_{H^1}
+\|\na\te\|_{L^3}+\|\na u\|_{L^6}\|\te\|_{L^6}\right) \\
%&\le C\si^2 (\|\na \te_{tt}\|_{L^2}+\|\n^{1/2} \te_{tt}\|_{L^2})\\
&\le \de\|\na \te_{tt}\|_{L^2}^2+C(\de)(\|\n^{1/2} \te_{tt}\|_{L^2}^2+\si^{-4}),\ea\\
&\ba\notag%\la{eg6}
|Q_3|&\le C \|\n_t\|_{L^3} \|\te_{tt}\|_{L^6}\|u\cdot\na \te+(\ga-1)\te\div u\|_{L^2} \\
&\le \de\|\na \te_{tt}\|_{L^2}^2+ C (\|\n^{1/2} \te_{tt}\|_{L^2}^2 +\si^{-4}),\ea\\
&\ba\notag%\la{eg7}
|Q_4|&\le C\|\te_{tt}\|_{L^6}
\left( \sigma^{-2}+ \|\n \te_{tt}\|_{L^2}\right)+C\|\te\|_{L^\infty}\|\n\te_{tt}\|_{L^2} \|\na u_{tt}\|_{L^2} \\
&\le \de\|\na \te_{tt}\|_{L^2}^2
+C(\de)\left(\|\n^{1/2} \te_{tt}\|_{L^2}^2+\si^{-1}\|\na u_{tt}\|_{L^2}^2+\si^{-4}  \right),\ea\\
&\ba\notag%\la{eg8}
|Q_5|&\le C\|\te_{tt}\|_{L^6}
\left( \sigma^{-2}+\|\na u\|_{L^3}\|\na u_{tt}+\|\na H\|_{L^3}\|\na H_{tt}\|_{L^2}\right)  \\
&\le \de\|\na \te_{tt}\|_{L^2}^2
+C(\de)\left(\|\n^{1/2} \te_{tt}\|_{L^2}^2+\|\na u_{tt}\|_{L^2}^2+\|\na H_{tt}\|_{L^2}^2+\si^{-4}  \right).\ea
\end{align}
Then, multiplying (\ref{eg3})  by $\si^4,$ substituting the estimates of $Q_i (i=1,\cdots,5)\ $ into the resulting equality and choosing $\de$ suitably small, one obtains
\begin{equation*}
\begin{aligned}
& \frac{d}{dt}\left(\si^4\|\n^{1/2} \te_{tt}\|_{L^2}^2\right) +\frac{\ka(\ga-1)}{R}\si^4\|\na \te_{tt}\|_{L^2}^2\\
& \le  C\si^2\left(\|\n^{1/2} \te_{tt}\|_{L^2}^2
+\|\na u_{tt}\|_{L^2}^2+\|\na H_{tt}\|_{L^2}^2  \right)+C,
\end{aligned}
\end{equation*}
which together with (\ref{eg10})  gives
\be\la{eg13} \sup_{ 0\le t\le T}\si^4\|\n^{1/2} \te_{tt}\|_{L^2}^2
+\int_{0}^T\si^4\|\na \te_{tt}\|_{L^2}^2dt\le C.\ee
Finally, one obtains after using Lemmas \ref{lem-x1}--\ref{lem-x6}, (\ref{va1}), (\ref{hs}), and (\ref{eg13}) that
\be\notag%\la{eg14}
\ba
&\sup_{0\le t\le T}\si^2(\|\na^2\te_t\|_{L^2}+\|\na^2\te\|_{H^2})\le  C,\ea\ee
which together with (\ref{eg13}) shows (\ref{egg17}).
The proof of Lemma \ref{lem-x7} is completed.
\end{proof}

\section{Proof of  Theorem  \ref{th1}-\ref{th3}}\label{se5}

With all the a priori estimates in Section \ref{se3} and Section \ref{se4} at hand, we are going to  prove the main results of the paper in this section.
We first state the global existence of strong solution $(\rho,u,\te, H)$ whose proof is similar to that of \cite[Proposition 5.1]{llw2022}.

\begin{proposition} \la{pro2}
For  given numbers $M>0$ (not necessarily small),  $\on> 2,$ and $\bt>1,$   assume that  $(\rho_0,u_0,\te_0,H_0)$ satisfies (\ref{2.1}),  (\ref{3.1}), and   (\ref{z01}). Then    there exists a unique classical solution  $(\rho,u,\te, H) $ of problem (\ref{CMHD})--(\ref{boundary}) in $\Omega\times (0,\infty)$ satisfying (\ref{mn5})--(\ref{mn2}) with $T_0$ replaced by any $T\in (0,\infty).$
  Moreover,  (\ref{key2}), (\ref{basic0}), (\ref{ae3.7}), and (\ref{vu15})  hold for any $T\in (0,\infty)$ and (\ref{key3}) holds for any $t\geq 1$.
\end{proposition}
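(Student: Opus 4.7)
The strategy is to combine the local classical solution from Lemma \ref{lem-local} with the global a priori estimates of Sections \ref{se3}--\ref{se4} via a standard continuation-of-local-solutions argument, in the spirit of \cite[Proposition 5.1]{llw2022}. Under \eqref{2.1} and \eqref{3.1}, Lemma \ref{lem-local} provides a unique local classical solution on some $[0,T_0]$ with the regularity \eqref{mn5}--\eqref{mn1}, the lower bound $\n\ge\tfrac{1}{2}\inf_{\Omega}\n_0>0$ from \eqref{mn6}, and the positivity of $\te$ from \eqref{mn2}. Because the inequalities in \eqref{3.1} are strict and $C_0\le\ve_0$, the bootstrap conditions \eqref{key1} hold at $t=0$ with slack (using $K\ge 2M^2$ and the initial estimate $\|\sqrt{\n_0}(R\te_0-\overline{P_0})\|_{L^2}^2\le CC_0$, which follows from $\|\sqrt{\n_0}(\te_0-1)\|_{L^2}^2\le CC_0$, $\|\n_0-1\|_{L^2}^2\le CC_0$, and \eqref{pt0}). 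Continuity in $t$ of the norms appearing in \eqref{mn5} then propagates \eqref{key1} to a possibly shorter initial interval.

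Define
\begin{equation*}
T^*\triangleq\sup\bigl\{T>0\,:\,(\n,u,\te,H)\ \text{is a classical solution on}\ [0,T]\ \text{with \eqref{mn5}--\eqref{mn2} and satisfying \eqref{key1}}\bigr\}.
\end{equation*}
The plan is to show $T^*=\infty$ by contradiction. Assume $T^*<\infty$. Since $C_0\le\ve_0$, Proposition \ref{pr1} applies on $[0,T^*)$ and produces the strictly stronger bounds \eqref{key2}. The higher-order estimates of Lemmas \ref{lem-x1}--\ref{lem-x7} then deliver, uniformly on $[0,T^*]$ (with $\sigma(t)$-weights near $t=0$), control of $\n$ in $W^{2,q}$ and of $(u,\te,H)$ in $H^3$ away from $t=0$, together with the key integrability $\int_0^{T^*}\|\div u\|_{L^\infty}dt<\infty$ from \eqref{v6}. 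Combined with \eqref{mn2} and the analogous transport lower bound $\n(x,t)\ge\inf_\Omega\n_0\cdot\exp\bigl(-\int_0^t\|\div u\|_{L^\infty}ds\bigr)$, this preserves $\inf_\Omega\n(\cdot,T^*)>0$ and $\inf_\Omega\te(\cdot,T^*)>0$. Taking $(\n,u,\te,H)(\cdot,T^*)$ as new initial data, which now satisfies \eqref{2.1}, we reapply Lemma \ref{lem-local} to extend the solution onto $[0,T^*+T_0']$ for some $T_0'>0$. The strict improvement $\eqref{key2}\Rightarrow\eqref{key1}$ together with the time-continuity of the relevant norms ensures \eqref{key1} still holds slightly past $T^*$, contradicting maximality. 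Hence $T^*=\infty$, uniqueness being standard in this regularity class. The estimates \eqref{key2}, \eqref{basic0}, \eqref{ae3.7}, and \eqref{vu15} on any finite $[0,T]$ then follow from Proposition \ref{pr1} and Lemmas \ref{lem-basic2} and \ref{lem-th}, while the exponential decay \eqref{key3} for $t\ge 1$ is Lemma \ref{lem-lim}.

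The main obstacle is verifying at the candidate failure time $T^*$ the full regularity hypothesis \eqref{2.1} needed to restart Lemma \ref{lem-local}. The Section \ref{se4} estimates are $T$-dependent and carry negative powers of $\sigma$ near $t=0$, so one must restart from a positive time $\tau\in(0,T^*)$; the smoothing effects encoded in Lemmas \ref{lem-x3}, \ref{lem-x6}, and \ref{lem-x7} are precisely what furnish the $H^3$-class on $[\tau,T^*]$. The second delicate point is retaining the strict positivity $\inf_\Omega\n(\cdot,T^*)>0$ and $\inf_\Omega\te(\cdot,T^*)>0$, since \eqref{key1} a priori only requires $\n>0$ without a uniform lower bound. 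This is where the $L^1_t L^\infty_x$-bound on $\div u$ extracted from the Beale--Kato--Majda-type argument in the proof of Lemma \ref{lem-x1} is indispensable; without it the density could degenerate at $T^*$ and block the continuation.
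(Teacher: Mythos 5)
Your overall scheme---local solution from Lemma \ref{lem-local}, bootstrap of \eqref{key1} via Proposition \ref{pr1}, and continuation past a hypothetical maximal time by contradiction---is the same as the paper's. The genuine gap is in how you recover the hypothesis \eqref{2.1} at the restart time, specifically the $H^3$ regularity of the \emph{density}. You propose restarting from a positive time $\tau\in(0,T^*)$ and invoking the smoothing encoded in Lemmas \ref{lem-x3}, \ref{lem-x6}, \ref{lem-x7}; but those lemmas only ever control $\rho$ in $W^{2,q}$ with $q\in(3,6)$ (see \eqref{y2}), and $W^{2,q}$ does not embed into $H^3$. The density obeys a transport equation and does not smooth, so no amount of waiting until $t=\tau>0$ produces $\rho(\cdot,\tau)\in H^3$ from the $\sigma$-weighted estimates alone. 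Nor can you propagate $\rho_0\in H^3$ forward using only the weighted bounds: controlling $\partial_t\|\nabla^3\rho\|_{L^2}$ requires $\|\nabla^4 u\|_{L^2}$, hence $\|\nabla^2 u_t\|_{L^2}$ and $\|\nabla^3\theta\|_{L^2}$ in $L^1_t$ or $L^2_t$ near $t=0$, and the weighted bound $\int_0^T\sigma\|\nabla u_t\|_{H^1}^2\,dt\le C$ from \eqref{x3b2} gives only $\int_\epsilon^T\|\nabla u_t\|_{H^1}^2\,dt\le C\epsilon^{-1}$, which is not integrable down to $t=0$. Without $\rho(\cdot,T_*)\in H^3$ you cannot reapply Lemma \ref{lem-local}, and the continuation argument stalls.

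The paper closes exactly this gap by exploiting the full strength of \eqref{2.1}: since $\inf\rho_0>0$, $\inf\theta_0>0$ and the data are in $H^3$, one may define $u_t(\cdot,0)$, $\theta_t(\cdot,0)$, $H_t(\cdot,0)$ directly from the equations and obtain the initial bounds \eqref{ssp91} and \eqref{ssp9}. These allow the higher-order energy estimates to be run \emph{without} time weights, yielding the unweighted bounds \eqref{a51}--\eqref{ssp24} on all of $[0,T_*)$ (with constants depending on $T_*$ and $\inf\rho_0$), and finally the Gr\"onwall argument \eqref{sp134} produces the uniform bound \eqref{y12} on $\|\rho\|_{H^3}$. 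That is the step your proposal is missing; your treatment of the lower bounds on $\rho$ and $\theta$ via $\int_0^{T^*}\|\nabla u\|_{L^\infty}\,dt<\infty$ and \eqref{mn2} is fine, as is the rest of the skeleton.
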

\begin{proof}
By Lemma \ref{lem-local}, there exists a $T_0>0$ which may depend on
$\inf\limits_{x\in \Omega}\n_0(x), $  such that the  problem
 (\ref{CMHD})--(\ref{boundary})  with   initial data $(\n_0 ,u_0,\te_0, H_0)$
 has a unique classical solution $(\n,u,\te,H)$ on $\O\times(0,T_0]$  satisfyinng (\ref{mn6})--(\ref{mn2}). One may use the a priori estimates, Proposition \ref{pr1} and Lemmas \ref{lem-x1}-\ref{lem-x7} to extend the classical solution $(\rho,u,\theta,H)$ globally in time.

First, it follows from (\ref{As1})--(\ref{3.1}) and (\ref{z01}) that
\bnn A_1(0)\le 2M^2,\quad  A_2(0)\le  C_0^{1/4},\quad A_3(0)+A_4(0)=0, \quad  \n_0<
 \hat{\rho},\quad \te_0\le \bt,\enn  which implies  there exists a
$T_1\in(0,T_0]$ such that (\ref{key1} ) holds for $T=T_1.$
 We set \bnn \notag T^* =\sup\left\{T\,\left|\, \sup_{t\in [0,T]}\|(\n,u,\te,H)\|_{H^3}<\infty\right\},\right.\enn  and \be \la{s1}T_*=\sup\{T\le T^* \,|\,{\rm (\ref{key1} ) \
holds}\}.\ee Then $ T^*\ge T_* \geq T_1>0.$

Next, we claim that
 \be \la{s2}  T_*=\infty.\ee  Otherwise,    $T_*<\infty.$
Proposition \ref{pr1} shows   (\ref{key2}) holds for all $0<T<T_*,$ which together with \eqref{z01} yields
  Lemmas \ref{lem-x1}--\ref{lem-x7} still hold for all  $0< T< T_*.$
Note here that all  constants $C$  in  Lemmas \ref{lem-x1}--\ref{lem-x7} % and \eqref{y12}
depend  on $T_*  $ and $\inf\limits_{x\in \Omega}\n_0(x)$, and are in fact independent  of  $T$.
Then,  we claim that  there
exists a positive constant $\tilde{C}$ which may  depend  on $T_* $
and $\inf\limits_{x\in \Omega}\n_0(x)$   such that, for all  $0< T<
 T_*,$  \be\la{y12}\ba \sup_{0\le t\le T}
\| \n\|_{H^3}   \le \tilde{C},\ea \ee which together with Lemmas \ref{lem-x1}-\ref{lem-x6},  \eqref{mn2}, and (\ref{3.1}) gives
 \bnn
 \|(\n(x,T_*),u(x,T_*),\te(x,T_*),H(x,T_*))\|_{H^3}
 \le \tilde{C},\quad\inf_{x\in \Omega}\n(x,T_*)>0,\quad\inf_{x\in \Omega}\te(x,T_*)>0.\enn
  Thus, Lemma \ref{lem-local} implies that there exists some $T^{**}>T_*,$  such that
(\ref{key1}) holds for $T=T^{**},$   which contradicts (\ref{s1}).
Hence, (\ref{s2}) holds. This along with Lemmas \ref{lem-local}, \ref{lem-basic2}, \ref{lem-th},  and Proposition \ref{pr1},  thus
finishes the proof of   Proposition \ref{pro2}.

Finally, it remains to prove (\ref{y12}).
By $(\ref{CMHD})_3$ and (\ref{mn6}), we can define
 \bnn
 \theta_t(\cdot,0)\triangleq - u_0 \cdot\na \te_0\! +\! \frac{\ga\!-\!1}{R} \rho_0^{-1}
\left(\ka\Delta\te_0\!-\!R\rho_0 \theta_0 \div u_0\!+\!\lambda (\div u_0)^2\!+\!2\mu |\mathbb{D}u_0|^2\!+\!\nu |\curl H_0|^2\right),
 \enn
which together with (\ref{2.1}) gives
 \be \la{ssp91}\|\theta_t(\cdot,0)\|_{L^2}\le \tilde{C}.\ee
 Thus, one deduces from  (\ref{3.99}), \eqref{2.1}, \eqref{ssp91}, and Lemma \ref{lem-x1} that
\be  \ba \la{a51}
\sup\limits_{0\le t\le T} \int \n|\dot\te|^2dx+\int_0^T \|\na\dot\te\|_{L^2}^2dt \le \tilde{C},
\ea\ee
which together with  (\ref{td2-th} ) and Lemma \ref{lem-x1}  yields
\be\la{sp211}
\sup\limits_{0\le t\le T}\|\na^2 \theta\|_{L^2} \le \tilde{C}.\ee
Similarly, by  $(\ref{CMHD})_2$ and  (\ref{mn6}), we can define
\bnn
u_t(\cdot,0) \triangleq -\!u_0\!\cdot\!\na\! u_0\!+\!\n_0^{-1}\left( \mu \Delta u_0 \!+\! (\mu\!\!+\!\lambda) \na\! \div u_0\! -\! R\na (\n_0\te_0)\!+\!H_0\!\cdot\!\na\! H_0\!-\!\frac12\na|H_0|^2\right),
\enn
which along with \eqref{hh00} and  (\ref{2.1}) yields
\be \la{ssp9}\|\na u_t(\cdot,0)\|_{L^2}+\|\nabla H_t(\cdot,0)\|_{L^2}\le \tilde{C}.\ee
Thus, it follows from Lemmas \ref{lem-x1},  (\ref{x4b8}),  (\ref{s4}),  (\ref{a51})--(\ref{ssp9}), and Gr\"{o}nwall's inequality that
\be \la{ssp1}
\sup_{0\le t\le T}(\|\na u_t\|_{L^2}+\|\na H_t\|_{L^2})+\int_0^T\int (\n |u_{tt}|^2+|H_{tt}|^2)dxdt\le \tilde{C},
\ee
which as well as (\ref{va2}), \eqref{sp211}, \eqref{3tdh-1} and (\ref{x1b2}) yields
\be\la{sp221} \sup\limits_{0\le t\le T}(\|u\|_{H^3}+\|H\|_{H^3}) \le \tilde{C}.\ee
Combining this with Lemma \ref{lem-x1}, \eqref{ex4}, \eqref{nt4}, (\ref{a51}), (\ref{sp211}), (\ref{ssp1}),   and  (\ref{sp221}) gives
\be\la{ssp24} \ia\left(\|\na^3\te\|_{L^2}^2+ \|\nabla u_t\|_{H^1}^2+ \|\nabla H_t\|_{H^1}^2\right)dt\le \tilde{C} . \ee
%Applying the standard  $H^2$-estimate to elliptic system
Then, applying \eqref{hs}, \eqref{sp211}, \eqref{ssp1}, \eqref{sp221}, and Lemma \ref{lem-x1}, one has
\be \notag\ba \|\na^2 u\|_{H^2}
&\le\tilde{C}\left(\| \n \dot u \|_{H^2}+\|\na  P\|_{H^2}+\|H\na  H\|_{H^2} + \|\na u\|_{L^2}\right)\\
&\le \tilde{C}\left(\| \n \|_{H^2}\|  u_t \|_{H^2}+\| \n \|_{H^2}\|  u \|_{H^2}\|  \na u \|_{H^2}+\| H \|_{H^2}\|  \na H \|_{H^2}\right)\\
&\quad+\tilde C\left(\|\na\n \|_{H^2}\|\te\|_{H^2}+\|\n\|_{H^2}\|\na  \te\|_{H^2}+1\right)\\
& \le \tilde{C} (1+ \|\na^2  u_t\|_{L^2}+\|\na^3 \n \|_{L^2}+\|\na^3 \te \|_{L^2}),\ea\ee
%where one has used  Lemma \ref{lem-x1}  and the following
% simple facts: \be\notag\ba
%\| \n u_t \|_{H^2}
%&\le \tilde{C}  \| (\n-1) u_t \|_{H^2}+ \tilde{C}\| u_t \|_{H^2} \no
 %\le \tilde{C}  \| \n\|_{H^2}\| u_t \|_{H^2}  %+ \tilde{C}\| \na^2 u_t \|_{L^2}+\tilde{C} \\
%& \le \tilde{C} (1+ \|\na^2 u_t\|_{L^2}),\ea\ee
%\bnn \ba
%\| \n u\cdot\na u \|_{H^2}&\le \tilde{C} \| \n u \|_{H^2} \|  \na u \|_{H^2} \le \tilde{C},
%&\le \tilde{C}\|\n-1\|_{H^2}\| u \|_{H^2}+\tilde{C}\\ &\le \tilde{C},
%\ea\enn
%and \bnn\ba \|\na^3(\n \te)\|_{L^2}
%\le &\tilde{C}(\|\na^3 \n\|_{L^2} \|\te\|_{L^\infty}+\|\na^2\n\|_{L^6} \|\na\te\|_{L^3}+\|\na \n\|_{L^3}
%\|\na^2\te\|_{L^6}+\|\na^3\te\|_{L^2})\\
%\le &\tilde{C}(1+\|\na^3\n\|_{L^2}+\|\na^3\te\|_{L^2}),\ea\enn due
%to (\ref{hs}),  (\ref{x3b1}),  (\ref{ssp1}), (\ref{x3b2}), and (\ref{sp221}).
which along with (\ref{sp221}) and Lemma \ref{lem-x1} leads to
\begin{equation}
\begin{aligned}\la{sp134}
& \left(\|\na^3 \n\|_{L^2} \right)_t \\
&\le \tilde{C}\left(\| |\na^3u| |\na \n| \|_{L^2}+ \||\na^2u||\na^2
      \n|\|_{L^2}+ \||\na u||\na^3 \n|\|_{L^2} +\| \na^4u \|_{L^2} \right)\\
%&\le \tilde{C}\left(\| \na^3 u\|_{L^2}\|\na \n \|_{H^2}+ \| \na^2u\|_{L^3}\|\na^2 \n \|_{L^6}       +\|\na u\|_{L^\infty}\|\na^3 \n\|_{L^2} +\| \na^4u \|_{L^2}\right) \\
%&\quad+\tilde{C}\left( 1+\|\na^2 u_t \|_{L^2}+ \| \na^3\n \|_{L^2}+\| \na^3\te\|_{L^2}\right)\\
&\le \tilde{C}(1+ \| \na^3\n \|_{L^2}+ \| \na^2 u_t\|^2_{L^2}+ \|\na^3\te\|^2_{L^2}).
\end{aligned}
\end{equation}
Combining this with (\ref{ssp24})  and
Gr\"{o}nwall's inequality  yields   \bnn\la{sp26} \sup\limits_{0\le t\le
T}\|\nabla^3  \n\|_{L^2} \le \tilde{C},\enn which together with
(\ref{x1b2}) gives (\ref{y12}).
The proof of Proposition \ref{pro2} is completed.
\end{proof}

With  Proposition \ref{pro2} at hand, we are now in a position to prove  Theorem \ref{th1}.

\noindent{\it Proof of Theorem \ref{th1}.}
Assume that  $C_0$  satisfies (\ref{co14}) with
\be\la{xia}\ve\triangleq \ve_0/2,\ee
where  $\ve_0$  is given in Proposition \ref{pr1}. Suppose the initial data $(\n_0,u_0,\te_0,H_0)$ satisfies (\ref{dt1})--(\ref{dt3}).
First, we construct the approximate initial data $(\n_0^{m,\eta},u_0^{m,\eta}, \te_0^{m,\eta}, H_0^{m,\eta})$ as follows:
\begin{align*}
\n_0^{m,\eta} = \n_0^{m}+ \eta,\ \  u_0^{m,\eta}=\frac{u_0^m }{1+\eta},\ \  \te_0^{m,\eta}= \frac{\te_0^{m} + \eta}{1+2\eta},\ \  H_0^{m,\eta}=\frac{H_0^m }{1+\eta},
\end{align*}
where
\be\la{5d0}
m \in \mathbb{Z}^+,\ \  \eta \in \left(0, \eta_0 \right),\ \  \eta_0\triangleq \min\xl\{1,\frac{1}{2}(\on-\sup\limits_{x\in \O}\n_0(x)) \xr\},
\ee
and $(\n_0^{m},u_0^{m}, \te_0^{m}, H_0^{m})$ satisfies the boundary conditions \eqref{navier-b}-\eqref{boundary} and
\be\notag   \begin{cases}
0 \le \n_0^{m} \in C^{\infty},\ \ \displaystyle  \lim_{m \to \infty} \|\n_0^{m} -\rho_0\|_{W^{2,q}}=0,\\
\Delta u_0^m=\Delta \tilde{u}_0^m,\qquad\text{in}\,\, \O,\\
\tilde{u}_0^m \in C^{\infty},\ \ \displaystyle\lim_{m \to \infty}\| \tilde{u}_0^m -{u}_0\|_{H^2}=0,\\
\Delta \te_0^m=\Delta \tilde{\te}_0^m- \overline{\Delta \tilde{\te}_0^m},\qquad\text{in}\,\,\O,\\
0\le \tilde{\te}_0^m \in C^{\infty},\ \ \displaystyle\lim\limits_{m \to \infty}\| \tilde{\te}_0^m -{\te}_0\|_{H^2}=0,\\
\Delta H_0^m=\Delta \tilde{H}_0^m,\qquad\text{in}\,\, \O,\\
\tilde{H}_0^m \in C^{\infty},\ \ \displaystyle\lim_{m \to \infty}\| \tilde{H}_0^m -{H}_0\|_{H^2}=0,
\end{cases}\ee
Then for any $\eta\in (0, \eta_0)$, there exists $m_1(\eta)\ge 1$ such that for $m \ge m_1(\eta)$, the approximate initial data
$(\n_0^{m,\eta},u_0^{m,\eta}, \te_0^{m,\eta}, H_0^{m,\eta})$ satisfies the boundary conditions \eqref{navier-b}-\eqref{boundary} and
\be \la{de3}\begin{cases}(\n_0^{m,\eta},u_0^{m,\eta}, \te_0^{m,\eta}, H_0^{m,\eta})\in C^\infty ,\\
  \dis \eta\le  \n_0^{m,\eta}  <\hat\n,~~\, \frac{\eta}{4}\le \te_0^{m,\eta} \le \hat \te,~~\,\|\na u_0^{m,\eta}\|_{L^2} \le M,~~\,\|\na H_0^{m,\eta}\|_{L^2} \le M, \\
\lim\limits_{\eta\rightarrow 0}\! \lim\limits_{m\rightarrow \infty}\!
\left(\| \n_0^{m,\eta} \!\!-\!\! \n_0 \|_ {W^{2,q}}\!\!+\!\!\| u_0^{m,\eta}\!\!-\!u_0\|_{H^2}\!\!+\!\!\| \te_0^{m,\eta}\!\!-\!\te_0  \|_{H^2}\!\!+\!\!\| H_0^{m,\eta}\!\!-\!\! H_0  \|_{H^2}\right)\!=\!0.
  %\dis u_0^{m,\eta}\cdot n=0,~~\,\curl u_0^{m,\eta}\times n=0,~~\,\na \te_0^{m,\eta}\cdot n=0\,\, \text{on}\,\p\O,
\end{cases}
\ee
%\be\la{cx}\|\n_0^{m,\eta}-\n_0-\eta\|_{L^\infty}\le \frac{\eta}{2},\quad\|\na \n_0^{m,\eta}\|_{L^\infty}\le \|\na \n_0\|_{L^\infty}+1,\ee
% and
%  \be \la{de03}
% \lim\limits_{\eta\rightarrow 0} \lim\limits_{m\rightarrow \infty}
% \left(\| \n_0^{m,\eta} \!\!-\! \n_0 \|_ {W^{2,q}}\!\!+\!\| u_0^{m,\eta}\!\!-\!u_0\|_{H^2}\!\!+\!\| \te_0^{m,\eta}\!\!- \!\te_0  \|_{H^2}\!\!+\!\| H_0^{m,\eta}\!\!-\! H_0  \|_{H^2}\right)=0.
% \ee
Moreover,  the initial norm $C_0^{m,\eta}$
for $(\n_0^{m,\eta},u_0^{m,\eta}, \te_0^{m,\eta}, H_0^{m,\eta}),$ which is defined by  the right-hand side of (\ref{c0})
with $(\n_0,u_0,\te_0,H_0)$   replaced by
$(\n_0^{m,\eta},u_0^{m,\eta}, \te_0^{m,\eta}, H_0^{m,\eta}),$
satisfies \bnn \lim\limits_{\eta\rightarrow 0} \lim\limits_{m\rightarrow \infty} C_0^{m,\eta}=C_0.\enn
Therefore, there exists  an  $\eta_1\in(0, \eta_0) $
such that, for any $\eta\in(0,\eta_1),$ we can find some $m_2(\eta)\geq m_1(\eta)$  such that   \be \la{de1} C_0^{m,\eta}\le C_0+\ve_0/2\le  \ve_0 , \ee
provided that\be  \la{de7}0<\eta<\eta_1 ,\,\, m\geq m_2(\eta).\ee

  We assume that $m,\eta$ satisfy (\ref{de7}).
  Proposition \ref{pro2} together with (\ref{de1}) and (\ref{de3}) thus yields that
   there exists a smooth solution  $(\n^{m,\eta},u^{m,\eta}, \te^{m,\eta}, H^{m,\eta}) $
   of problem (\ref{CMHD})--(\ref{boundary}) with  initial data $(\n_0^{m,\eta},u_0^{m,\eta}, \te_0^{m,\eta}, H_0^{m,\eta})$
   on $\Omega\times (0,T] $ for all $T>0. $
    Moreover, one has (\ref{esti-rho}), \eqref{key2}, \eqref{key3}, (\ref{basic0}), \eqref{ae3.7}, and (\ref{vu15})   with $(\n,u,\te,H)$  being replaced by $(\n^{m,\eta},u^{m,\eta}, \te^{m,\eta}, H^{m,\eta}).$

 Next, for the initial data $(\n_0^{m,\eta},u_0^{m,\eta}, \te_0^{m,\eta}, H_0^{m,\eta})$, the function $\tilde g$ in (\ref{co12})  is
 \be \la{co5}\ba \tilde g & \triangleq(\n_0^{m,\eta})^{-\frac12}\left(-\!\mu \Delta u_0^{m,\eta}\!-\!(\mu\!+\!\lambda)\na\div
 u_0^{m,\eta}\!+\!R\na (\n_0^{m,\eta}\te^{m,\eta}_0)\!-\!\curl H_0^{m,\eta}\!\times\! H_0^{m,\eta}\right)\\
& = (\n_0^{m,\eta})^{-\frac12}\sqrt{\n_0}g+\mu(\n_0^{m,\eta})^{-\frac12}\Delta(u_0-u_0^{m,\eta})\\
&\quad+(\mu+\lambda) (\n_0^{m,\eta})^{-\frac12} \na \div(u_0-u_0^{m,\eta})+ R(\n_0^{m,\eta})^{-\frac12} \na(\n_0^{m,\eta}\te_0^{m,\eta}-\n_0\te_0),\\
&\quad+(\n_0^{m,\eta})^{-\frac12}(\curl H_0\!\times\! H_0-\curl H_0^{m,\eta}\!\times\! H_0^{m,\eta}),\ea\ee
where in the second equality we have used (\ref{dt3}).
%Similarly, $\tilde{g}_2$ in (\ref{co13}) is
%\be
%\la{co6} \ba \tilde {g}_2 & \triangleq (\n_0^{m,\eta})^{-\frac12}\left(\ka\Delta \te_0^{m,\eta}+\frac{\mu}{2}|\na
%u_0^{m,\eta}+(\na u_0^{m,\eta})^{\rm tr}|^2+\lambda (\div u_0^{m,\eta})^2\right)\\
%&=(\n_0^{m,\eta})^{-\frac12}(\n_0)^{1/2}g_2
%+\ka(\n_0^{m,\eta})^{-\frac12}\Delta(\te_0^{m,\eta}-\te_0)\\
%&\quad+\frac{\mu}{2} (\n_0^{m,\eta})^{-\frac12}\left(|\na u_0^{m,\eta}+(\na u_0^{m,\eta})^{\rm
%tr}|^2-|\na u_0+(\na u_0))^{\rm tr}|^2  \right) \\
%&\quad+\lambda(\n^{m,\eta}_0)^{-\frac12}\left((\div u_0^{m,\eta})^2- (\div u_0)^2\right) \ea\ee due to   (\ref{co1}).
Since $g \in L^2,$ one deduces from (\ref{co5}),  (\ref{de3}), and  (\ref{dt1})  that for any $\eta\in(0,\eta_1),$ there exist some $m_3(\eta)\geq m_2(\eta)$ and a positive constant $C$ independent of $m$ and $\eta$ such that
 \be\la{de4}
  \|\tilde g\|_{L^2}\le \|g\|_{L^2}+C\eta^{-\frac12}\de(m) + C\eta^{\frac12},
  \ee
with   $0\le\de(m) \rightarrow 0$ as
$m \rightarrow \infty.$ Hence,  for any  $\eta\in(0,\eta_1),$ there exists some $m_4(\eta)\geq m_3(\eta)$ such that for any $ m\geq m_4(\eta)$,
 \be \la{de9}\de(m) <\eta.\ee  We thus obtain from (\ref{de4}) and (\ref{de9}) that
there exists some positive constant $C$ independent of $m$ and $\eta$ such that  \be\la{de14}  \|\tilde g \|_{L^2}\le \|g \|_{L^2}+C,\ee provided that\be \la{de10} 0<\eta<\eta_1,\,\,  m\geq m_4(\eta).\ee

 Now, we   assume that $m,$  $\eta$ satisfy (\ref{de10}).
 It thus follows from (\ref{de3})--(\ref{de1}),  (\ref{de14}), Proposition \ref{pr1}, % \eqref{vu15}, %Corollary \ref{cor1},
 and Lemmas \ref{lem-th}, \ref{lem-x1}--\ref{lem-x7} that for any $T>0,$
 there exists some positive constant $C$ independent of $m$ and $\eta$ such that
 (\ref{esti-rho}), (\ref{key2}),   (\ref{basic0}),  \eqref{ae3.7}, (\ref{vu15}), \eqref{x1b1}, \eqref{x1b2},  (\ref{x3b1}),  (\ref{vva5}), (\ref{x6b}),
  and  (\ref{egg17})  hold for  $(\n^{m,\eta},u^{m,\eta}, \te^{m,\eta}, H^{m,\eta}) .$
   Then passing  to the limit first $m\rightarrow \infty,$ then $\eta\rightarrow 0,$
   together with standard arguments yields that there exists a solution $(\n,u,\te,H)$ of the problem (\ref{CMHD})--(\ref{boundary})
   on $\Omega\times (0,T]$ for all $T>0$, such that the solution  $(\n,u,\te,H)$
   satisfies  (\ref{esti-rho}), (\ref{basic0}),    \eqref{ae3.7},  (\ref{vu15}),  \eqref{x1b1}, \eqref{x1b2}, (\ref{x3b1}),  (\ref{vva5}), (\ref{x6b}),  (\ref{egg17}),
   and  the estimates of $A_i(T)\,(i=1,\cdots,4)$ in
   (\ref{key2}). Hence, $(\n,u,\te,H)$ satisfying  (\ref{esti-rho}) and \eqref{esti-uh} can be proved in the same way as that in \cite{hl2018}. Moreover, one deduces from Proposition \ref{pr1} that the desired exponential decay property \eqref{esti-t}.
Furthermore,  the proof of the uniqueness of $(\n,u,\te,H)$ is similar to that  of \cite{ck2006} and we omit the details.
The proof of Theorem \ref{th1} is finished.   \endproof

\noindent{\it{Proof of Theorem \ref{th2}.} }
As is shown by \cite{cl2019}, we sketch the proof for completeness. First, we show that, for $T>0$, the Lagrangian coordinates of the system are given by
  \be \la{c61}  \begin{cases}\frac{\partial}{\partial \tau}X(\tau; t,x) =u(X(\tau; t,x),\tau),\,\,\,\, 0\leq \tau\leq T\\
 X(t;t,x)=x, \,\,\,\, 0\leq t\leq T,\,x\in\bar{\Omega}.\end{cases}\ee
 By \eqref{esti-uh}, the transformation \eqref{c61} is well-defined. In addition, by $\eqref{CMHD}_1$, we have
 \be\la{c62}\ba
\rho(x,t)=\rho_0(X(0; t, x)) \exp \{-\int_0^t\div u(X(\tau;t, x),\tau)d\tau\}.
\ea \ee
 If there exists some point $x_0\in \Omega$ such that $\n_0(x_0)=0,$ then there is a point $x_0(t)\in \bar{\Omega}$ such that $X(0; t, x_0(t))=x_0$. Hence, by \eqref{c62}, $\rho(x_0(t),t)\equiv 0$ for any $t\geq 0.$
By \eqref{g2}, we get for $ \tilde{r}\in  (3,\infty)$ and $\theta=\frac{2(\tilde{r}-3)}{5\tilde{r}-6}$,
\begin{align*}
\displaystyle  1 \leq\|\rho-1\|_{C\left(\ol{\O }\right)} \le C
\|\rho-1\|_{L^2}^{\theta}\|\na \rho\|_{L^{\tilde{r}}}^{1-\theta},
\end{align*}
Combining this with \eqref{esti-t} gives \eqref{esti-2} and completes the proof of Theorem \ref{th2}. \endproof

\noindent{\it Proof of  Theorem   \ref{th3}. } We adopt the idea in \cite{llw2022} to  prove  Theorem   \ref{th3} in two steps.

 {\it Step 1. Construction  of approximate  solutions.} Assume $(\n_0,u_0,\te_0,H_0)$ satisfying (\ref{dt1}) and \eqref{dt7} is the initial data in Theorem \ref{th3} and  $C_0$ satisfies (\ref{co14})   with  $\ve $  as in  (\ref{xia}).   For $j_{m^{-1}}(x)$ being the standard mollifying kernel of width $m^{-1}$, we construct
\begin{align*}
\hat\n_0^{m,\eta} =\! (\n_0 1_\O)\ast j_{m^{-1}}1_\O\!+\! \eta,\ \  \hat u_0^{m,\eta}=\!\frac{u_0^m }{1\!+\!\eta},\ \  \hat\te_0^{m,\eta}=\! \frac{(\n_0\te_0 1_{\O_m})\ast j_{m^{-1}} \!+\! \eta}{(\n_01_{\O_m})\ast j_{m^{-1}}\!+ \!\eta},\ \  \hat H_0^{m,\eta}=\!\frac{H_0^m }{1\!+\!\eta},
\end{align*}
where  $\O_m=\{x\in\O| dist(x,\p\O)>2/m\}$ and $u_0^{m},H_0^{m}$ satisfies the boundary conditions \eqref{navier-b}, \eqref{boundary} and
%\begin{align*}
%0 \le \n_0^{m} \in C^{\infty},\ \  \n_0^{m}\rightarrow\rho_0\quad in\, L^p(\O)\cap L^\infty(\O \mbox{-weak}^\ast),
%\end{align*}
\be\notag (u_0^{m},H_0^{m})\in C^\infty\cap H^1\ \ \text{and}\ \ \lim_{m\rightarrow \infty}\|(u_0^{m}-u_0,H_0^{m}-H_0)\|_{H^1}=0.\ee
%and
%\be\notag 0\le\xi^m\in C_0^\infty,\ \ \xi^{m}\rightarrow \n_0\te_0\quad in\, L^2(\O).\ee
 Then for any $\eta\in (0, \eta_0)$ with $\eta_0$  as in (\ref{5d0}), there exists $m(\eta)>1$ such that for $m \ge m(\eta)$, the approximate initial data
$(\hat\n_0^{m,\eta},\hat u_0^{m,\eta},\hat\te_0^{m,\eta},\hat{H}_0^{m,\eta})$ satisfies the boundary conditions \eqref{navier-b}-\eqref{boundary} and
\be \la{dee3}\begin{cases}(\hat\n_0^{m,\eta},\hat u_0^{m,\eta},\hat\te_0^{m,\eta},\hat{H}_0^{m,\eta})\in C^\infty ,\\
  \dis \eta\le \hat \n_0^{m,\eta}  <\hat\n,~~\, \frac{\eta}{\hat\n+\eta}\le \hat\te_0^{m,\eta} \le \hat \te,~~\,\|\na \hat u_0^{m,\eta}\|_{L^2} \le M,~\,\|\na \hat H_0^{m,\eta}\|_{L^2} \le M,\end{cases}
\ee
%\be\la{cx}\|\n_0^{m,\eta}-\n_0-\eta\|_{L^\infty}\le \frac{\eta}{2},\quad\|\na \n_0^{m,\eta}\|_{L^\infty}\le \|\na \n_0\|_{L^\infty}+1,\ee
and for any $p\geq1$,
 \be \la{dee03}
\lim\limits_{\eta\rightarrow 0} \lim\limits_{m\rightarrow \infty}
\left(\| \hat\n_0^{m,\eta}\!\! -\!\! \n_0 \|_ {L^p}\!\!+\!\!\|\hat u_0^{m,\eta}\!\!-\!\!u_0\|_{H^1}\!\!+\!\!\|\hat\n_0^{m,\eta}\hat\te_0^{m,\eta}\!\!-\!\!\n_0\te_0  \|_{L^2}\!\!+\!\!\|\hat H_0^{m,\eta}\!\!-\!\!u_0\|_{H^1}\right)\!\!=\!0
\ee
owing to (\ref{dt1})  and (\ref{co14}).

Now, we claim that  the initial norm $\hat C_0^{m,\eta}$
for $(\hat\n_0^{m,\eta},\hat u_0^{m,\eta},\hat\te_0^{m,\eta},\hat{H}_0^{m,\eta}),$  i.e., the right hand side of
(\ref{c0}) with $(\n_0,u_0,\te_0,H_0)$  replaced by
$(\hat\n_0^{m,\eta},\hat u_0^{m,\eta},\hat\te_0^{m,\eta},\hat{H}_0^{m,\eta}),$   satisfies
\be \la{uv9}  \lim\limits_{\eta\rightarrow 0}
\lim\limits_{m\rightarrow \infty}\hat C_0^{m,\eta}\le C_0,\ee
which leads to that there exists an $\hat\eta\in(0,\eta_0)$ such that, for any $\eta\in
(0,\hat\eta ),$ there exists some $\hat m (\eta)\geq m(\eta)$ such that
\be \la{uv8}\hat C_0^{m,\eta}\le C_0+\ve_0/2\le \ve_0 , \ee
provided \be\la{uv01}
0<\eta<\hat\eta  , \quad m\geq\hat m (\eta).\ee
Then if we assume (\ref{uv01}) holds, it directly follows from Proposition \ref{pro2}, (\ref{dee3}) and (\ref{uv8}) that there exists a classical solution  $(\hat\n^{m,\eta},\hat u^{m,\eta},\hat\te^{m,\eta},\hat H^{m,\eta})  $  of problem (\ref{CMHD})--(\ref{boundary}) with  initial data $(\hat\n_0^{m,\eta},\hat u_0^{m,\eta},\hat\te_0^{m,\eta},\hat{H}_0^{m,\eta})$   on $\O\times(0,T]$ for all $T>0$.  Furthermore, $(\hat\n^{m,\eta},\hat u^{m,\eta},\hat\te^{m,\eta},\hat H^{m,\eta}) $  satisfies \eqref{esti-rho}, (\ref{key2}), (\ref{basic0}), \eqref{p-b}, (\ref{ae3.7}),  (\ref{vu15}), and \eqref{key3} respectively  for any $T>0$ and $t\geq1$ with $(\n,u,\te,H)$   replaced by $(\hat\n^{m,\eta},\hat u^{m,\eta},\hat\te^{m,\eta},\hat H^{m,\eta})$.

It remains to prove (\ref{uv9}). Indeed, we just need to infer
\be\la{uv10}\lim_{\eta\rightarrow 0}\lim_{m\rightarrow \infty}\int\hat\n_0^{m,\eta}\Phi(\hat\te_0^{m,\eta})dx\le \int\n_0\Phi(\te_0)dx ,\ee since   the other terms in  (\ref{uv9}) can be proved in a  similar and even simpler way.
Note that
\begin{align}\notag
\begin{aligned}
\hat\n_0^{m,\eta}\Phi(\hat\te_0^{m,\eta})%=\hat\n_0^{m,\eta}\left(\hat\te_0^{m,\eta}- \log \hat\te_0^{m,\eta} -1\right)\\
&=\hat\n_0^{m,\eta}(\hat\te_0^{m,\eta}-1)^2 \int_0^1\frac{\al}{\al(\hat\te_0^{m,\eta}-1)+1}d\al\\
 &= \frac{\hat\n_0^{m,\eta}(j_{m^{-1}}*(\n_0(\te_0-1)1_{\O_m}))^2}{j_{m^{-1}}* (\n_01_{\O_m} )+\eta}\\
 &\quad\cdot\int_0^1\frac{\al}{\al j_{m^{-1}}*(\n_0(\te_0-1)1_{\O_m})+j_{m^{-1}}* (\n_01_{\O_m} )+\eta}d\al \\
 &\in \left[0, \, \hat\n\eta^{-2}(j_{m^{-1}}*(\n_0(\te_0-1)1_{\O_m}))^2 \right],
\end{aligned}
\end{align}
% \bnn\ba
% &\hat\n_0^{m,\eta}\left(\hat\te_0^{m,\eta}- \log \hat\te_0^{m,\eta} -1\right)\\
% &=\hat\n_0^{m,\eta}(\hat\te_0^{m,\eta}-1)^2 \int_0^1\frac{\al}{\al(\hat\te_0^{m,\eta}-1)+1}d\al\\
% &= \frac{\hat\n_0^{m,\eta}(j_{m^{-1}}*(\n_0(\te_0-1)1_{\O_m}))^2}{j_{m^{-1}}* (\n_01_{\O_m} )+\eta}\\
% &\quad\cdot\int_0^1\frac{\al}{\al j_{m^{-1}}*(\n_0(\te_0-1)1_{\O_m})+j_{m^{-1}}* (\n_01_{\O_m} )+\eta}d\al \\
% &\in \left[0, \, \hat\n\eta^{-2}(j_{m^{-1}}*(\n_0(\te_0-1)1_{\O_m}))^2 \right],
% \ea\enn
which combined with  Lebesgue's dominated convergence theorem yields that
\begin{align}\notag
\begin{aligned}
&\lim_{m\rightarrow \infty}\int\hat\n_0^{m,\eta}\Phi(\hat\te_0^{m,\eta})dx\\
&=\int(\n_0+\eta)\left(\frac{\n_0\te_0+\eta}{\n_0+\eta}
 - \ln\frac{\n_0\te_0+\eta}{\n_0+\eta}-1 \right)dx\\
%&=\int\left(\n_0\te_0 -\n_0 -(\n_0+\eta)\log\frac{\n_0\te_0+\eta}{\n_0+\eta}\right)dx \\
&=\int\left(\n_0\te_0 -\n_0+(\n_0+\eta)\ln(\n_0+\eta)\right)dx\\
 &\quad-\int\left(\n_0\ln(\n_0\te_0+\eta)+\eta\ln(\n_0\te_0+\eta)\right)dx\\
 &\le \int\left(\n_0\te_0 -\n_0+(\n_0+\eta)\ln(\n_0+\eta)\right)dx-\int\left(\n_0\ln(\n_0\te_0)+\eta\ln \eta\right)dx\\
 & \rightarrow \int \n_0\left(\te_0-  \ln\te_0 -1 \right)dx=\int \n_0\Phi(\te_0)dx,\quad \mbox{ as }\eta\rightarrow 0.
\end{aligned}
\end{align}
% \be \notag\ba
% &\lim_{m\rightarrow \infty}\int\hat\n_0^{m,\eta}\hat\n_0^{m,\eta}\Phi(\hat\te_0^{m,\eta})dx\\
% &=\int(\n_0+\eta)\left(\frac{\n_0\te_0+\eta}{\n_0+\eta}
%  - \ln\frac{\n_0\te_0+\eta}{\n_0+\eta}-1 \right)dx\\
% %&=\int\left(\n_0\te_0 -\n_0 -(\n_0+\eta)\log\frac{\n_0\te_0+\eta}{\n_0+\eta}\right)dx \\
% &=\int\left(\n_0\te_0 -\n_0+(\n_0+\eta)\log(\n_0+\eta)\right)dx\\
%  &\quad-\int\left(\n_0\log(\n_0\te_0+\eta)+\eta\ln(\n_0\te_0+\eta)\right)dx\\
%  &\le \int\left(\n_0\te_0 -\n_0+(\n_0+\eta)\ln(\n_0+\eta)\right)dx-\int\left(\n_0\log(\n_0\te_0)+\eta\ln\eta\right)dx\\
%  & \rightarrow \int \n_0\left(\te_0-  \log\te_0 -1 \right)dx,\quad \mbox{ as }\eta\rightarrow 0.
% \ea\ee
It thus gives \eqref{uv10}.
% where we have used the following simple fact that, for
%$f\in L^p (1\le p<\infty),$\bnn
%\lim_{m\rightarrow \infty}\|j_{m^{-1}}*f-f\|_{L^p}=0,\quad
%\lim_{m\rightarrow \infty} j_{m^{-1}}*f(x)=f(x),\,\,\mbox{ a.e. } x\in\O.\enn
%which together with (\ref{uv11}) and (\ref{uv12}) give (\ref{uv10}).

{\it Step 2. Compactness results.}
 With the approximate solutions $(\hat\n^{m,\eta},\hat u^{m,\eta},\hat\te^{m,\eta},\hat H^{m,\eta}) $ obtained in the previous step at hand,  we can derive the global existence of weak solutions by passing to the limit  first   $m\rightarrow \infty,$ then $\eta\rightarrow 0 .$   Since the two steps are similar, we will only   sketch the  arguments for $m\rightarrow \infty.$  For any fixed  $\eta\in (0,\hat\eta)$, we simply denote  $(\hat\n^{m,\eta},\hat u^{m,\eta},\hat\te^{m,\eta},\hat H^{m,\eta}) $
   by $(\n^m,u^m,\te^m,H^m).$ Then  the combination of Aubin-Lions Lemma with (\ref{key2}), (\ref{basic0}),  \eqref{p-b},  (\ref{vu15}),  and Lemma \ref{lem-f-td} yields that there exists some appropriate subsequence $ m_j \rightarrow \infty$ of $m\rightarrow \infty$ such that, for any $0<\tau<T<\infty $, $p\in[1,\infty)$, and $\tilde p\in[1,6)$,
\be \la{vu4}
u^{m_j}\rightharpoonup u,\,H^{m_j}\rightharpoonup H \,\,\mbox{ weakly star in }\,\, L^\infty(0,T; H^1),
\ee
\be\la{lk}
\te^{m_j}\rightharpoonup \te, \,H^{m_j}\rightharpoonup H \,\,\mbox{ weakly in }\,\, L^2(0,T;H^1),
\ee
\be \la{vu1}
\n^{m_j}\rightarrow \n  \quad \mbox{ in }\,\, C([0,T];L^p\mbox{-weak})\cap C([0,T];H^{-1}),\ee
\be \la{vu5}
\n^{m_j}u^{m_j}\rightarrow \n u,\, \n^{m_j}\te^{m_j}
\rightarrow \n \te   \, \mbox{ in }\,  C([0,T];L^2 \mbox{-weak})\cap C([0,T];H^{-1}),
\ee
\be \la{vu26} \n^{m_j} |u^{m_j}|^2\rightarrow \n |u|^2 \,\, \mbox{ in }\,\, C([0,T];L^3 \mbox{-weak})\cap C([0,T];H^{-1}),\ee
\be\la{ghh}
F^{m_j}\rightarrow F,\,\omega^{m_j}\rightarrow\omega\,\,\mbox{ in }\,\,C([\tau,T];H^1\mbox{-weak})\cap C([\tau,T];L^{\tilde p}),
\ee
\be\la{lll}u^{m_j}\rightarrow u\,\mbox{ in }\,\,C([\tau,T];W^{1,6}\mbox{-weak})\cap C(\O\times[\tau,T]),\ee
and
\be \la{vu18}
\te^{m_j}\rightarrow \te, \,H^{m_j}\rightarrow H\,\,\mbox{in}\,\,C([\tau,T];H^2\mbox{-weak})\cap C([\tau,T];W^{1,\tilde p}).
\ee
%referring to \cite{hl2018} for the detailed proof.
Now we consider the approximate solutions $(\n^{m_j},u^{m_j},\te^{m_j},H^{m_j})$ in the weak forms, i.e. \eqref{def1}--\eqref{def3}, then  take appropriate limits. Standard arguments as well as (\ref{dee03}) and (\ref{vu4})--(\ref{vu18}) thus conclude that the limit $(\n,u,\te,H)$ is a weak solution  of   (\ref{CMHD}),   (\ref{navier-b})-(\ref{boundary})  in the sense of Definition \ref{def} and satisfies (\ref{hq1})--(\ref{hq4}) and the exponential decay property \eqref{esti-t}. % where $\n\in C([0,\infty),L^p)$ can be obtained by similar arguments as in \cite{H-L}.
Moreover, we obtain the estimates (\ref{hq5})--(\ref{hq8}) with the aid of (\ref{key2}), (\ref{basic0}), (\ref{vu15}),    and (\ref{vu4})--(\ref{vu18}). Finally, (\ref{vu019}) shall be obtained by adopting the similar way as in \cite{hl2018}.
The proof of Theorem \ref{th3} is finished. \endproof
%\section{Conclusion}

%{\bf Conflict of Interest:} The authors declare that they have no conflict of interest.

%%      ---------------------------------------------------------------------
%%      ------------------------- APPENDIX (OPTIONAL) -----------------------
%%      ---------------------------------------------------------------------

%%      If you have one appendix, uncomment the line \appendix and add
%%      a \section{ *** APPENDIX TITLE ***}. If you have more than
%%      one, uncomment the line \appendices and add a \section{ ***
%%      APPENDIX TITLE ***} command for each appendix title.

\appendix

%%      Type body of appendix/-ices here.

%%      ---------------------------------------------------------------------
%%      ---------------------------ACKNOWLEDGMENTS (OPTIONAL) ---------------
%%      ---------------------------------------------------------------------
\section*{Acknowledgements} The research  is partially supported by the National Natural Science Foundation of China (No. 11901025).

\end{document}